\newtheorem{mydef}{Definition}
\newtheorem{mycor}{Corollary}
\newtheorem{myprop}{Proposition}
\newtheorem{mythe}{Theorem}
\newtheorem{mylem}{Lemma}
\newtheorem{mynot}{Notation}
\newtheorem{myrem}{Remark}
\newtheorem{myex}{Example}
\DeclareMathOperator*{\per}{per}
\DeclareMathOperator*{\scon}{scon}
\DeclareMathOperator*{\con}{con}
\DeclareMathOperator*{\degg}{deg}
\DeclareMathOperator{\proj}{Proj}
\DeclareMathOperator*{\lcm}{lcm}
\DeclareMathOperator*{\cor}{cor}
\begin{document}

\sloppy

\title{Ergodic Theory Meets Polarization. I: \\ An Ergodic Theory for Binary Operations} 

\author{
Rajai Nasser\\
School of Computer and Communication Sciences, EPFL\\
Lausanne, Switzerland\\
Email: rajai.nasser@epfl.ch
\thanks{This paper was presented in part at the IEEE International Symposium on Information Theory, Hong Kong, June 2015.}
}

\maketitle

\setcounter{page}{1}

\begin{abstract}
An open problem in polarization theory is to determine the binary operations that always lead to polarization (in the general multilevel sense) when they are used in Ar{\i}kan style constructions. This paper, which is presented in two parts, solves this problem by providing a necessary and sufficient condition for a binary operation to be polarizing. This (first) part of the paper introduces the mathematical framework that we will use in the second part \cite{RajErgII} to characterize the polarizing operations. We define uniformity preserving, irreducible, ergodic and strongly ergodic operations and we study their properties. The concepts of a stable partition and the residue of a stable partition are introduced. We show that an ergodic operation is strongly ergodic if and only if all its stable partitions are their own residues. We also study the products of binary operations and the structure of their stable partitions. We show that the product of a sequence of binary operations is strongly ergodic if and only if all the operations in the sequence are strongly ergodic. In the second part of the paper, we provide a foundation of polarization theory based on the ergodic theory of binary operations that we develop in this part.
\end{abstract}

\section{Introduction}

Polar codes are a class of codes invented by Ar{\i}kan \cite{Arikan} which achieves the capacity of symmetric binary-input memoryless channels with low encoding and decoding complexities. Ar{\i}kan and Telatar showed that the probability of error of the successive cancellation decoder of polar codes is equal to $o(2^{-N^{1/2-\epsilon}})$ \cite{ArikanTelatar}.

Ar{\i}kan's construction is based on a basic transformation that is applied recursively. The basic transformation starts with two identical and independent copies of a single user channel $W$ and transforms them to two channels $W^-$ and $W^+$ such that $I(W^-)+I(W^+)=2I(W)$, which means that the total capacity is preserved. It is shown that if $W$ is not extremal, i.e., if $0< I(W)<\log 2$, then $W^-$ (resp. $W^+$) is strictly worse (resp. strictly better) than $W$. This fact was used to show that if we apply the basic transformation recursively, we can convert a set of identical and independent copies of a given single user binary-input channel, into a set of ``almost perfect'' and ``almost useless" channels while preserving the total capacity. This phenomenon is called \emph{polarization} and it is used to construct capacity-achieving polar codes.

Ar{\i}kan's basic construction uses the XOR operation. Therefore, any attempt to generalize Ar{\i}kan's technique to channels having a non-binary input alphabet $\mathcal{X}$, has to replace the XOR operation by a binary operation $\ast$ on the input alphabet $\mathcal{X}$. The first operation that was investigated is the addition modulo $q$, where $q=|\mathcal{X}|$ and $\mathcal{X}$ is endowed with the algebraic structure $\mathbb{Z}_{q}$. \c{S}a\c{s}o\u{g}lu et al. \cite{SasogluTelAri} showed that if $q$ is prime, then the addition modulo $q$ leads to the same polarization phenomenon as in the binary input case.

Park and Barg \cite{ParkBarg} showed that if $q=2^r$ with $r>0$, then the addition modulo $q$ leads to a polarization phenomenon which is different from the polarization in the binary input case, but it can still be used to construct capacity-achieving polar codes. They showed that we have a multilevel polarization: while we do not always have polarization to ``almost perfect" or ``almost useless" channels, we always have polarization to channels which are easy to use for communication. Sahebi and Pradhan \cite{SahebiPradhan} showed that multilevel polarization also happens if any Abelian group operation on the alphabet $\mathcal{X}$ is used. This allows the construction of polar codes for arbitrary discrete memoryless channels (DMC) since any alphabet can be endowed with an Abelian group structure.

Polar codes for arbitrary DMCs were also constructed by \c{S}a\c{s}o\u{g}lu \cite{SasS} using a special quasigroup operation which ensures two-level polarization. The author and Telatar have shown in \cite{RajTel} that all quasigroup operations are polarizing (in the general multilevel sense) and can be used to construct capacity-achieving polar codes for arbitrary DMCs \cite{RajTelA}.

Quasigroups are the largest class of binary operations that is known to be polarizing. This paper, which is presented in two parts, determines all the polarizing operations by providing a necessary and sufficient condition for a binary operation to be polarizing. This part introduces the mathematical framework that we will use in the second part \cite{RajErgII} to characterize the polarizing operations.

In section II we introduce the notion of uniformity preserving operations. A \emph{uniformity preserving operation} $\ast$ on $\mathcal{X}$ is a binary operation such that the mapping $f_{\ast}:\mathcal{X}^2\rightarrow \mathcal{X}^2$ defined by $f_{\ast}(x,y)=(x\ast y,y)$ is bijective.  It is called uniformity preserving since for any pair of random variables $(X_1,X_2)$ in $\mathcal{X}^2$, $(X_1\ast X_2,X_2)$ is uniform in $\mathcal{X}^2$ if and only if $(X_1,X_2)$ is uniform in $\mathcal{X}^2$. As we will see in \cite{RajErgII}, if $\ast$ is not uniformity preserving, then the Ar{\i}kan style construction that is based on $\ast$ does not conserve the symmetric capacity. Hence being uniformity preserving is a necessary condition to be polarizing. On the other hand, being a quasigroup operation is a sufficient condition \cite{RajTel}. Therefore, a necessary and sufficient condition must be a property that is stronger than uniformity preserving and weaker than quasigroup. A reasonable strategy to search for a necessary and sufficient condition is to relax the quasigroup property while keeping the uniformity preserving property.

The difference between a quasigroup operation and a uniformity preserving operation is that in the case of a quasigroup operation, any element is reachable from any other element by one multiplication on the right. This property does not always hold for a uniformity preserving operation.

One possible relaxation of the quasigroup property is to consider uniformity preserving operations where all the elements are reachable from each other by multiple multiplications on the right. Irreducible and ergodic operations --- which are defined and studied in section III --- satisfy this property. The concepts of irreducible and ergodic operations are very similar to the concepts of irreducible and ergodic Markov chains. The reason why we consider such binary operations is because of their good connectability properties: if the elements of $\mathcal{X}$ are well connected under $\ast$, this will create strong correlations between the inputs of the synthetic channels which should ultimately lead to a polarization phenomenon. 

Although ergodic operations seem to have good connectability properties, this is not enough to ensure polarization as we will see in Part II \cite{RajErgII}. It turns out that we need a stronger notion of ergodicity. But in order to define this stronger notion of ergodicity, we first need to define stable partitions. Section IV introduces balanced, periodic and stable partitions and investigates their properties. Stable partitions are a generalization of the concept of quotient groups. In section V, we introduce and study the notion of the residue of a stable partition and in section VI we define and investigate strongly ergodic operations. We show that an ergodic operation is strongly ergodic if and only if each stable partition is its own residue. Strong ergodicity is a novel concept and does not have a similar concept in the ergodic theory of Markov chains. We will show in Part II \cite{RajErgII} that a binary operation is polarizing if and only if it is uniformity preserving and its right-inverse is strongly ergodic. 

Generated stable partitions are introduced and studied in section VII. This concept is needed to show that the strong ergodicity of the right-inverse operation is a sufficient condition for polarization.

The products of binary operations are defined in section VIII and the structure of their stable partitions is studied. We show that the product of a sequence of binary operations is strongly ergodic if and only if every operation in the sequence is strongly ergodic. The products of binary operations and their stable partitions are important for the study of polarization theory for multiple access channels \cite{RajErgII}.

The main results which will be proven in Part II \cite{RajErgII} are:
\begin{itemize}
\item If $\ast$ is a binary operation on a set $\mathcal{X}$, then $\ast$ is polarizing (in the general multi-level sense) if and only if $\ast$ is uniformity preserving and $/^\ast$ (the right-inverse of $\ast$) is strongly ergodic.
\item The exponent of any polarizing operation is at most $\frac{1}{2}$, which is achieved by quasigroup operations.
\item If $\ast_1,\ldots,\ast_m$ are $m$ binary operations on $\mathcal{X}_1,\ldots,\mathcal{X}_m$ respectively, then $(\ast_1,\ldots,\ast_m)$ is MAC-polarizing if and only if $\ast_1,\ldots,\ast_m$ are polarizing.
\end{itemize}

\section{Uniformity preserving operations}

All the sets that are considered in this paper are finite.

\begin{mydef}
A \emph{uniformity preserving operation} $\ast$ on $\mathcal{X}$ is a binary operation such that the mapping $f_{\ast}:\mathcal{X}^2\rightarrow \mathcal{X}^2$ defined by $f_{\ast}(x,y)=(x\ast y,y)$ is bijective. It is called uniformity preserving since for any pair of random variables $(X_1,X_2)$ in $\mathcal{X}^2$, $(X_1\ast X_2,X_2)$ is uniform in $\mathcal{X}^2$ if and only if $(X_1,X_2)$ is uniform in $\mathcal{X}^2$.
\end{mydef}

\begin{myrem}
It is easy to see that $\ast$ is uniformity preserving if and only if it satisfies the following condition:
\begin{itemize}
\item The multiplication-on-the-right mappings $\pi_b:\mathcal{X}\rightarrow \mathcal{X}$ defined by $\pi_b(x)=x\ast b$ are bijective for all $b\in \mathcal{X}$. We denote $\pi_b^{-1}(a)$ as $a/^{\ast}b$. The binary operation $/^{\ast}$ is called the \emph{right-inverse} of $\ast$.
\end{itemize}
It is easy to see that if $\ast$ is uniformity preserving then $/^{\ast}$ is uniformity preserving as well.
\end{myrem}

\begin{mydef}
A uniformity preserving operation is said to be a quasigroup operation if it also satisfies the following:
\begin{itemize}
\item The multiplication-on-the-left mappings $\eta_b:\mathcal{X}\rightarrow \mathcal{X}$ defined by $\eta_b(x)=b\ast x$ are bijective for all $b\in \mathcal{X}$. We denote $\eta_b^{-1}(a)$ as $b\backslash_\ast a$. The binary operation $\backslash_{\ast}$ is called the \emph{left-inverse} of $\ast$
\end{itemize}
It is easy to see that if $\ast$ is a quasigroup operation then $/^{\ast}$ and $\backslash_{\ast}$ are quasigroup operations as well.

Note that for a general quasigroup operation $\ast$, we may find $a,b\in\mathcal{X}$ such that $\pi_b^{-1}(a)=a/^\ast b\neq b\backslash_\ast a=\eta_b^{-1}(a)$. This is why we use different notations for left and right inverses.
\end{mydef}

\begin{mynot}
Let $A$ and $B$ be two subsets of  $\mathcal{X}$. We define the set:
$$A\ast B:=\{a\ast b:\; a\in A,b\in B\}.$$
For $a,b\in\mathcal{X}$, we denote $\{a\}\ast B$ and $A\ast\{b\}$ by $a\ast B$ and $A\ast b$ respectively.
\end{mynot}

It is easy to see that if $\ast$ is uniformity preserving and $B$ is non-empty, then $|A\ast B|\geq |A|$. On the other hand, the relation $|A\ast B|\geq |B|$ does not hold in general unless $\ast$ is a quasigroup operation and $A$ is non-empty.

\section{Irreducible and ergodic operations}

In this section and throughout the paper, $\ast$ is always a uniformity preserving operation.

\begin{mydef}
Let $\ast$ be a uniformity preserving operation on a set $\mathcal{X}$. We say that $a\in\mathcal{X}$ is $\ast$-connectable to $b\in\mathcal{X}$ in $l$-steps if there exist $l$ elements $x_0,\ldots,x_{l-1}\in \mathcal{X}$ satisfying $(\ldots((a\ast x_0)\ast x_1)\ldots\ast x_{l-1})=b$. We denote this relation by $a\stackrel{\ast,l}{\longrightarrow} b$.

We say that $a$ is $\ast$-connectable to $b$ if there exists $l>0$ such that $a\stackrel{\ast,l}{\longrightarrow} b$. We denote this relation by $a\stackrel{\ast}{\longrightarrow} b$.
\end{mydef}

\begin{mydef}
A uniformity preserving operation $\ast$ is said to be \emph{irreducible} if all the elements of $\mathcal{X}$ are $\ast$-connectable to each other. If $\ast$ is irreducible, we define the period of an element $a\in\mathcal{X}$ as $\per(\ast,a):=\gcd\{l> 0:\;a\stackrel{\ast,l}{\longrightarrow} a\}$, and we define the period of $\ast$ as:
$$\per(\ast):=\gcd\left\{\per(\ast,a):\; a\in\mathcal{X}\right\}=\gcd\left\{l> 0:\;\exists a\in\mathcal{X},a\stackrel{\ast,l}{\longrightarrow} a\right\}.$$

If there exists $l>0$ such that all the elements of $\mathcal{X}$ are $\ast$-connectable to each other in $l$ steps, we say that the operation $\ast$ is \emph{ergodic}. In this case, we call the minimum integer $l>0$ which satisfies this property the \emph{connectability} of the operation $\ast$, and we denote it by $\con(\ast)$.
\end{mydef}

\begin{myrem}
In order to justify our choice of terminology in the previous definition, consider a sequence $(X_n')_{n\geq 0}$ of independent and uniformly distributed random variables in $\mathcal{X}$. Define $(X_n)_{n\geq 0}$ recursively as follows: $X_0=X_0'$ and $X_n=X_{n-1}\ast X_n'$ for $n>0$. It is easy to see that $(X_n)_{n\geq 0}$ is a stationary Markov chain. We have the following:
\begin{itemize}
\item $\ast$ is irreducible if and only if $(X_n)_{n\geq 0}$ is irreducible.
\item $\ast$ is ergodic if and only if $(X_n)_{n\geq 0}$ is ergodic.
\end{itemize}
\end{myrem}

The following proposition shows the important properties of irreducible and ergodic operations. These properties will be used in Part II \cite{RajErgII} to show that every polarizing operation is ergodic.

\begin{myprop}
\label{lemerg}
We have the following:
\begin{enumerate}
\item Every quasigroup operation is ergodic, and every ergodic operation is irreducible.
\item If $\ast$ is uniformity preserving but not irreducible, there exists two disjoint non-empty subsets $A_1$ and $A_2$ of $\mathcal{X}$ such that $A_1\cup A_2=\mathcal{X}$, $A_1\ast \mathcal{X}=A_1$ and $A_2\ast \mathcal{X}=A_2$.
\item If $\ast$ is irreducible, we have $\per(\ast,a)=\per(\ast)$ for all $a\in\mathcal{X}$.
\item If $\ast$ is irreducible, there exists a partition $\mathcal{E}_{\ast}$ of $\mathcal{X}$ containing $n=\per(\ast)$ subsets $H_0$, \ldots, $H_{n-1}$ such that $H_{i}\ast \mathcal{X}=H_{i+1\bmod n}$ for all $0\leq i < n$. Moreover, we have $|H_0|=\ldots=|H_{n-1}|$.
\item If $\ast$ is irreducible, there exists an integer $d>0$ such that for any $0\leq i< n=\per(\ast)$, any element of $H_i$ is $\ast$-connectable to any element of $H_{i+d\bmod n}$ in $d$ steps. We call the least integer $d>0$ satisfying this property the \emph{connectability} of the irreducible operation $\ast$ and we denote it $\con(\ast)$ (This definition is consistent with the definition of the connectability of ergodic operations. I.e., the connectability of an ergodic operation when it is seen as an irreducible operation is the same as its connectability when it is seen as an ergodic operation).
\item If $\ast$ is irreducible, then for every $s\geq\con(\ast)$ and every $0\leq i< n=\per(\ast)$, any element of $H_i$ is $\ast$-connectable to any element of $H_{i+s\bmod n}$ in $s$ steps.
\item If $\ast$ is irreducible, $\per(\ast)=1$ if and only if $\ast$ is ergodic.
\item If $\ast$ is ergodic, all the elements of $\mathcal{X}$ are $\ast$-connectable to each other in $s$-steps for any $s\geq \con(\ast)$.
\item If $\ast$ is ergodic, $\con(\ast)=1$ if and only if $\ast$ is a quasigroup operation.
\item If $\ast$ is irreducible (resp. ergodic), then $/^{\ast}$ is irreducible (resp. ergodic) as well.
\end{enumerate}
\end{myprop}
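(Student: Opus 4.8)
The plan is to establish the ten items roughly in the order listed, since several later items lean on earlier ones, and to exploit throughout the Markov-chain picture described in the preceding remark: the items are, almost line for line, the classical structure theorems for irreducible and ergodic finite Markov chains, transcribed to the language of $\ast$-connectability. For item~1, every quasigroup operation has $a\stackrel{\ast,1}{\longrightarrow}b$ for all $a,b$ (take $x_0=a\backslash_{\ast}b$), so it is ergodic with $\con(\ast)=1$; and ergodic trivially implies irreducible. For item~2, if $\ast$ is not irreducible pick $a$ not $\ast$-connectable to some element, let $A_1=\{b:a\stackrel{\ast}{\longrightarrow}b\}\cup\{a\}$ (or the analogous forward-closed set) and $A_2=\mathcal{X}\setminus A_1$; the key observation is that $A_1\ast\mathcal{X}\subseteq A_1$ by construction, and since $\ast$ is uniformity preserving $|A_1\ast\mathcal{X}|\ge|A_1|$ (using $|A\ast B|\ge|A|$ from the end of section~II), forcing $A_1\ast\mathcal{X}=A_1$, and likewise for $A_2$; one must check $A_1,A_2$ are both non-empty, which is exactly the failure of irreducibility. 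Items~3 and~4 are the standard cyclic-class argument: the relation $a\sim b$ iff $a\stackrel{\ast,l}{\longrightarrow}b$ and $b\stackrel{\ast,m}{\longrightarrow}a$ for some $l\equiv m$ appropriately, or more simply a direct gcd argument on return times, shows $\per(\ast,a)$ is constant, and partitioning $\mathcal{X}$ according to path-length mod $n$ gives the $H_i$; equality $|H_0|=\cdots=|H_{n-1}|$ follows because $\pi_b:x\mapsto x\ast b$ is a bijection of $\mathcal{X}$ that, for fixed $b$, maps $H_i$ into $H_{i+1\bmod n}$, hence onto it by finiteness, so $|H_i|\le|H_{i+1}|$ for all $i$ around the cycle, forcing equality.

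For items~5 and~6, the content is that within the cyclic structure the set of valid "step counts'' from $H_i$ to a given target class is eventually all sufficiently large integers in the correct residue class — a numerical-semigroup / Chicken-McNugget type statement. I would first show that if $a\stackrel{\ast,l}{\longrightarrow}b$ and $a\stackrel{\ast,l'}{\longrightarrow}b$ then $l\equiv l'\pmod n$ (both congruent to the class-difference), and that the set $S_{a,b}=\{l>0:a\stackrel{\ast,l}{\longrightarrow}b\}$ is closed under adding any return time at $b$, hence under adding $n$ once $l$ is large (since $\gcd$ of return times is $n$ and by Schur/Chicken-McNugget a cofinite subset of $n\mathbb{Z}_{>0}$ lies in the return semigroup at $b$); taking a common threshold $d$ over the finitely many pairs $(a,b)$ in classes at offset $d$ gives item~5, and item~6 is then immediate by adding extra full cycles. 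Item~7 is a corollary: $\per(\ast)=1$ means $n=1$, so the single class is $\mathcal{X}$ and item~6 with $s=\con(\ast)$ says every element reaches every element in $\con(\ast)$ steps, i.e.\ $\ast$ is ergodic; conversely ergodicity gives a common $l$ with $a\stackrel{\ast,l}{\longrightarrow}a$ for all $a$ and also $a\stackrel{\ast,l+1}{\longrightarrow}a$ (go one step then around), so $\per(\ast)\mid 1$. Item~8 is item~6 specialized to $n=1$, and item~9 is item~1 in one direction and, in the other, the observation that $\con(\ast)=1$ literally says $a\stackrel{\ast,1}{\longrightarrow}b$ for all $a,b$, which together with uniformity preserving is the quasigroup property (for each $a,b$ there is $c$ with $a\ast c=b$, and uniqueness follows since $\pi_{\,\cdot}$ being bijective makes $|a\ast\mathcal{X}|=|\mathcal{X}|$).

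For item~10, I would translate $\ast$-connectability into $/^{\ast}$-connectability by the identity $x\ast b=a \iff a/^{\ast}b=x$, which lets one reverse a connecting path: from $(\dots(a\ast x_0)\ast\cdots)\ast x_{l-1}=b$ one extracts, going backwards, elements $y_0,\dots,y_{l-1}$ with $(\dots(b/^{\ast}y_0)/^{\ast}\cdots)/^{\ast}y_{l-1}=a$ — essentially the $y_j$ are the intermediate values $x_{l-1-j}$ read off the original path. Hence $a\stackrel{\ast,l}{\longrightarrow}b$ iff $b\stackrel{/^{\ast},l}{\longrightarrow}a$, so irreducibility is preserved, the periods coincide, and (via items~5–6) ergodicity is preserved with $\con(/^{\ast})=\con(\ast)$. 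I expect the main obstacle to be item~5: pinning down the uniform bound $d$ that works simultaneously for every source element, every target element, and the whole cyclic structure requires the numerical-semigroup argument to be run carefully at every element of a single target class and then combined, and one must be attentive that $d$ is a genuine step count (a positive integer achievable as a path length from each $H_i$ to $H_{i+d}$), not merely a residue; the bookkeeping linking $\con$ as "ergodic connectability'' to $\con$ as "irreducible connectability'' in the parenthetical remark of item~5 also needs the $n=1$ case of the semigroup argument to be stated cleanly so the two definitions visibly agree.
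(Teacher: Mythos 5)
Your plan is correct, and for most items (1--4, 7--10) it tracks the paper's proof closely: the same forward-closed set $A_1=\{x:a\stackrel{\ast}{\longrightarrow}x\}$ with the cardinality argument $|A_1\ast\mathcal{X}|\geq|A_1|$ for item 2, the same gcd-of-return-times and path-length-mod-$n$ cyclic classes for items 3--4, and the same path-reversal via $x\ast b=a\iff a/^{\ast}b=x$ for item 10 (which the paper merely declares trivial). Where you genuinely diverge is item 5, which is the technical heart of the proposition. You run the classical Markov-chain argument: the return-time set $S_{b,b}$ is an additive semigroup with gcd $n$, hence (Schur/Chicken--McNugget) contains all sufficiently large multiples of $n$, so each $S_{a,b}$ is cofinite in its residue class, and a uniform threshold over the finitely many pairs gives $d$. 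The paper instead studies the reachable sets $K_{x,j}=\{y:x\stackrel{\ast,j}{\longrightarrow}y\}$, observes that $K_{x,j+1}=K_{x,j}\ast\mathcal{X}$ makes the sequence eventually periodic while $|K_{x,j}|$ is non-decreasing, hence eventually constant, deduces that the eventual period divides $n$, and concludes $K_{x,j}=H_{i_x+j\bmod n}$ for large $j$. Your route is the standard one and is shorter to state but imports the numerical-semigroup fact; the paper's route is fully self-contained and, as a bonus, makes item 6 a one-line induction. Both are sound.

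Two small points to tighten. First, your phrase that item 6 ``is then immediate by adding extra full cycles'' is not quite right as written: adding multiples of $n$ to $d$ only reaches $s\equiv d\bmod n$, whereas item 6 ranges over all $s\geq\con(\ast)$ with the target class $H_{i+s\bmod n}$ moving with $s$. What saves you is that your semigroup argument, if you take the threshold uniformly over \emph{all} pairs $(a,b)$ (not just those at offset $d$), already yields every sufficiently large $s$ in the residue class determined by the pair, which is exactly item 6; you should state it that way. Second, in item 9 the uniqueness of $c$ with $a\ast c=b$ follows from surjectivity of $c\mapsto a\ast c$ (given by $\con(\ast)=1$) plus finiteness of $\mathcal{X}$, not from the bijectivity of the right-multiplication maps $\pi_b$; the conclusion is unaffected.
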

\begin{proof}
See Appendix \ref{appA}.
\end{proof}

\section{Balanced, periodic and stable partitions}

\label{sec4}

\begin{mynot}
Let $\mathcal{H}$ be a set of subsets of a set $\mathcal{X}$, we define the following:
\begin{itemize}
\item $\displaystyle \|\mathcal{H}\|_{\wedge}=\min_{H\in\mathcal{H}}|H|$.
\item $\displaystyle \|\mathcal{H}\|_{\vee}=\max_{H\in\mathcal{H}}|H|$.
\end{itemize}
\end{mynot}

\begin{mydef}
A partition $\mathcal{H}$ of a set $\mathcal{X}$ is said to be a \emph{balanced partition} if all the elements of $\mathcal{H}$ have the same size. We denote the common size of its elements by $\|\mathcal{H}\|$. The number of elements in $\mathcal{H}$ is denoted by $|\mathcal{H}|$. Clearly, $|\mathcal{X}|=|\mathcal{H}|\cdot\|\mathcal{H}\|$ and $\|\mathcal{H}\|=\|\mathcal{H}\|_{\wedge}=\|\mathcal{H}\|_{\vee}$ for such a partition.
\end{mydef}

\begin{mydef}
Let $\mathcal{H}$ be a partition of a set $\mathcal{X}$. We define the \emph{projection onto} $\mathcal{H}$ as the mapping $\proj_{\mathcal{H}}:\mathcal{X}\rightarrow\mathcal{H}$, where $\proj_{\mathcal{H}}(x)$ is the unique element $H\in\mathcal{H}$ such that $x\in H$.
\end{mydef}

\begin{mynot}
Let $\mathcal{A}$ and $\mathcal{B}$ be two sets of subsets of $\mathcal{X}$. We define $\mathcal{A}\ast\mathcal{B}$ as follows:
$$\mathcal{A}\ast\mathcal{B}=\{A\ast B:\; A\in\mathcal{A},B\in\mathcal{B}\}.$$
\end{mynot}

\begin{mydef}
Let $\mathcal{H}$ be a set of subsets of $\mathcal{X}$, and let $\ast$ be a uniformity preserving operation on $\mathcal{X}$. We define the set $\mathcal{H}^{\ast}=\mathcal{H}\ast\mathcal{H}=\{A\ast B:\; A,B\in\mathcal{H}\}$, and we define the sequence $(\mathcal{H}^{n\ast})_{n\geq 0}$ recursively as follows:
\begin{itemize}
\item $\mathcal{H}^{0\ast}=\mathcal{H}$.
\item $\mathcal{H}^{n\ast}:=(\mathcal{H}^{(n-1)\ast})^{\ast}=\mathcal{H}^{(n-1)\ast} \ast\mathcal{H}^{(n-1)\ast}$ for all $n>0$.
\end{itemize}
\end{mydef}

\begin{mydef}
A partition $\mathcal{H}$ of $\mathcal{X}$ is said to be a \emph{periodic partition} of $(\mathcal{X},\ast)$ if there exists $n>0$ such that $\mathcal{H}^{n\ast}=\mathcal{H}$. In this case, the minimum integer $n>0$ which satisfies $\mathcal{H}^{n\ast}=\mathcal{H}$ is called the \emph{period} of $\mathcal{H}$, and it is denoted by $\per(\mathcal{H})$.

A partition $\mathcal{H}$ of $\mathcal{X}$ is said to be a \emph{stable partition} of $(\mathcal{X},\ast)$ if $\mathcal{H}$ is both balanced and periodic.
\end{mydef}

Throughout the paper, we write that $\mathcal{H}$ is a periodic (resp. stable) partition of $\mathcal{X}$ if the binary operation $\ast$ is clear from the context.

\begin{myex}
Let $Q=\mathbb{Z}_{n}\times\mathbb{Z}_{n}$, define $(x_1,y_1)\ast (x_2,y_2)=(x_1+y_1+x_2+y_2,y_1+y_2)$ which is a quasigroup operation. For each $j\in\mathbb{Z}_n$ and each $0\leq i< n$, define $H_{i,j}=\{(j+ik,k):\; k\in\mathbb{Z}_n\}$. Let $\mathcal{H}_i=\{H_{i,j}:\; j\in\mathbb{Z}_n\}$ for $0\leq i< n$. It is easy to see that $\mathcal{H}_i^{\ast}=\mathcal{H}_{i+1}$ for $0\leq i< n-1$ and $\mathcal{H}_{n-1}^{\ast}=\mathcal{H}_0$. Therefore, $\mathcal{H}:=\mathcal{H}_0$ is a periodic partition of $(Q,\ast)$ and $\per(\mathcal{H})=n$. Moreover, $\mathcal{H}$ is balanced with $\|\mathcal{H}\|=n$, hence $\mathcal{H}$ is a stable partition.
\end{myex}

\begin{myprop}
\label{propPerPer}
Let $\mathcal{H}$ be a periodic partition of $(\mathcal{X},\ast)$. For every $n>0$, we have:
\begin{enumerate}
\item $\mathcal{H}^{n\ast}$ is a periodic partition and has the same period as $\mathcal{H}$, i.e., $\per(\mathcal{H}^{n\ast})=\per(\mathcal{H})$.
\item $|\mathcal{H}^{n\ast}|=|\mathcal{H}|$.
\end{enumerate}
\end{myprop}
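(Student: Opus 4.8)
The plan is to regard $\mathcal{G}\mapsto\mathcal{G}^{\ast}$ as a single self-map $T$ of the finite set of all families of subsets of $\mathcal{X}$, so that $\mathcal{H}^{n\ast}=T^{n}(\mathcal{H})$ and $T^{a}\circ T^{b}=T^{a+b}=T^{b}\circ T^{a}$. Put $p=\per(\mathcal{H})$, so $T^{p}(\mathcal{H})=\mathcal{H}$ and $p$ is minimal with this property. The first step is to record that the forward orbit of $\mathcal{H}$ is a cycle of length exactly $p$: from $T^{p}(\mathcal{H})=\mathcal{H}$ one gets $T^{kp}(\mathcal{H})=\mathcal{H}$ for every $k\geq 0$, hence $T^{n}(\mathcal{H})=T^{\,n\bmod p}(\mathcal{H})$ for all $n$, so the orbit only takes the $p$ values $\mathcal{H}^{0\ast},\dots,\mathcal{H}^{(p-1)\ast}$; and these are pairwise distinct, because if $T^{i}(\mathcal{H})=T^{j}(\mathcal{H})$ with $0\leq i<j<p$ then applying $T^{p-j}$ yields $T^{\,p-(j-i)}(\mathcal{H})=T^{p}(\mathcal{H})=\mathcal{H}$ with $0<p-(j-i)<p$, contradicting the minimality of $p$.

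Next fix $n>0$ and set $r=n\bmod p$, so $\mathcal{H}^{n\ast}=T^{r}(\mathcal{H})$ lies on this cycle; in particular $(\mathcal{H}^{n\ast})^{p\ast}=T^{p}\big(T^{n}(\mathcal{H})\big)=T^{n}\big(T^{p}(\mathcal{H})\big)=\mathcal{H}^{n\ast}$. To see that $\mathcal{H}^{n\ast}$ is genuinely a \emph{partition}, I would first check that $(\cdot)^{\ast}$ preserves the property of covering $\mathcal{X}$: for non-empty $\mathcal{G}$ and any $B\in\mathcal{G}$ one has $\bigcup_{A\in\mathcal{G}}(A\ast B)=\big(\bigcup_{A\in\mathcal{G}}A\big)\ast B$, which equals $\mathcal{X}\ast B=\mathcal{X}$ when $\mathcal{G}$ covers $\mathcal{X}$, since each $\pi_{b}$ is a bijection; iterating, every $\mathcal{H}^{j\ast}$ covers $\mathcal{X}$. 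It then remains to show the blocks of $\mathcal{H}^{n\ast}$ are pairwise disjoint. The idea is that an overlap is ``forward invariant'': if two blocks $A\neq A'$ of $\mathcal{H}^{j\ast}$ share a point $x$, then for any block $D$ and any $d\in D$ the blocks $A\ast D$ and $A'\ast D$ of $\mathcal{H}^{(j+1)\ast}$ both contain $x\ast d$, so one must rule out (using $|A\ast D|\ge|A|$ and the bijectivity of the $\pi_{d}$'s, and the fact that finitely many further applications of $(\cdot)^{\ast}$ send $\mathcal{H}^{n\ast}$ back to the partition $\mathcal{H}$) that $A\ast D=A'\ast D$ for every $D$. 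I expect this disjointness step --- confirming that every iterate remains a bona fide partition --- to be the only real obstacle; the rest is formal.

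Finally, the period. By the first paragraph $T^{p}$ fixes $\mathcal{H}^{n\ast}$, so $\per(\mathcal{H}^{n\ast})$ divides $p$. Conversely, writing $q=\per(\mathcal{H}^{n\ast})$ we have $T^{n+q}(\mathcal{H})=T^{q}\big(T^{n}(\mathcal{H})\big)=T^{n}(\mathcal{H})$, hence $T^{\,(n+q)\bmod p}(\mathcal{H})=T^{\,n\bmod p}(\mathcal{H})$; since $s\mapsto T^{s}(\mathcal{H})$ is injective on $\{0,\dots,p-1\}$ by the distinctness established above, this forces $q\equiv 0\pmod p$, i.e.\ $p\mid q$. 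Therefore $q=p$, which is exactly $\per(\mathcal{H}^{n\ast})=\per(\mathcal{H})$.
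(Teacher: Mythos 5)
Your reduction of the statement to three sub-claims --- every iterate covers $\mathcal{X}$, every iterate has pairwise disjoint blocks, and the period is preserved --- is the right skeleton, and the first and third parts of your argument are correct (the covering computation and the orbit-cycle/injectivity argument for the period are essentially what the paper does). But the disjointness step, which you yourself flag as ``the only real obstacle,'' is precisely the content of the proposition, and the tools you propose do not close it. Your forward-invariance observation only shows that two distinct overlapping blocks $A\neq A'$ of $\mathcal{H}^{j\ast}$ either remain distinct and overlapping at the next step or \emph{merge} (i.e.\ $A\ast D=A'\ast D$); since $\mathcal{H}^{p\ast}=\mathcal{H}$ is a partition, a merge must eventually occur, so the contradiction has to come from the merge itself. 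The inequality $|A\ast D|\geq|A|$ and the bijectivity of the maps $\pi_d$ do not forbid a merge: nothing prevents $A=\{1,2\}$ and $A'=\{2,3\}$ from being carried onto the same larger set, and block sizes genuinely can grow along the orbit of a (non-balanced) periodic partition. Nor can you count the blocks of $\mathcal{H}^{j\ast}$ directly, because a priori $\mathcal{H}^{j\ast}=\{A\ast B:\;A,B\in\mathcal{H}^{(j-1)\ast}\}$ could have far more than $|\mathcal{H}|$ elements; the fact that $|\mathcal{H}^{j\ast}|=|\mathcal{H}|$ is itself a consequence of this proposition in the paper.

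The missing idea is a cardinality-controlled subfamily. The paper fixes one sequence $H_0\in\mathcal{H}$, $H_1\in\mathcal{H}^{\ast}$, \ldots, $H_{k-1}\in\mathcal{H}^{(k-1)\ast}$ and works with $\mathcal{H}_{H_0,\ldots,H_{k-1}}=\big\{(\ldots((H\ast H_0)\ast H_1)\ldots\ast H_{k-1}):\;H\in\mathcal{H}\big\}$, which has at most $|\mathcal{H}|$ elements (it is the image of $\mathcal{H}$ under a single map) and still covers $\mathcal{X}$. If $X_1\neq X_1'$ overlap in $\mathcal{H}^{n\ast}$ with $X_1'\setminus X_1\neq\emptyset$, one takes $X_1$ inside such a subfamily; covering then produces a second member $X_2\neq X_1$ of the \emph{same} subfamily meeting $X_1'\setminus X_1$, and pushing $X_1$, $X_2$, $X_1'$ forward to a multiple $p$ of $\per(\mathcal{H})$ exceeding $n$ forces all three images to coincide in the partition $\mathcal{H}$. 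Hence the surjection from the $n$-step subfamily onto $\mathcal{H}_{H_0,\ldots,H_{p-1}}=\mathcal{H}$ is not injective, giving $|\mathcal{H}|<|\mathcal{H}_{H_0,\ldots,H_{n-1}}|\leq|\mathcal{H}|$, a contradiction. Without this (or an equivalent) counting device, your disjointness step remains a genuine gap.
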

\begin{proof}
see Appendix \ref{appB}.
\end{proof}

\begin{mylem}
\label{lemsize}
$\|\mathcal{H}^{\ast}\|_{\wedge}\geq\|\mathcal{H}\|_{\wedge}$ and $\|\mathcal{H}^{\ast}\|_{\vee}\geq\|\mathcal{H}\|_{\vee}$.
\end{mylem}
\begin{proof}
Let $A\in\mathcal{H}$ be such that $A=\|\mathcal{H}\|_{\vee}$, then $A\ast A\in\mathcal{H}^{\ast}$. Thus,
$\|\mathcal{H}^{\ast}\|_{\vee}\geq |A\ast A|\geq |A|=\|\mathcal{H}\|_{\vee}$.

Now let $B$ and $C$ be two elements of $\mathcal{H}$ such that $|B\ast C|=\|\mathcal{H}^{\ast}\|_{\wedge}$. We have 
$|B\ast C|\geq |B|\geq \|\mathcal{H}\|_{\wedge}$. This implies that $\|\mathcal{H}^{\ast}\|_{\wedge}\geq\|\mathcal{H}\|_{\wedge}$.
\end{proof}

\begin{myprop}
Let $\mathcal{H}$ be a stable partition of $(\mathcal{X},\ast)$. For every $n>0$, $\mathcal{H}^{n\ast}$ is a stable partition satisfying $\per(\mathcal{H}^{n\ast})=\per(\mathcal{H})$ and $\|\mathcal{H}^{n\ast}\|=\|\mathcal{H}\|$.
\end{myprop}
\begin{proof}
Proposition \ref{propPerPer} shows that $\mathcal{H}^{n\ast}$ is a periodic partition of period $\per(\mathcal{H}^{n\ast})=\per(\mathcal{H})$. It remains to show that $\mathcal{H}^{n\ast}$ is balanced and that $\|\mathcal{H}^{n\ast}\|=\|\mathcal{H}\|$. Let $p>0$ be the smallest multiple of $\per(\mathcal{H})$ which is greater than $n$, i.e., $p=\min\{k\cdot \per(\mathcal{H}):\;k> 0,\;k\cdot \per(\mathcal{H})>n\}$. We have $\mathcal{H}^{p\ast}=\mathcal{H}$ since $\per(\mathcal{H})$ divides $p$. By Lemma \ref{lemsize} we have:
\begin{itemize}
\item $\|\mathcal{H}\|=\|\mathcal{H}\|_{\wedge}\leq\|\mathcal{H}^{\ast}\|_{\wedge}\leq\ldots\leq \|\mathcal{H}^{n\ast}\|_{\wedge} \leq\ldots\leq \|\mathcal{H}^{p\ast}\|_{\wedge}=\|\mathcal{H}\|_{\wedge}=\|\mathcal{H}\|$.
\item $\|\mathcal{H}\|=\|\mathcal{H}\|_{\vee}\leq\|\mathcal{H}^{\ast}\|_{\vee}\leq\ldots\leq \|\mathcal{H}^{n\ast}\|_{\vee} \leq\ldots\leq \|\mathcal{H}^{p\ast}\|_{\vee}=\|\mathcal{H}\|_{\vee}=\|\mathcal{H}\|$.
\end{itemize}

Therefore, $\|\mathcal{H}^{n\ast}\|_{\wedge}=\|\mathcal{H}^{n\ast}\|_{\vee}=\|\mathcal{H}\|$, which means that for every $A\in\mathcal{H}^{n\ast}$ we have $|A|=\|\mathcal{H}\|$. We conclude that $\mathcal{H}^{n\ast}$ is balanced and $\|\mathcal{H}^{n\ast}\|=\|\mathcal{H}\|$.
\end{proof}

\begin{mylem}
\label{stasta}
If $\ast$ is ergodic then every periodic partition is stable.
\end{mylem}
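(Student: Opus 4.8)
The statement asserts that an ergodic operation has no ``unbalanced'' periodic partitions; since a stable partition is by definition a periodic partition that is also balanced, it suffices to show that every periodic partition $\mathcal{H}$ of $(\mathcal{X},\ast)$ is balanced when $\ast$ is ergodic. The plan is to prove that any two blocks $A,B\in\mathcal{H}$ have the same size, by producing a cardinality-preserving bijection of $\mathcal{H}$ onto itself that carries $A$ to $B$.

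First I would fix an integer $d>0$ that is a multiple of $\per(\mathcal{H})$ and satisfies $d\geq\con(\ast)$ (for instance $d=\per(\mathcal{H})\cdot\con(\ast)$). Periodicity then gives $\mathcal{H}^{d\ast}=\mathcal{H}$, and Proposition~\ref{lemerg}(8) guarantees that every element of $\mathcal{X}$ is $\ast$-connectable to every element of $\mathcal{X}$ in exactly $d$ steps. For any choice of blocks $H_0\in\mathcal{H},\,H_1\in\mathcal{H}^{\ast},\,\ldots,\,H_{d-1}\in\mathcal{H}^{(d-1)\ast}$, define $\phi\colon\mathcal{H}\rightarrow\mathcal{H}^{d\ast}=\mathcal{H}$ by $\phi(H)=(\ldots((H\ast H_0)\ast H_1)\ldots\ast H_{d-1})$. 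Exactly as in the proof of Proposition~\ref{propPerPer}, the image of $\phi$ is the set $\mathcal{H}_{H_0,\ldots,H_{d-1}}$, which covers $\mathcal{X}$ and is contained in the partition $\mathcal{H}^{d\ast}=\mathcal{H}$; hence $\phi$ is onto $\mathcal{H}$, and therefore a bijection of the finite set $\mathcal{H}$. Since $\ast$ is uniformity preserving, right multiplication by a non-empty set never decreases cardinality, so $|\phi(H)|\geq|H|$ for every $H\in\mathcal{H}$; summing over $H\in\mathcal{H}$ and using that $\phi$ is a bijection of $\mathcal{H}$, both sides of the inequality sum to $|\mathcal{X}|$, which forces $|\phi(H)|=|H|$ for all $H\in\mathcal{H}$. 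Thus every such $\phi$ is a size-preserving permutation of the blocks of $\mathcal{H}$.

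It remains to choose the $H_j$ so that $\phi(A)=B$; this is where ergodicity genuinely enters. Pick $a\in A$ and $b\in B$. By the choice of $d$ and Proposition~\ref{lemerg}(8) there exist $x_0,\ldots,x_{d-1}\in\mathcal{X}$ with $(\ldots((a\ast x_0)\ast x_1)\ldots\ast x_{d-1})=b$. Set $H_j=\proj_{\mathcal{H}^{j\ast}}(x_j)$ for $0\leq j<d$. I would then prove by induction on $j$ that the block of $\mathcal{H}^{j\ast}$ containing the partial product $(\ldots((a\ast x_0)\ast x_1)\ldots\ast x_{j-1})$ is exactly $(\ldots((A\ast H_0)\ast H_1)\ldots\ast H_{j-1})$: the case $j=0$ is just $\proj_{\mathcal{H}}(a)=A$, and in the inductive step, if the $j$-th partial product lies in the block $P\in\mathcal{H}^{j\ast}$ while $x_j\in H_j\in\mathcal{H}^{j\ast}$, then the $(j{+}1)$-st partial product lies in $P\ast H_j$, which is a block of $\mathcal{H}^{j\ast}\ast\mathcal{H}^{j\ast}=\mathcal{H}^{(j+1)\ast}$, a partition by Proposition~\ref{propPerPer}. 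Taking $j=d$ this reads $B=\proj_{\mathcal{H}}(b)=(\ldots((A\ast H_0)\ast H_1)\ldots\ast H_{d-1})=\phi(A)$, so $|A|=|\phi(A)|=|B|$.

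Since $A$ and $B$ were arbitrary blocks of $\mathcal{H}$, all blocks have the same cardinality, so $\mathcal{H}$ is balanced; being periodic by hypothesis, it is stable. The step I expect to need the most care is the inductive identification of the projection of the partial trajectory with the iterated block product: one must be precise about the fact that the ambient partition changes from stage to stage --- it is $\mathcal{H}^{j\ast}$, not $\mathcal{H}$, at stage $j$ --- and that the correct element to feed into the block product at stage $j$ is the block of $\mathcal{H}^{j\ast}$ (not the block of $\mathcal{H}$) that contains $x_j$. Everything else is bookkeeping with the structure theory of periodic partitions already developed in Proposition~\ref{propPerPer}.
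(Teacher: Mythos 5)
Your proof is correct and follows essentially the same route as the paper's: use ergodicity to connect a point of one block to a point of another in a number of steps divisible by $\per(\mathcal{H})$, lift the connecting trajectory to a sequence of blocks, and use the fact that $\mathcal{H}^{d\ast}=\mathcal{H}$ is a partition to identify the resulting block product with the target block. The only cosmetic difference is at the end: the paper picks a block of maximal size and concludes directly from $|\phi(H)|\geq|H|$, whereas you first establish that $\phi$ is a size-preserving bijection of $\mathcal{H}$ via the summation argument --- both are valid.
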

\begin{proof}
Let $\mathcal{H}$ be a periodic partition of $\mathcal{X}$. We only need to show that $\mathcal{H}$ is balanced.

Let $n=\per(\mathcal{H})$ and $m=\min\{kn: k>0\;\text{and}\;kn>\con(\ast)\}$. Clearly, $\mathcal{H}^{m\ast}=\mathcal{H}$. Moreover, statement 8 of Proposition \ref{lemerg} shows that all the elements of $\mathcal{X}$ are $\ast$-connectable to each other in $m$ steps. Let $H\in\mathcal{H}$ be chosen such that $|H|$ is maximal and let $H'$ be any element of $\mathcal{H}$. Let $h\in H$ and $h'\in H'$. We have $h\stackrel{\ast,m}{\longrightarrow} h'$ so there exist $m$ elements $x_0,\ldots,x_{m-1}\in \mathcal{X}$ satisfying $(\ldots((h\ast x_0)\ast x_1)\ldots\ast x_{m-1})=h'$.

Since $\mathcal{H}$ covers $\mathcal{X}$, then each of $\mathcal{H}^{\ast}$, $\mathcal{H}^{2\ast}$, \ldots, and $\mathcal{H}^{(m-1)\ast}$ covers $\mathcal{X}$ as well. And so there exist $X_0\in\mathcal{H}$, $X_1\in\mathcal{H}^{\ast}$, \ldots, and $X_{m-1}\in\mathcal{H}^{(m-1)\ast}$ such that $x_0\in X_0$, $x_1\in X_1$, \ldots, and $x_{m-1}\in X_{m-1}$. Now since $(\ldots((h\ast x_0)\ast x_1)\ldots\ast x_{m-1})=h'$ and since $h\in H$, we have $h'\in H'' := (\ldots((H\ast X_0)\ast X_1)\ldots\ast X_{m-1})$. From the definition of $H''$, we have $H''\in \mathcal{H}^{m\ast}=\mathcal{H}$. Moreover, $h'\in H'\cap H''$, so $H'=H''$ since $\mathcal{H}$ is a partition. We conclude that $H' = (\ldots((H\ast X_0)\ast X_1)\ldots\ast X_{m-1})$ which implies that $|H'|\geq |H|$. On the other hand, we have $|H|\geq |H'|$ since $H$ was chosen so that $|H|$ is maximal. We conclude that $|H'|=|H|$ for all $H'\in\mathcal{H}$, hence $\mathcal{H}$ is balanced.
\end{proof}

\begin{myrem}
The ergodicity condition in the previous Lemma cannot be replaced by irreducibility. Consider the following irreducible (but not ergodic) operation:

\begin{center}
  \begin{tabular}{ | c || c | c | c | c | }
    \hline
    $\ast$ & 0 & 1 & 2 & 3 \\ \hline \hline
    0 & 2 & 3 & 2 & 2 \\ \hline
    1 & 3 & 2 & 3 & 3 \\ \hline
    2 & 0 & 0 & 0 & 1 \\ \hline
    3 & 1 & 1 & 1 & 0 \\ \hline
  \end{tabular}
\end{center}

Although the partition $\mathcal{H}=\{\{0,1\},\{2\},\{3\}\}$ is not balanced, we have $\mathcal{H}^{2\ast}=\mathcal{H}$.
\end{myrem}

The following proposition shows that the concept of periodic partitions generalizes the concept of quotient groups:

\begin{myprop}
Let $(G,\ast)$ be a finite group, and let $\mathcal{H}$ be a periodic partition of $(G,\ast)$. There exists a normal subgroup $H$ of $G$ such that $\mathcal{H}$ is the quotient group of $G$ by $H$ (denoted by $G/H$).
\end{myprop}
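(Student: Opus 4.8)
The plan is to single out the block $H\in\mathcal{H}$ containing the identity $e$ of $G$, show that $H$ is a normal subgroup, and show that $\mathcal{H}$ is exactly the partition of $G$ into cosets of $H$. Since a group operation is a quasigroup operation, Proposition~\ref{lemerg} gives that $\ast$ is ergodic, so Lemma~\ref{stasta} shows that the periodic partition $\mathcal{H}$ is in fact stable; write $||\mathcal{H}||$ for the common size of its blocks. I would also record at the outset that $\mathcal{H}^{\ast}$ is balanced with $||\mathcal{H}^{\ast}||=||\mathcal{H}||$: by Proposition~\ref{propPerPer} it is a partition of $G$, by Lemma~\ref{lemarb} it equals $\{H_1\ast H_2:\;H_1\in\mathcal{H}\}$ for a fixed $H_2\in\mathcal{H}$ and by Proposition~\ref{propPerSizePer} it has $|\mathcal{H}|$ blocks, so since $|H_1\ast H_2|\geq|H_1|=||\mathcal{H}||$ for every $H_1$ and the blocks partition $G$, all blocks of $\mathcal{H}^{\ast}$ must have size exactly $||\mathcal{H}||$.

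First I would prove $H\ast H=H$. As $e\in H$ we have $H=\{e\}\ast H\subseteq H\ast H$, while $H\ast H\in\mathcal{H}^{\ast}$ is a block of size $||\mathcal{H}||=|H|$; hence $H\ast H=H$. Since $G$ is finite, a non-empty subset closed under $\ast$ is a subgroup (every element has finite order, so its inverse is one of its powers and lies in the subset), so $H$ is a subgroup of $G$.

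Next I would identify the blocks of $\mathcal{H}$ with cosets of $H$ and deduce normality. Fix $H_1\in\mathcal{H}$ and $g\in H_1$. Then $g\ast H\subseteq H_1\ast H$, and since $|g\ast H|=|H|=||\mathcal{H}^{\ast}||=|H_1\ast H|$, we get $g\ast H=H_1\ast H\in\mathcal{H}^{\ast}$; as $g\in g\ast H$, this is the unique block of the partition $\mathcal{H}^{\ast}$ containing $g$. By the symmetric argument, $H\ast g=H\ast H_1\in\mathcal{H}^{\ast}$ is also a block of $\mathcal{H}^{\ast}$ containing $g$, hence $g\ast H=H\ast g$. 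Since $g$ is arbitrary, $H$ is normal in $G$ and $\mathcal{H}^{\ast}=\{g\ast H:\;g\in G\}=G/H$.

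Finally I would bootstrap from $\mathcal{H}^{\ast}=G/H$ to $\mathcal{H}=G/H$. Since $\mathcal{H}^{\ast}$ is the coset space of the normal subgroup $H$, the product of two of its blocks is again a coset, so $(\mathcal{H}^{\ast})^{\ast}=\mathcal{H}^{\ast}$, and by induction $\mathcal{H}^{n\ast}=\mathcal{H}^{\ast}$ for every $n\geq1$. Periodicity of $\mathcal{H}$ then forces $\mathcal{H}=\mathcal{H}^{\per(\mathcal{H})\ast}=\mathcal{H}^{\ast}=G/H$, and the operation induced by $\ast$ on the blocks is coset multiplication, so $\mathcal{H}$ is the quotient group $G/H$. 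I do not expect a genuine obstacle; the one delicate point is that $\mathcal{H}$ need not a priori equal $\mathcal{H}^{\ast}$, which forces the argument to pass through $\mathcal{H}^{\ast}$ first, and the whole argument relies on keeping every relevant block size pinned to $||\mathcal{H}||$ --- which is exactly where ergodicity, via Lemma~\ref{stasta}, is indispensable.
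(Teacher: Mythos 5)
Your proof is correct and follows essentially the same route as the paper: isolate the block $H$ containing $e$, use ergodicity of the group operation and Lemma~\ref{stasta} to pin every relevant block size to $||\mathcal{H}||$, and conclude that $H$ is a normal subgroup whose cosets are exactly the blocks. The only difference is one of economy: the paper notes that $H'=e\ast H'\subset H\ast H'$ together with $|H\ast H'|=||\mathcal{H}||=|H'|$ forces $H\ast H'=H'$ for every $H'\in\mathcal{H}$, which identifies the blocks of $\mathcal{H}$ itself as cosets in one stroke, whereas you first establish $\mathcal{H}^{\ast}=G/H$ and then bootstrap back to $\mathcal{H}=\mathcal{H}^{\ast}$ via periodicity --- a sound but avoidable detour.
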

\begin{proof}
Since every group operation is ergodic, Lemma \ref{stasta} implies that $\mathcal{H}$ is stable, i.e., it is also balanced.

Let $H$ be the element of $\mathcal{H}$ containing the neutral element $e$ of $G$. For every $H'\in\mathcal{H}$, we have $|H'|=|H\ast H'|=|H'\ast H|=\|\mathcal{H}\|$ since $H\ast H'\in\mathcal{H}^{\ast}$, $H'\ast H\in\mathcal{H}^{\ast}$ and $\|\mathcal{H}^\ast\|=\|\mathcal{H}\|$. On the other hand, we have $H'=e\ast H'\subset H\ast H'$ and $H'=H'\ast e \subset H'\ast H$. We conclude that $H\ast H' = H'\ast H = H'$. Therefore,
\begin{itemize}
\item $H\ast H=H$, hence $x\ast y\in H$ for every $x,y\in H$.
\item For every $x\in H$, we have $|H\ast x|=|H|$. On the other hand, $H\ast x\subset H\ast H=H$. Therefore, $H\ast x=H$ which implies that $e\in H\ast x$ and so there exists $x'\in H$ such that $x'\ast x=e$. We conclude that the inverse of any element of $H$ is also in $H$.
\item For every $x\in G$ let $H_x\in\mathcal{H}$ be such that $x\in H_x$. We have $x\ast H \subset H_x\ast H=H_x$ and $|x\ast H|\stackrel{(a)}{=}|H|=|H_x|$, where (a) follows from the fact that $\ast$ is a group operation. Therefore, $x\ast H=H_x$. Similarly, we can show that $H\ast x=H_x$. Hence $x\ast H=H\ast x=H_x$ for every $x\in G$.
\end{itemize}
We conclude that $H$ is a normal subgroup of $G$, and $\mathcal{H}$ is the quotient group of $G$ by $H$.
\end{proof}

\begin{mydef}
A periodic partition $\mathcal{H}_1$ is said to be a sub-periodic partition of another periodic partition $\mathcal{H}_2$ if for any $H_1\in\mathcal{H}_1$, there exists $H_2\in\mathcal{H}_2$ such that $H_1\subset H_2$. We denote this relation by $\mathcal{H}_1 \preceq\mathcal{H}_2$, and we say that $\mathcal{H}_1$ is finer than $\mathcal{H}_2$.

If $\mathcal{H}_1$ and $\mathcal{H}_2$ are two stable partitions satisfying $\mathcal{H}_1 \preceq\mathcal{H}_2$, we say that $\mathcal{H}_1$ is a sub-stable partition of $\mathcal{H}_2$ (in such case, we clearly have $\|\mathcal{H}_1\|$ divides $\|\mathcal{H}_2\|$).
\end{mydef}

\begin{myrem}
Let $(G,\ast)$ be a group and let $\mathcal{H}_1$ be a sub-periodic partition of a periodic partition $\mathcal{H}_2$. If $H_{\mathcal{H}_1}$ and $H_{\mathcal{H}_2}$ are the normal subgroups associated with $\mathcal{H}_1$ and $\mathcal{H}_2$ respectively, then $H_{\mathcal{H}_1}$ is a normal subgroup of $H_{\mathcal{H}_2}$.
\end{myrem}

\begin{mydef}
\label{defWedge}
For any two partitions $\mathcal{H}_1$ and $\mathcal{H}_2$ of a set $\mathcal{X}$, we define:
$$\mathcal{H}_1 \wedge \mathcal{H}_2=\{H_1\cap H_2:\; H_1\in\mathcal{H}_1,\; H_2\in\mathcal{H}_2,\; H_1\cap H_2\neq \o\}.$$
\end{mydef}

\begin{myprop}
\label{lemWedge}
If $\mathcal{H}_1$ and $\mathcal{H}_2$ are periodic partitions then $\mathcal{H}_1\wedge\mathcal{H}_2$ is a periodic partition of period at most $\lcm\{\per(\mathcal{H}_1),\per(\mathcal{H}_2)\}$. Moreover, we have $(\mathcal{H}_1\wedge\mathcal{H}_2)^{n\ast}=\mathcal{H}_1^{n\ast}\wedge \mathcal{H}_2^{n\ast}$ for every $n\geq 0$.
\end{myprop}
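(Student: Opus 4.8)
The plan is to prove the identity $(\mathcal{H}_1\wedge\mathcal{H}_2)^{n\ast}=\mathcal{H}_1^{n\ast}\wedge\mathcal{H}_2^{n\ast}$ for all $n\geq 0$ first, and then deduce periodicity of $\mathcal{H}_1\wedge\mathcal{H}_2$ as an immediate corollary. Since the identity is stated for all $n\geq 0$, the natural approach is induction on $n$, with the case $n=0$ being the definition of $\mathcal{H}_1\wedge\mathcal{H}_2$. For the inductive step it suffices, by the definition of $\mathcal{H}^{n\ast}$, to prove the single-step identity $(\mathcal{K}_1\wedge\mathcal{K}_2)^{\ast}=\mathcal{K}_1^{\ast}\wedge\mathcal{K}_2^{\ast}$ whenever $\mathcal{K}_1,\mathcal{K}_2$ are periodic partitions (applying it with $\mathcal{K}_i=\mathcal{H}_i^{(n-1)\ast}$, which are periodic by Proposition \ref{propPerPer}; note $\mathcal{K}_1\wedge\mathcal{K}_2=(\mathcal{H}_1\wedge\mathcal{H}_2)^{(n-1)\ast}$ by the induction hypothesis, and this is periodic, hence $(\mathcal{K}_1\wedge\mathcal{K}_2)^\ast$ makes sense as a partition by Proposition \ref{propPerPer}).

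So the heart of the matter is the single-step claim. For the inclusion $(\mathcal{K}_1\wedge\mathcal{K}_2)^{\ast}\subseteq\mathcal{K}_1^{\ast}\wedge\mathcal{K}_2^{\ast}$: a typical element of the left side has the form $(H_1\cap H_2)\ast(H_1'\cap H_2')$ with $H_i,H_i'\in\mathcal{K}_i$ and the intersections nonempty. Clearly $(H_1\cap H_2)\ast(H_1'\cap H_2')\subseteq (H_1\ast H_1')\cap(H_2\ast H_2')$, and the right-hand set is a (nonempty) intersection of a member of $\mathcal{K}_1^\ast$ with a member of $\mathcal{K}_2^\ast$, hence a member of $\mathcal{K}_1^\ast\wedge\mathcal{K}_2^\ast$. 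Since both $(\mathcal{K}_1\wedge\mathcal{K}_2)^\ast$ (by Proposition \ref{propPerPer}, using that $\mathcal{K}_1\wedge\mathcal{K}_2$ is periodic — this is where I would first establish that $\mathcal{K}_1\wedge\mathcal{K}_2$ is periodic of period dividing $\lcm$, see below) and $\mathcal{K}_1^\ast\wedge\mathcal{K}_2^\ast$ are partitions of $\mathcal{X}$, a containment of one member inside another forces equality of members; thus every member of the left side equals the member of the right side containing it, and the two partitions coincide. The reverse inclusion then also follows: every member of $\mathcal{K}_1^\ast\wedge\mathcal{K}_2^\ast$ meets some member of $(\mathcal{K}_1\wedge\mathcal{K}_2)^\ast$, which we have just shown is contained in it, and both being partitions forces equality.

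There is a chicken-and-egg issue to resolve: to invoke Proposition \ref{propPerPer} for $(\mathcal{K}_1\wedge\mathcal{K}_2)^\ast$ being a partition, I need $\mathcal{K}_1\wedge\mathcal{K}_2$ to be periodic in the first place, which is part of what I am proving. I would handle this as follows. Let $m=\lcm\{\per(\mathcal{H}_1),\per(\mathcal{H}_2)\}$. Using $\mathcal{H}_i^{m\ast}=\mathcal{H}_i$, if I can establish the iterated identity $(\mathcal{H}_1\wedge\mathcal{H}_2)^{m\ast}=\mathcal{H}_1^{m\ast}\wedge\mathcal{H}_2^{m\ast}=\mathcal{H}_1\wedge\mathcal{H}_2$ directly, that shows $\mathcal{H}_1\wedge\mathcal{H}_2$ is periodic of period dividing $m$. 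To get there without circularity I would prove the single-step inclusion $(\mathcal{K}_1\wedge\mathcal{K}_2)^\ast\subseteq\mathcal{K}_1^\ast\wedge\mathcal{K}_2^\ast$ (valid for arbitrary partitions, as the computation above only used $A\ast B\supseteq$-type facts and that $\mathcal{K}_1^\ast,\mathcal{K}_2^\ast$ are partitions — actually one must be careful, so the cleanest route is: $(\mathcal{K}_1\wedge\mathcal{K}_2)^\ast$ is a priori just a \emph{covering} of $\mathcal{X}$ by the covering argument in Proposition \ref{propPerPer}, and it is contained in the partition $\mathcal{K}_1^\ast\wedge\mathcal{K}_2^\ast$; a covering of $\mathcal{X}$ contained in a partition must equal that partition). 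Iterating the inclusion $m$ times gives $(\mathcal{H}_1\wedge\mathcal{H}_2)^{m\ast}\subseteq\mathcal{H}_1^{m\ast}\wedge\mathcal{H}_2^{m\ast}=\mathcal{H}_1\wedge\mathcal{H}_2$, but the left side also covers $\mathcal{X}$ and the right side is a partition, so the inclusion is an equality and $\mathcal{H}_1\wedge\mathcal{H}_2$ is periodic of period dividing $m$. With periodicity in hand, Proposition \ref{propPerPer} applies at every intermediate level and the general identity $(\mathcal{H}_1\wedge\mathcal{H}_2)^{n\ast}=\mathcal{H}_1^{n\ast}\wedge\mathcal{H}_2^{n\ast}$ follows by the induction above.

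The main obstacle, as just indicated, is purely bookkeeping: separating out which facts hold for arbitrary partitions (the one-sided inclusion and the covering property) versus which require periodicity (being a genuine partition, via Proposition \ref{propPerPer}), and sequencing the argument so periodicity of $\mathcal{H}_1\wedge\mathcal{H}_2$ is established before it is used. Once that ordering is fixed, each individual step is a short set-theoretic computation; no new idea beyond the observation $(A\cap B)\ast(C\cap D)\subseteq(A\ast C)\cap(B\ast D)$ and the "a covering contained in a partition equals it" principle is needed.
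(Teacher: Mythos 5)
There is a genuine gap at the central step. Your computation shows only that each member $(H_1\cap H_2)\ast(H_1'\cap H_2')$ of $(\mathcal{K}_1\wedge\mathcal{K}_2)^{\ast}$ is a \emph{subset} of the member $(H_1\ast H_1')\cap(H_2\ast H_2')$ of $\mathcal{K}_1^{\ast}\wedge\mathcal{K}_2^{\ast}$. That is refinement ($\preceq$), not the collection containment $(\mathcal{K}_1\wedge\mathcal{K}_2)^{\ast}\subseteq\mathcal{K}_1^{\ast}\wedge\mathcal{K}_2^{\ast}$ that both of your closing arguments require. The principle ``a cover of $\mathcal{X}$ contained in a partition equals that partition'' is correct, but its hypothesis is that every member of the cover \emph{is} a member of the partition, which you have not shown; and ``both are partitions, so a member contained in a member forces equality'' is false in general (one partition can strictly refine another --- compare $\{\{1\},\{2\}\}$ with $\{\{1,2\}\}$), besides being circular, since partition-hood of $(\mathcal{K}_1\wedge\mathcal{K}_2)^{\ast}$ via Proposition \ref{propPerPer} presupposes the periodicity you are in the middle of proving. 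You spotted the circularity, but the refinement-versus-membership confusion is the real problem, and it also undermines the step ``iterating the inclusion $m$ times gives $(\mathcal{H}_1\wedge\mathcal{H}_2)^{m\ast}\subseteq\mathcal{H}_1\wedge\mathcal{H}_2$.''

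The missing ingredient is a proof that the inclusion $(H_1\cap H_2)\ast(H_1'\cap H_2')\subseteq(H_1\ast H_1')\cap(H_2\ast H_2')$ is in fact an \emph{equality}; this is where the paper does real work. It fixes $H_1'\in\mathcal{H}_1$ and $H_2'\in\mathcal{H}_2$ with $H_1'\cap H_2'\neq\o$, uses Lemma \ref{lemarb} to write $\mathcal{H}_1^{\ast}=\{H_1\ast H_1':H_1\in\mathcal{H}_1\}$ and $\mathcal{H}_2^{\ast}=\{H_2\ast H_2':H_2\in\mathcal{H}_2\}$, and then sandwiches: $|\mathcal{X}|=\sum_{H_1,H_2}|(H_1\ast H_1')\cap(H_2\ast H_2')|\geq\sum_{H_1\cap H_2\neq\o}|(H_1\cap H_2)\ast(H_1'\cap H_2')|\geq\sum_{H_1\cap H_2\neq\o}|H_1\cap H_2|=|\mathcal{X}|$, forcing every inequality to be an equality and hence termwise equality of the two sets. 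Only after that does the ``cover inside a partition'' argument close the proof. So, contrary to your final remark, a new idea beyond $(A\cap B)\ast(C\cap D)\subseteq(A\ast C)\cap(B\ast D)$ is needed, namely this global cardinality argument (or an equivalent), and it must be inserted before either of your concluding steps can be run.
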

\begin{proof}
See Appendix \ref{appB}.
\end{proof}

\begin{mycor}
\label{corWedge}
Let $\ast$ be an ergodic operation. If $\mathcal{H}_1$ and $\mathcal{H}_2$ are two stable partitions then $\mathcal{H}_1\wedge\mathcal{H}_2$ is a stable partition of period at most $\lcm\{\per(\mathcal{H}_1),\per(\mathcal{H}_2)\}$.
\end{mycor}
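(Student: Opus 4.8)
The plan is to simply combine Proposition~\ref{lemWedge} with Lemma~\ref{stasta}. First I would observe that stable partitions are in particular periodic, so $\mathcal{H}_1$ and $\mathcal{H}_2$ are periodic partitions of $(\mathcal{X},\ast)$, and Proposition~\ref{lemWedge} applies: it yields that $\mathcal{H}_1\wedge\mathcal{H}_2$ is a periodic partition with $\per(\mathcal{H}_1\wedge\mathcal{H}_2)\leq \lcm\{\per(\mathcal{H}_1),\per(\mathcal{H}_2)\}$.

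Next, since $\ast$ is assumed ergodic, Lemma~\ref{stasta} tells us that \emph{every} periodic partition of $(\mathcal{X},\ast)$ is stable (i.e.\ also balanced). Applying this to the periodic partition $\mathcal{H}_1\wedge\mathcal{H}_2$ shows it is balanced, hence stable, and the period bound from the previous step is retained. This gives exactly the claimed statement.

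There is no real obstacle here — the corollary is a direct consequence of the two cited results, and the only thing to be careful about is the logical ordering: one must invoke Proposition~\ref{lemWedge} (which only needs periodicity, valid for any $\ast$) to get periodicity together with the period bound, and only then invoke the ergodicity hypothesis via Lemma~\ref{stasta} to upgrade periodic to stable.

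\begin{proof}
Since $\mathcal{H}_1$ and $\mathcal{H}_2$ are stable, they are in particular periodic partitions of $(\mathcal{X},\ast)$. By Proposition~\ref{lemWedge}, $\mathcal{H}_1\wedge\mathcal{H}_2$ is a periodic partition with $\per(\mathcal{H}_1\wedge\mathcal{H}_2)\leq\lcm\{\per(\mathcal{H}_1),\per(\mathcal{H}_2)\}$. Now since $\ast$ is ergodic, Lemma~\ref{stasta} implies that every periodic partition of $(\mathcal{X},\ast)$ is stable; in particular $\mathcal{H}_1\wedge\mathcal{H}_2$ is stable. This proves the claim.
\end{proof}
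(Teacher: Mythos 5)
Your proposal is correct and follows exactly the paper's own argument, which likewise derives the corollary by combining Proposition~\ref{lemWedge} (periodicity and the period bound) with Lemma~\ref{stasta} (ergodicity upgrades periodic to stable). Nothing further is needed.
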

\begin{proof}
The corollary follows from Proposition \ref{lemWedge} and Lemma \ref{stasta}.
\end{proof}

\begin{myrem}
Let $(G,\ast)$ be a group. If $\mathcal{H}_1$ and $\mathcal{H}_2$ are two periodic partitions of $(G,\ast)$, then $H_{\mathcal{H}_1\wedge\mathcal{H}_2}=H_{\mathcal{H}_1}\cap H_{\mathcal{H}_2}$.
\end{myrem}

\begin{myrem}
\label{remIrrInd}
The ergodicity condition in Corollary \ref{corWedge} cannot be replaced by irreducibility. Consider the following irreducible (but not ergodic) operation:

\begin{center}
  \begin{tabular}{ | c || c | c | c | c | c | c | c | c | }
    \hline
    $\ast$ & 0 & 1 & 2 & 3 & 4 & 5 & 6 & 7 \\ \hline \hline
    0 & 4 & 5 & 6 & 7 & 4 & 4 & 4 & 4 \\ \hline
    1 & 5 & 4 & 7 & 6 & 5 & 5 & 5 & 5 \\ \hline
    2 & 6 & 7 & 4 & 5 & 6 & 6 & 6 & 6 \\ \hline
    3 & 7 & 6 & 5 & 4 & 7 & 7 & 7 & 7 \\ \hline
    4 & 0 & 0 & 0 & 0 & 0 & 1 & 2 & 3 \\ \hline
    5 & 1 & 1 & 1 & 1 & 1 & 0 & 3 & 2 \\ \hline
    6 & 2 & 2 & 2 & 2 & 2 & 3 & 0 & 1 \\ \hline
    7 & 3 & 3 & 3 & 3 & 3 & 2 & 1 & 0 \\ \hline
  \end{tabular}
\end{center}

Define: $$\mathcal{H}_1=\big\{\{0,1\},\{2,3\},\{4,5\},\{6,7\}\big\},$$
$$\mathcal{H}_2=\big\{\{0,2\},\{1,3\},\{4,5\},\{6,7\}\big\}.$$
While both $\mathcal{H}_1$ and $\mathcal{H}_2$ are stable partitions of periods 1 and 2 respectively, the partition $\mathcal{H}_1\wedge\mathcal{H}_2=\big\{\{0\},\{1\},\{2\},\{3\},\{4,5\},\{6,7\}\big\}$ is periodic but it is not stable as it is not balanced.
\end{myrem}

\section{The residue of a stable partition}

Let $\mathcal{H}$ be a stable partition. Let $H\in\mathcal{H}$ and $x\in H$. For any sequence $(X_n)_{n\geq 0}$ satisfying $X_n\in\mathcal{H}^{n\ast}$ for all $n\geq 0$, define the sequences $(A_n)_{n\geq 0}$ and $(H_n)_{n\geq 0}$ recursively as follows:
\begin{itemize}
\item $A_0=\{x\}$ and $H_0=H$.
\item $A_n=A_{n-1}\ast X_{n-1}=(\ldots((x\ast X_0)\ast X_1)\ldots\ast X_{n-1})$.
\item $H_n=H_{n-1}\ast X_{n-1}=(\ldots((H\ast X_0)\ast X_1)\ldots\ast X_{n-1})$.
\end{itemize}
Since $x\in H$, we can show by induction on $n$ that $A_n\subset H_n\in \mathcal{H}^{n\ast}$ and so $|A_n|\leq|H_n|=\|\mathcal{H}^{n\ast}\|=\|\mathcal{H}\|$ for all $n\geq 0$. Therefore, $|H_n|$ is constant. On the other hand, $|A_n|\geq |A_{n-1}|$ since $A_n=A_{n-1}\ast X_{n-1}$. Hence, $|A_n|$ is increasing and it is upper bounded by $\|\mathcal{H}\|$.

Does $|A_n|$ reach $\|\mathcal{H}\|$ or does $|A_n|$ remain strictly less than $\|\mathcal{H}\|$ for all $n\geq 0$? In other words, do we have $A_n=H_n$ for some $n>0$ or does $A_n$ remain a strict subset of $H_n$ for all $n\geq0$? The answer depends of course on the sequence $(X_n)_{n\geq 0}$, so one can ask: Is it possible to choose at least one sequence $(X_n)_{n\geq 0}$ for which $|A_n|=\|\mathcal{H}\|$ and $A_n=H_n$ for some $n>0$?

What are the stable partitions $\mathcal{H}$ for which it is always possible to reach a set in $\mathcal{H}^{n\ast}$ for some $n>0$ starting from an arbitrary singleton in $\mathcal{X}$ and then recursively multiplying on the right by sets chosen from $\mathcal{H}^{i\ast}$ ($0\leq i<n$)?

It is easy to see that for the trivial stable partition $\mathcal{H}=\{\mathcal{X}\}$, the above condition is equivalent to ergodicity. Therefore, satisfying the above condition for every stable partition is a stronger notion of ergodicity. Strong ergodicity turns out to be important for polarization theory as we will see in Part II of this paper \cite{RajErgII}. In this section, we introduce the notions and concepts that are necessary to understand strong ergodicity.

\begin{mynot}
Let $\mathfrak{X}=(X_i)_{0\leq i<k}$ be a sequence of subsets $X_i$ of $\mathcal{X}$. We denote the length $k$ of the sequence $\mathfrak{X}$ by $|\mathfrak{X}|$.

For any $A\subset\mathcal{X}$, we denote $(\ldots((A\ast X_0)\ast X_1)\ldots)\ast X_{k-1})$ by $A\ast \mathfrak{X}$. If $A=\{a\}$, we write $a\ast \mathfrak{X}$ to denote $\{a\}\ast \mathfrak{X}$.

The $n^{th}$ power of the sequence $\mathfrak{X}=(X_i)_{0\leq i<k}$ is the sequence $\mathfrak{X}^n=(X_i')_{0\leq i<kn}$, where $X_i'=X_{i\bmod k}$ for $0\leq i<kn$. I.e., $\mathfrak{X}^n$ is obtained by concatenating $n$ copies of $\mathfrak{X}$.
\end{mynot}

\begin{mydef}
Let $\mathcal{H}$ be a stable partition of $(\mathcal{X},\ast)$ where $\ast$ is uniformity preserving. A sequence $\mathfrak{X}=(X_i)_{0\leq i<k}$ is said to be $\mathcal{H}$-\emph{sequence} if $X_0\in\mathcal{H}$, $X_1\in\mathcal{H}^{\ast}$, \ldots, $X_{k-1}\in\mathcal{H}^{(k-1)\ast}$. If we also have that $\per(\mathcal{H})$ divides $|\mathfrak{X}|=k$, we say that the sequence is $\mathcal{H}$-\emph{repeatable}.

An $\mathcal{H}$-repeatable sequence $\mathfrak{X}$ is said to be $\mathcal{H}$-\emph{augmenting} if $A\subset A\ast \mathfrak{X}$ for all $A\subset \mathcal{X}$.
\end{mydef}

\begin{myrem}
\label{remtemdem123}
If $\mathfrak{X}$ is $\mathcal{H}$-repeatable, then $\mathfrak{X}^l$ is an $\mathcal{H}$-sequence for every $l>0$. This is not necessarily true if $\mathfrak{X}$ is an $\mathcal{H}$-sequence which is not repeatable.

If a sequence is $\mathcal{H}$-augmenting then it is also $\mathcal{H}$-repeatable by definition. Therefore, whenever we need to show that a sequence is $\mathcal{H}$-augmenting, we have to show first that it is $\mathcal{H}$-repeatable.

If $\mathfrak{X}$ is $\mathcal{H}$-augmenting then $\mathfrak{X}^l$ is $\mathcal{H}$-augmenting for every $l>0$.
\end{myrem}

\begin{mythe}
\label{theres}
Let $\mathcal{H}$ be a stable partition of $(\mathcal{X},\ast)$ where $\ast$ is ergodic. There exists a unique sub-stable partition $\mathcal{K}_{\mathcal{H}}$ of $\mathcal{H}$ such that:
\begin{itemize}
\item For every $K\in\mathcal{K}_{\mathcal{H}}$ and every $\mathcal{H}$-sequence $\mathfrak{X}$, $K\ast\mathfrak{X} \in {\mathcal{K}_{\mathcal{H}}}^{|\mathfrak{X}|\ast}$.
\item For every $K\in\mathcal{K}_{\mathcal{H}}$ and every $x\in K$, there exists an $\mathcal{H}$-augmenting sequence $\mathfrak{X}$ such that $x\ast \mathfrak{X}=K$.
\item For every $K\in\mathcal{K}_{\mathcal{H}}$, every $x\in K$, and every $\mathcal{H}$-augmenting sequence $\mathfrak{X}'$, $x\ast \mathfrak{X}'\subset K$.
\end{itemize}
$\mathcal{K}_{\mathcal{H}}$ is called the \emph{first residue} of the stable partition $\mathcal{H}$. We also have ${\mathcal{K}_{\mathcal{H}}}^{l\ast}=\mathcal{K}_{\mathcal{H}^{l\ast}}$ for all $l\geq 0$.
\end{mythe}
\begin{proof}
See Appendix \ref{appC}.
\end{proof}

\begin{myrem}
Theorem \ref{theres} implies that an ergodic operation is strongly ergodic if and only if $\mathcal{K}_{\mathcal{H}}=\mathcal{H}$ for every stable partition $\mathcal{H}$ of $\mathcal{X}$. This will be explained and proven in detail in section VI.
\end{myrem}

\begin{myrem}
It is possible to prove a more general theorem for the periodic partitions of an arbitrary uniformity preserving operation:

Let $\mathcal{H}$ be a periodic partition of $(\mathcal{X},\ast)$ where $\ast$ is an arbitrary uniformity preserving operation, there exists a unique sub-periodic partition $\mathcal{K}_{\mathcal{H}}$ of $\mathcal{H}$ such that:
\begin{itemize}
\item For every $K\in\mathcal{K}_{\mathcal{H}}$ and every $\mathcal{H}$-sequence $\mathfrak{X}$, $K\ast\mathfrak{X} \in {\mathcal{K}_{\mathcal{H}}}^{|\mathcal{X}|\ast}$.
\item For every $K\in\mathcal{K}_{\mathcal{H}}$ and every $x\in K$, there exists an $\mathcal{H}$-augmenting sequence $\mathfrak{X}$ such that $x\ast \mathfrak{X}=K$.
\item For every $K\in\mathcal{K}_{\mathcal{H}}$, every $x\in K$, and every $\mathcal{H}$-augmenting sequence $\mathfrak{X}'$, $x\ast \mathfrak{X}'\subset K$.
\end{itemize}
$\mathcal{K}_{\mathcal{H}}$ is called the \emph{first residue} of the periodic partition $\mathcal{H}$. We also have ${\mathcal{K}_{\mathcal{H}}}^{l\ast}=\mathcal{K}_{\mathcal{H}^{l\ast}}$ for all $l\geq 0$.

We will not prove this general theorem here since Theorem \ref{theres} is sufficient for our purposes. The proof of the general theorem is more complicated but follows similar steps as the proof of Theorem \ref{theres}.

Note that if the operation $\ast$ is not ergodic, $\mathcal{K}_{\mathcal{H}}$ may not be a stable partition even if $\mathcal{H}$ is a stable partition. Consider the following irreducible (but not ergodic) operation:

\begin{center}
  \begin{tabular}{ | c || c | c | c | c | c | c | c | c | }
    \hline
    $\ast$ & 0 & 1 & 2 & 3 & 4 & 5 & 6 & 7 \\ \hline \hline
    0 & 4 & 5 & 4 & 5 & 4 & 4 & 4 & 4 \\ \hline
    1 & 5 & 4 & 5 & 4 & 5 & 5 & 5 & 5 \\ \hline
    2 & 6 & 7 & 6 & 7 & 6 & 6 & 6 & 6 \\ \hline
    3 & 7 & 6 & 7 & 6 & 7 & 7 & 7 & 7 \\ \hline
    4 & 2 & 2 & 2 & 2 & 2 & 3 & 2 & 3 \\ \hline
    5 & 3 & 3 & 3 & 3 & 3 & 2 & 3 & 2 \\ \hline
    6 & 0 & 0 & 0 & 0 & 0 & 1 & 0 & 1 \\ \hline
    7 & 1 & 1 & 1 & 1 & 1 & 0 & 1 & 0 \\ \hline
  \end{tabular}
\end{center}
Let $\mathcal{H}=\big\{\{0,2\},\{1,3\},\{4,5\},\{6,7\}\big\}$, which is a stable partition of period 2. The reader can check that $\mathcal{K}_{\mathcal{H}}=\big\{\{0\},\{1\},\{2\},\{3\},\{4,5\},\{6,7\}\big\}$ which is periodic but not stable as it is not balanced.
\end{myrem}

\begin{mydef}
Let $\mathcal{H}$ be a stable partition of $(\mathcal{X},\ast)$ where $\ast$ is ergodic. For every $n\geq 0$, we define the $n^{th}$ residue $\mathcal{R}_n(\mathcal{H})$ of $\mathcal{H}$ recursively as follows:
\begin{itemize}
\item $\mathcal{R}_0(\mathcal{H})=\mathcal{H}$.
\item $\mathcal{R}_1(\mathcal{H})=\mathcal{K}_{\mathcal{H}}$.
\item $\mathcal{R}_{n+1}(\mathcal{H})=\mathcal{R}_1(\mathcal{R}_n(\mathcal{H}))=\mathcal{K}_{ \mathcal{R}_n(\mathcal{H})}$ for every $n\geq 1$.
\end{itemize}
The \emph{residual degree} $\degg_{\mathcal{R}}(\mathcal{H})$ of $\mathcal{H}$ is the smallest integer $n\geq 0$ satisfying $\mathcal{R}_{n+1}(\mathcal{H})=\mathcal{R}_n(\mathcal{H})$. The residue of $\mathcal{H}$ is defined as $\mathcal{R}(\mathcal{H}):=\mathcal{R}_{\degg_{\mathcal{R}}(\mathcal{H})}(\mathcal{H})$. Clearly $\mathcal{R}_1(\mathcal{R}(\mathcal{H}))=\mathcal{K}_{\mathcal{R}(\mathcal{H})}=\mathcal{R}(\mathcal{H})$ and $\mathcal{R}(\mathcal{R}(\mathcal{H}))=\mathcal{R}(\mathcal{H})$.
\end{mydef}

\begin{myrem}
In the application to polarization theory, we will only need the first residue. We just note here that for every $n\geq 0$, there exists an ergodic operation and a stable partition $\mathcal{H}$ of residual degree $n$. In other words, there are stable partitions of arbitrary residual degrees.
\end{myrem}

\section{Strongly ergodic operations}

\begin{mydef}
\label{defdef}
A uniformity preserving operation $\ast$ is said to be \emph{strongly ergodic} if for any stable partition $\mathcal{H}$ and for any $x\in\mathcal{X}$, there exists an integer $n=n(x,\mathcal{H})$ such that for any $H\in\mathcal{H}^{n\ast}$, there exists an $\mathcal{H}$-sequence $\mathfrak{X}_{x,H}$ of length $n$ such that $x\ast\mathfrak{X}_{x,H}=H$.
\end{mydef}

\begin{mythe}
\label{thestrong}
We have the following:
\begin{enumerate}
\item If $\ast$ is strongly ergodic then it is ergodic.
\item If $\ast$ is strongly ergodic, there exists an integer $d>0$ such that for any $s\geq d$, any stable partition $\mathcal{H}$,  any $x\in\mathcal{X}$ and any $H\in\mathcal{H}^{s\ast}$, there exists an $\mathcal{H}$-sequence $\mathcal{X}_{x,H}$ of length $s$ satisfying $x\ast \mathcal{X}_{x,H}=H$. If $d$ is minimal with this property, we call it the \emph{strong connectability} of $\ast$, and we denote it by $\scon(\ast)$.
\item If $\ast$ is ergodic, $\ast$ is strongly ergodic if and only if $\mathcal{K}_{\mathcal{H}}=\mathcal{H}$ for every stable partition $\mathcal{H}$ (i.e., every stable partition $\mathcal{H}$ is its own residue, and so the residual degree is zero).
\item If $\ast$ is a quasigroup operation then it is strongly ergodic.
\end{enumerate}
\end{mythe}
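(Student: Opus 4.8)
\emph{Parts 1 and 2.} For part~1 the plan is to apply strong ergodicity to the trivial stable partition $\mathcal{H}=\{\mathcal{X}\}$ (balanced, and periodic of period $1$ since $\mathcal{X}\ast\mathcal{X}=\mathcal{X}$). An $\{\mathcal{X}\}$-sequence of length $n$ is just $n$ copies of $\mathcal{X}$, so Definition~\ref{defdef} yields, for each $x$, an integer $n(x)$ with $x\stackrel{\ast,n(x)}{\longrightarrow}y$ for all $y\in\mathcal{X}$; hence $\ast$ is irreducible. Letting $\mathcal{E}_{\ast}=\{H_0,\dots,H_{p-1}\}$ be the partition of Proposition~\ref{lemerg} with $p=\per(\ast)$ and $H_i\ast\mathcal{X}=H_{i+1\bmod p}$, and picking $x\in H_0$, the relation $x\stackrel{\ast,n(x)}{\longrightarrow}y$ for all $y$ forces $H_{n(x)\bmod p}=\mathcal{X}$, so $p=1$ and $\ast$ is ergodic by Proposition~\ref{lemerg}. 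For part~2 the crucial remark is a length-monotonicity: for a fixed $x$ and stable $\mathcal{H}$, if every $H\in\mathcal{H}^{N\ast}$ is reached from $x$ by an $\mathcal{H}$-sequence of length $N$, then the same is true with $N+1$, because $\mathcal{H}^{(N+1)\ast}=\mathcal{H}^{N\ast}\ast\mathcal{H}^{N\ast}$, so $H=H_1\ast H_2$ with $H_1,H_2\in\mathcal{H}^{N\ast}$, and one reaches $H_1$ in $N$ steps and appends $H_2$. Thus for each $x$ and $\mathcal{H}$ the property of part~2 holds for every length $\geq n(x,\mathcal{H})$; since $\mathcal{X}$ is finite there are finitely many pairs $(x,\mathcal{H})$, so $d=\max_{x,\mathcal{H}}n(x,\mathcal{H})$ works, and $\scon(\ast)$ is the least such $d$.

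\emph{Part 3, necessity.} Assume $\ast$ is strongly ergodic (hence ergodic, by part~1). Fix a stable $\mathcal{H}$ with $n=\per(\mathcal{H})$ and $x\in\mathcal{X}$; write $H_x=\proj_{\mathcal{H}}(x)$ and let $K_x$ be the $R_{\mathcal{H}}$-class of $x$. By Lemma~\ref{lemres1} there is an $\mathcal{H}$-augmenting sequence connecting $K_x$, so Lemma~\ref{lemsubsres} gives $K_x\subseteq H_x$; it remains to prove $H_x\subseteq K_x$. Using part~2, choose $s\geq\scon(\ast)$ a multiple of $n$ and get an $\mathcal{H}$-sequence $\mathfrak{X}$ of length $s$ with $x\ast\mathfrak{X}=H_x$. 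As $\mathfrak{X}$ is $\mathcal{H}$-repeatable, Lemma~\ref{lemaugm} gives $l>0$ with $\mathfrak{X}^l$ $\mathcal{H}$-augmenting. Now $x\ast\mathfrak{X}^l=H_x\ast\mathfrak{X}^{l-1}$ is a single block of $\mathcal{H}$ (it lies in $\mathcal{H}^{(l-1)s\ast}=\mathcal{H}$ because $(l-1)s$ is a multiple of $n$) and it contains $x$ because $\mathfrak{X}^l$ is augmenting; therefore $x\ast\mathfrak{X}^l=H_x$. For any $y\in H_x=x\ast\mathfrak{X}^l$, Lemma~\ref{lemconn} produces an $\mathcal{H}$-augmenting sequence connecting $\{x,y\}$, so $y\in K_x$. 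Hence $K_x=H_x$ for all $x$, i.e. $\mathcal{K}_{\mathcal{H}}=\mathcal{H}$.

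\emph{Part 3, sufficiency.} Assume $\mathcal{K}_{\mathcal{H}}=\mathcal{H}$ for every stable $\mathcal{H}$; since $\ast$ is ergodic, to verify Definition~\ref{defdef} for a given $x$ and stable $\mathcal{H}$ it suffices to exhibit one length $N$ such that every block of $\mathcal{H}^{N\ast}$ is reached from $x$ by some $\mathcal{H}$-sequence of length $N$. Two preliminary facts: (i) every $\mathcal{H}$-augmenting sequence $\mathfrak{A}$ fixes every block of $\mathcal{H}$, since for $h\in H\in\mathcal{H}=\mathcal{K}_{\mathcal{H}}$ Lemma~\ref{lemres1} gives $h\ast\mathfrak{A}\subseteq H$ and augmentation gives $h\in h\ast\mathfrak{A}$, whence $H\ast\mathfrak{A}=H$; (ii) writing $\mathcal{H}=\{H_1,\dots,H_r\}$, Lemma~\ref{lemres1} supplies for each $H_i$ an $\mathcal{H}$-augmenting sequence $\mathfrak{Z}_i$ connecting $H_i$, and then the concatenation $\mathfrak{Z}=(\mathfrak{Z}_1,\dots,\mathfrak{Z}_r)$ is $\mathcal{H}$-augmenting and, by a short induction on its factors using (i), satisfies $h\ast\mathfrak{Z}=\proj_{\mathcal{H}}(h)$ for every $h$, i.e. $\mathfrak{Z}$ connects every block. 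Put $\zeta=|\mathfrak{Z}|$ and pick $m$ a multiple of $n$ with $m\geq\con(\ast)$. Given $H\in\mathcal{H}$, choose $h\in H$; ergodicity gives a word realizing $x\stackrel{\ast,m}{\longrightarrow}h$, and replacing each letter by its class in the relevant $\mathcal{H}^{i\ast}$ gives an $\mathcal{H}$-sequence $\mathfrak{C}$ of length $m$ with $h\in x\ast\mathfrak{C}$, while $x\ast\mathfrak{C}$ lies in a single block of $\mathcal{H}^{m\ast}=\mathcal{H}$; hence $x\ast\mathfrak{C}\subseteq H$. Then $x\ast(\mathfrak{C},\mathfrak{Z})$ contains $h\ast\mathfrak{Z}=H$ and is contained in $H\ast\mathfrak{Z}=H$, so $x\ast(\mathfrak{C},\mathfrak{Z})=H$, and $(\mathfrak{C},\mathfrak{Z})$ is an $\mathcal{H}$-sequence of length $N:=m+\zeta$ (a multiple of $n$, so $\mathcal{H}^{N\ast}=\mathcal{H}$) which does not depend on $H$. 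This proves strong ergodicity.

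\emph{Part 4, and the main difficulty.} For part~4, if $\ast$ is a quasigroup I would show that $n=1$ always works in Definition~\ref{defdef}. For a stable $\mathcal{H}$ and $x\in\mathcal{X}$ write $H_x=\proj_{\mathcal{H}}(x)$; for $X_0\in\mathcal{H}$ the quasigroup law makes $c\mapsto x\ast c$ a bijection, so $|\{x\}\ast X_0|=||\mathcal{H}||$, and since $\{x\}\ast X_0\subseteq H_x\ast X_0\in\mathcal{H}^{\ast}$ with $||\mathcal{H}^{\ast}||=||\mathcal{H}||$ we get $\{x\}\ast X_0=H_x\ast X_0\in\mathcal{H}^{\ast}$. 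As $X_0$ ranges over $\mathcal{H}$ these blocks cover $H_x\ast\mathcal{X}=\mathcal{X}$, hence exhaust $\mathcal{H}^{\ast}=\mathcal{H}^{1\ast}$; so every $H\in\mathcal{H}^{1\ast}$ equals $\{x\}\ast X_0$ for some $X_0\in\mathcal{H}$, which is exactly strong ergodicity with $n=1$ (and gives $\scon(\ast)=1$, matching $\con(\ast)=1$). The step I expect to be the real obstacle is obtaining a \emph{uniform} connecting length in part~3: the residue lemmas only control such lengths modulo $\per(\mathcal{H})$, and different blocks are a priori reached in different lengths. The device that resolves this is the universal connector $\mathfrak{Z}$ of (ii): because $\mathcal{K}_{\mathcal{H}}=\mathcal{H}$ it simultaneously fixes every block (so it pads an already-reached block up to the common length) and connects every block (so, appended to a length-$m$ sequence that only lands inside $H$, it completes the set to all of $H$), which is what lets the single length $m+\zeta$ work for every block at once.
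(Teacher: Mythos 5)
Your proof is correct, and parts 1, 2 and the sufficiency direction of part 3 follow essentially the same route as the paper: the trivial partition $\{\mathcal{X}\}$ plus Proposition \ref{lemerg} for part 1, the maximum of $n(x,\mathcal{H})$ over the finitely many pairs together with a length-extension step for part 2 (your extension via $\mathcal{H}^{(N+1)\ast}=\mathcal{H}^{N\ast}\ast\mathcal{H}^{N\ast}$ is interchangeable with the paper's $H=H'\ast\mathfrak{X}$ with $\mathfrak{X}$ an $\mathcal{H}^{n\ast}$-sequence), and for sufficiency the same ``universal connector'' built by concatenating the augmenting sequences of Lemma \ref{lemres1} over all blocks, prefixed by a length-$\con(\ast)\cdot\per(\mathcal{H})$ word from ergodicity. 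You diverge genuinely in two places. In the necessity direction of part 3 the paper fixes $x\in K\subset H$, takes the sequence $\mathfrak{X}_{x,H}$ of length $\scon(\ast)\per(\mathcal{H})$ with $x\ast\mathfrak{X}_{x,H}=H$, and invokes Lemma \ref{lemres3} to see that $K\ast\mathfrak{X}_{x,H}$ is again a class of $\mathcal{K}_{\mathcal{H}}$ meeting $K$, hence equals $K$ and contains $H$; you instead raise $\mathfrak{X}$ to an augmenting power, identify $x\ast\mathfrak{X}^{l}$ with $H_x$, and use Lemma \ref{lemconn} to put every $y\in H_x$ in the class of $x$. Both are sound; yours stays closer to the definition of $R_{\mathcal{H}}$ and avoids Lemma \ref{lemres3} here, while the paper's is a one-line consequence of the residue machinery. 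In part 4 the paper only proves $||\mathcal{K}_{\mathcal{H}}||=||\mathcal{H}||$ via the quasigroup bound $|x\ast\mathfrak{X}|\geq|X_{k-1}|$ and then leans on part 3 (and on quasigroup $\Rightarrow$ ergodic) to conclude; you prove strong ergodicity directly by showing $n=1$ always works, using that $x\ast X_0=H_x\ast X_0$ has full size $||\mathcal{H}||$ and that these blocks exhaust $\mathcal{H}^{\ast}$ as $X_0$ ranges over $\mathcal{H}$. Your version is more self-contained and additionally recovers $\scon(\ast)=1$, at the cost of not reusing the residue criterion. No gaps.
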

\begin{proof}
1) Suppose that $\ast$ is strongly ergodic and consider the trivial stable partition $\{\mathcal{X}\}$. For every $x\in\mathcal{X}$, there exists $n_x>0$ such that $x\ast(\mathcal{X})^{n_x}=\mathcal{X}$. This shows that for every $y\in\mathcal{X}$, $x\stackrel{\ast,n_x}{\longrightarrow}y$ which shows that $\ast$ is irreducible. Let $n=\per(\ast)$ and let $H_0,\ldots,H_{n-1}$ be the equally sized subsets of $\mathcal{X}$ given by the fourth point of Proposition \ref{lemerg}.

Let $x\in H_0$. We have $\mathcal{X}=x\ast(\mathcal{X})^{n_x}\subset H_0\ast(\mathcal{X})^{n_x}=H_{n_x\bmod n}$, where the last equality follows from the fourth point of Proposition \ref{lemerg}. Therefore, $H_{n_x\bmod n}=\mathcal{X}$ which implies that $n=1$ since $\{H_0,\ldots,H_{n-1}\}$ is a partition. Therefore, $\per(\ast)=1$ and so $\ast$ is ergodic by the seventh point of Proposition \ref{lemerg}.

\vspace*{2mm}
2)  Let $\ast$ be strongly ergodic, and define $\displaystyle d=\max_{x,\mathcal{H}}n(x,\mathcal{H}),$ where $n(x,\mathcal{H})$ is as in Definition \ref{defdef}. Now fix $x\in\mathcal{X}$ and fix a stable partition $\mathcal{H}$. Let $s\geq d$ and fix $H\in\mathcal{H}^{s\ast}$. If $s=n(x,\mathcal{H})$, there is nothing to prove. Now suppose that $s>n:=n(x,H)$, then there exists $H'\in\mathcal{H}^{n\ast}$ and an $\mathcal{H}^{n\ast}$-sequence $\mathfrak{X}$ of length $s-n$ such that $H'\ast \mathfrak{X}=H$. Moreover, there exists an $\mathcal{H}$-sequence $\mathfrak{X}_{x,H'}$ of length $n$ such that $x\ast \mathfrak{X}_{x,H'}=H'$. We conclude that $x\ast(\mathfrak{X}_{x,H'},\mathfrak{X})=H$.

\vspace*{2mm}
3) Let $\mathcal{H}$ be a stable partition of $(\mathcal{X},\ast)$ where $\ast$ is strongly ergodic, and let $x\in\mathcal{X}$, $K\in\mathcal{K}_{\mathcal{H}}$ and $H\in\mathcal{H}$ be chosen so that $x\in K\subset H$. Let $s= \scon(\ast)\cdot\per(\mathcal{H})$. We have $\mathcal{H}^{s\ast}=\mathcal{H}$ since $\per(\mathcal{H})$ divides $s$. Now since $s\geq \scon(\ast)$ and $H\in\mathcal{H}=\mathcal{H}^{s\ast}$, there exists an $\mathcal{H}$-sequence $\mathfrak{X}_{x,H}$ of length $s$ such that $x\ast\mathfrak{X}_{x,H}=H$. We have $x\in H=x\ast\mathfrak{X}_{x,H}\subset K\ast \mathfrak{X}_{x,H}$, so $x\in K\ast \mathfrak{X}_{x,H}$ which implies that $K\cap (K\ast \mathfrak{X}_{x,H})\neq \o$ (since we also have $x\in K$). On the other hand, Theorem \ref{theres} implies that $K \ast\mathfrak{X}_{x,H}\in {\mathcal{K}_{\mathcal{H}}}^{s\ast}=\mathcal{K}_{\mathcal{H}}$. Therefore, $K\ast \mathfrak{X}_{x,H}=K$ since $\mathcal{K}_{\mathcal{H}}$ is a partition. We conclude that $H=x\ast \mathfrak{X}_{x,H}\subset K\ast \mathfrak{X}_{x,H}=K$ which implies that $H=K$ since we also have $K\subset H$. Therefore, $\|\mathcal{K}_{\mathcal{H}}\|=\|\mathcal{H}\|$ and so $\mathcal{K}_{\mathcal{H}}=\mathcal{H}$.

Now suppose that $\ast$ is an ergodic operation which satisfies $\mathcal{K}_{\mathcal{H}}=\mathcal{H}$ for every stable partition $\mathcal{H}$. Let $x\in\mathcal{X}$ and let $\mathcal{H}$ be a stable partition. Let $k=\con(\ast)\cdot\per(\mathcal{H})\geq \con(\ast)$, and for each $H\in\mathcal{H}$ fix $x_H\in H$ and let $\mathfrak{X}_{H}$ be an $\mathcal{H}$-augmenting sequence such that $x_H\ast\mathfrak{X}_H=H$ (such $\mathfrak{X}_{H}$ exists due to Theorem \ref{theres}). Define $\displaystyle n(x,H)=k+\sum_{H\in\mathcal{H}}|\mathfrak{X}_{H}|$ and define $\mathfrak{X}'$ to be the $\mathcal{H}$-augmenting sequence obtained by concatenating all the $\mathfrak{X}_{H}$ sequences (the order of the concatenation is not important). It is easy to see that $x_H\ast\mathfrak{X}'=H$ for all $H\in\mathcal{H}$: We have $x_H\ast \mathfrak{X}'\subset H$ from Theorem \ref{theres}. On the other hand, $H\subset x_H\ast \mathfrak{X}'$ follows from the fact that $\mathfrak{X}'$ is the concatenation of a collection of $\mathcal{H}$-augmenting sequences containing $\mathfrak{X}_H$ and that $x_H\ast\mathfrak{X}_H=H$. We also have $\displaystyle |\mathfrak{X}'|=\sum_{H\in\mathcal{H}}|\mathfrak{X}_{H}|$. Now since $k\geq \con(\ast)$, it follows from Proposition \ref{lemerg} that for every $H\in\mathcal{H}$ there exists a sequence $x_0,\ldots,x_{k-1}$ satisfying $(\ldots((x\ast x_0)\ast x_1) \ldots\ast x_{k-1})=x_H$. Let $\mathfrak{X}_H'=(X_0,\ldots,X_{k-1})$ be an $\mathcal{H}$-sequence of length $k$ such that $x_i\in X_i$ for all $0\leq i<k$. Clearly, $x_H\in x\ast\mathfrak{X}_H'$. It is easy to see that the sequence $\mathfrak{X}_H''=(\mathfrak{X}_{H}',\mathfrak{X}')$ is of length $n(x,\mathcal{H})$ and satisfies $H\subset x\ast\mathfrak{X}_H''$. Now let $H_x\in\mathcal{H}$ be chosen so that $x\in H_x$. Since $H_x\in\mathcal{H}=\mathcal{K}_{\mathcal{H}}$, Theorem \ref{theres} implies that we have $H_x\ast\mathfrak{X}_H''\in\mathcal{K}_{\mathcal{H}^{n(x,\mathcal{H})\ast}}=\mathcal{K}_{\mathcal{H}}=\mathcal{H}$ (note that $\mathcal{H}^{n(x,\mathcal{H})\ast}=\mathcal{H}$ since $\per(\mathcal{H})$ divides $n(x,\mathcal{H})$). We conclude that $H\subset x\ast\mathfrak{X}_H'' \subset H_x\ast\mathfrak{X}_H''\in \mathcal{H}$, which implies that $H=x\ast\mathfrak{X}_H''=H_x\ast\mathfrak{X}_H''$ since we have $H\in\mathcal{H}$ and $\mathcal{H}$ is a partition. Therefore, for every $H\in\mathcal{H}=\mathcal{H}^{n(x,\mathcal{H})\ast}$, there exists an $\mathcal{H}$-sequence $\mathfrak{X}_H''$ of length $n(x,\mathcal{H})$ such that $x\ast\mathfrak{X}_H''=H$. Thus, $\ast$ is a strongly ergodic operation.

\vspace*{2mm}
4)  Let $\mathcal{H}$ be a stable partition of a quasigroup operation $\ast$. For any $K\in\mathcal{K}_{\mathcal{H}}$ and any $x\in K$, there exists an $\mathcal{H}$-augmenting sequence $\mathfrak{X}=(X_i)_{0\leq i< k}$ such that $K=x\ast\mathfrak{X}$, which implies that $|K|=|x\ast \mathfrak{X}|=\big|\big(x\ast(X_i)_{0\leq i< k-1}\big)\ast X_{k-1}\big|\stackrel{(a)}{\geq} |X_{k-1}|=\|\mathcal{H}\|$, where (a) is true because $\ast$ is a quasigroup operation. We conclude that $\|\mathcal{K}_{\mathcal{H}}\|=\|\mathcal{H}\|$ which implies that $\mathcal{K}_{\mathcal{H}}=\mathcal{H}$.
\end{proof}

\begin{myrem}
\label{remexamperg}
While every strongly ergodic operation $\ast$ is ergodic, the converse is not true. Consider the following operation:
\vspace*{-1mm}
\begin{center}
  \begin{tabular}{ | c || c | c | c | c | }
    \hline
    $\ast$ & 0 & 1 & 2 & 3 \\ \hline \hline
    0 & 2 & 2 & 0 & 0 \\ \hline
    1 & 3 & 3 & 1 & 1 \\ \hline
    2 & 1 & 1 & 3 & 3 \\ \hline
    3 & 0 & 0 & 2 & 2 \\ \hline
  \end{tabular}
\end{center}
\vspace*{4mm}
The first residue of the stable partition $\mathcal{H}=\big\{\{0,1\},\{2,3\}\big\}$ is $\mathcal{K}_{\mathcal{H}}=\big\{\{0\},\{1\},\{2\},\{3\}\big\}\neq \mathcal{H}$.

Also, a strongly ergodic operation need not be a quasigroup operation, here is an example:
\begin{center}
  \begin{tabular}{ | c || c | c | c | c | }
    \hline
    $\ast$ & 0 & 1 & 2 & 3 \\ \hline \hline
    0 & 3 & 3 & 3 & 3 \\ \hline
    1 & 0 & 1 & 0 & 0 \\ \hline
    2 & 1 & 0 & 1 & 1 \\ \hline
    3 & 2 & 2 & 2 & 2 \\ \hline
  \end{tabular}
\end{center}
\end{myrem}

\section{Generated stable partitions}

\label{sec7}

\begin{mydef}
Let $\mathcal{A}$ and $\mathcal{B}$ be two sets of subsets of $\mathcal{X}$. We say that $\mathcal{A}$ is \emph{finer} than $\mathcal{B}$ (or $\mathcal{B}$ is \emph{coarser} than $\mathcal{A}$) if for every $A\in\mathcal{A}$ there exists $B\in\mathcal{B}$ such that $A\subset B$. We write  $\mathcal{A}\preceq \mathcal{B}$ to denote the relation ``$\mathcal{A}$ is finer than $\mathcal{B}$".
\end{mydef}

Let $\mathcal{A}$ be a set of subsets of $\mathcal{X}$. Is it possible to find a periodic partition of $(\mathcal{X},\ast)$ which is coarser than $\mathcal{A}$ and finer than every other periodic partition that is coarser than $\mathcal{A}$? Similarly, is it possible to find a stable partition of $(\mathcal{X},\ast)$ which is coarser than $\mathcal{A}$ and finer than every other stable partition that is coarser than $\mathcal{A}$? The following answer these two questions.

\begin{myprop}
\label{propDefInd}
Let $\ast$ be a uniformity preserving operation on $\mathcal{X}$, and let $\mathcal{A}$ be a set of subsets of $\mathcal{X}$. There exists a unique periodic partition $\langle\mathcal{A}\rangle$ which satisfies the following:
\begin{itemize}
\item $\mathcal{A}\preceq \langle\mathcal{A}\rangle$.
\item For every periodic partition $\mathcal{H}$ of $\mathcal{X}$, if $\mathcal{A}\preceq\mathcal{H}$ then $\langle\mathcal{A}\rangle\preceq\mathcal{H}$.
\end{itemize}
In other words, $\langle\mathcal{A}\rangle$ is the finest periodic partition that is coarser than $\mathcal{A}$. $\langle\mathcal{A}\rangle$ is called the \emph{periodic partition generated} by $\mathcal{A}$.
\end{myprop}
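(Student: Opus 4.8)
The plan is to construct $\langle\mathcal{A}\rangle$ as the common refinement (wedge) of \emph{all} periodic partitions of $(\mathcal{X},\ast)$ that are coarser than $\mathcal{A}$, and to use Proposition~\ref{lemWedge} to see that this wedge is itself periodic. Uniqueness comes essentially for free and I would dispatch it first: if $\mathcal{H}'$ and $\mathcal{H}''$ both satisfy the two stated conditions, then applying the second condition for $\mathcal{H}'$ to the periodic partition $\mathcal{H}''$ (which is coarser than $\mathcal{A}$) gives $\mathcal{H}'\preceq\mathcal{H}''$, and symmetrically $\mathcal{H}''\preceq\mathcal{H}'$; since mutual refinement of two partitions forces equality (if $A\in\mathcal{H}'$, choose $B\in\mathcal{H}''$ with $A\subset B$ and $A'\in\mathcal{H}'$ with $B\subset A'$, so $A=A'$ and hence $A=B$, because parts of a partition are nonempty and pairwise disjoint), we get $\mathcal{H}'=\mathcal{H}''$.

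For existence, I would let $\mathcal{S}$ denote the set of all periodic partitions of $(\mathcal{X},\ast)$ that are coarser than $\mathcal{A}$; it is finite since $\mathcal{X}$ is, and it is nonempty because the trivial partition $\{\mathcal{X}\}$ is periodic (each $\pi_b$ is a bijection, so $\mathcal{X}\ast\mathcal{X}=\mathcal{X}$ and therefore $\{\mathcal{X}\}^{\ast}=\{\mathcal{X}\}$) and is coarser than any family of subsets of $\mathcal{X}$. Then I set $\langle\mathcal{A}\rangle:=\bigwedge_{\mathcal{H}\in\mathcal{S}}\mathcal{H}$ and verify three things in order. First, $\langle\mathcal{A}\rangle$ is periodic: since the wedge of Definition~\ref{defWedge} is associative and commutative and $\mathcal{S}$ is finite, an easy induction using Proposition~\ref{lemWedge} (together with the identity $(\mathcal{H}_1\wedge\mathcal{H}_2)^{n\ast}=\mathcal{H}_1^{n\ast}\wedge\mathcal{H}_2^{n\ast}$, which iterates over the family) shows that $\langle\mathcal{A}\rangle$ is a periodic partition. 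Second, $\mathcal{A}\preceq\langle\mathcal{A}\rangle$: given $A\in\mathcal{A}$, for each $\mathcal{H}\in\mathcal{S}$ pick $H_{\mathcal{H}}\in\mathcal{H}$ with $A\subset H_{\mathcal{H}}$; then $A\subset\bigcap_{\mathcal{H}\in\mathcal{S}}H_{\mathcal{H}}$, and this intersection is an element of $\langle\mathcal{A}\rangle$ (it is nonempty when $A\neq\o$, and $A=\o$ is immediate). Third, the minimality condition is automatic: any periodic partition $\mathcal{H}_0$ with $\mathcal{A}\preceq\mathcal{H}_0$ belongs to $\mathcal{S}$, and every element of the wedge $\langle\mathcal{A}\rangle$ is contained in some element of each factor, so $\langle\mathcal{A}\rangle\preceq\mathcal{H}_0$. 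Combined with the uniqueness argument, this proves the proposition.

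I do not expect a genuinely hard step here. The only point needing a little care is the passage from the two‑partition statement of Proposition~\ref{lemWedge} to a finite family; this is the routine induction mentioned above, and it goes through because the period of $\mathcal{H}_1\wedge\mathcal{H}_2$ is at most $\lcm(\per(\mathcal{H}_1),\per(\mathcal{H}_2))$, so iterating keeps the period finite. One should also keep the degenerate cases explicitly in mind (empty members of $\mathcal{A}$, or an $\mathcal{A}$ that imposes essentially no constraint, in which case $\langle\mathcal{A}\rangle$ turns out to be the partition into singletons), but these cause no real difficulty.
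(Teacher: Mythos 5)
Your proposal is correct and follows essentially the same route as the paper: define $\langle\mathcal{A}\rangle$ as the wedge of all periodic partitions coarser than $\mathcal{A}$, invoke Proposition~\ref{lemWedge} for periodicity, verify $\mathcal{A}\preceq\langle\mathcal{A}\rangle$ by intersecting chosen containing blocks, and get minimality and uniqueness from the wedge construction and mutual refinement. The extra details you supply (nonemptiness of the family via $\{\mathcal{X}\}$, and the finite induction extending the two-partition wedge lemma) are sensible elaborations of what the paper leaves implicit.
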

\begin{proof}
Define
\begin{equation}
\label{eqWedgelala}
\langle\mathcal{A}\rangle=\bigwedge_{\substack{\mathcal{H}\text{ is a periodic partition}\\\mathcal{A}\preceq\mathcal{H}}}\mathcal{H}.
\end{equation}
Proposition \ref{lemWedge} implies that $\langle\mathcal{A}\rangle$ is a periodic partition. Moreover, it follows from \eqref{eqWedgelala} and from the definition of the wedge operator (Definition \ref{defWedge}) that for every periodic partition $\mathcal{H}$ satisfying $\mathcal{A}\preceq\mathcal{H}$, we have $\langle\mathcal{A}\rangle\preceq\mathcal{H}$.

Now let $A\in\mathcal{A}$. We have:
\begin{itemize}
\item If $A=\o$, then $A\subset B$ for any $B\in\langle\mathcal{A}\rangle$.
\item If $A\neq\o$, then for every periodic partition $\mathcal{H}$ satisfying $\mathcal{A}\preceq\mathcal{H}$, choose $B_\mathcal{H}\in\mathcal{H}$ such that $A\subset B_\mathcal{H}$. Define
$$B=\bigcap_{\substack{\mathcal{H}\text{ is a periodic partition}\\\mathcal{A}\preceq\mathcal{H}}}B_\mathcal{H}.$$
Clearly, $A\subset B$ which implies that $B\neq \o$ and so $B\in\langle\mathcal{A}\rangle$ (see Definition \ref{defWedge}).
\end{itemize}
We conclude that for every $A\in\mathcal{A}$, there exists $B\in\langle\mathcal{A}\rangle$ such that $A\subset B$. Therefore, $\mathcal{A}\preceq\langle\mathcal{A}\rangle$.

Now let $\mathcal{H}'$ be a periodic partition satisfying the conditions of the proposition. I.e.,
\begin{itemize}
\item $\mathcal{A}\preceq \mathcal{H}'$.
\item For every periodic partition $\mathcal{H}$ of $\mathcal{X}$, if $\mathcal{A}\preceq\mathcal{H}$ then $\mathcal{H}'\preceq\mathcal{H}$.
\end{itemize}
Since $\mathcal{A}\preceq \langle\mathcal{A}\rangle$, we have $\mathcal{H}'\preceq \langle\mathcal{A}\rangle$. Similarly, since $\mathcal{A}\preceq \mathcal{H}'$ we have $\langle\mathcal{A}\rangle\preceq\mathcal{H}'$. Therefore, $\mathcal{H}'=\langle\mathcal{A}\rangle$ and so $\langle\mathcal{A}\rangle$ is unique.
\end{proof}

\begin{myrem}
It is possible to show that $\langle\mathcal{A}\rangle^{n\ast}=\langle\mathcal{A}^{n\ast}\rangle$ for every $n>0$, but we will not prove this here since we do not need this property for our purposes.
\end{myrem}

\begin{mycor}
\label{corInd}
Let $\ast$ be an ergodic operation on $\mathcal{X}$, and let $\mathcal{A}$ be a set of subsets of $\mathcal{X}$. There exists a unique stable partition $\langle\mathcal{A}\rangle$ which satisfies the following:
\begin{itemize}
\item $\mathcal{A}\preceq \langle\mathcal{A}\rangle$.
\item For every stable partition $\mathcal{H}$ of $\mathcal{X}$, if $\mathcal{A}\preceq\mathcal{H}$ then $\langle\mathcal{A}\rangle\preceq\mathcal{H}$.
\end{itemize}
In other words, $\langle\mathcal{A}\rangle$ is the finest stable partition that is coarser than $\mathcal{A}$. $\langle\mathcal{A}\rangle$ is called the \emph{stable partition generated} by $\mathcal{A}$.
\end{mycor}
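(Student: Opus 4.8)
The plan is to reduce the corollary to Proposition~\ref{propDefInd} by exploiting the fact that, for an ergodic operation, periodicity and stability of partitions coincide. First I would invoke Lemma~\ref{stasta}: since $\ast$ is ergodic, every periodic partition of $(\mathcal{X},\ast)$ is stable. Conversely, every stable partition is periodic by definition. Hence the class of stable partitions of $(\mathcal{X},\ast)$ is exactly the class of periodic partitions of $(\mathcal{X},\ast)$.

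Next I would apply Proposition~\ref{propDefInd} to the same set $\mathcal{A}$, obtaining the periodic partition $\langle\mathcal{A}\rangle$ which is the finest periodic partition coarser than $\mathcal{A}$. By the observation above, $\langle\mathcal{A}\rangle$ is in fact stable. Its two defining properties from Proposition~\ref{propDefInd} read: $\mathcal{A}\preceq\langle\mathcal{A}\rangle$, and for every periodic partition $\mathcal{H}$ with $\mathcal{A}\preceq\mathcal{H}$ we have $\langle\mathcal{A}\rangle\preceq\mathcal{H}$. Restricting the universal quantifier in the second property from periodic partitions to stable partitions gives exactly the two bullet points of the corollary; this establishes existence.

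For uniqueness, suppose $\mathcal{H}'$ is a stable partition satisfying the two conditions of the corollary. Then $\mathcal{H}'$ is periodic, and since $\mathcal{A}\preceq\langle\mathcal{A}\rangle$ with $\langle\mathcal{A}\rangle$ stable, the second condition of the corollary applied with $\mathcal{H}=\langle\mathcal{A}\rangle$ gives $\mathcal{H}'\preceq\langle\mathcal{A}\rangle$; symmetrically, from $\mathcal{A}\preceq\mathcal{H}'$ and the universal property of $\langle\mathcal{A}\rangle$ in Proposition~\ref{propDefInd} we get $\langle\mathcal{A}\rangle\preceq\mathcal{H}'$. Since $\preceq$ is antisymmetric on partitions, this forces $\mathcal{H}'=\langle\mathcal{A}\rangle$, exactly as in the proof of Proposition~\ref{propDefInd}.

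There is essentially no hard step: the whole content is the equivalence ``periodic $\Leftrightarrow$ stable under ergodicity'' supplied by Lemma~\ref{stasta}, after which the statement is a verbatim transcription of Proposition~\ref{propDefInd}. The only point requiring a moment's care is checking that shrinking the range of the quantifier in the minimality clause does not break the universal property, which is immediate because every stable partition is periodic. (Alternatively, one could re-derive existence directly by setting $\langle\mathcal{A}\rangle$ to be the wedge of all stable partitions coarser than $\mathcal{A}$ and invoking Corollary~\ref{corWedge} to see this wedge is stable, but the reduction above is shorter.)
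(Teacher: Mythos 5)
Your proposal is correct and matches the paper's own proof, which likewise derives the corollary from Proposition~\ref{propDefInd} together with Lemma~\ref{stasta} (ergodicity makes every periodic partition stable, so the two classes of partitions coincide). Nothing further is needed.
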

\begin{proof}
The corollary follows from Proposition \ref{propDefInd} and from the fact that if $\ast$ is an ergodic operation on $\mathcal{X}$ then every periodic partition is stable (see Lemma \ref{stasta}).
\end{proof}

\begin{myrem}
The ergodicity condition in Corollary \ref{corInd} cannot be replaced by irreducibility. Consider the irreducible (but not ergodic) operation $\ast$ of Remark \ref{remIrrInd}, and let $\mathcal{A}=\big\{\{0,1\},\{2,3\}\big\}$. Notice that there is no stable partition that is both coarser than $\mathcal{A}$ and finer than every stable partition that is coarser than $\mathcal{A}$. Therefore, if $\ast$ is not ergodic, the concept of ``generated stable partitions" is not always well defined.
\end{myrem}

Let $\mathcal{A}$ be a set of subsets of $\mathcal{X}$ which covers $\mathcal{X}$ and does not contain the empty set as an element. We have $\mathcal{A}\preceq\langle\mathcal{A}\rangle$ which implies that $\mathcal{A}^{n\ast}\preceq\langle\mathcal{A}\rangle^{n\ast}$ for every $n>0$. Can we find $n>0$ for which $\mathcal{A}^{n\ast}=\langle\mathcal{A}\rangle^{n\ast}$? The rest of this section is dedicated to show that the answer to this question is affirmative if $\ast$ is strongly ergodic. This property of strongly ergodic operations turns out to be important for polarization theory as we will see in Part II of this paper \cite{RajErgII}.

\begin{mydef}
Let $\mathcal{A}$ be a set of subsets of $\mathcal{X}$. We say that $\mathcal{A}$ is an $\mathcal{X}$\emph{-cover} if $\o\notin\mathcal{A}$ and $\displaystyle \mathcal{X}=\bigcup_{A\in\mathcal{A}} A$.

We say that an $\mathcal{X}$-cover $\mathcal{A}$ is \emph{periodic} if $\mathcal{A}^{n\ast}=\mathcal{A}$ for some $n>0$. The least integer $n>0$ satisfying $\mathcal{A}^{n\ast}=\mathcal{A}$ is called the \emph{period} of $\mathcal{A}$, and it is denoted by $\per(\mathcal{A})$.

We say that an $\mathcal{X}$-cover $\mathcal{A}$ is \emph{balanced} if for every $A_1,A_2\in\mathcal{A}$ we have $|A_1|=|A_2|$. An $\mathcal{X}$-cover $\mathcal{A}$ is said to be \emph{stable} if it is both periodic and balanced.
\end{mydef}

\begin{myprop}
\label{stastaerg}
If $\ast$ is a strongly ergodic operation on a set $\mathcal{X}$, then every periodic $\mathcal{X}$-cover is a stable partition.
\end{myprop}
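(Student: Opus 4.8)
The plan is to identify $\mathcal{A}$ with the partition $\mathcal{P}(\mathcal{A})$ generated by it. First, since a strongly ergodic operation is ergodic (the first point of Theorem \ref{thestrong}), all the ergodic-case results of this section are available: Lemma \ref{stastaerg1} tells us that $\mathcal{A}$ is balanced --- write $||\mathcal{A}||$ for the common size of its elements, and recall that every element of every $\mathcal{A}^{i\ast}$ also has size $||\mathcal{A}||$ --- and Lemma \ref{stasta1} tells us that $\mathcal{H}:=\mathcal{P}(\mathcal{A})$ is a stable partition. Since each $A\in\mathcal{A}$ is contained in a single $P_{\mathcal{A}}$-class, we have $\mathcal{A}\preceq\mathcal{H}$ and hence $||\mathcal{A}||\le ||\mathcal{H}||$. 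Everything then reduces to proving the reverse inequality $||\mathcal{H}||\le||\mathcal{A}||$: once we have it, each $A\in\mathcal{A}$ is contained in some $B\in\mathcal{H}$ with $|A|=||\mathcal{A}||=||\mathcal{H}||=|B|$, so $A=B\in\mathcal{H}$; thus the elements of $\mathcal{A}$ are pairwise disjoint, $\mathcal{A}$ is a partition, and being balanced (Lemma \ref{stastaerg1}) and periodic (by hypothesis) it is a stable partition (in fact $\mathcal{A}=\mathcal{P}(\mathcal{A})$).

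To bound $||\mathcal{H}||$, I would fix $A\in\mathcal{A}$ and a point $x\in A$, and take $n=\scon(\ast)\cdot\per(\mathcal{H})$, which is $\ge\scon(\ast)$ and a multiple of $\per(\mathcal{H})$, so that $\mathcal{H}^{n\ast}=\mathcal{H}$. By the second point of Theorem \ref{thestrong}, for every $H\in\mathcal{H}$ there exists an $\mathcal{H}$-sequence $\mathfrak{X}_{x,H}$ of length $n$ with $x\ast\mathfrak{X}_{x,H}=H$. As $\mathfrak{X}_{x,H}$ is a $\mathcal{P}(\mathcal{A})$-sequence, Lemma \ref{stastaerg4} gives $A\ast\mathfrak{X}_{x,H}\in\mathcal{A}^{n\ast}$, hence $|A\ast\mathfrak{X}_{x,H}|=||\mathcal{A}||$. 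Since $x\in A$, we have $H=x\ast\mathfrak{X}_{x,H}\subset A\ast\mathfrak{X}_{x,H}$, so $|H|\le||\mathcal{A}||$; and $H\in\mathcal{H}$ was arbitrary, so $||\mathcal{H}||\le||\mathcal{A}||$.

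The only step that is more than bookkeeping is the middle paragraph: the idea is to line up the shape of the strong-ergodicity hypothesis --- which provides $\mathcal{H}$-sequences of one fixed length $n$ steering a single point onto every prescribed block of $\mathcal{H}^{n\ast}=\mathcal{H}$ --- with Lemma \ref{stastaerg4}, which controls the image of an element of $\mathcal{A}$ under any $\mathcal{P}(\mathcal{A})$-sequence. Once these are combined the size comparison is immediate, and I do not expect any hidden difficulty; the equality $\mathcal{A}=\mathcal{P}(\mathcal{A})$ that falls out is presumably what is used downstream.
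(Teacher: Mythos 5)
Your proof is correct and follows essentially the same route as the paper: both rest on Lemma \ref{stasta1} (to get that $\mathcal{P}(\mathcal{A})$ is stable), Theorem \ref{thestrong}(2) (to steer a point $x\in A$ onto a whole block of $\mathcal{P}(\mathcal{A})$ with a $\mathcal{P}(\mathcal{A})$-sequence of length $n=\scon(\ast)\cdot\per$), and Lemmas \ref{stastaerg4} and \ref{stastaerg1} (to pin $|A\ast\mathfrak{X}|$ at $||\mathcal{A}||$), forcing each $A\in\mathcal{A}$ to coincide with its $P_{\mathcal{A}}$-class. The only cosmetic difference is that you compare the common block sizes $||\mathcal{A}||$ and $||\mathcal{P}(\mathcal{A})||$ globally, whereas the paper argues blockwise that $A\ast\mathfrak{X}=B$ for the specific class $B\supset A$.
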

\begin{proof}
See Appendix \ref{appD}.
\end{proof}

\begin{myrem}
\label{remIndStErgNotlala}
The strong ergodicity condition in Proposition \ref{stastaerg} cannot be replaced by ergodicity. Consider the following ergodic (but not strongly ergodic) operation:
\begin{center}
  \begin{tabular}{ | c || c | c | c | c | c | c | }
    \hline
    $\ast$ & 0 & 1 & 2 & 3 & 4 & 5 \\ \hline \hline
    0 & 3 & 3 & 3 & 0 & 0 & 0 \\ \hline
    1 & 4 & 4 & 4 & 1 & 1 & 1 \\ \hline
    2 & 5 & 5 & 5 & 2 & 2 & 2 \\ \hline
    3 & 1 & 1 & 1 & 5 & 5 & 5 \\ \hline
    4 & 2 & 2 & 2 & 3 & 3 & 3 \\ \hline
    5 & 0 & 0 & 0 & 4 & 4 & 4 \\ \hline
  \end{tabular}
\end{center}
The set $\big\{\{0,1\},\{0,2\},\{1,2\},\{3,4\},\{3,5\},\{4,5\}\big\}$ is a periodic $\mathcal{X}$-cover of period 1, but it is not a partition.
\end{myrem}

\begin{mythe}
\label{theinduced}
Let $\ast$ be a strongly ergodic operation on a set $\mathcal{X}$. For every $\mathcal{X}$-cover $\mathcal{A}$, there exists an integer $n<2^{2^{|\mathcal{X}|}}$ such that $\mathcal{A}^{n\ast}=\langle\mathcal{A}\rangle$ and $\per(\langle\mathcal{A}\rangle)$ divides $n$, i.e., $\mathcal{A}^{n\ast}=\langle\mathcal{A}\rangle=\langle\mathcal{A}\rangle^{n\ast}$.
\end{mythe}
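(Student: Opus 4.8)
The plan is to trap the orbit of $\mathcal{A}$ under the squaring operator $\mathcal{B}\mapsto\mathcal{B}^{\ast}=\mathcal{B}\ast\mathcal{B}$ inside a short eventual cycle, to recognise the cycle members as stable partitions via Proposition~\ref{stastaerg}, and then to pin down which one is $\langle\mathcal{A}\rangle$ by playing the inclusion $\mathcal{A}\preceq\langle\mathcal{A}\rangle$ against the minimality clause of Corollary~\ref{corInd}.

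First I would observe that $\mathcal{B}\mapsto\mathcal{B}^{\ast}$ is a self-map of the finite set of families of subsets of $\mathcal{X}$, that every $\mathcal{A}^{n\ast}$ is again an $\mathcal{X}$-cover (it covers $\mathcal{X}$ since $\mathcal{X}\ast b=\mathcal{X}$ for any $b$, and it omits $\o$ since $|A\ast B|\geq|A|\geq 1$ when $A,B$ are non-empty), and that there are at most $2^{2^{|\mathcal{X}|}}-1$ distinct $\mathcal{X}$-covers, the empty family not being one. Pigeonhole on $\mathcal{A}^{0\ast},\ldots,\mathcal{A}^{(2^{2^{|\mathcal{X}|}}-1)\ast}$ then yields indices with $\mathcal{A}^{a\ast}=\mathcal{A}^{b\ast}$ and $0\leq a<b\leq 2^{2^{|\mathcal{X}|}}-1$, so the orbit is eventually periodic: $\mathcal{A}^{(m+p)\ast}=\mathcal{A}^{m\ast}$ for all $m\geq a$, where $p=b-a$ and $a+p\leq 2^{2^{|\mathcal{X}|}}-1$. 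In particular, for each $m\geq a$, $\mathcal{A}^{m\ast}$ is a periodic $\mathcal{X}$-cover of period dividing $p$, hence by Proposition~\ref{stastaerg} a stable partition. Put $\mathcal{H}=\langle\mathcal{A}\rangle$ (it exists by Corollary~\ref{corInd}) and $q=\per(\mathcal{H})$.

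Next, for $m\geq a$ with $q\mid m$ I would establish $\mathcal{A}^{m\ast}=\mathcal{H}$ via two inclusions. The inclusion $\mathcal{A}^{m\ast}\preceq\mathcal{H}$ is immediate: applying $\ast$ $m$ times to $\mathcal{A}\preceq\mathcal{H}$ gives $\mathcal{A}^{m\ast}\preceq\mathcal{H}^{m\ast}$, and $\mathcal{H}^{m\ast}=\mathcal{H}$ since $q\mid m$ (Proposition~\ref{propPerPer}). For the reverse inclusion $\mathcal{H}\preceq\mathcal{A}^{m\ast}$, note that $\mathcal{A}^{m\ast}$ is a stable partition, so by the minimality clause of Corollary~\ref{corInd} it suffices to show that $\mathcal{A}^{m\ast}$ is coarser than $\mathcal{A}$ itself. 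Here I would route through the partition generated by $\mathcal{A}$: from $\mathcal{A}\preceq\mathcal{P}(\mathcal{A})$ one gets $\mathcal{A}^{m\ast}\preceq\mathcal{P}(\mathcal{A})^{m\ast}$, while Lemma~\ref{lemindkif} gives $\mathcal{P}(\mathcal{A}^{m\ast})=\mathcal{P}\big(\mathcal{P}(\mathcal{A})^{m\ast}\big)$; since $\mathcal{A}^{m\ast}$ is already a partition (equal to its own $\mathcal{P}$), this squeezes $\mathcal{A}^{m\ast}=\mathcal{P}(\mathcal{A})^{m\ast}$, and for $m$ additionally divisible by the eventual period of the orbit of $\mathcal{P}(\mathcal{A})$ — a divisor of $p$, again by Lemma~\ref{lemindkif} and Proposition~\ref{propPerPer} — one recovers a coarsening of $\mathcal{P}(\mathcal{A})$, hence of $\mathcal{A}$. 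Combining the two inclusions, $\mathcal{A}^{m\ast}=\mathcal{H}=\langle\mathcal{A}\rangle$ for every such $m$, and Proposition~\ref{propPerPer} then forces $q=\per(\mathcal{H})=\per(\mathcal{A}^{m\ast})$ to divide $p$.

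It remains to fit the exponent into the stated bound: let $n$ be the least common multiple of $q$ and of the auxiliary period above that is $\geq a$; since both divide $p$ we get $n\leq a+p-1<2^{2^{|\mathcal{X}|}}$, and by the previous paragraph $\mathcal{A}^{n\ast}=\langle\mathcal{A}\rangle$, while $q\mid n$ gives $\langle\mathcal{A}\rangle^{n\ast}=\langle\mathcal{A}\rangle$, which is exactly the claim. The step I expect to be genuinely delicate is the reverse inclusion $\mathcal{H}\preceq\mathcal{A}^{m\ast}$: one must show that the stable partition produced by Proposition~\ref{stastaerg} at the cycle is coarser not merely than $\langle\mathcal{A}\rangle$ but than $\mathcal{A}$ itself, so that the minimality of $\langle\mathcal{A}\rangle$ can be invoked; everything else is pigeonhole or routine manipulation of the generated-partition machinery of Lemma~\ref{lemindkif} and Lemma~\ref{lemsublem}.
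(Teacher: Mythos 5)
Your overall skeleton is the same as the paper's: pigeonhole on the $2^{2^{|\mathcal{X}|}}$ families of subsets to trap the orbit in a cycle, invoke Proposition~\ref{stastaerg} to see that the covers on the cycle are stable partitions, get $\langle\mathcal{A}\rangle\preceq\mathcal{A}^{n\ast}$ from the minimality clause once $\mathcal{A}\preceq\mathcal{A}^{n\ast}$ is known, and get $\mathcal{A}^{n\ast}\preceq\langle\mathcal{A}\rangle$ by applying $\ast$ to $\mathcal{A}\preceq\langle\mathcal{A}\rangle$ a suitable number of times. The arithmetic giving $n<2^{2^{|\mathcal{X}|}}$ with $\per(\langle\mathcal{A}\rangle)\mid n$ is also fine. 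The problem is the one step you yourself flag as delicate, namely $\mathcal{A}\preceq\mathcal{A}^{n\ast}$: the argument you sketch for it does not work. First, the ``squeeze'' $\mathcal{A}^{m\ast}=\mathcal{P}(\mathcal{A})^{m\ast}$ is not justified: from $\mathcal{A}^{m\ast}\preceq\mathcal{P}(\mathcal{A})^{m\ast}\preceq\mathcal{A}^{m\ast}$ with $\mathcal{A}^{m\ast}$ a partition you only get $\mathcal{A}^{m\ast}\subset\mathcal{P}(\mathcal{A})^{m\ast}$; nothing rules out $\mathcal{P}(\mathcal{A})^{m\ast}$ containing, in addition, proper subsets of blocks of $\mathcal{A}^{m\ast}$. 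Second, and more seriously, the final clause ``for $m$ divisible by the eventual period of the orbit of $\mathcal{P}(\mathcal{A})$ one recovers a coarsening of $\mathcal{P}(\mathcal{A})$'' is a non sequitur. Eventual periodicity only yields $\mathcal{P}(\mathcal{A})^{(m+r)\ast}=\mathcal{P}(\mathcal{A})^{m\ast}$ for large $m$; it never relates $\mathcal{P}(\mathcal{A})^{m\ast}$ back to $\mathcal{P}(\mathcal{A})$ itself unless $\mathcal{P}(\mathcal{A})$ already lies on the cycle, i.e.\ unless $\mathcal{P}(\mathcal{A})$ is a periodic partition --- and that is not known here: Lemma~\ref{stasta1} gives periodicity of $\mathcal{P}(\mathcal{A})$ only when $\mathcal{A}$ is a \emph{periodic} $\mathcal{X}$-cover, whereas your $\mathcal{A}$ is arbitrary. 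So the detour through $\mathcal{P}(\mathcal{A})$ merely renames the problem: you now need $\mathcal{P}(\mathcal{A})\preceq\mathcal{P}(\mathcal{A})^{m\ast}$, which is exactly the same kind of statement you set out to prove for $\mathcal{A}$, and no argument for it is given.

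The missing idea is the paper's permutation trick, which works directly on $\mathcal{A}$ and makes the detour unnecessary. Fix $A\in\mathcal{A}$ and any $a\in\mathcal{X}$; since $\ast$ is uniformity preserving, $\pi(x)=x\ast a$ is a permutation of $\mathcal{X}$, so $\pi^{k}=\mathrm{id}$ for some $k>0$. Because every $\mathcal{A}^{i\ast}$ covers $\mathcal{X}$, one can choose $X_i\in\mathcal{A}^{i\ast}$ with $a\in X_i$ for $0\leq i<kn$, and then
$$A=\pi^{kn}(A)\subset(\ldots((A\ast X_0)\ast X_1)\ldots\ast X_{kn-1})\in\mathcal{A}^{kn\ast}=\mathcal{A}^{n\ast},$$
the last equality holding because $\mathcal{A}^{n\ast}$ is (by the cycle argument) a stable partition whose period divides $n$. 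This gives $\mathcal{A}\preceq\mathcal{A}^{n\ast}$, after which your two-inclusion argument closes as intended. With this replacement your proof becomes essentially the paper's.
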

\begin{proof}
$2^{|\mathcal{X}|}$ is the number of subsets of $\mathcal{X}$, and $2^{2^{|\mathcal{X}|}}$ is the number of sets of subsets of $\mathcal{X}$. Thus, the sets $\mathcal{A}^{i\ast}$ for $0\leq i\leq 2^{2^{|\mathcal{X}|}}$ cannot be pairwise different. Therefore, there exist at least two integers $0\leq n_1<n_2\leq 2^{2^{|\mathcal{X}|}}$ such that $\mathcal{A}^{n_1\ast}=\mathcal{A}^{n_2\ast}$. Since $(\mathcal{A}^{n_1\ast})^{(n_2-n_1)\ast}=\mathcal{A}^{n_1\ast}$, $\mathcal{A}^{n_1\ast}$ is a stable partition by Proposition \ref{stastaerg}. There exists an integer $n_3$ such that $0\leq n_3<n_2-n_1$ and $n_3\equiv -n_1 \bmod (n_2-n_1)$. Let $n=n_1+n_3$, we have $n_1\leq n<n_2\leq 2^{2^{|\mathcal{X}|}}$, $\mathcal{A}^{n\ast}=(\mathcal{A}^{n_1\ast})^{n_3\ast}$ is a stable partition, and $n_2-n_1$ divides $n$. But since $(\mathcal{A}^{n_1\ast})^{(n_2-n_1)\ast}=\mathcal{A}^{n_2\ast}=\mathcal{A}^{n_1\ast}$, $\per(\mathcal{A}^{n_1\ast})$ divides $n_2-n_1$ which divides $n$. On the other hand, $\per(\mathcal{A}^{n_1\ast})=\per\big((\mathcal{A}^{n_1\ast}) ^{n_3\ast}\big)=\per(\mathcal{A}^{n\ast})$. We conclude that $\per(\mathcal{A}^{n\ast})$ divides $n$.

Now let $A\in\mathcal{A}$ and let $a$ be an arbitrary element of $\mathcal{X}$. Define the mapping $\pi:\mathcal{X}\rightarrow\mathcal{X}$ as $\pi(x)=x\ast a$. Since $\pi$ is a permutation, there exists $k>0$ such that $\pi^k(x)=x$ for every $x\in\mathcal{X}$. Now for every $0\leq i< kn$, let $X_i\in\mathcal{A}^{i\ast}$ be such that $a\in X_i$ and let $\mathfrak{X}=(X_i)_{0\leq i< kn}$. We have:
\begin{itemize}
\item $A\ast\mathfrak{X}\in\mathcal{A}^{kn\ast}$.
\item $A\subset A\ast\mathfrak{X}$ since $\pi^{kn}(x)=x$ for every $x\in\mathcal{X}$.
\item $\mathcal{A}^{kn\ast}=(\mathcal{A}^{n\ast})^{(k-1)n\ast}= \mathcal{A}^{n\ast}$ since $\mathcal{A}^{n\ast}$ is a stable partition whose period divides $n$ (and so divides $(k-1)n$ as well).
\end{itemize}
We conclude that $A\subset A\ast \mathfrak{X}\in\mathcal{A}^{n\ast}$. Therefore, $A\preceq\mathcal{A}^{n\ast}$. Now since $\mathcal{A}^{n\ast}$ is a stable partition (hence it is also periodic), we must have $\langle\mathcal{A}\rangle\preceq \mathcal{A}^{n\ast}$ by Proposition \ref{propDefInd}. On the other hand, we have:
\begin{itemize}
\item Since $\mathcal{A}\preceq\langle\mathcal{A}\rangle$ then $\mathcal{A}^{np\ast}\preceq\langle\mathcal{A}\rangle^{np\ast}$, where $p=\per(\langle\mathcal{A}\rangle)$.
\item $\mathcal{A}^{np\ast}=(\mathcal{A}^{n\ast})^{(p-1)n\ast}\stackrel{(a)}{=}\mathcal{A}^{n\ast}$, where (a) follows from the fact that $n$ divides $\per(\mathcal{A}^{n\ast})$.
\item $\langle\mathcal{A}\rangle^{np\ast}=\langle\mathcal{A}\rangle$ since $p=\per(\langle\mathcal{A}\rangle)$.
\end{itemize}
We conclude that $\mathcal{A}^{n\ast}\preceq\langle\mathcal{A}\rangle$, which implies that $\mathcal{A}^{n\ast}=\langle\mathcal{A}\rangle$ as we have already shown that $\langle\mathcal{A}\rangle\preceq\mathcal{A}^{n\ast}$.
\end{proof}

\begin{myrem}
The strong ergodicity condition in Theorem \ref{theinduced} cannot be replaced by ergodicity. Consider the ergodic operation $\ast$ of Remark \ref{remIndStErgNotlala}, and consider the the $\mathcal{X}$-cover $$\mathcal{A}=\big\{\{0,1\},\{0,2\},\{1,2\},\{3,4\},\{3,5\},\{4,5\}\big\},$$ which is not a partition. We have the following:
\begin{itemize}
\item It is easy to see that $\mathcal{A}^{n\ast}=\mathcal{A}$ for every $n\geq 0$.
\item Since $\mathcal{A}$ is not a partition, $\mathcal{A}^{n\ast}=\mathcal{A}$ is not a partition for every $n\geq0$.
\end{itemize}
Therefore, $\mathcal{A}^{n\ast}\neq\langle\mathcal{A}\rangle^{n\ast}$ for every $n\geq0$.
\end{myrem}

\section{Product of binary operations}

\label{sec8}

\begin{mydef}
Let $\mathcal{X}_1,\ldots,\mathcal{X}_m$ be $m$ sets, and let $\ast_1,\ldots,\ast_m$ be $m$ binary operations on $\mathcal{X}_1,\ldots,\mathcal{X}_m$ respectively. We define the product of $\ast_1,\ldots,\ast_m$, denoted $\ast=\ast_1\otimes\ldots\otimes\ast_m$, as the binary operation $\ast$ on $\mathcal{X}_1\times\ldots\times\mathcal{X}_m$ defined by:
$$(x_1,x_2,\ldots,x_m)\ast(x_1',x_2',\ldots,x_m')=(x_1\ast_1x_1',x_2\ast_2x_2',\ldots,x_m\ast_mx_m').$$
\end{mydef}

\begin{myprop}
\label{trivialprod}
Let $\ast_1,\ldots,\ast_m$ be $m$ binary operations on $\mathcal{X}_1,\ldots,\mathcal{X}_m$ respectively. Let $\mathcal{X}=\mathcal{X}_1\times\ldots\times\mathcal{X}_m$ and $\ast=\ast_1\otimes\ldots\otimes\ast_m$. We have:
\begin{enumerate}
\item $\ast$ is uniformity preserving if and only if $\ast_1,\ldots,\ast_m$ are uniformity preserving.
\item If $\ast$ is irreducible then $\ast_1,\ldots,\ast_m$ are irreducible. The converse is not necessarily true.
\item $\ast$ is ergodic if and only if $\ast_1,\ldots,\ast_m$ are ergodic. Moreover, $\con(\ast)=\max\{\con(\ast_1),\ldots,\con(\ast_m)\}$.
\end{enumerate}
\end{myprop}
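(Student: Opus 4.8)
The plan is to treat the three parts in order, using two ingredients from the excerpt: the characterization of uniformity preserving operations via bijectivity of the maps $\pi_b$, and point 8 of Proposition \ref{lemerg}, which says that an ergodic operation connects all elements in $s$ steps for \emph{every} $s\geq\con$. Throughout I would exploit that $\ast=\ast_1\otimes\cdots\otimes\ast_m$ acts coordinatewise, so that a sequence of multiplications in $\mathcal{X}=\mathcal{X}_1\times\cdots\times\mathcal{X}_m$ projects, coordinate by coordinate, onto sequences of multiplications in the factors, and conversely sequences chosen in the factors can be assembled into a sequence in $\mathcal{X}$.

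\textbf{Part 1.} First I would note that $\pi_{(b_1,\ldots,b_m)}=\pi_{b_1}^{\ast_1}\times\cdots\times\pi_{b_m}^{\ast_m}$ as a self-map of $\mathcal{X}$, and that a Cartesian product of self-maps of (nonempty, finite) sets is a bijection if and only if every factor is a bijection. Hence if every $\ast_i$ is uniformity preserving, each $\pi_{b_i}^{\ast_i}$ is bijective, so $\pi_b$ is bijective for all $b$ and $\ast$ is uniformity preserving. Conversely, if $\ast$ is uniformity preserving, fix $i$ and $b_i\in\mathcal{X}_i$, pick arbitrary $b_j\in\mathcal{X}_j$ for $j\neq i$, and read off bijectivity of the $i$-th factor $\pi_{b_i}^{\ast_i}$ from bijectivity of $\pi_b$.

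\textbf{Part 2.} For the forward implication, suppose $\ast$ is irreducible, fix $i$, and let $a_i,b_i\in\mathcal{X}_i$. Choose arbitrary $c_j\in\mathcal{X}_j$ ($j\neq i$) and form $a,b\in\mathcal{X}$ that agree with these in every coordinate $j\neq i$ and have $i$-th coordinates $a_i$ and $b_i$. Irreducibility gives $l>0$ and $x_0,\ldots,x_{l-1}\in\mathcal{X}$ with $(\ldots((a\ast x_0)\ast x_1)\ldots\ast x_{l-1})=b$; projecting onto the $i$-th coordinate yields $a_i\stackrel{\ast_i,l}{\longrightarrow}b_i$, so $\ast_i$ is irreducible. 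For the failure of the converse I would give a small period obstruction: take $\mathcal{X}_1=\mathcal{X}_2=\mathbb{Z}_2$ with $\ast_1=\ast_2$ the uniformity preserving operation $x\ast y=x+1$, which is irreducible of period $2$; in the product, from $(0,0)$ one only ever reaches elements of the form $(t,t)$, so $(0,1)$ is unreachable and $\ast$ is not irreducible.

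\textbf{Part 3.} If $\ast$ is ergodic with $\con(\ast)=l$, the projection argument of Part 2, now applied with this single fixed $l$ for all pairs $a_i,b_i$, shows each $\mathcal{X}_i$ is $\ast_i$-connectable in $l$ steps, so every $\ast_i$ is ergodic with $\con(\ast_i)\leq l$; hence $\max_i\con(\ast_i)\leq\con(\ast)$. Conversely, suppose every $\ast_i$ is ergodic; by Part 1, $\ast$ is uniformity preserving. Set $d=\max_i\con(\ast_i)$. By point 8 of Proposition \ref{lemerg}, for each $i$ all elements of $\mathcal{X}_i$ are $\ast_i$-connectable in exactly $d$ steps (this is the one step that needs the "for every $s\geq\con(\ast_i)$" strength, so that a \emph{common} step count works across all coordinates). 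Given $a=(a_1,\ldots,a_m)$ and $b=(b_1,\ldots,b_m)$, pick for each $i$ witnesses $x_0^{(i)},\ldots,x_{d-1}^{(i)}\in\mathcal{X}_i$ realizing $a_i\stackrel{\ast_i,d}{\longrightarrow}b_i$ and set $x_j=(x_j^{(1)},\ldots,x_j^{(m)})$; coordinatewise evaluation gives $a\stackrel{\ast,d}{\longrightarrow}b$, so $\ast$ is ergodic with $\con(\ast)\leq d$. Combining the two inequalities yields $\con(\ast)=\max\{\con(\ast_1),\ldots,\con(\ast_m)\}$. The only non-mechanical points are aligning all coordinates to the \emph{same} number of steps in the converse of Part 3 — precisely what point 8 of Proposition \ref{lemerg} supplies — and producing the period-based counterexample for the converse of Part 2.
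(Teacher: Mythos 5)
Your proposal is correct and follows essentially the same route as the paper: coordinatewise projection/assembly for parts 2 and 3, the identical period-2 counterexample $x\ast y=x+1$ on $\mathbb{Z}_2\times\mathbb{Z}_2$, and the use of the ``all $s\geq\con$'' strengthening to align the step counts across coordinates. The only cosmetic difference is that you phrase part 1 via the factorization of $\pi_b$ into the $\pi_{b_i}$ rather than arguing surjectivity directly, which is equivalent.
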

\begin{proof}
1) Suppose that $\ast_1,\ldots,\ast_m$ are uniformity preserving. Fix $b=(b_1,\ldots,b_m)\in\mathcal{X}$ and define the mapping $\pi_b:\mathcal{X}\rightarrow\mathcal{X}$ as $\pi_b(x)=x\ast b$ for all $x\in\mathcal{X}$. Now let $y=(y_1,\ldots,y_m)\in\mathcal{X}$. For every $1\leq i\leq m$, $\ast_i$ is uniformity preserving and so there exists $x_i\in\mathcal{X}_i$ such that $x_i\ast_i b_i=y_i$. Define $x=(x_1,\ldots,x_m)$. We have $\pi_b(x)= x\ast b=y$. Therefore, $\pi_b$ is surjective which implies that it is bijective. Since this is true for every $b\in\mathcal{X}$, $\ast$ is uniformity preserving.

Conversely, suppose that $\ast$ is uniformity preserving and let $1\leq i\leq m$. Fix $b_i\in\mathcal{X}_i$ and define the mapping $\pi_{b_i}:\mathcal{X}_i\rightarrow\mathcal{X}_i$ as $\pi_{b_i}(x_i)=x_i\ast_i b_i$ for all $x_i\in\mathcal{X}_i$. Now let $y_i\in\mathcal{X}_i$ and choose arbitrarily $y_j\in\mathcal{X}_j$ for each $j\neq i$. Define $y=(y_1,\ldots,y_m)\in\mathcal{X}$. Since $\ast$ is uniformity preserving, there exists $x=(x_1,\ldots,x_m)\in\mathcal{X}$ such that $y=x\ast b$ which implies that $y_i=x_i\ast_i b_i$. Therefore, $\pi_{b_i}$ is surjective which implies that it is bijective. Since this is true for every $b_i\in\mathcal{X}_i$, $\ast_i$ is uniformity preserving.

\vspace*{2mm}
2) Suppose that $\ast$ is irreducible and fix $1\leq i\leq m$. Let $a_i,b_i\in\mathcal{X}_i$ and choose arbitrarily $a_j,b_j\in\mathcal{X}_j$ for each $j\neq i$. Define $a=(a_1,\ldots,a_m)\in\mathcal{X}$ and $b=(b_1,\ldots,b_m)\in\mathcal{X}$. Since $\ast$ is irreducible, $a$ is $\ast$-connectable to $b$ and so there exists $l>0$ and $x_0,\ldots,x_{l-1}\in\mathcal{X}$ such that $b=(\ldots((a\ast x_0)\ast x_1)\ldots\ast x_{l-1})$. For each $0\leq k<l$, let $x_k=(x_{1,k},\ldots,x_{m,k})$ and so $x_{i,k}\in\mathcal{X}_i$. It is easy to see that we have $b_i=(\ldots((a_i\ast_i x_{i,0})\ast_i x_{i,1})\ldots\ast_i x_{i,l-1})$. Therefore, $a_i$ is $\ast_i$-connectable to $b_i$ for all $a_i,b_i\in\mathcal{X}_i$, hence $\ast_i$ is irreducible.

In order to see that the converse is not necessarily true, let $\mathcal{X}_1=\mathcal{X}_2=\{0,1\}$ and define $x\ast_1 y=x\ast_2 y= x\oplus 1$ for every $x,y\in\{0,1\}$. It is easy to see that $\ast_1$ and $\ast_2$ are irreducible and $\per(\ast_1)=\per(\ast_2)=2$. Let $\ast=\ast_1\otimes\ast_2$. It is easy to see that $(0,0)$ is not $\ast$-connectable to $(0,1)$. Therefore, $\ast$ is not irreducible.

\vspace*{2mm}
3) Suppose that $\ast_1,\ldots,\ast_m$ are ergodic and let $d=\max\{\con(\ast_1),\ldots,\con(\ast_m)\}$. Let $a=(a_1,\ldots,a_m)\in\mathcal{X}$ and $b=(b_1,\ldots,b_m)\in\mathcal{X}$. For each $1\leq i\leq m$, since $d\geq \con(\ast_i)$ there exist $x_{i,0},\ldots,x_{i,d-1}\in\mathcal{X}_i$ such that $b_i=(\ldots((a_i\ast_i x_{i,0})\ast_i x_{i,1})\ldots\ast_i x_{i,d-1})$. For each $0\leq k< d$ define $x_k=(x_{1,k},\ldots,x_{m,k})\in\mathcal{X}$. It is easy to see that $b=(\ldots((a\ast x_0)\ast x_1)\ldots\ast x_{d-1})$. Therefore, all the elements of $\mathcal{X}$ are $\ast$-connectable to each other in $d$ steps. We conclude that $\ast$ is ergodic and $\con(\ast)\leq d=\max\{\con(\ast_1),\ldots,\con(\ast_m)\}$.

Conversely, suppose that $\ast$ is ergodic and let $1\leq i\leq m$. Let $a_i,b_i\in\mathcal{X}_i$ and choose arbitrarily $a_j,b_j\in\mathcal{X}_j$ for each $j\neq i$. Define $a=(a_1,\ldots,a_m)\in\mathcal{X}$ and $b=(b_1,\ldots,b_m)\in\mathcal{X}$. Since $\ast$ is ergodic, $a$ is $\ast$-connectable to $b$ in $\con(\ast)$ steps. It follows that $a_i$ is $\ast_i$-connectable to $b_i$ in $\con(\ast)$ steps (we use the same argument that we used for the irreducible case). Since this is true for every $a_i,b_i\in\mathcal{X}_i$, we conclude that $\ast_i$ is ergodic and $\con(\ast_i)\leq\con(\ast)$. We conclude that $\max\{\con(\ast_1),\ldots,\con(\ast_m)\}\leq\con(\ast)$ which implies that $\con(\ast)=\max\{\con(\ast_1),\ldots,\con(\ast_m)\}$ since we have  $\con(\ast)\leq\max\{\con(\ast_1),\ldots,\con(\ast_m)\}$ from the previous paragraph.
\end{proof}

\begin{mydef}
Let $\mathcal{H}_1,\ldots,\mathcal{H}_m$ be $m$ stable partitions of $(\mathcal{X}_1,\ast_1),\ldots,(\mathcal{X}_m,\ast_m)$ respectively. The product of $\mathcal{H}_1,\ldots,\mathcal{H}_m$, denoted $\mathcal{H}=\mathcal{H}_1\otimes\ldots\otimes\mathcal{H}_m$ is defined as
$$\mathcal{H}=\{A_1\times \ldots\times A_m:\; A_1\in\mathcal{H}_1,\ldots,A_m\in\mathcal{H}_m\}.$$
It is easy to see that $\mathcal{H}$ is a stable partition of $(\mathcal{X}_1\times\ldots\times\mathcal{X}_m,\ast_1\otimes\ldots\otimes\ast_m)$ of period $\per(\mathcal{H})=\lcm\{\per(\mathcal{H}_1),\ldots,\per(\mathcal{H}_m)\}$.
\end{mydef}

\begin{mythe}
Let $\ast_1$ and $\ast_2$ be two ergodic operations on $\mathcal{X}_1$ and $\mathcal{X}_2$ respectively. Let $\mathcal{X}=\mathcal{X}_1\times\mathcal{X}_2$ and $\ast=\ast_1\otimes\ast_2$ (thus, $\ast$ is ergodic). Let $\mathcal{H}$ be a stable partition of $\mathcal{X}$. There exist two unique stable partitions $\mathcal{L}_1:=\mathcal{L}_1(\mathcal{H})$ and $\mathcal{U}_1:=\mathcal{U}_1(\mathcal{H})$ of $\mathcal{X}_1$ and two unique stable partitions $\mathcal{L}_2:=\mathcal{L}_2(\mathcal{H})$ and $\mathcal{U}_2:=\mathcal{U}_2(\mathcal{H})$ of $\mathcal{X}_2$ such that:
\begin{itemize}
\item $\mathcal{L}_1\preceq\mathcal{U}_1$, $\mathcal{L}_2\preceq\mathcal{U}_2$ and $\frac{\|\mathcal{U}_1\|}{\|\mathcal{L}_1\|}=\frac{\|\mathcal{U}_2\|}{\|\mathcal{L}_2\|}=n$ for some integer $n>0$.
\item $\mathcal{L}_1\otimes \mathcal{L}_2\preceq \mathcal{H}\preceq\mathcal{U}_1\otimes \mathcal{U}_2$.
\item For every $H\in\mathcal{H}$, there exist $n$ disjoint sets $H_{1,1},\ldots,H_{1,n}\in \mathcal{L}_1$ and $n$ disjoint sets $H_{2,1},\ldots,H_{2,n}\in \mathcal{L}_2$ such that:
\begin{itemize}
\item $H_{1,1}\cup \ldots \cup H_{1,n}\in\mathcal{U}_1$.
\item $H_{2,1}\cup \ldots \cup H_{2,n}\in\mathcal{U}_2$.
\item $H=(H_{1,1}\times H_{2,1})\cup \ldots \cup (H_{1,n}\times H_{2,n})$.
\end{itemize}
Therefore, $\|\mathcal{H}\|=n\cdot\|\mathcal{L}_1\|\cdot\|\mathcal{L}_2\|=\|\mathcal{L}_1\|\cdot\|\mathcal{U}_2\|=\|\mathcal{U}_1\|\cdot\|\mathcal{L}_2\|$.
\end{itemize}
The integer $n$ is called the \emph{correlation} of $\mathcal{H}$ and is denoted by $\cor_{\ast_1,\ast_2}(\mathcal{H})$.

We also have $\mathcal{L}_1(\mathcal{H})^{i\ast_1}=\mathcal{L}_1(\mathcal{H}^{i\ast})$, $\mathcal{L}_2(\mathcal{H})^{i\ast_2}=\mathcal{L}_2(\mathcal{H}^{i\ast})$, $\mathcal{U}_1(\mathcal{H})^{i\ast_1}=\mathcal{U}_1(\mathcal{H}^{i\ast})$ and $\mathcal{U}_2(\mathcal{H})^{i\ast_2}=\mathcal{U}_2(\mathcal{H}^{i\ast})$ for every $i\geq 0$.
\label{theprod}
\end{mythe}
\begin{proof}
See Appendix \ref{appE}.
\end{proof}

\begin{myrem}
If $\mathcal{H}=\mathcal{H}_1\otimes\mathcal{H}_2$, then $\mathcal{L}_1(\mathcal{H})=\mathcal{U}_1(\mathcal{H})=\mathcal{H}_1$, $\mathcal{L}_2(\mathcal{H})=\mathcal{U}_2(\mathcal{H})=\mathcal{H}_2$ and $\cor_{\ast_1,\ast_2}(\mathcal{H})=1$.
\end{myrem}

\begin{myex}
The following figure shows an element $H$ of a stable partition $\mathcal{H}$ of correlation $n=\cor_{\ast_1,\ast_2}(\mathcal{H})=3$. $H$ is represented by the regions that are enclosed in thick lines.
\begin{figure}[h]
\centering
\begin{center}
\begin{tikzpicture}[xscale =2.5,yscale=1]
\draw[line width=.5] (0,0)--(0,3);
\draw[line width=.5] (1,0)--(1,3);
\draw[line width=.5] (2,0)--(2,3);
\draw[line width=.5] (3,0)--(3,3);

\draw[line width=.5] (0,0)--(3,0);
\draw[line width=.5] (0,1)--(3,1);
\draw[line width=.5] (0,2)--(3,2);
\draw[line width=.5] (0,3)--(3,3);

\draw [line width=2,pattern=north east lines, pattern color=yellow] (0,0) rectangle (1,1);
\draw [line width=2,pattern=north east lines, pattern color=yellow] (1,2) rectangle (2,3);
\draw [line width=2,pattern=north east lines, pattern color=yellow] (2,1) rectangle (3,2);

\node at (.5,.5) {$H_{1,1}\times H_{2,1}$};
\node at (1.5,2.5) {$H_{1,2}\times H_{2,2}$};
\node at (2.5,1.5) {$H_{1,3}\times H_{2,3}$};

\draw [decorate,decoration={brace,amplitude=8pt,mirror},xshift=0pt,yshift=0pt]
(.05,-.15) -- (.95,-.15);
\node at (.5,-.7) {\small $H_{1,1}\in\mathcal{L}_1(\mathcal{H})$};
\draw [decorate,decoration={brace,amplitude=8pt,mirror},xshift=0pt,yshift=0pt]
(1.05,-.15) -- (1.95,-.15);
\node at (1.5,-.7) {\small $H_{1,2}\in\mathcal{L}_1(\mathcal{H})$};
\draw [decorate,decoration={brace,amplitude=8pt,mirror},xshift=0pt,yshift=0pt]
(2.05,-.15) -- (2.95,-.15);
\node at (2.5,-.7) {\small $H_{1,3}\in\mathcal{L}_1(\mathcal{H})$};

\draw [decorate,decoration={brace,amplitude=10pt},xshift=0pt,yshift=0pt]
(.05,3.15) -- (2.95,3.15);
\node at (1.5,3.75) {\small $H_{1,1}\cup H_{1,2}\cup H_{1,3}\in\mathcal{U}_1(\mathcal{H})$};

\draw [decorate,decoration={brace,amplitude=5pt},xshift=0pt,yshift=0pt]
(-.05,.05) -- (-.05,.95);
\draw [decorate,decoration={brace,amplitude=5pt},xshift=0pt,yshift=0pt]
(-.05,1.05) -- (-.05,1.95);
\draw [decorate,decoration={brace,amplitude=5pt},xshift=0pt,yshift=0pt]
(-.05,2.05) -- (-.05,2.95);
\node at (-.57,.5) {\small $H_{2,1}\in\mathcal{L}_2(\mathcal{H})$};
\node at (-.57,1.5) {\small $H_{2,3}\in\mathcal{L}_2(\mathcal{H})$};
\node at (-.57,2.5) {\small $H_{2,2}\in\mathcal{L}_2(\mathcal{H})$};

\draw [decorate,decoration={brace,amplitude=10pt,mirror},xshift=0pt,yshift=0pt]
(3.05,0.1) -- (3.05,2.95);
\node at (4.09,1.5) {\small $H_{2,1}\cup H_{2,2}\cup H_{2,3}\in\mathcal{U}_2(\mathcal{H})$};
\end{tikzpicture}
\captionsetup{justification=centering}
\caption{$H=(H_{1,1}\times H_{2,1})\cup(H_{1,2}\times H_{2,2})\cup (H_{1,3}\times H_{2,3})\in\mathcal{H}$.}
\end{center}
\end{figure}
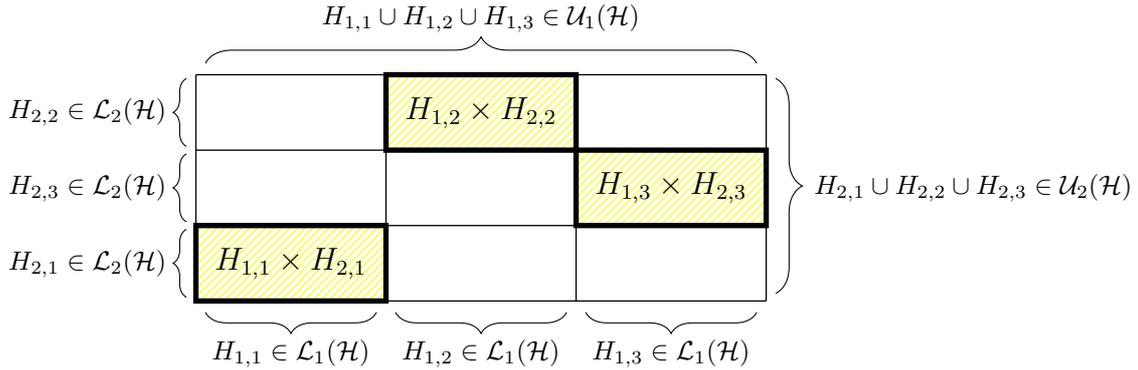
\end{myex}

\begin{myex}
Let $\mathcal{X}_1=\mathcal{X}_2=\{0,1\}$ and define $\ast_1$ and $\ast_2$ as $x\ast_1 y= x\ast_2 y= x\oplus y$ for every $x,y\in\{0,1\}$. Let $\mathcal{X}=\mathcal{X}_1\times\mathcal{X}_2$, $\ast=\ast_1\otimes\ast_2$ and $\mathcal{H}=\big\{\{(0,0),(1,1)\},\{(0,1),(1,0)\}\big\}$. It is easy to see that $\mathcal{H}$ is a stable partition of $(\mathcal{X},\ast)$. We have:
\begin{itemize}
\item $\mathcal{L}_1(\mathcal{H})=\mathcal{L}_2(\mathcal{H})=\big\{\{0\},\{1\}\big\}$.
\item $\mathcal{U}_1(\mathcal{H})=\mathcal{U}_2(\mathcal{H})=\big\{\{0,1\}\big\}$.
\item $n=\cor_{\ast_1,\ast_2}(\mathcal{H})=2$.
\end{itemize} 
For $H=\{(0,1),(1,0)\}\in\mathcal{H}$, we have:
\begin{itemize}
\item $H_{1,1}=\{0\}$, $H_{1,2}=\{1\}$ and $H_{1,1}\cup H_{1,2}=\{0,1\}\in\mathcal{U}_1(\mathcal{H})$.
\item $H_{2,1}=\{1\}$, $H_{2,2}=\{0\}$ and $H_{2,1}\cup H_{2,2}=\{0,1\}\in\mathcal{U}_2(\mathcal{H})$.
\item $(H_{1,1}\times H_{2,1})\cup(H_{1,2}\times H_{2,2})=\{(0,1),(1,0)\}=H$.
\end{itemize} 
\end{myex}

%\begin{mydef}
%Let $(\mathcal{X}_1,\ast_1),\ldots,(\mathcal{X}_m,\ast_m)$ be ergodic, let $\mathcal{X}=\mathcal{X}_1\times\ldots\times\mathcal{X}_m$ and let $\ast=\ast_1\otimes\ldots\otimes\ast_m$ (so $\ast$ is also ergodic). Let $\mathcal{H}$ be a stable partition of $\mathcal{X}$. Let $A=\{i_1,\ldots,i_{|A|}\}$ and $B=\{j_1,\ldots,j_{|B|}\}$ be two non-empty subsets of $\{1,\ldots,m\}$ which form a non-trivial partition (i.e., $A\cup B=\{1,\ldots,m\}$, $A\cap B=\o$, $A\neq\o$ and $B\neq\o$). Let $\mathcal{X}_A=\mathcal{X}_{i_1}\times\ldots\times\mathcal{X}_{i_{|A|}}$ and $\mathcal{X}_B=\mathcal{X}_{j_1}\times\ldots\times\mathcal{X}_{j_{|B|}}$, and let $f_{A,B}:\mathcal{X}\rightarrow \mathcal{X}_A\times\mathcal{X}_B$ be defined as $f_{A,B}(x_1,\ldots,x_m)=\big((x_{i_1},\ldots,x_{i_{|A|}}),(x_{j_1},\ldots,x_{j_{|B|}})\big)$. Clearly, $\mathcal{H}_{A,B}=\{f_{A,B}(H):\; H\in\mathcal{H}\}$ is a stable partition of $(\mathcal{X}_A\ast\mathcal{X}_B,\ast_A\otimes\ast_B)$, where $\ast_A=\ast_{i_1}\otimes\ldots\otimes\ast_{i_{|A|}}$ and $\ast_B=\ast_{j_1}\otimes\ldots\otimes\ast_{j_{|B|}}$. The \emph{correlation} of the stable partition $\mathcal{H}$ relative to the partition $\{A,B\}$ is $\cor_{A,B}(\mathcal{H}):=\cor_{\ast_A,\ast_B}\big(\mathcal{H}_{A,B}\big)$ (see the definition of $\cor_{\ast_A,\ast_B}$ in Theorem \ref{theprod}).
%\end{mydef}

\vspace*{3mm}

Theorem \ref{theprod} shows that the stable partitions of the product of two ergodic operations have a very particular structure. This structure will be useful to prove the following theorem:

\begin{mythe}
\label{theprodstrong}
Let $\ast_1,\ldots,\ast_m$ be $m\geq 2$ binary operations on $\mathcal{X}_1,\ldots,\mathcal{X}_m$ respectively. Let $\mathcal{X}=\mathcal{X}_1\times\ldots\times\mathcal{X}_m$ and $\ast=\ast_1\otimes\ldots\otimes\ast_m$. Then $\ast$ is strongly ergodic if and only if $\ast_1,\ldots,\ast_m$ are strongly ergodic.
\end{mythe}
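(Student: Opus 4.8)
The plan is to reduce everything to the case $m=2$ by induction, using $\ast_1\otimes\cdots\otimes\ast_m=(\ast_1\otimes\cdots\otimes\ast_{m-1})\otimes\ast_m$ on $(\mathcal{X}_1\times\cdots\times\mathcal{X}_{m-1})\times\mathcal{X}_m$: the two-factor statement plus the inductive hypothesis then gives that $\ast$ is strongly ergodic iff $\ast_1\otimes\cdots\otimes\ast_{m-1}$ and $\ast_m$ are, iff all of $\ast_1,\ldots,\ast_m$ are. So fix $\ast=\ast_1\otimes\ast_2$ on $\mathcal{X}=\mathcal{X}_1\times\mathcal{X}_2$. Strong ergodicity of any factor forces ergodicity (Theorem~\ref{thestrong}, part~1), and by Proposition~\ref{trivialprod}, part~3, $\ast$ is ergodic iff $\ast_1,\ast_2$ are; hence in both directions all operations in sight are ergodic, and I will use the criterion of Theorem~\ref{thestrong}, part~3: an ergodic operation is strongly ergodic iff $\mathcal{K}_{\mathcal{G}}=\mathcal{G}$ for every stable partition $\mathcal{G}$.

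For the forward direction, assume $\ast$ is strongly ergodic and let $\mathcal{H}_1$ be a stable partition of $(\mathcal{X}_1,\ast_1)$ (the case of $\ast_2$ is symmetric). Consider $\mathcal{H}:=\mathcal{H}_1\otimes\{\mathcal{X}_2\}$, a stable partition of $(\mathcal{X},\ast)$ with $\mathcal{H}^{i\ast}=\mathcal{H}_1^{i\ast_1}\otimes\{\mathcal{X}_2\}$, so every $\mathcal{H}$-sequence is of the form $\mathfrak{X}=(Y_i\times\mathcal{X}_2)_i$ with $\mathfrak{Y}:=(Y_i)_i$ an $\mathcal{H}_1$-sequence, $P_1\big((x_1,x_2)\ast\mathfrak{X}\big)=x_1\ast_1\mathfrak{Y}$, and $\mathfrak{X}$ is $\mathcal{H}$-augmenting iff $\mathfrak{Y}$ is $\mathcal{H}_1$-augmenting. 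Consequently $(a,b)\,R_{\mathcal{H}}\,(a',b)$ implies $a\,R_{\mathcal{H}_1}\,a'$. If $\mathcal{K}_{\mathcal{H}_1}\neq\mathcal{H}_1$, pick $a,a'$ in one block of $\mathcal{H}_1$ that are not $R_{\mathcal{H}_1}$-related and fix any $b\in\mathcal{X}_2$; then $(a,b),(a',b)$ lie in one block of $\mathcal{H}$ but are not $R_{\mathcal{H}}$-related, so $\mathcal{K}_{\mathcal{H}}\neq\mathcal{H}$, contradicting strong ergodicity of $\ast$. Hence $\mathcal{K}_{\mathcal{H}_1}=\mathcal{H}_1$ and $\ast_1$ is strongly ergodic.

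For the converse, assume $\ast_1,\ast_2$ are strongly ergodic and let $\mathcal{H}$ be a stable partition of $(\mathcal{X},\ast)$; put $\mathcal{K}:=\mathcal{K}_{\mathcal{H}}$, a stable partition with $\mathcal{K}\preceq\mathcal{H}$ (Theorem~\ref{theres}). Applying Theorem~\ref{theprod} to both $\mathcal{H}$ and $\mathcal{K}$ gives the partitions $\mathcal{L}_1(\cdot),\mathcal{U}_1(\cdot)$ of $\mathcal{X}_1$ and $\mathcal{L}_2(\cdot),\mathcal{U}_2(\cdot)$ of $\mathcal{X}_2$, the identities $\mathcal{L}_1(\mathcal{G}^{i\ast})=\mathcal{L}_1(\mathcal{G})^{i\ast_1}$ etc., and the size identity $||\mathcal{G}||=||\mathcal{L}_1(\mathcal{G})||\cdot||\mathcal{U}_2(\mathcal{G})||$. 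Since $\mathcal{K}\preceq\mathcal{H}$ one has $\mathcal{L}_1(\mathcal{K})\preceq\mathcal{L}_1(\mathcal{H})$ and $\mathcal{U}_2(\mathcal{K})\preceq\mathcal{U}_2(\mathcal{H})$, so it suffices to prove the reverse refinements, since then $||\mathcal{K}||=||\mathcal{L}_1(\mathcal{H})||\cdot||\mathcal{U}_2(\mathcal{H})||=||\mathcal{H}||$ and $\mathcal{K}=\mathcal{H}$. For $\mathcal{U}_2(\mathcal{H})\preceq\mathcal{U}_2(\mathcal{K})$: fix $H\in\mathcal{H}$; as $\mathcal{U}_2(\mathcal{H})$ is its own residue ($\ast_2$ strongly ergodic), Lemma~\ref{lemres1} yields a $\mathcal{U}_2(\mathcal{H})$-augmenting $\mathfrak{Y}$ with $c\ast_2\mathfrak{Y}=P_2(H)$ for all $c\in P_2(H)$ (after passing to a power of $\mathfrak{Y}$, assume $\per(\mathcal{H})$ divides $|\mathfrak{Y}|$); using $\mathcal{U}_2(\mathcal{H}^{i\ast})=\{P_2(H'):H'\in\mathcal{H}^{i\ast}\}$, lift $\mathfrak{Y}$ to an $\mathcal{H}$-repeatable $\mathfrak{X}=(X_i)_i$ with $P_2(X_i)=Y_i$, and raise it to a power $\mathfrak{X}^t$ that is $\mathcal{H}$-augmenting (Lemma~\ref{lemaugm}). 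For $z\in H$, on one hand $z\ast\mathfrak{X}^t$ is contained in the $R_{\mathcal{H}}$-class $K$ of $z$ (Lemma~\ref{lemres1}) and in $H$ (since $\mathfrak{X}^t$ is augmenting and $\per(\mathcal{H})$ divides $|\mathfrak{X}^t|$), so $P_2(z\ast\mathfrak{X}^t)\subseteq P_2(K)\subseteq P_2(H)$; on the other hand $P_2(z\ast\mathfrak{X}^t)=P_2(z)\ast_2\mathfrak{Y}^t=\big(P_2(z)\ast_2\mathfrak{Y}\big)\ast_2\mathfrak{Y}^{t-1}$ has size $\geq|P_2(H)|$. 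Hence $P_2(K)=P_2(H)\in\mathcal{U}_2(\mathcal{K})$, as required.

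The harder claim is $\mathcal{L}_1(\mathcal{H})\preceq\mathcal{L}_1(\mathcal{K})$, and this is where the interaction between the two operations is delicate. Fix a slice $A=P_{1|x_2}(H)\in\mathcal{L}_1(\mathcal{H})$. Since $\mathcal{L}_1(\mathcal{H})$ is its own residue, Lemma~\ref{lemres1} gives an $\mathcal{L}_1(\mathcal{H})$-augmenting $\mathfrak{Y}=(Y_i)_i$ with $c\ast_1\mathfrak{Y}=A$ for all $c\in A$ (again assume $\per(\mathcal{H})$ divides $|\mathfrak{Y}|$); writing each $Y_i=P_{1|w_i}(H_i)$ with $H_i\in\mathcal{H}^{i\ast}$ and setting $\mathfrak{X}:=(H_i)_i$, any first-coordinate element path through $\mathfrak{Y}$ lifts, via the elements $(y_i,w_i)\in H_i$, to an element path through $\mathfrak{X}$ whose second coordinate ends at $x_2^{\ast}:=x_2\ast_2(w_0,\dots,w_{|\mathfrak{Y}|-1})$ \emph{independently of its first-coordinate endpoints}; since $c\ast_1\mathfrak{Y}=A$ for every $c\in A$, this gives $A\times\{x_2^{\ast}\}\subseteq(a,x_2)\ast\mathfrak{X}$ for every $a\in A$. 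Using Lemma~\ref{lemseqoaoa}, append an $\mathcal{H}$-repeatable $\mathfrak{V}$ carrying the second coordinate from $x_2^{\ast}$ back to $x_2$ while leaving the first coordinate fixed; then $(a,x_2)\ast(\mathfrak{X},\mathfrak{V})\supseteq A\times\{x_2\}$ for every $a\in A$, so by iteration $(a,x_2)\ast(\mathfrak{X},\mathfrak{V})^t\supseteq A\times\{x_2\}$ for all $t$. Choosing $t$ with $(\mathfrak{X},\mathfrak{V})^t$ $\mathcal{H}$-augmenting and applying Lemma~\ref{lemres1} to a fixed point $(a_0,x_2)$, its $R_{\mathcal{H}}$-class $K$ satisfies $A\times\{x_2\}\subseteq(a_0,x_2)\ast(\mathfrak{X},\mathfrak{V})^t\subseteq K$, i.e.\ $A\subseteq P_{1|x_2}(K)\in\mathcal{L}_1(\mathcal{K})$; as $A$ was arbitrary this finishes the proof. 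I expect the main obstacle to be exactly this last step: constructing one $\mathcal{H}$-augmenting sequence that simultaneously reproduces a prescribed augmenting motion of $\mathcal{L}_1(\mathcal{H})$ in the first coordinate and returns the second coordinate to its starting value, so that an entire first-coordinate slice is absorbed into a single residue block — the representation $Y_i=P_{1|w_i}(H_i)$ and the coordinate-return sequence from Lemma~\ref{lemseqoaoa} are precisely what make this possible, whereas the $\mathcal{U}_2$ claim needed only the size monotonicity $|B\ast_2 C|\geq|B|$ and containment in the block $H$.
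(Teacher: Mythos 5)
Your proposal is correct and follows essentially the same route as the paper: the same induction down to $m=2$, the same projection of $\mathcal{H}_1\otimes\{\mathcal{X}_2\}$ for necessity, and for sufficiency the same machinery (Theorem~\ref{theprod}, lifting augmenting sequences of $\mathcal{U}_2(\mathcal{H})$ and $\mathcal{L}_1(\mathcal{H})$ to $\mathcal{H}$-sequences via slices, Lemma~\ref{lemseqoaoa} to restore the other coordinate, and the residue criterion of Theorem~\ref{thestrong}). The only real difference is bookkeeping: the paper absorbs all of $H$ into $(x_1,x_2)\ast\mathfrak{X}$ with one compound augmenting sequence built from $\mathcal{U}_1$ and $\mathcal{L}_2$ and compares $|H|$ with $|K|$ directly, whereas you prove $\mathcal{L}_1(\mathcal{H})=\mathcal{L}_1(\mathcal{K}_{\mathcal{H}})$ and $\mathcal{U}_2(\mathcal{H})=\mathcal{U}_2(\mathcal{K}_{\mathcal{H}})$ separately and conclude $||\mathcal{K}_{\mathcal{H}}||=||\mathcal{H}||$ from the product formula — both are sound.
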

\begin{proof}
See Appendix \ref{appE}.
\end{proof}

\begin{mynot}
\label{notIdent}
Let $\ast_1,\ldots,\ast_m$ be $m\geq 2$ ergodic operations on $\mathcal{X}_1,\ldots,\mathcal{X}_m$ respectively. Define $\mathcal{X}=\mathcal{X}_1\times\ldots\times\mathcal{X}_m$ and $\ast=\ast_1\otimes\ldots\otimes\ast_m$.  Let $A=\{i_1,\ldots,i_{|A|}\}$ and $B=\{j_1,\ldots,j_{|B|}\}$ be two non-empty subsets of $I_m:=\{1,\ldots,m\}$ which form a non-trivial partition (i.e., $A\cup B=I_m$, $A\cap B=\o$, $A\neq\o$ and $B\neq\o$). Define $\mathcal{X}_A=\mathcal{X}_{i_1}\times\ldots\times\mathcal{X}_{i_{|A|}}$, $\mathcal{X}_B=\mathcal{X}_{j_1}\times\ldots\times\mathcal{X}_{j_{|B|}}$, $\ast_A=\ast_{i_1}\otimes\ldots\otimes\ast_{i_{|A|}}$ and $\ast_B=\ast_{j_1}\otimes\ldots\otimes\ast_{j_{|B|}}$. Define the mapping $f_{A,B}:\mathcal{X}\rightarrow \mathcal{X}_A\times\mathcal{X}_B$ as $f_{A,B}(x_1,\ldots,x_m)=\big((x_{i_1},\ldots,x_{i_{|A|}}),(x_{j_1},\ldots,x_{j_{|B|}})\big)$. Clearly, $f_{A,B}$ is a bijection. We call $f_{A,B}$ the \emph{canonical bijection} between $\mathcal{X}$ and $\mathcal{X}_A\times\mathcal{X}_B$. Throughout this paper, we identify $(\mathcal{X},\ast)$ with $(\mathcal{X}_A\ast \mathcal{X}_B,\ast_A\otimes\ast_B)$ through the canonical bijection $f_{A,B}$.

Let $\mathcal{H}$ be a stable partition of $(\mathcal{X},\ast)$. Since $\ast_A$ and $\ast_B$ are ergodic, there are two unique stable partitions $\mathcal{L}_A(\mathcal{H})\preceq\mathcal{U}_A(\mathcal{H})$ of $(\mathcal{X}_A,\ast_A)$ and two unique stable partitions $\mathcal{L}_B(\mathcal{H})\preceq\mathcal{U}_B(\mathcal{H})$ of $(\mathcal{X}_B,\ast_B)$ and $n_A=\cor_{\ast_A,\ast_B}(\mathcal{H})=\cor_{\ast_B,\ast_A}(\mathcal{H})=n_B>0$ satisfying the conditions of Theorem \ref{theprod}. We adopt the convention that $\mathcal{U}_{I_m}(\mathcal{H})=\mathcal{H}$.

If $A=\{i\}$ contains only one element $i$, we denote $\mathcal{L}_{\{i\}}(\mathcal{H})$ and $\mathcal{U}_{\{i\}}(\mathcal{H})$ by $\mathcal{L}_i(\mathcal{H})$ and $\mathcal{U}_i(\mathcal{H})$ respectively.
\end{mynot}

\begin{mynot}
For each $A\subset B\subset I_m=\{1,\ldots,m\}$ we define the mapping $P_{B\rightarrow A}:\mathcal{X}_B\rightarrow\mathcal{X}_A$ as $P_{B\rightarrow A}(x_{j_1},\ldots,x_{j_{|B|}})=(x_{i_1},\ldots,x_{i_{|A|}})$, where $A=\{i_1,\ldots,i_{|A|}\}\subset\{j_1,\ldots,j_{|B|}\}=B$. If $A$ contains only one element $i$, we denote $P_{B\rightarrow \{i\}}$ by $P_{B\rightarrow i}$.

Now for each $A\subsetneq B\subset I_m=\{1,\ldots,m\}$, each $x_{B\setminus A}\in\mathcal{X}_{B\setminus A}$ and each $X_B\subset \mathcal{X}_B$, we define the set $P_{B\rightarrow A | x_{B\setminus A}}(X_B):=\{x_A\in\mathcal{X}_A:\; (x_A,x_{B\setminus A})\in X_B\}\subset \mathcal{X}_A$. If $A$ contains only one element $i$, we denote $P_{B\rightarrow \{i\}|x_{B\setminus \{i\}}}(X_B)$ by $P_{B\rightarrow i|x_{B\setminus i}}(X_B)$.

It is easy to see that if $A\subset B \subset C\subset \{1,\ldots,m\}$ then we have $P_{B\rightarrow A}\circ P_{C\rightarrow B}=P_{C\rightarrow A}$. Similarly, if $A\subsetneq B\subsetneq C\subset \{1,\ldots,m\}$, then for each $X_C\in\mathcal{X}_C$, each $x_{C\setminus B}\in\mathcal{X}_{C\setminus B}$ and each $x_{B\setminus A}\in\mathcal{X}_{B\setminus A}$, we have $P_{B\rightarrow A| x_{B\setminus A}}\big(P_{C\rightarrow B| x_{C\setminus B}}(X_C)\big) = P_{C\rightarrow A| (x_{C\setminus B},x_{B\setminus A})}(X_C)$. Here we have $(x_{C\setminus B},x_{B\setminus A})\in \mathcal{X}_{C\setminus A}$ since we are identifying $\mathcal{X}_{C\setminus A}$ with $\mathcal{X}_{C\setminus B}\times \mathcal{X}_{B\setminus A}$ through the canonical bijection.
\end{mynot}

\begin{myrem}
\label{remremudsjbd}
Let $\mathcal{H}$ be a stable partition of $(\mathcal{X},\ast)=(\mathcal{X}_1\times\ldots\times\mathcal{X}_m,\ast_1\otimes\ldots\otimes\ast_m)$, where $\ast$ is ergodic. If $A\subset I_m=\{1,\ldots,m\}$, we have from Definition \ref{defProjGG}:
$$\mathcal{U}_A(\mathcal{H})=\{P_{I_m\rightarrow A}(H):\; H\in\mathcal{H}\}. $$
and if $A\subsetneq I_m=\{1,\ldots,m\}$, we have from Definition \ref{defProjEE}:
\begin{align*}
\mathcal{L}_A(\mathcal{H})&=\{P_{I_m\rightarrow A|x_{I_m\setminus A}}(H):\; H\in\mathcal{H},\; x_{I_m\setminus A}\in \mathcal{X}_{I_m\setminus A},\; P_{I_m\rightarrow A|x_{I_m\setminus A}}(H)\neq\o\}\\
&\stackrel{(a)}{=}\{P_{I_m\rightarrow A|x_{I_m\setminus A}}(H):\; H\in\mathcal{H},\; x_{I_m\setminus A}\in P_{I_m\rightarrow I_m\setminus A}(H)\}.
\end{align*}
(a) follows from the fact that $P_{I_m\rightarrow A|x_{I_m\setminus A}}(H)\neq\o$ if and only if $x_{I_m\setminus A}\in P_{I_m\rightarrow I_m\setminus A}(H)$.
\end{myrem}

\begin{myprop}
Let $\ast_1,\ldots,\ast_m$ be $m\geq 2$ ergodic operations on $\mathcal{X}_1,\ldots,\mathcal{X}_m$ respectively. Define $\mathcal{X}=\mathcal{X}_1\times\ldots\times\mathcal{X}_m$ and $\ast=\ast_1\otimes\ldots\otimes\ast_m$. Let $\mathcal{H}$ be a stable partition of $(\mathcal{X},\ast)$ and $A\subsetneq B\subsetneq I_m=\{1,\ldots,m\}$. Then $\mathcal{L}_A\big(\mathcal{L}_B(\mathcal{H})\big)= \mathcal{L}_A(\mathcal{H})$ and $\mathcal{U}_A\big(\mathcal{U}_B(\mathcal{H})\big) =\mathcal{U}_A(\mathcal{H})$.
\label{proplamlemlak}
\end{myprop}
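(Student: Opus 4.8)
The plan is to reduce the whole statement to the two composition identities for the projection maps recorded just before the proposition — namely $P_{B\rightarrow A}\circ P_{C\rightarrow B}=P_{C\rightarrow A}$ for $A\subset B\subset C$, and $P_{B\rightarrow A|x_{B\setminus A}}\big(P_{C\rightarrow B|x_{C\setminus B}}(X_C)\big)=P_{C\rightarrow A|(x_{C\setminus B},x_{B\setminus A})}(X_C)$ for $A\subsetneq B\subsetneq C$ — combined with the explicit descriptions of $\mathcal{U}_A$ and $\mathcal{L}_A$ from Remark \ref{remremudsjbd}. First I would check that the objects in the statement are well-defined: since each $\ast_i$ is ergodic, Proposition \ref{trivialprod} gives that $\ast_B$ is ergodic, and Theorem \ref{theprod} (applied with the canonical identification $(\mathcal{X},\ast)\cong(\mathcal{X}_B\times\mathcal{X}_{I_m\setminus B},\ast_B\otimes\ast_{I_m\setminus B})$) shows that $\mathcal{L}_B(\mathcal{H})$ and $\mathcal{U}_B(\mathcal{H})$ are stable partitions of $(\mathcal{X}_B,\ast_B)$; as $A\subsetneq B$, the partitions $\mathcal{L}_A(\mathcal{L}_B(\mathcal{H}))$ and $\mathcal{U}_A(\mathcal{U}_B(\mathcal{H}))$ then make sense, and as $A\subsetneq I_m$, so do $\mathcal{L}_A(\mathcal{H})$ and $\mathcal{U}_A(\mathcal{H})$.

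The $\mathcal{U}$ equality is essentially a one-liner. Using the description $\mathcal{U}_A(\mathcal{G})=\{P_{\cdot\rightarrow A}(G):G\in\mathcal{G}\}$ twice and the first composition identity with $C=I_m$, one gets $\mathcal{U}_A(\mathcal{U}_B(\mathcal{H}))=\{P_{B\rightarrow A}(H'):H'\in\mathcal{U}_B(\mathcal{H})\}=\{P_{B\rightarrow A}(P_{I_m\rightarrow B}(H)):H\in\mathcal{H}\}=\{P_{I_m\rightarrow A}(H):H\in\mathcal{H}\}=\mathcal{U}_A(\mathcal{H})$.

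For the $\mathcal{L}$ equality I would prove both inclusions directly (either one suffices, since both sides are partitions of $\mathcal{X}_A$, but both are short). For one direction, take a set $P_{B\rightarrow A|x_{B\setminus A}}(H')\in\mathcal{L}_A(\mathcal{L}_B(\mathcal{H}))$ with $H'\in\mathcal{L}_B(\mathcal{H})$ and $x_{B\setminus A}\in P_{B\rightarrow B\setminus A}(H')$, and write $H'=P_{I_m\rightarrow B|x_{I_m\setminus B}}(H)$ for suitable $H\in\mathcal{H}$, $x_{I_m\setminus B}\in P_{I_m\rightarrow I_m\setminus B}(H)$. The second composition identity (with $C=I_m$) turns $P_{B\rightarrow A|x_{B\setminus A}}(H')$ into $P_{I_m\rightarrow A|(x_{I_m\setminus B},x_{B\setminus A})}(H)$, where $(x_{I_m\setminus B},x_{B\setminus A})$ is read as an element of $\mathcal{X}_{I_m\setminus A}$ via the canonical bijection; since this set is nonempty, the fact noted in Remark \ref{remremudsjbd}(a) gives $(x_{I_m\setminus B},x_{B\setminus A})\in P_{I_m\rightarrow I_m\setminus A}(H)$, so it lies in $\mathcal{L}_A(\mathcal{H})$. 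Conversely, given $P_{I_m\rightarrow A|x_{I_m\setminus A}}(H)\in\mathcal{L}_A(\mathcal{H})$ with $x_{I_m\setminus A}\in P_{I_m\rightarrow I_m\setminus A}(H)$, decompose $x_{I_m\setminus A}=(x_{I_m\setminus B},x_{B\setminus A})$ and pick $x_A$ with $(x_A,x_{I_m\setminus A})\in H$; reordering coordinates shows $x_{I_m\setminus B}\in P_{I_m\rightarrow I_m\setminus B}(H)$ and $x_{B\setminus A}\in P_{B\rightarrow B\setminus A}(H')$ for $H':=P_{I_m\rightarrow B|x_{I_m\setminus B}}(H)\in\mathcal{L}_B(\mathcal{H})$, and the same identity recovers $P_{B\rightarrow A|x_{B\setminus A}}(H')=P_{I_m\rightarrow A|x_{I_m\setminus A}}(H)$, placing the latter in $\mathcal{L}_A(\mathcal{L}_B(\mathcal{H}))$. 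Hence $\mathcal{L}_A(\mathcal{L}_B(\mathcal{H}))=\mathcal{L}_A(\mathcal{H})$.

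The only real difficulty here is notational hygiene rather than mathematical content: one must track the conditioning variables and the implicit coordinate reshufflings coming from the canonical identification $\mathcal{X}_{I_m\setminus A}\cong\mathcal{X}_{I_m\setminus B}\times\mathcal{X}_{B\setminus A}$ carefully throughout. Once the two composition identities are invoked with the right $A\subsetneq B\subsetneq C=I_m$, the argument is purely formal.
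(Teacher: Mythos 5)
Your proposal is correct and follows essentially the same route as the paper: both arguments reduce to the composition identities for $P_{B\rightarrow A}$ and $P_{B\rightarrow A|x_{B\setminus A}}$ together with the descriptions of $\mathcal{U}_A$ and $\mathcal{L}_A$ in Remark \ref{remremudsjbd}, the paper merely writing the $\mathcal{L}$ case as one chain of set equalities where you split it into two inclusions. The well-definedness check you add at the start is implicit in the paper via Notation \ref{notIdent}, so nothing substantive differs.
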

\begin{proof}
From Remark \ref{remremudsjbd} we have:
\begin{align*}
\mathcal{U}_A\big(\mathcal{U}_B(\mathcal{H})\big)&=\{P_{B\rightarrow A}(H_B):\; H_B\in\mathcal{U}_B(\mathcal{H})\}=\big\{P_{B\rightarrow A}\big(P_{I_m\rightarrow B}(H)\big):\; H\in\mathcal{H}\big\}\\
&=\{P_{I_m\rightarrow A}(H):\; H\in\mathcal{H}\}=\mathcal{U}_A(\mathcal{H}).
\end{align*}

On the other hand, we have:
\begin{align*}
\mathcal{L}_A\big(\mathcal{L}_B(\mathcal{H})\big)&\stackrel{(a)}{=}\{P_{B\rightarrow A|x_{B\setminus A}}(H_B):\; H_B\in\mathcal{L}_B(\mathcal{H}),\; x_{B\setminus A}\in \mathcal{X}_{B\setminus A},\; P_{B\rightarrow A|x_{B\setminus A}}(H_B)\neq o\}\\
&\begin{aligned}
\stackrel{(b)}{=}\Big\{P_{B\rightarrow A|x_{B\setminus A}}\big(P_{I_m\rightarrow B|x_{I_m\setminus B}}(H)\big):\; H\in &\mathcal{H},\; x_{I_m\setminus B}\in\mathcal{X}_{I_m\setminus B},\; P_{I_m\rightarrow B|x_{I_m\setminus B}}(H)\neq\o,\\
&x_{B\setminus A}\in \mathcal{X}_{B\setminus A},\; P_{B\rightarrow A|x_{B\setminus A}}\big(P_{I_m\rightarrow B|x_{I_m\setminus B}}(H)\big)\neq o\Big\}
\end{aligned}
\\
&
\begin{aligned}
\stackrel{(c)}{=}\Big\{P_{B\rightarrow A|x_{B\setminus A}}\big(P_{I_m\rightarrow B|x_{I_m\setminus B}}(H)\big):\; H\in\mathcal{H},\; x_{I_m\setminus B}\in &\mathcal{X}_{I_m\setminus B},\; x_{B\setminus A} \in \mathcal{X}_{B\setminus A},\\
&\;\;\;\;P_{B\rightarrow A|x_{B\setminus A}}\big(P_{I_m\rightarrow B|x_{I_m\setminus B}}(H)\big)\neq o\Big\}
\end{aligned}
\\
&
\begin{aligned}
\stackrel{(d)}{=}\Big\{P_{I_m\rightarrow A|(x_{I_m\setminus B},x_{B\setminus A})}(H):\; H\in\mathcal{H},\; x_{I_m\setminus B}\in\mathcal{X}_{I_m\setminus B},\; x_{B\setminus A}\in &\mathcal{X}_{B\setminus A},\\
& P_{I_m\rightarrow A|(x_{I_m\setminus B},x_{B\setminus A})}(H) \neq \o\Big\}
\end{aligned}
\\
&\stackrel{(e)}{=}\{P_{I_m\rightarrow A|x_{I_m\setminus A}}(H):\; H\in\mathcal{H},\; x_{I_m\setminus A}\in\mathcal{X}_{I_m\setminus A},\; P_{I_m\rightarrow A|x_{I_m\setminus A}}(H)\neq o\}=\mathcal{L}_A(\mathcal{H}).
\end{align*}
(a) and (b) follow from Remark \ref{remremudsjbd}. (c) follows from the fact that $P_{B\rightarrow A|x_{B\setminus A}}\big(P_{I_m\rightarrow B|x_{I_m\setminus B}}(H)\big)\neq \o$ entails $P_{I_m\rightarrow B|x_{I_m\setminus B}}(H)\neq\o$. (d) follows from the fact that $P_{B\rightarrow A|x_{B\setminus A}}\big(P_{I_m\rightarrow B|x_{I_m\setminus B}}(H)\big)=P_{I_m\rightarrow A|(x_{I_m\setminus B},x_{B\setminus A})}(H)$. (e) follows from the fact that $I_m\setminus A=(I_m\setminus B)\cup(B\setminus A)$ and so $\mathcal{X}_{I_m\setminus A}$ is identified to $\mathcal{X}_{I_m\setminus B}\times \mathcal{X}_{B\setminus A}$.
\end{proof}

\begin{mydef}
\label{defCanFac}
Let $\ast_1,\ldots,\ast_m$ be $m\geq 2$ ergodic operations on $\mathcal{X}_1,\ldots,\mathcal{X}_m$ respectively. Let $\mathcal{X}=\mathcal{X}_1\times\ldots\times\mathcal{X}_m$ and $\ast=\ast_1\otimes\ldots\otimes\ast_m$. Let $\mathcal{H}$ be a stable partition of $(\mathcal{X},\ast)$. The \emph{canonical factorization} of $\mathcal{H}$ is the sequence $(\mathcal{H}_i)_{1\leq i\leq m}$ defined as:
\begin{itemize}
\item $\mathcal{H}_m=\mathcal{U}_m(\mathcal{H})$.
\item For each $1\leq i< m$, $\mathcal{H}_i=\mathcal{U}_i\big(\mathcal{L}_{I_i}(\mathcal{H})\big)$, where $I_i=\{1,\ldots,i\}$.
\end{itemize}
\end{mydef}

\begin{mylem}
Let $\ast_1,\ldots,\ast_m$ be $m\geq 2$ ergodic operations on $\mathcal{X}_1,\ldots,\mathcal{X}_m$ respectively. Let $\mathcal{X}=\mathcal{X}_1\times\ldots\times\mathcal{X}_m$ and $\ast=\ast_1\otimes\ldots\otimes\ast_m$. Let $\mathcal{H}$ be a stable partition of $(\mathcal{X},\ast)$. If $(\mathcal{H}_i)_{1\leq i\leq m}$ is the canonical factorization of $\mathcal{H}$, then $(\mathcal{H}_i)_{1\leq i\leq m-1}$ is the canonical factorization of $\mathcal{L}_{I_{m-1}}(\mathcal{H})$, where $I_{m-1}=\{1,\ldots,m-1\}$.
\label{lembdhsbhsssdfgaaaq1}
\end{mylem}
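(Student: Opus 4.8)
The plan is to unwind the two instances of Definition~\ref{defCanFac} and reduce the whole statement to the \emph{tower law} for the operators $\mathcal{L}_A$ established in Proposition~\ref{proplamlemlak}. First I would record that $\mathcal{H}':=\mathcal{L}_{I_{m-1}}(\mathcal{H})$ is genuinely a stable partition of $(\mathcal{X}_{I_{m-1}},\ast_{I_{m-1}})$: this is part of Theorem~\ref{theprod} read via the canonical bijection of Notation~\ref{notIdent} for the non-trivial partition $\{I_{m-1},\{m\}\}$ of $I_m$, and $\ast_{I_{m-1}}=\ast_1\otimes\cdots\otimes\ast_{m-1}$ is ergodic by Proposition~\ref{trivialprod}, so the canonical factorization $(\mathcal{H}'_i)_{1\leq i\leq m-1}$ of $\mathcal{H}'$ is well defined. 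It then suffices to prove $\mathcal{H}'_i=\mathcal{H}_i$ for every $1\leq i\leq m-1$.

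For the top index $i=m-1$, Definition~\ref{defCanFac} applied inside $(\mathcal{X}_{I_{m-1}},\ast_{I_{m-1}})$ gives $\mathcal{H}'_{m-1}=\mathcal{U}_{m-1}(\mathcal{H}')=\mathcal{U}_{m-1}\big(\mathcal{L}_{I_{m-1}}(\mathcal{H})\big)$, which is exactly $\mathcal{H}_{m-1}$ by Definition~\ref{defCanFac} applied inside $(\mathcal{X},\ast)$; here I would just note that both occurrences of $\mathcal{U}_{m-1}$ are taken with respect to the same ambient product $\mathcal{X}_{I_{m-1}}$ (projection onto the last coordinate), so the equality is a literal identity requiring no argument. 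This case already settles the whole lemma when $m=2$, since then there is no index below $m-1$.

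For $1\leq i<m-1$, I would invoke Proposition~\ref{proplamlemlak} with $A=I_i$ and $B=I_{m-1}$: since $i<m-1<m$ we have $I_i\subsetneq I_{m-1}\subsetneq I_m$, so the hypotheses of that proposition are met and $\mathcal{L}_{I_i}(\mathcal{H}')=\mathcal{L}_{I_i}\big(\mathcal{L}_{I_{m-1}}(\mathcal{H})\big)=\mathcal{L}_{I_i}(\mathcal{H})$. Applying $\mathcal{U}_i$ to both sides and using Definition~\ref{defCanFac} on each side yields $\mathcal{H}'_i=\mathcal{U}_i\big(\mathcal{L}_{I_i}(\mathcal{H}')\big)=\mathcal{U}_i\big(\mathcal{L}_{I_i}(\mathcal{H})\big)=\mathcal{H}_i$, which is what we want.

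The argument is essentially bookkeeping, and the closest thing to an obstacle is making sure the subscripted operators $\mathcal{U}_i$ and $\mathcal{L}_{I_i}$ are always interpreted relative to the correct ambient factor (the full product $\mathcal{X}_{I_{m-1}}$ when forming the factorization of $\mathcal{H}'$, and $\mathcal{X}_{I_m}$ when forming that of $\mathcal{H}$), together with verifying the strict inclusions $I_i\subsetneq I_{m-1}\subsetneq I_m$ demanded by Proposition~\ref{proplamlemlak}; this is precisely why the range $i<m-1$ is the one handled by the tower law while $i=m-1$ must be dealt with separately.
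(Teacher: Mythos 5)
Your proof is correct and follows essentially the same route as the paper's: the top index $i=m-1$ is a literal unwinding of Definition~\ref{defCanFac}, and the indices $i<m-1$ reduce to the identity $\mathcal{L}_{I_i}\big(\mathcal{L}_{I_{m-1}}(\mathcal{H})\big)=\mathcal{L}_{I_i}(\mathcal{H})$ from Proposition~\ref{proplamlemlak}. Your added care in checking that $\mathcal{L}_{I_{m-1}}(\mathcal{H})$ is a stable partition of the smaller product (so that its canonical factorization is well defined) is a worthwhile refinement the paper leaves implicit.
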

\begin{proof}
For each $1\leq i\leq m$, define $I_i=\{1,\ldots,i\}$. Let $\{\mathcal{H}_i'\} _{1\leq i\leq m-1}$ be the canonical factorization of $\mathcal{L}_{I_{m-1}}(\mathcal{H})$. We have:
\begin{itemize}
\item $\mathcal{H}_{m-1}'=\mathcal{U}_{m-1}\big( \mathcal{L}_{I_{m-1}}(\mathcal{H})\big)=\mathcal{H}_{m-1}$.
\item For each $1\leq i<m-1$, we have $\mathcal{H}_i'=\mathcal{U}_i\big(\mathcal{L}_{I_i}\big( \mathcal{L}_{I_{m-1}}(\mathcal{H})\big)\big)\stackrel{(a)}{=}\mathcal{U}_i\big(\mathcal{L}_{I_i}(\mathcal{H})\big)=\mathcal{H}_i$, where (a) follows from Proposition \ref{proplamlemlak}.
\end{itemize}
\end{proof}

\begin{mydef}
\label{defSect}
Let $\mathcal{H}$ be a partition of a set $\mathcal{X}$. A section of $\mathcal{H}$ is a subset $C\subset\mathcal{X}$ such that:
\begin{itemize}
\item $|C|=\mathcal{H}$.
\item For each $H\in\mathcal{H}$, there exists a unique $x\in C$ such that $x\in H$. In other words, the mapping $\proj_{\mathcal{H}}$, restricted to $C$, is a bijection between $C$ and $\mathcal{H}$.
\end{itemize}
\end{mydef}

\begin{mylem}
Let $\ast_1$ and $\ast_2$ be two ergodic operations on $\mathcal{X}_1$ and $\mathcal{X}_2$ respectively. Let $\mathcal{X}=\mathcal{X}_1\times\mathcal{X}_2$ and $\ast=\ast_1\otimes\ast_2$ (thus, $\ast$ is ergodic). Let $\mathcal{H}$ be a stable partition of $\mathcal{X}$. If $C_1$ and $C_2$ are sections of $\mathcal{L}_1(\mathcal{H})$ and $\mathcal{U}_2(\mathcal{H})$ respectively, then $C=C_1\times C_2$ is a section of $\mathcal{H}$.
\label{lembdhsbhsssdfgaaaq2}
\end{mylem}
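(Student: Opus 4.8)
The plan is to verify the two defining conditions of a section (Definition~\ref{defSect}) for $C=C_1\times C_2$: that $|C|=|\mathcal{H}|$, and that every $H\in\mathcal{H}$ contains exactly one point of $C$. For the cardinality I would use the size identities of Theorem~\ref{theprod}. Since $C_1$ is a section of $\mathcal{L}_1(\mathcal{H})$ and $C_2$ is a section of $\mathcal{U}_2(\mathcal{H})$, we have $|C|=|C_1|\cdot|C_2|=|\mathcal{L}_1(\mathcal{H})|\cdot|\mathcal{U}_2(\mathcal{H})|$. On the other hand, Theorem~\ref{theprod} gives $||\mathcal{H}||=||\mathcal{L}_1(\mathcal{H})||\cdot||\mathcal{U}_2(\mathcal{H})||$, so $|\mathcal{H}|=|\mathcal{X}|/||\mathcal{H}||=(|\mathcal{X}_1|\cdot|\mathcal{X}_2|)/(||\mathcal{L}_1(\mathcal{H})||\cdot||\mathcal{U}_2(\mathcal{H})||)$; using $|\mathcal{X}_1|=|\mathcal{L}_1(\mathcal{H})|\cdot||\mathcal{L}_1(\mathcal{H})||$ and $|\mathcal{X}_2|=|\mathcal{U}_2(\mathcal{H})|\cdot||\mathcal{U}_2(\mathcal{H})||$ then yields $|\mathcal{H}|=|\mathcal{L}_1(\mathcal{H})|\cdot|\mathcal{U}_2(\mathcal{H})|=|C|$.

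Next I would show that the restriction of $\proj_{\mathcal{H}}$ to $C$ is injective; together with $|C|=|\mathcal{H}|$ and finiteness this forces it to be a bijection from $C$ onto $\mathcal{H}$, which is exactly the second condition of Definition~\ref{defSect}. So suppose $(c_1,c_2)$ and $(c_1',c_2')$ are elements of $C_1\times C_2$ lying in the same $H\in\mathcal{H}$. By Theorem~\ref{theprod} write $H=(H_{1,1}\times H_{2,1})\cup\ldots\cup(H_{1,n}\times H_{2,n})$ with the $H_{1,i}\in\mathcal{L}_1(\mathcal{H})$ pairwise disjoint, the $H_{2,i}\in\mathcal{L}_2(\mathcal{H})$ pairwise disjoint, and $H_{2,1}\cup\ldots\cup H_{2,n}=P_2(H)\in\mathcal{U}_2(\mathcal{H})$. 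Choose indices $a,b$ with $(c_1,c_2)\in H_{1,a}\times H_{2,a}$ and $(c_1',c_2')\in H_{1,b}\times H_{2,b}$.

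Now I would exploit the two sections in turn. Since $c_2\in H_{2,a}\subset P_2(H)$ and $c_2'\in H_{2,b}\subset P_2(H)$, and $P_2(H)$ is a single block of $\mathcal{U}_2(\mathcal{H})$ of which $C_2$ is a section, $C_2$ meets $P_2(H)$ in exactly one point; hence $c_2=c_2'$. Then $c_2=c_2'\in H_{2,a}\cap H_{2,b}$, and since these are disjoint members of $\mathcal{L}_2(\mathcal{H})$, we get $H_{2,a}=H_{2,b}$, i.e.\ $a=b$. Consequently $c_1,c_1'\in H_{1,a}\in\mathcal{L}_1(\mathcal{H})$, and since $C_1$ is a section of $\mathcal{L}_1(\mathcal{H})$ it meets $H_{1,a}$ in exactly one point, so $c_1=c_1'$. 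Thus $(c_1,c_2)=(c_1',c_2')$, proving injectivity and completing the argument.

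There is no real hard obstacle here; the only thing to get right is the asymmetry — matching the \emph{lower} partition $\mathcal{L}_1(\mathcal{H})$ on the first coordinate with the \emph{upper} partition $\mathcal{U}_2(\mathcal{H})$ on the second. This is precisely the pairing that makes $||\mathcal{H}||=||\mathcal{L}_1(\mathcal{H})||\cdot||\mathcal{U}_2(\mathcal{H})||$ and that lets the second coordinate of a point of $C\cap H$ pin down the index $a$ in the decomposition of $H$. Everything else reduces to Definition~\ref{defSect} and the already-established Theorem~\ref{theprod}.
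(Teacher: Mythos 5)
Your proof is correct and follows essentially the same strategy as the paper's: show that the restriction of $\proj_{\mathcal{H}}$ to $C$ is a bijection onto $\mathcal{H}$ by combining the count $|C|=|\mathcal{L}_1(\mathcal{H})|\cdot|\mathcal{U}_2(\mathcal{H})|=|\mathcal{H}|$ with a one-sided verification. The only difference is which side you verify: you prove injectivity using the block decomposition $H=\bigcup_i(H_{1,i}\times H_{2,i})$ from Theorem \ref{theprod}, whereas the paper proves surjectivity by first locating the unique point of $C_2$ in $P_2(H)\in\mathcal{U}_2(\mathcal{H})$ and then the unique point of $C_1$ in the fiber $P_{1|x_2}(H)\in\mathcal{L}_1(\mathcal{H})$ --- dual halves of the same argument resting on the same $\mathcal{L}_1$/$\mathcal{U}_2$ pairing.
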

\begin{proof}
Let $f_{C,\mathcal{H}}:C\rightarrow\mathcal{H}$ be the mapping $\proj_{\mathcal{H}}$ restricted to $C$, i.e., $f_{C,\mathcal{H}}(x)=\proj_{\mathcal{H}}(x)$ for every $x\in C$.

Let $H\in\mathcal{H}$ and $I_2=\{1,2\}$. We have $P_{I_2\rightarrow 2}(H)\in \mathcal{U}_2(\mathcal{H})$ by Remark \ref{remremudsjbd}. Now since $C_2$ is a section of $\mathcal{U}_2(\mathcal{H})$, there exists a unique $x_2\in C_2$ such that $x_2\in P_{I_2\rightarrow 2}(H)$.

Since $x_2\in P_{I_2\rightarrow 2}(H)$, we have $P_{I_2\rightarrow 1|x_2}(H)\in\mathcal{L}_1(\mathcal{H})$ by Remark \ref{remremudsjbd}. But $C_1$ is a section of $\mathcal{L}_1(\mathcal{H})$, so there exists a unique $x_1\in C_1$ such that $x_1\in P_{I_2\rightarrow 1|x_2}(H)$, which means that $(x_1,x_2)\in H$. Therefore, there exists $(x_1,x_2)\in C_1\times C_2=C$ such that $f_{C,\mathcal{H}}(x_1,x_2)=\proj_{\mathcal{H}}(x_1,x_2)=H$. We conclude that $f_{C,\mathcal{H}}$ is surjective.

On the other hand, we have $|C|=|C_1\times C_2|=|C_1|\cdot|C_2|=|\mathcal{L}_1(\mathcal{H})|\cdot|\mathcal{U}_2(\mathcal{H})|=|\mathcal{H}|$, where the last equality follows from Theorem \ref{theprod}. Therefore, $f_{C,\mathcal{H}}$ is bijective since $f_{C,\mathcal{H}}:C\rightarrow\mathcal{H}$ is surjective and $|C|=|\mathcal{H}|$. Hence, $C=C_1\times C_2$ is a section of $\mathcal{H}$.
\end{proof}

\begin{myprop}
\label{PropProdProdProd}
Let $\ast_1,\ldots,\ast_m$ be $m\geq 2$ ergodic operations on $\mathcal{X}_1,\ldots,\mathcal{X}_m$ respectively. Let $\mathcal{X}=\mathcal{X}_1\times\ldots\times\mathcal{X}_m$ and $\ast=\ast_1\otimes\ldots\otimes\ast_m$. Let $\mathcal{H}$ be a stable partition of $(\mathcal{X},\ast)$ and $(\mathcal{H}_i)_{1\leq i\leq m}$ be the canonical factorization of $\mathcal{H}$. We have:
\begin{itemize}
\item $|\mathcal{H}|=|\mathcal{H}_1|\cdots|\mathcal{H}_m|$.
\item If $C_i$ is a section of $\mathcal{H}_i$ for every $1\leq i\leq m$, then $C=C_1\times\ldots\times C_m$ is a section of $\mathcal{H}$.
\end{itemize}
\end{myprop}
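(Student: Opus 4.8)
The plan is to induct on $m\geq 2$. For the base case $m=2$ we have $I_1=\{1\}$, so the canonical factorization reduces to $\mathcal{H}_2=\mathcal{U}_2(\mathcal{H})$ and $\mathcal{H}_1=\mathcal{U}_1\big(\mathcal{L}_1(\mathcal{H})\big)=\mathcal{L}_1(\mathcal{H})$, the last equality because the projection $P_1$ on a single‑factor space is the identity, so $\mathcal{U}_1$ leaves any partition of $\mathcal{X}_1$ unchanged. The size identity $|\mathcal{H}|=|\mathcal{H}_1|\cdot|\mathcal{H}_2|$ then follows by combining the counting relations $|\mathcal{X}_1\times\mathcal{X}_2|=|\mathcal{H}|\cdot||\mathcal{H}||$, $|\mathcal{X}_1|=|\mathcal{L}_1(\mathcal{H})|\cdot||\mathcal{L}_1(\mathcal{H})||$, $|\mathcal{X}_2|=|\mathcal{U}_2(\mathcal{H})|\cdot||\mathcal{U}_2(\mathcal{H})||$ with the relation $||\mathcal{H}||=||\mathcal{L}_1(\mathcal{H})||\cdot||\mathcal{U}_2(\mathcal{H})||$ from Theorem \ref{theprod}. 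The section claim for $m=2$ is precisely Lemma \ref{lembdhsbhsssdfgaaaq2}.

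For the inductive step, assume the proposition holds for $m-1$ and let $\mathcal{H}$ be a stable partition of $(\mathcal{X},\ast)$ with $\ast=\ast_1\otimes\cdots\otimes\ast_m$. I would identify $(\mathcal{X},\ast)$ with $(\mathcal{X}_{I_{m-1}}\times\mathcal{X}_m,\ast_{I_{m-1}}\otimes\ast_m)$ via the canonical bijection $f_{I_{m-1},\{m\}}$ of Notation \ref{notIdent}. Under this identification the four objects attached by Theorem \ref{theprod} to the two‑factor decomposition are $\mathcal{L}_{I_{m-1}}(\mathcal{H})$, $\mathcal{U}_{I_{m-1}}(\mathcal{H})$, $\mathcal{L}_m(\mathcal{H})$, $\mathcal{U}_m(\mathcal{H})$, and in particular $\mathcal{U}_m(\mathcal{H})=\mathcal{H}_m$. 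Applying the already‑proven $m=2$ statement to this decomposition gives $|\mathcal{H}|=|\mathcal{L}_{I_{m-1}}(\mathcal{H})|\cdot|\mathcal{H}_m|$, and (through Lemma \ref{lembdhsbhsssdfgaaaq2}) that $C'\times C_m$ is a section of $\mathcal{H}$ whenever $C'$ is a section of $\mathcal{L}_{I_{m-1}}(\mathcal{H})$ and $C_m$ is a section of $\mathcal{H}_m$.

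Next I would invoke Lemma \ref{lembdhsbhsssdfgaaaq1}, which identifies $(\mathcal{H}_i)_{1\leq i\leq m-1}$ as the canonical factorization of $\mathcal{L}_{I_{m-1}}(\mathcal{H})$, itself a stable partition of the product $\mathcal{X}_1\times\cdots\times\mathcal{X}_{m-1}$ of the $m-1$ ergodic operations $\ast_1,\ldots,\ast_{m-1}$. The induction hypothesis then yields $|\mathcal{L}_{I_{m-1}}(\mathcal{H})|=|\mathcal{H}_1|\cdots|\mathcal{H}_{m-1}|$ and that $C_1\times\cdots\times C_{m-1}$ is a section of $\mathcal{L}_{I_{m-1}}(\mathcal{H})$ whenever each $C_i$ is a section of $\mathcal{H}_i$. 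Substituting into the two conclusions of the previous paragraph gives $|\mathcal{H}|=|\mathcal{H}_1|\cdots|\mathcal{H}_m|$ and that $C=(C_1\times\cdots\times C_{m-1})\times C_m=C_1\times\cdots\times C_m$ is a section of $\mathcal{H}$, completing the induction.

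The only friction I anticipate is bookkeeping: verifying that the two‑factor objects produced by Theorem \ref{theprod} for the grouping $(I_{m-1},\{m\})$ really are the objects $\mathcal{L}_{I_{m-1}}(\mathcal{H})$ and $\mathcal{U}_m(\mathcal{H})$ named in Notation \ref{notIdent}, and the small remark that $\mathcal{U}_1$ acts trivially on a partition of a single‑factor space so that $\mathcal{H}_1=\mathcal{L}_1(\mathcal{H})$ when $m=2$. Both are essentially definitional once the canonical bijection is spelled out, so there is no substantive obstacle beyond organizing the identifications cleanly.
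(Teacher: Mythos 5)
Your proposal is correct and follows essentially the same route as the paper: induction on $m$, with the base case handled by Theorem \ref{theprod} and Lemma \ref{lembdhsbhsssdfgaaaq2}, and the inductive step reducing to the two-factor decomposition $(I_{m-1},\{m\})$ via Lemma \ref{lembdhsbhsssdfgaaaq1}. Your explicit counting argument for $|\mathcal{H}|=|\mathcal{L}_1(\mathcal{H})|\cdot|\mathcal{U}_2(\mathcal{H})|$ (via $|\mathcal{X}|=|\mathcal{H}|\cdot||\mathcal{H}||$ and $||\mathcal{H}||=||\mathcal{L}_1(\mathcal{H})||\cdot||\mathcal{U}_2(\mathcal{H})||$) just spells out what the paper cites directly from Theorem \ref{theprod}.
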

\begin{proof}
For each $1\leq i\leq m$, we define $I_i=\{1,\ldots,i\}$. We will prove the proposition by induction on $m$. If $m=2$, we have:
\begin{itemize}
\item $\mathcal{H}_1=\mathcal{U}_1\big(\mathcal{L}_{I_1}(\mathcal{H})\big)=\mathcal{U}_1\big(\mathcal{L}_1(\mathcal{H})\big)= \mathcal{L}_1(\mathcal{H})$ and $\mathcal{H}_2=\mathcal{U}_2(\mathcal{H})$.
\item By Theorem \ref{theprod}, we have $|\mathcal{H}|=|\mathcal{L}_1(\mathcal{H})|\cdot|\mathcal{U}_2(\mathcal{H})|=|\mathcal{H}_1|\cdot|\mathcal{H}_2|$.
\item If $C_1$ and $C_2$ are sections of $\mathcal{H}_1=\mathcal{L}_1(\mathcal{H})$ and $\mathcal{H}_2=\mathcal{U}_2(\mathcal{H})$ respectively, then Lemma \ref{lembdhsbhsssdfgaaaq2} shows that $C=C_1\times C_2$ is a section of $\mathcal{H}$
\end{itemize}
Therefore the proposition is true for $m=2$.

Now let $m>2$ and suppose that the proposition is true for $m-1$. By Lemma \ref{lembdhsbhsssdfgaaaq1}, $(\mathcal{H}_i)_{1\leq i\leq m-1}$ is the canonical factorization of $\mathcal{L}_{I_{m-1}}(\mathcal{H})$. We have:
\begin{itemize}
\item $|\mathcal{H}|=|\mathcal{L}_{I_{m-1}}(\mathcal{H})|\cdot|\mathcal{U}_m(\mathcal{H})|=|\mathcal{L}_{I_{m-1}}(\mathcal{H})|\cdot|\mathcal{H}_m|$ by Theorem \ref{theprod}. On the other hand, we have $|\mathcal{L}_{I_{m-1}}(\mathcal{H})|=|\mathcal{H}_1|\cdots|\mathcal{H}_{m-1}|$ from the induction hypothesis. Therefore, $|\mathcal{H}|=|\mathcal{H}_1|\cdots|\mathcal{H}_m|$.
\item For every $1\leq i\leq m$, let $C_i$ be a section of $\mathcal{H}_i$. From the induction hypothesis we get that $C_1\times\ldots\times C_{m-1}$ is a section of $\mathcal{L}_{I_{m-1}}(\mathcal{H})$. Now since $C_1\times\ldots\times C_{m-1}$ and $C_m$ are sections of $\mathcal{L}_{I_{m-1}}(\mathcal{H})$ and $\mathcal{U}_m(\mathcal{H})$ respectively, Lemma \ref{lembdhsbhsssdfgaaaq2} implies that $C=C_1\times\ldots\times C_m$ is a section of $\mathcal{H}$.
\end{itemize}
Therefore, the proposition is also true for $m$. We conclude that the proposition is true for every $m\geq 2$.
\end{proof}

\section{Conclusion}
An ergodic theory for binary operations was developed. This theory will be applied in Part II of this paper \cite{RajErgII} to provide a foundation for polarization theory. We will show that a binary operation $\ast$ is polarizing if and only if it is uniformity preserving and $/^{\ast}$ is strongly ergodic. A natural question to ask is whether the strong ergodicity of the right-inverse operation implies the strong ergodicity of the operation itself. It is easy to see that a uniformity preserving operation is ergodic (resp. irreducible, quasigroup operation) if and only if its right-inverse is ergodic (resp. irreducible, quasigroup operation). We do not know whether the same is true for strong ergodicity.

The potential applications of the ergodic theory of binary operations might extend beyond polarization theory. The mathematical framework that is developed here is fairly general and might be useful to areas outside polarization and information theory.

\appendices

\section{Proof of Proposition \ref{lemerg}}
\label{appA}
1) Trivial: For a quasigroup operation, all the elements of $\mathcal{X}$ are $\ast$-connectable to each other in one step.

\vspace*{2mm}
2) Suppose that $\ast$ is uniformity preserving but not irreducible, there exists two elements $a_1$ and $a_2$ of $\mathcal{X}$ such that $a_1$ is not $\ast$-connectable to $a_2$. Let $A_1=\{x\in\mathcal{X}:\; a_1\stackrel{\ast}{\longrightarrow} x\}$ and $A_2=\mathcal{X}\setminus A_1$. Clearly, $a_1\ast a_1\in A_1$ and $a_2\in A_2$. Therefore, $A_1$ and $A_2$ are two disjoint non-empty sets such that $A_1\cup A_2=\mathcal{X}$. Moreover, we have $A_1\ast \mathcal{X} \subset A_1$ from the definition of $A_1$. Now since $|A_1\ast\mathcal{X}|\geq |A_1|$, we must have $A_1\ast\mathcal{X}=A_1$. 

For every $x\in\mathcal{X}$, define $\pi_x:\mathcal{X}\rightarrow\mathcal{X}$ as $\pi_x(a)=a\ast x$ for all $a\in\mathcal{X}$. Since $\ast$ is uniformity preserving, $\pi_x$ is bijective for all $x\in\mathcal{X}$. Therefore, $|\pi_x(A_1)|=|A_1|$. On the other hand, $\pi_x(A_1)=A_1\ast x\subset A_1\ast\mathcal{X}=A_1$. This means that $\pi_x(A_1)=A_1$, which implies that $\pi_{x}(A_2)=\pi_{x}(\mathcal{X}\setminus A_1)=\mathcal{X}\setminus A_1=A_2$ since $\pi_x$ is bijective. Therefore, $A_2\ast x=A_2$ for every $x\in\mathcal{X}$, hence $A_2\ast\mathcal{X}=A_2$.

\vspace*{2mm}
3) Suppose that $\ast$ is irreducible, and let $a,b\in\mathcal{X}$. Since $\ast$ is irreducible, there exists $l_1,l_2\geq 0$ such that $a\stackrel{\ast,l_1}{\longrightarrow} b$ and $b\stackrel{\ast,l_2}{\longrightarrow} a$, so $a\stackrel{\ast,l_1+l_2}{\longrightarrow} a$ which means that $\per(\ast,a)$ divides $l_1+l_2$. Now for any integer $l>0$ satisfying $b\stackrel{\ast,l}{\longrightarrow} b$, we have that $a\stackrel{\ast,l_1+l+l_2}{\longrightarrow} a$. This shows that $\per(\ast,a)$ divides $l_1+l_2+l$, which implies that $\per(\ast,a)$ divides $l$ since we have just shown that $\per(\ast,a)$ divides $l_1+l_2$. But this is true for any $l>0$ satisfying $b\stackrel{\ast,l}{\longrightarrow} b$. We conclude that $\per(\ast,a)$ divides $\per(\ast,b)$. Similarly, we can show that $\per(\ast,b)$ divides $\per(\ast,a)$. Therefore, $\per(\ast,a)$ is the same for all $a\in\mathcal{X}$. Now since $\per(\ast)=\gcd\{\per(\ast,a):\;a\in\mathcal{X}\}$, we have $\per(\ast)=\per(\ast,a)$ for all $a\in\mathcal{X}$.

\vspace*{2mm}
4) Suppose that $\ast$ is irreducible and let $n=\per(\ast)$. Fix $a\in\mathcal{X}$ and define the subsets $H_0$, \ldots, $H_{n-1}$ of $\mathcal{X}$ as follows: for each $0\leq i< n$, $H_i=\big\{x\in\mathcal{X}:\; \exists l> 0,\; a\stackrel{\ast,l}{\longrightarrow} x\;\text{and}\;l\equiv i\bmod n\big\}$. We have the following:
\begin{itemize}
\item If $x\in\mathcal{X}$, then $a\stackrel{\ast,l_{a,x}}{\longrightarrow} x$ for some integer $l_{a,x}>0$ because of irreducibility. This shows that for every $x\in\mathcal{X}$, we have $\displaystyle x\in H_{l_{a,x}\bmod n}\subset \bigcup_{i=0}^{n-1} H_i$. Therefore, $\displaystyle \mathcal{X}\subset \bigcup_{i=0}^{n-1} H_i\subset\mathcal{X}$, hence $\displaystyle\bigcup_{i=0}^{n-1} H_i=\mathcal{X}$.
\item Let $x\in H_i$ and $y\in H_j$. We have $a\stackrel{\ast,l_{a,x}}{\longrightarrow} x$ for some $l_{a,x}>0$ satisfying $l_{a,x}\equiv i\bmod n$. Moreover, $x\stackrel{\ast,l_{x,a}}{\longrightarrow} a$ for some $l_{x,a}>0$, and so $a\stackrel{\ast,l_{a,x}+l_{x,a}}{\longrightarrow} a$. The definition of $\per(\ast)$ implies that $n$ divides $l_{a,x}+l_{x,a}$ and so $l_{x,a}\equiv -i \bmod n$. Now since $y\in H_j$, we have $a\stackrel{\ast,l_{a,y}}{\longrightarrow} y$ for some $l_{a,y}>0$ satisfying $l_{a,y}\equiv j\bmod n$. We conclude that $x\stackrel{\ast,l_{x,y}}{\longrightarrow} y$ where $l_{x,y}=l_{x,a}+l_{a,y}\equiv j-i\bmod n$.
\item Suppose there exists $i\neq j$ such that $H_i\cap H_j\neq \o$ and let $x\in H_i\cap H_j$, then $x\stackrel{\ast,l_{x,x}}{\longrightarrow} x$, where $l_{x,x}\equiv j-i \not\equiv 0 \bmod n$. The definition of $\per(\ast)$ implies that $n$ divides $l_{x,x}$ which is a contradiction since $l_{x,x}\not\equiv 0\bmod n$. We conclude that $H_i\cap H_j=\o$ for all $i\neq j$.
\item For any $0\leq i< n$ and any $y\in H_i\ast\mathcal{X}$, there exist $x\in H_i$ and $z\in\mathcal{X}$ such that $y=x\ast z$, which implies that $y\in H_{i+1\bmod n}$. Therefore $H_i\ast\mathcal{X}\subset H_{i+1\bmod n}$, and so $|H_{i+1\bmod n}|\geq |H_i\ast\mathcal{X}|\geq |H_i|$. Thus, $|H_0|\geq |H_{n-1}|\geq \ldots |H_1|\geq |H_0|$, which implies that $|H_0|=|H_1|=\ldots=|H_{n-1}|$.
\end{itemize}
Therefore, $\{H_0,\ldots,H_{n-1}\}$ is a partition of $\mathcal{X}$ satisfying $|H_0|=|H_1|=\ldots=|H_{n-1}|$.

Now let $0\leq i<n$. We have shown that $H_i\ast\mathcal{X}\subset H_{i+1\bmod n}$. On the other hand, we have $|H_i\ast\mathcal{X}|\geq |H_i|=|H_{i+1\bmod n}|$. Therefore, $H_i\ast\mathcal{X}= H_{i+1\bmod n}$.

\vspace*{2mm}
5) For every $x\in\mathcal{X}$ and every $j>0$ define $$K_{x,j}=\left\{y\in\mathcal{X}:\;x\stackrel{\ast,j}{\longrightarrow} y\right\}.$$ Since $K_{x,j+1}=K_{x,j}\ast \mathcal{X}$ and since the number of subsets of $\mathcal{X}$ is finite, there exists $d_x>0$ such that the sequence $(K_{x,j})_{j\geq d_x}$ is periodic. Let $\per_x$ be the period of $(K_{x,j})_{j\geq d_x}$. Now since $K_{x,j+1}=K_{x,j}\ast \mathcal{X}$, we have $|K_{x,j+1}|\geq|K_{x,j}|$. Therefore the sequence $(|K_{x,j}|)_{j\geq d_x}$ is both periodic and non-decreasing, which implies that it is constant. 

Fix $j\geq d_x$, and let $l>0$ be such that $x\stackrel{\ast,l}{\longrightarrow} x$. For every $x'\in K_{x,j}$ we have $x\stackrel{\ast,j}{\longrightarrow} x'$ which implies that $x\stackrel{\ast,l+j}{\longrightarrow} x'$ (since $x\stackrel{\ast,l}{\longrightarrow} x$) and so $x'\in K_{x,j+l}$. Therefore, $K_{x,j}\subset K_{x,j+l}$, which implies that $K_{x,j}=K_{x,j+l}$ (since we know that $|K_{x,j}|=|K_{x,j+l}|$). Now since this is true for any $j\geq d_x$, we conclude that $\per_x$ divides every $l>0$ satisfying $x\stackrel{\ast,l}{\longrightarrow} x$. Therefore, $\per_x$ divides $\gcd\{l>0:\;x\stackrel{\ast,l}{\longrightarrow} x\}=\per(\ast,x)=n$. Hence, 
\begin{equation}
K_{x,j}=K_{x,j+kn}\;\text{for\;all}\;j\geq d_x\;\text{and\;all}\;k\geq 0.
\label{dgfnjdfjgb235rurjsm}
\end{equation}

For every $x\in\mathcal{X}$, let $i_x$ be the unique index $0\leq i_x<n$ satisfying $x\in H_{i_x}$. Clearly, $K_{x,j}\subset H_{i_x+j\bmod n}$. Now let $x'\in K_{x,j}$ and $x''\in H_{i_x+j\bmod n}$, where $j\geq d_x$. Since both $x'$ and $x''$ are in $H_{i_x+j\bmod n}$, we know from the discussion of the fourth point that we have $x'\stackrel{\ast,l_{x',x''}}{\longrightarrow} x''$ for some $l_{x',x''}\equiv 0\bmod n$. Since $n$ divides $l_{x',x''}$, we have $K_{x,j+l_{x',x''}}=K_{x,j}$ from \eqref{dgfnjdfjgb235rurjsm}. Now since $x'\in K_{x,j}$ and $x'\stackrel{\ast,l_{x',x''}}{\longrightarrow} x''$, we have $x''\in K_{x,j+l_{x',x''}}=K_{x,j}$. But this is true for every $x''\in H_{i_x+j\bmod n}$. Therefore, $H_{i_x+j\bmod n}\subset K_{x,j}$, which implies that $K_{x,j}=H_{i_x+j\bmod n}$ as we already have $K_{x,j}\subset H_{i_x+j\bmod n}$.

Define $\displaystyle d=\max_{x\in\mathcal{X}} d_x$. Let $0\leq i<n$ and $x\in H_i$. We have $i_x=i$ (since $x\in H_i$) and $d\geq d_x$. Therefore, from the above discussion we have $H_{i+d\bmod n}=H_{i_x+d\bmod n}=K_{x,d}$. Hence, for every $y\in H_{i+d\bmod n}$, we have $y\in K_{x,d}$ and so $x\stackrel{\ast,d}{\longrightarrow} y$.

\vspace*{2mm}
6) We will prove the claim by induction on $s\geq \con(\ast)$. If $s=\con(\ast)$, the claim follows from 5). Now let $s>\con(\ast)$ and suppose that the claim is true for $s-1$. Let $0\leq i< n$, $x\in H_i$ and $y\in H_{i+s\bmod n}$. Since $H_{i+s\bmod n}=H_{i+s-1\bmod n}\ast\mathcal{X}$, there exists $y'\in H_{i+s-1\bmod n}$ such that $y'\stackrel{\ast,1}{\longrightarrow}y$. Now since $y'\in H_{i+s-1\bmod n}$, it follows from the induction hypothesis that $x\stackrel{\ast,s-1}{\longrightarrow}y'$. Therefore, $x\stackrel{\ast,s}{\longrightarrow}y$.

\vspace*{2mm}
7) Let $\ast$ be an irreducible operation of period $\per(\ast)=1$. Let $\mathcal{E}_{\ast}$ be the partition defined in 4). Since $\per(\ast)=1$, the partition $\mathcal{E}_{\ast}$ contains only one element $H_0$ which must be $\mathcal{X}$. Now 5) implies that there exists $d>0$ such that any element of $\mathcal{X}=H_0$ is $\ast$-connectable to any element of $H_{0+d\bmod 1}=H_0=\mathcal{X}$ in $d$ steps. Therefore, $\ast$ is ergodic.

Conversely, if $\ast$ is ergodic, let $d=\con(\ast)$ and $n=\per(\ast)$. Define $\mathcal{E}_\ast=\{H_0,\ldots,H_{n-1}\}$ as in 4) and let $a\in H_0$. Since $a\stackrel{\ast,d}{\longrightarrow} x$ for all $x\in\mathcal{X}$, then $\mathcal{X}\subset H_{d\bmod n}$ which implies that $\mathcal{X} = H_{d\bmod n}$. Now since $|H_0|=\ldots=|H_{n-1}|=|H_{d\bmod n}|=|\mathcal{X}|$, then $H_0=\ldots=H_{n-1}=\mathcal{X}$ and $\mathcal{E}_\ast=\{\mathcal{X}\}$. Therefore, $\per(\ast)=n=|\mathcal{E}_\ast|=1$.

\vspace*{2mm}
8) If $\ast$ is ergodic, then $\per(\ast)=1$ by 7). Therefore, $\mathcal{E}_{\ast}$ contains only one element $H_0$ which must be $\mathcal{X}$. Now 6) implies that for every $s\geq\con(\ast)$, any element of $\mathcal{X}=H_0$ is $\ast$-connectable to any element of $H_{0+s\bmod 1}=H_0=\mathcal{X}$ in $s$ steps.

\vspace*{2mm}
9) and 10) are trivial.

\section{Proofs for section \ref{sec4}}
\label{appB}
\begin{proof}[Proof of Proposition \ref{propPerPer} (1)]
For every $k>0$ and every sequence $H_0\in\mathcal{H}$, $H_1\in\mathcal{H}^\ast$, \ldots, $H_{k-1}\in\mathcal{H}^{(k-1)\ast}$, define
\begin{equation}
\label{eqDefHseq}
\mathcal{H}_{H_0,\ldots,H_{k-1}}:=\big\{(\ldots((H\ast H_0)\ast H_1)\ldots \ast H_{k-1}):\; H\in\mathcal{H}\big\}.
\end{equation}
We have:
\begin{align*}
\bigcup_{X\in \mathcal{H}_{H_0,\ldots,H_{k-1}}} X&=\bigcup_{H\in\mathcal{H}}(\ldots((H\ast H_0)\ast H_1)\ldots \ast H_{k-1})=\Big(\ldots\Big(\Big(\Big(\bigcup_{H\in\mathcal{H}} H\Big)\ast H_0\Big)\ast H_1\Big)\ldots \ast H_{k-1}\Big)\\
&=(\ldots((\mathcal{X}\ast H_0)\ast H_1)\ldots \ast H_{k-1})=\mathcal{X}.
\end{align*}
Therefore, $\mathcal{H}_{H_0,\ldots,H_{k-1}}$ covers $\mathcal{X}$ for any sequence $H_0\in\mathcal{H}$, $H_1\in\mathcal{H}^\ast$, \ldots, $H_{k-1}\in\mathcal{H}^{(k-1)\ast}$. Moreover, it is easy to see from \eqref{eqDefHseq} that $\mathcal{H}_{H_0,\ldots,H_{k-1}}\subset \mathcal{H}^{k\ast}$, which implies that $\mathcal{H}^{k\ast}$ covers $\mathcal{X}$.

Fix $n>0$ and suppose that $\mathcal{H}^{n\ast}$ is not a partition. Since we have shown that $\mathcal{H}^{n\ast}$ covers $\mathcal{X}$, there must exist $X_1,X_1'\in \mathcal{H}^{n\ast}$ such that $X_1\cap X_1'\neq\o$ and $X_1\neq X_1'$. We may assume without loss of generality that $|X_1|\leq|X_1'|$. If $X_1'\setminus X_1=\o$ then $X_1'\subset X_1$ which implies that $X_1'=X_1$ (because $|X_1|\leq|X_1'|$) which is a contradiction. Therefore, we must have $X_1'\setminus X_1\neq\o$.

Since $X_1\in\mathcal{H}^{n\ast}$, there exists $H\in\mathcal{H}$ and a sequence $H_0\in\mathcal{H}$, $H_1\in\mathcal{H}^\ast$, \ldots, $H_{n-1}\in\mathcal{H}^{(n-1)\ast}$ such that $X_1= (\ldots((H\ast H_0)\ast H_1)\ldots \ast H_{n-1})$ which implies that $X_1\in\mathcal{H}_{H_0,\ldots,H_{n-1}}$. Now since we have shown that $\mathcal{H}_{H_0,\ldots,H_{n-1}}$ covers $\mathcal{X}$ and since $X_1'\setminus X_1\neq\o$, there must exist $X_2\in \mathcal{H}_{H_0,\ldots,H_{n-1}}$ such that $X_2\cap (X_1'\setminus X_1)\neq\o$. Clearly, $X_1\neq X_2$ since $X_1\cap (X_1'\setminus X_1)=\o$ and $X_2\cap (X_1'\setminus X_1)\neq\o$.

Let $p>0$ be the smallest multiple of $\per(\mathcal{H})$ which is greater than $n$, i.e., $$p=\min\{k\cdot \per(\mathcal{H}):\;k> 0,\;k\cdot \per(\mathcal{H})>n\}.$$ We have $\mathcal{H}^{p\ast}=\mathcal{H}$ since $\per(\mathcal{H})$ divides $p$. Fix $H_n\in\mathcal{H}^{n\ast},H_{n+1}\in\mathcal{H}^{(n+1)\ast},\ldots,H_{p-1}\in\mathcal{H}^{(p-1)\ast}$ and define:
\begin{itemize}
\item $A=(\ldots((X_1\ast H_n)\ast H_{n+1})\ldots \ast H_{p-1})\in\mathcal{H}^{p\ast}=\mathcal{H}$.
\item $B=(\ldots((X_2\ast H_n)\ast H_{n+1})\ldots \ast H_{p-1})\in\mathcal{H}^{p\ast}=\mathcal{H}$.
\item $C=(\ldots((X_1'\ast H_n)\ast H_{n+1})\ldots \ast H_{p-1})\in\mathcal{H}^{p\ast}=\mathcal{H}$.
\end{itemize}

We have $X_1 \cap X_1'\neq \o$ and $X_2\cap X_1'\neq\o$, which imply that
$A\cap C\neq\o$ and $B\cap C\neq\o$. Now since $A,B,C$ are members of $\mathcal{H}$ which is a partition (i.e., the elements of $\mathcal{H}$ are non-empty, disjoint and cover $\mathcal{X}$), we must have $A=B=C$. We conclude that
\begin{equation}
\label{eqinjnot}
(\ldots((X_1\ast H_n)\ast H_{n+1})\ldots \ast H_{p-1})\;=\;(\ldots((X_2\ast H_n)\ast H_{n+1})\ldots \ast H_{p-1}).
\end{equation}

We have:
\begin{itemize}
\item $\mathcal{H}_{H_0,\ldots,H_{p-1}}\subset \mathcal{H}^{p\ast}$ from the definition of $\mathcal{H}_{H_0,\ldots,H_{p-1}}$ (see \eqref{eqDefHseq}). We have shown that $\mathcal{H}_{H_0,\ldots,H_{p-1}}$ covers $\mathcal{X}$ and we know that $\mathcal{H}^{p\ast}=\mathcal{H}$ is a partition. Therefore, we must have $\mathcal{H}_{H_0,\ldots,H_{p-1}}=\mathcal{H}^{p\ast}=\mathcal{H}$.
\item The mapping $\mathcal{H}_{H_0,\ldots,H_{n-1}}\rightarrow \mathcal{H}_{H_0,\ldots,H_{p-1}}$ defined by $X\rightarrow(\ldots((X\ast H_n)\ast H_{n+1})\ldots \ast H_{p-1})$ is surjective but not injective because of \eqref{eqinjnot}. This implies that $|\mathcal{H}_{H_0,\ldots,H_{p-1}}|< |\mathcal{H}_{H_0,\ldots,H_{n-1}}|$.
\item The mapping $\mathcal{H}\rightarrow \mathcal{H}_{H_0,\ldots,H_{n-1}}$ defined by $H\rightarrow(\ldots((H\ast H_0)\ast H_1)\ldots \ast H_{n-1})$ is surjective. Therefore, $|\mathcal{H}_{H_0,\ldots,H_{n-1}}|\leq |\mathcal{H}|$.
\end{itemize}
We conclude that $|\mathcal{H}|=|\mathcal{H}_{H_0,\ldots,H_{p-1}}|< |\mathcal{H}_{H_0,\ldots,H_{n-1}}|\leq |\mathcal{H}|$ which is a contradiction. Therefore, $\mathcal{H}^{n\ast}$ must be a partition. On the other hand, we have, $(\mathcal{H}^{n\ast})^{\per(\mathcal{H})\ast}=(\mathcal{H}^{\per(\mathcal{H})\ast})^{n\ast}=\mathcal{H}^{n\ast}$ which implies that $\mathcal{H}^{n\ast}$ is a periodic partition of period $\per(\mathcal{H}^{n\ast})\leq \per(\mathcal{H})$. But since $\mathcal{H}=\mathcal{H}^{p\ast}=(\mathcal{H}^{n\ast})^{(p-n)\ast}$, we must also have $\per(\mathcal{H})=\per(\mathcal{H}^{p\ast})\leq\per(\mathcal{H}^{n\ast})$. Therefore, $\per(\mathcal{H}^{n\ast})=\per(\mathcal{H})$ for every $n>0$.
\end{proof}

\begin{mylem}
\label{lemarb}
Let $\mathcal{H}$ be a periodic partition of $(\mathcal{X},\ast)$. For every $H_2\in\mathcal{H}$, we have $$\mathcal{H}^\ast=\mathcal{H}\ast\{H_2\}= \{H_1\ast H_2:\;H_1\in\mathcal{H}\}.$$
\end{mylem}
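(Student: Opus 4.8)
The plan is to combine Proposition~\ref{propPerPer} with the elementary fact that a subcollection of a partition of $\mathcal{X}$ which still covers $\mathcal{X}$ must equal the whole partition. The first step is to apply Proposition~\ref{propPerPer} with $n=1$, which tells us that $\mathcal{H}^{\ast}=\mathcal{H}^{1\ast}$ is itself a (periodic) partition of $\mathcal{X}$. This is the only place where the periodicity hypothesis on $\mathcal{H}$ is used.

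Now fix $H_2\in\mathcal{H}$ and set $\mathcal{S}:=\mathcal{H}\ast\{H_2\}=\{H_1\ast H_2:\;H_1\in\mathcal{H}\}$. The inclusion $\mathcal{S}\subseteq\mathcal{H}^{\ast}$ is immediate from $\mathcal{H}^{\ast}=\mathcal{H}\ast\mathcal{H}$, so it remains to prove $\mathcal{H}^{\ast}\subseteq\mathcal{S}$. For this I would first check that $\mathcal{S}$ covers $\mathcal{X}$, which is a one-line computation:
$$\bigcup_{H_1\in\mathcal{H}}\big(H_1\ast H_2\big)=\Big(\bigcup_{H_1\in\mathcal{H}}H_1\Big)\ast H_2=\mathcal{X}\ast H_2=\mathcal{X},$$
the last equality holding because $H_2\neq\o$: fixing $b\in H_2$, the map $x\mapsto x\ast b$ is a bijection of $\mathcal{X}$ by uniformity preservation, so $\mathcal{X}=\mathcal{X}\ast b\subseteq\mathcal{X}\ast H_2\subseteq\mathcal{X}$. (Alternatively, this is precisely the assertion, established inside the proof of Proposition~\ref{propPerPer}, that the collection $\mathcal{H}_{H_2}$ of \eqref{eqDefHseq} covers $\mathcal{X}$.) Given this, if some $X\in\mathcal{H}^{\ast}$ were not in $\mathcal{S}$, then --- since $\mathcal{H}^{\ast}$ is a partition --- $X$ would be disjoint from every member of $\mathcal{S}$, contradicting that $\mathcal{S}$ covers $\mathcal{X}$; hence $\mathcal{H}^{\ast}\subseteq\mathcal{S}$, and equality follows.

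There is no real obstacle here; the single point requiring care is the logical ordering, namely that the ``covering subcollection is the whole partition'' step may only be invoked after we know $\mathcal{H}^{\ast}$ is genuinely a partition, which is exactly what Proposition~\ref{propPerPer} provides. Everything else is routine.
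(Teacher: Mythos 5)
Your proof is correct and follows essentially the same route as the paper's: use Proposition~\ref{propPerPer} to know $\mathcal{H}^{\ast}$ is a partition, observe that $\mathcal{H}\ast\{H_2\}$ is a subcollection of it that covers $\mathcal{X}$ (via $\mathcal{X}\ast H_2=\mathcal{X}$), and conclude equality. The only difference is that you spell out the bijectivity justification for $\mathcal{X}\ast H_2=\mathcal{X}$, which the paper leaves implicit.
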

\begin{proof}
For every $H_2\in\mathcal{H}$, we have:
\begin{align*}
\mathcal{X} &= \mathcal{X}\ast H_2= \Big(\bigcup_{H_1\in\mathcal{H}}H_1\Big)\ast H_2 =\bigcup_{H_1\in\mathcal{H}}(H_1\ast H_2).
\end{align*}
Therefore, the set $\{H_1\ast H_2:\; H_1\in\mathcal{H}\}$ covers $\mathcal{X}$ and it is a subset of $\mathcal{H}^\ast$ which is a partition of $\mathcal{X}$ by Proposition \ref{propPerPer} (1). Therefore, we must have $\mathcal{H}^{\ast}=\{H_1\ast H_2:\; H_1\in\mathcal{H}\}$.
\end{proof}

\vspace*{5mm}

\begin{proof}[Proof of Proposition \ref{propPerPer} (2)]
For every $l\geq 0$, Proposition \ref{propPerPer} (1) shows that $\mathcal{H}^{l\ast}$ is a periodic partition. If we fix $H_2\in\mathcal{H}^{l\ast}$, then we have $\mathcal{H}^{(l+1)\ast}=\{H_1\ast H_2:\; H_1\in\mathcal{H}^{l\ast}\}$ by Lemma \ref{lemarb}. Therefore,
\begin{equation}
|\mathcal{H}^{(l+1)\ast}|=\left|\{H_1\ast H_2:\; H_1\in\mathcal{H}^{l\ast}\}\right|\leq \left|\{H_1:\; H_1\in\mathcal{H}^{l\ast}\}\right|=|\mathcal{H}^{l\ast}|.
\label{eqPropPerSize}
\end{equation}

Now fix $n>0$ and let $p>0$ be the smallest multiple of $\per(\mathcal{H})$ which is greater than $n$, i.e., $p=\min\{k\cdot \per(\mathcal{H}):\;k> 0,\;k\cdot \per(\mathcal{H})>n\}$. By \eqref{eqPropPerSize} we have
$$|\mathcal{H}|=|\mathcal{H}^{p\ast}|\leq |\mathcal{H}^{(p-1)\ast}|\leq\ldots\leq |\mathcal{H}^{n\ast}|\leq\ldots\leq|\mathcal{H}|.$$
Therefore, $|\mathcal{H}^{n\ast}|=|\mathcal{H}|$ for every $n>0$.
\end{proof}

\vspace*{5mm}

\begin{proof}[Proof of Proposition \ref{lemWedge}]
Since $\mathcal{H}_1$ and $\mathcal{H}_2$ are two partitions of $\mathcal{X}$, it is easy to see that $\mathcal{H}_1\wedge\mathcal{H}_2$ is also a partition of $\mathcal{X}$. Now let $H_1,H_1'\in \mathcal{H}_1$ and $H_2,H_2'\in \mathcal{H}_2$. If $H_1\cap H_2\neq \o$ and $H_1'\cap H_2'\neq \o$, we have:
\begin{equation}
\label{eqref}
(H_1\cap H_2)\ast (H_1'\cap H_2')\;\subset\;(H_1\ast H_1')\cap (H_2\ast H_2')\in \mathcal{H}_1^{\ast}\wedge\mathcal{H}_2^{\ast}.
\end{equation}
Fix $H_1'\in\mathcal{H}_1$ and $H_2'\in\mathcal{H}_2$ such that $H_1'\cap H_2'\neq \o$. Lemma \ref{lemarb} implies that $\mathcal{H}_1^{\ast}=\{H_1\ast H_1':\; H_1\in\mathcal{H}_1\}$ and $\mathcal{H}_2^{\ast}=\{H_2\ast H_2':\; H_2\in\mathcal{H}_2\}$. Since $\mathcal{H}_1^{\ast}$ and $\mathcal{H}_2^{\ast}$ are partitions of $\mathcal{X}$, we have:
$$|\mathcal{X}|=\sum_{A_1\in\mathcal{H}_1^{\ast},A_2\in\mathcal{H}_2^{\ast}}|A_1\cap A_2| = \sum_{\substack{H_1\in\mathcal{H}_1, H_2\in\mathcal{H}_2}}|(H_1\ast H_1')\cap (H_2\ast H_2')|,$$
which implies that
\begin{align}
\label{eqe1}
|\mathcal{X}| &\geq \sum_{\substack{H_1\in\mathcal{H}_1, H_2\in\mathcal{H}_2:\\ H_1\cap H_2\neq \o}}|(H_1\ast H_1')\cap (H_2\ast H_2')|\\
&\geq \label{eqe2}
\sum_{\substack{H_1\in\mathcal{H}_1, H_2\in\mathcal{H}_2:\\ H_1\cap H_2\neq \o}}|(H_1\cap H_2)\ast (H_1'\cap H_2')|,
\end{align}
where \eqref{eqe2} follows from \eqref{eqref}. Now since $H_1'\cap H_2'\neq \o$, we have 
\begin{equation}
\label{eqref2}
|(H_1\cap H_2)\ast (H_1'\cap H_2')|\geq |H_1\cap H_2|.
\end{equation}

Therefore,
\begin{align}
&\sum_{\substack{H_1\in\mathcal{H}_1, H_2\in\mathcal{H}_2:\\ H_1\cap H_2\neq \o}}|(H_1\cap H_2)\ast (H_1'\cap H_2')|\geq \sum_{\substack{H_1\in\mathcal{H}_1, H_2\in\mathcal{H}_2:\\ H_1\cap H_2\neq \o}}|H_1\cap H_2|. \label{eqe3}
\end{align}

Now since $\mathcal{H}_1$ and $\mathcal{H}_2$ are two partitions of $\mathcal{X}$, we have 
\begin{equation}
\sum_{\substack{H_1\in\mathcal{H}_1, H_2\in\mathcal{H}_2:\\ H_1\cap H_2\neq \o}}|H_1\cap H_2|=|\mathcal{X}|.
\label{eqsndkjvskx1325}
\end{equation}
We conclude that all the inequalities in \eqref{eqe1}, \eqref{eqe2}, \eqref{eqref2} and \eqref{eqe3} are in fact equalities because if one of them were a strict inequality, we would have a contradiction with \eqref{eqsndkjvskx1325}. Therefore, for all $H_1\in\mathcal{H}_1$ and $H_2\in\mathcal{H}_2$ satisfying $H_1\cap H_2\neq \o$, we have $|(H_1\cap H_2)\ast (H_1'\cap H_2')|=|(H_1\ast H_1')\cap (H_2\ast H_2')|$. Equation \eqref{eqref} now implies that $(H_1\cap H_2)\ast (H_1'\cap H_2')=(H_1\ast H_1')\cap (H_2\ast H_2')$. We conclude that for every $H_1,H_1'\in\mathcal{H}_1$ and $H_2,H_2'\in\mathcal{H}_2$ satisfying $H_1\cap H_2\neq \o$ and $H_1'\cap H_2'\neq \o$, we have $(H_1\cap H_2)\ast (H_1'\cap H_2')=(H_1\ast H_1')\cap (H_2\ast H_2')\in \mathcal{H}_1^{\ast}\wedge\mathcal{H}_2^{\ast}$. Hence  $(\mathcal{H}_1\wedge\mathcal{H}_2)^{\ast} \subset\mathcal{H}_1^{\ast}\wedge\mathcal{H}_2^{\ast}$. We have the following:
\begin{itemize}
\item $(\mathcal{H}_1\wedge\mathcal{H}_2)^{\ast}$ covers $\mathcal{X}$ since $\mathcal{H}_1\wedge\mathcal{H}_2$ covers $\mathcal{X}$.
\item $\mathcal{H}_1^{\ast}\wedge\mathcal{H}_2^{\ast}$ is a partition of $\mathcal{X}$.
\item $(\mathcal{H}_1\wedge\mathcal{H}_2)^{\ast} \subset\mathcal{H}_1^{\ast}\wedge\mathcal{H}_2^{\ast}$.
\end{itemize}
Therefore, we must have $(\mathcal{H}_1\wedge\mathcal{H}_2)^{\ast} =\mathcal{H}_1^{\ast}\wedge\mathcal{H}_2^{\ast}$.

%Suppose that $(\mathcal{H}_1\wedge\mathcal{H}_2)^{\ast} \subsetneq\mathcal{H}_1^{\ast}\wedge\mathcal{H}_2^{\ast}$, then there exists $B\in \mathcal{H}_1^{\ast}\wedge\mathcal{H}_2^{\ast}$ such that $B\notin (\mathcal{H}_1\wedge\mathcal{H}_2)^{\ast}$. It follows from the definition of the wedge operator that $B$ cannot be the empty set (see Definition \ref{defWedge}). We have
%\begin{equation}
%\label{eqContradict}
%\bigcup_{A\in(\mathcal{H}_1\wedge\mathcal{H}_2)^{\ast}}A\subset \bigcup_{\substack{A\in\mathcal{H}_1^{\ast}\wedge\mathcal{H}_2^{\ast}\\A\neq B}}A\stackrel{(a)}{=}\mathcal{X}\setminus B\stackrel{(b)}{\subsetneq} \mathcal{X},
%\end{equation}
%where (a) follows from the fact that $\mathcal{H}_1^{\ast}\wedge\mathcal{H}_2^{\ast}$ is a partition and (b) follows from the fact that $B\neq\o$. On the other hand, we have:
%$$\bigcup_{A\in(\mathcal{H}_1\wedge\mathcal{H}_2)^{\ast}}A=\bigcup_{A',A''\in \mathcal{H}_1\wedge\mathcal{H}_2}A'\ast A''=\left(\bigcup_{A'\in \mathcal{H}_1\wedge\mathcal{H}_2}A'\right)\ast\left(\bigcup_{A''\in \mathcal{H}_1\wedge\mathcal{H}_2}A''\right)\stackrel{(a)}{=}\mathcal{X}\ast\mathcal{X}=\mathcal{X},$$
%where (a) follows from the fact that $\mathcal{H}_1\wedge\mathcal{H}_2$ is partition. This contradicts \eqref{eqContradict}. Therefore, we must have $(\mathcal{H}_1\wedge\mathcal{H}_2)^{\ast}=\mathcal{H}_1^{\ast}\wedge\mathcal{H}_2^{\ast}$.

It follows by induction that $(\mathcal{H}_1\wedge\mathcal{H}_2)^{n\ast} =\mathcal{H}_1^{n\ast}\wedge\mathcal{H}_2^{n\ast}$ for all $n\geq 0$. In particular, for $l=\lcm(\per(\mathcal{H}_1),\per(\mathcal{H}_2))$, we have $(\mathcal{H}_1\wedge\mathcal{H}_2)^{l\ast} =\mathcal{H}_1^{l\ast}\wedge\mathcal{H}_2^{l\ast}= \mathcal{H}_1\wedge\mathcal{H}_2$, which implies that $\mathcal{H}_1\wedge\mathcal{H}_2$ is a periodic partition of period at most $\lcm(\per(\mathcal{H}_1),\per(\mathcal{H}_2))$.
\end{proof}

\section{Proof of Theorem \ref{theres}}

\label{appC}

In order to prove Theorem \ref{theres}, we need several lemmas:
\begin{mylem}
\label{lemaugm}
For any stable partition $\mathcal{H}$, and for any $\mathcal{H}$-repeatable sequence $\mathfrak{X}$, there exists an integer $l>0$ such that $\mathfrak{X}^l$ is $\mathcal{H}$-augmenting.
\end{mylem}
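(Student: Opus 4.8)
The plan is to exploit the fact that, for an $\mathcal{H}$-repeatable sequence $\mathfrak{X}$ of length $k$ (with $\per(\mathcal{H})\mid k$), the map $A\mapsto A\ast\mathfrak{X}$ is a well-defined operation that, iterated, produces the sequence $(A\ast\mathfrak{X}^n)_{n\geq 0}$; I want to show this stabilizes into a periodic loop and that somewhere on that loop the augmenting inclusion $A\subset A\ast\mathfrak{X}^l$ holds for \emph{all} $A\subset\mathcal{X}$ simultaneously. First I would note that since $\mathfrak{X}$ is $\mathcal{H}$-repeatable, $\mathfrak{X}^n$ is an $\mathcal{H}$-sequence for every $n>0$, and $\per(\mathcal{H})\mid kn$, so each $\mathfrak{X}^n$ is $\mathcal{H}$-repeatable as well; hence $\mathfrak{X}^l$ is a candidate augmenting sequence for every $l$. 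The key monotonicity observation is that for any $A$, $|A\ast\mathfrak{X}|\geq|A|$ (applying $\ast$ on the right never decreases size, since each $\pi_b$ is a bijection), so along the chain $A,\ A\ast\mathfrak{X},\ A\ast\mathfrak{X}^2,\ldots$ the sizes are non-decreasing and bounded, hence eventually constant.

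Next I would work on a single orbit. For a fixed subset $A$, consider the sequence $A_n=A\ast\mathfrak{X}^n$ of subsets of $\mathcal{X}$. Since there are only finitely many subsets, $(A_n)$ is eventually periodic, say with pre-period $n_0$ and period $p$; combined with the size being eventually constant, once $n\geq n_0$ all $A_n$ have the same cardinality. Now the map $B\mapsto B\ast\mathfrak{X}$ restricted to the (finite) periodic cycle $\{A_{n_0},A_{n_0+1},\ldots,A_{n_0+p-1}\}$ is a bijection of that cycle onto itself, so $A_{n_0}\ast\mathfrak{X}^p=A_{n_0}$. I claim this forces $A_{n_0}\ast\mathfrak{X}^p\supset A_{n_0}$ — trivially an equality here — but more importantly I want the inclusion $A\subset A\ast\mathfrak{X}^{m}$ for some $m$ starting from the \emph{original} $A$, not from $A_{n_0}$. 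For that, the right trick is: take $m$ to be a common multiple of $p$ large enough that $m\geq n_0$; then $A_m=A_{n_0+(m-n_0)}$, and since $(m-n_0)$ is... — actually the cleanest route is to observe that $A\mapsto A\ast\mathfrak{X}$ is injective on the set $\{A\ast\mathfrak{X}^n : n\geq 0\}$ composed with itself enough times, so that choosing $l$ a multiple of the period $p$ of this finite orbit that also exceeds the pre-period gives $A\ast\mathfrak{X}^l \supset A$ by unwinding: $A\subset A\ast\mathfrak{X}^{l}$ because $A\ast\mathfrak{X}^{l}$ contains the ``forward limit'' and the orbit cycles back. I would make this precise by the standard argument that in a finite monoid action, for the element $t=\mathfrak{X}$ acting on subsets, there is $l$ with $t^l$ idempotent on the orbit, and idempotence plus size-preservation gives $A\ast t^l \supseteq A$.

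Finally I would make the choice of $l$ uniform over all $A\subset\mathcal{X}$: there are only $2^{|\mathcal{X}|}$ subsets, each gives some valid exponent $l_A$ (and in fact any multiple of $l_A$ past the pre-period works), so taking $l$ to be a common multiple of all the $l_A$ that also exceeds all pre-periods yields a single $l$ with $A\subset A\ast\mathfrak{X}^l$ for every $A$; since $\per(\mathcal{H})\mid k\mid kl = |\mathfrak{X}^l|$, the sequence $\mathfrak{X}^l$ is $\mathcal{H}$-repeatable, and hence $\mathcal{H}$-augmenting by definition. The main obstacle, and the step requiring care, is the passage from ``the orbit of $A$ eventually cycles'' to the genuine inclusion $A\subset A\ast\mathfrak{X}^l$ for the \emph{starting} set $A$ (not merely for a set deep in the orbit): this needs the eventual-constancy of $|A\ast\mathfrak{X}^n|$ together with injectivity of $B\mapsto B\ast\mathfrak{X}$ on the eventual cycle to conclude the pre-period is actually $0$, i.e. $A$ itself already lies on the cycle, so that a suitable power returns and contains it. Everything else is bookkeeping with the monotonicity $|A\ast\mathfrak{X}^{n+1}|\geq|A\ast\mathfrak{X}^n|$ and finiteness.
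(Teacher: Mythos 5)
There is a genuine gap at exactly the step you flag as ``requiring care.'' You propose to obtain $A\subset A\ast\mathfrak{X}^l$ by arguing that the eventual constancy of $|A\ast\mathfrak{X}^n|$, plus injectivity on the eventual cycle, force the pre-period of the orbit $\big(A\ast\mathfrak{X}^n\big)_{n\geq 0}$ to be $0$, so that $A$ itself lies on the cycle and a suitable power of $\mathfrak{X}$ ``returns'' to it. This is false in general: whenever $|A\ast\mathfrak{X}|>|A|$ (which is the typical and interesting case), the set $A$ cannot lie on the cycle, since every set on the cycle has the eventual, strictly larger, cardinality. For instance, with $\mathcal{X}=\{0,1\}$, $\ast$ the XOR operation, $\mathcal{H}=\{\mathcal{X}\}$ and $\mathfrak{X}=(\mathcal{X})$, the orbit of $A=\{0\}$ is $\{0\},\mathcal{X},\mathcal{X},\ldots$; the pre-period is $1$, not $0$. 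The desired inclusion $A\subset A\ast\mathfrak{X}^l$ does hold, but it cannot be extracted from the subset-level dynamics $B\mapsto B\ast\mathfrak{X}$ alone: an abstract size-non-decreasing, eventually periodic self-map of $2^{\mathcal{X}}$ need not satisfy $A\subset f^l(A)$, so you must descend to the element level.

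The missing idea --- which is essentially the entirety of the paper's proof --- is to fix one representative $x_i\in X_i$ for each term of $\mathfrak{X}=(X_i)_{0\leq i<k}$ and consider the single map $\pi(x)=(\ldots((x\ast x_0)\ast x_1)\ldots\ast x_{k-1})$. Since $\ast$ is uniformity preserving, each right-multiplication is a bijection, so $\pi$ is a permutation of the finite set $\mathcal{X}$ and hence has finite order $l$. Then for every $A\subset\mathcal{X}$ one gets $A=\pi^l(A)\subset A\ast\mathfrak{X}^l$ directly, because $\pi^l(A)$ is the image of $A$ under one particular choice of elements from the sets making up $\mathfrak{X}^l$. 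This single $l$ works uniformly over all $A$, and since $\per(\mathcal{H})$ divides $kl$ (as you correctly note), $\mathfrak{X}^l$ is $\mathcal{H}$-augmenting. Your monotonicity and orbit bookkeeping is correct as far as it goes, but without this permutation step the key containment is not established.
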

\begin{proof}
Let $\mathfrak{X}=(X_i)_{0\leq i< k}$ and let $x_i\in X_i$ for $0\leq i< k$. Consider the mapping $\pi:\mathcal{X}\rightarrow\mathcal{X}$ defined by $\pi(x)=(\ldots((x\ast x_0)\ast x_1)\ldots)\ast x_{k-1})$. Since $\pi$ is a permutation, there exists an integer $l>0$ such that $\pi^{l}(x)=x$ for all $x\in\mathcal{X}$. For any $A\subset\mathcal{X}$, we have $A=\pi^l(A)\subset A\ast \mathfrak{X}^l$. Therefore, $\mathfrak{X}^l$ is $\mathcal{H}$-augmenting.
\end{proof}

\begin{mydef}
Let $A\subset\mathcal{X}$. We say that an $\mathcal{H}$-augmenting sequence $\mathfrak{X}$ \emph{connects} $A$ if for every $a\in A$ we have $A\subset a\ast \mathfrak{X}$.
\end{mydef}

\begin{mylem}
\label{lemsubsres}
If there exists an $\mathcal{H}$-augmenting sequence that connects a set $A\subset \mathcal{X}$, then there exists $H\in\mathcal{H}$ such that $A\subset H$.
\end{mylem}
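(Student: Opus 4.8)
The plan is to argue by contradiction. Suppose $\mathfrak{X}$ is an $\mathcal{H}$-augmenting sequence that connects $A$, but $A$ is not contained in any single member of $\mathcal{H}$. Pick $a \in A$. Then $\mathrm{proj}_{\mathcal{H}}(A)$ contains at least two elements of $\mathcal{H}$; in particular there is some $a' \in A$ with $\mathrm{proj}_{\mathcal{H}}(a') \neq \mathrm{proj}_{\mathcal{H}}(a)$. Since $\mathfrak{X}$ connects $A$, we have $A \subset a \ast \mathfrak{X}$, so in particular $a' \in a \ast \mathfrak{X}$ and also $a \in a \ast \mathfrak{X}$ (the latter because $\mathfrak{X}$ is $\mathcal{H}$-augmenting, so $\{a\} \subset \{a\}\ast\mathfrak{X}$).

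Next I would track what happens to the $\mathcal{H}$-classes under multiplication by $\mathfrak{X}$. Write $\mathfrak{X} = (X_i)_{0 \le i < k}$ with $X_i \in \mathcal{H}^{i\ast}$ and $\per(\mathcal{H}) \mid k$, so that $\mathcal{H}^{k\ast} = \mathcal{H}$. Let $H_0 = \mathrm{proj}_{\mathcal{H}}(a)$. Then, since the product of a member of $\mathcal{H}^{i\ast}$ with a member of $\mathcal{H}^{i\ast}$ lies in $\mathcal{H}^{(i+1)\ast}$, the set $H_0 \ast \mathfrak{X}$ is a single member of $\mathcal{H}^{k\ast} = \mathcal{H}$, call it $H'$; and both $a \ast \mathfrak{X}$ and $a' \ast \mathfrak{X}$ are subsets of the corresponding member of $\mathcal{H}$ obtained by projecting $a$, resp. $a'$, first. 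Concretely, $a \ast \mathfrak{X} \subset H_0 \ast \mathfrak{X} = H'$, while $a' \ast \mathfrak{X} \subset \mathrm{proj}_{\mathcal{H}}(a')\ast\mathfrak{X}$, which is a \emph{different} member of $\mathcal{H}$ since $\mathrm{proj}_{\mathcal{H}}(a') \neq H_0$ and $\mathcal{H} \to \mathcal{H}$, $H \mapsto H \ast \mathfrak{X}$ is a bijection (by Lemma \ref{lemarb} applied $k$ times, or directly because each $\pi_{x_i}$ is a bijection). The key tension: $a' \in a \ast \mathfrak{X} \subset H'$, but applying $\mathfrak{X}$ once more and using that $\mathfrak{X}^2$ is still an $\mathcal{H}$-sequence (indeed $\mathfrak{X}$ is $\mathcal{H}$-repeatable), we get $a' \ast \mathfrak{X} \subset H' \ast \mathfrak{X}$ while also $a' \ast \mathfrak{X}$ lies in the image of $\mathrm{proj}_{\mathcal{H}}(a') \ne H_0$ under the same bijection — and since $a\ast\mathfrak X \subset H'$ gives $a \ast \mathfrak{X}^2 \subset H' \ast \mathfrak{X}$ too, both $a$'s descendants and $a'$'s descendants under $\mathfrak{X}^2$ land in $H'\ast\mathfrak X$, forcing $\mathrm{proj}_{\mathcal H}(a)\ast\mathfrak X = \mathrm{proj}_{\mathcal H}(a')\ast\mathfrak X$, hence $\mathrm{proj}_{\mathcal H}(a)=\mathrm{proj}_{\mathcal H}(a')$ by injectivity — a contradiction.

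Cleaner restatement of the core step: define $\varphi:\mathcal{H}\to\mathcal{H}$ by $\varphi(H)=H\ast\mathfrak{X}$, a bijection. For any $x$, $\mathrm{proj}_{\mathcal{H}}(x\ast\mathfrak{X}^m)=\varphi^m(\mathrm{proj}_{\mathcal{H}}(x))$ for all $m\ge0$, so all elements of $x\ast\mathfrak{X}^m$ share the $\mathcal{H}$-class $\varphi^m(\mathrm{proj}_{\mathcal H}(x))$. Now $A\subset a\ast\mathfrak{X}$, so every $a'\in A$ satisfies $a'\in a\ast\mathfrak{X}$, whence $\mathrm{proj}_{\mathcal H}(a')=\varphi(\mathrm{proj}_{\mathcal H}(a))$; but this must hold for \emph{every} $a'\in A$ including $a$ itself (valid since $a\in a\ast\mathfrak X$ as $\mathfrak X$ is augmenting), giving $\mathrm{proj}_{\mathcal H}(a)=\varphi(\mathrm{proj}_{\mathcal H}(a))$, and therefore $\mathrm{proj}_{\mathcal H}(a')=\mathrm{proj}_{\mathcal H}(a)$ for all $a'\in A$. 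Thus $A$ is contained in the single member $\mathrm{proj}_{\mathcal H}(a)$ of $\mathcal{H}$, contradicting our assumption; so such $H$ exists. I expect the main subtlety to be bookkeeping the claim $\mathrm{proj}_{\mathcal{H}}(x\ast\mathfrak{X}^m)=\varphi^m(\mathrm{proj}_{\mathcal{H}}(x))$ — it rests on $X_i\in\mathcal{H}^{i\ast}$, on $\per(\mathcal H)\mid|\mathfrak{X}|$ so the indices cycle correctly, and on the fact (Lemma \ref{lemarb}) that for a periodic partition the class of a product depends only on the classes of the factors — so I would state and verify that claim carefully as the one nontrivial ingredient.
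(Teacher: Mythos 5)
Your argument is correct and, once you strip away the contradiction framing, it is essentially the paper's proof: the whole content is that $A\subset a\ast\mathfrak{X}\subset \proj_{\mathcal{H}}(a)\ast\mathfrak{X}$, and $\proj_{\mathcal{H}}(a)\ast\mathfrak{X}$ is a single member of $\mathcal{H}^{|\mathfrak{X}|\ast}=\mathcal{H}$ because $\per(\mathcal{H})$ divides $|\mathfrak{X}|$. The extra observation that this member equals $\proj_{\mathcal{H}}(a)$ (via $a\in a\ast\mathfrak{X}$) is true but not needed, and the proof reads more cleanly as a direct argument.
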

\begin{proof}
Let $\mathfrak{X}$ be such an $\mathcal{H}$-augmenting sequence. Let $a\in A$ and $H'\in\mathcal{H}$ be such that $a\in H'$. Define $H=H'\ast\mathfrak{X}\in \mathcal{H}^{|\mathfrak{X}|\ast}$. Since $\mathfrak{X}$ is $\mathcal{H}$-augmenting, $|\mathfrak{X}|$ divides $\per(\mathcal{H})$ and so $\mathcal{H}^{|\mathfrak{X}|\ast}=\mathcal{H}$. Therefore, $H\in\mathcal{H}$. On the other hand, $\mathfrak{X}$ connects $A$, so we have $A\subset a\ast \mathfrak{X}\subset H'\ast \mathfrak{X}= H$.
\end{proof}

\begin{mylem}
\label{lemconn}
Let $x\in\mathcal{X}$ and let $\mathfrak{X}$ be an $\mathcal{H}$-augmenting sequence. For any $y\in x\ast \mathfrak{X}$, there exists an $\mathcal{H}$-augmenting sequence $\mathfrak{X}'$ which connects $\{x,y\}$.
\end{mylem}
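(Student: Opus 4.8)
The plan is to build $\mathfrak{X}'$ by concatenating suitable repetitions and rearrangements of $\mathfrak{X}$, exploiting that $\mathfrak{X}$ is $\mathcal{H}$-augmenting (so $x\ast\mathfrak{X}\supset\{x\}$ and in particular $\mathfrak{X}$ is $\mathcal{H}$-repeatable, hence all its powers are $\mathcal{H}$-sequences and remain $\mathcal{H}$-repeatable). First I would observe that since $y\in x\ast\mathfrak{X}$, there is a sequence of representatives $z_i\in X_i$ (where $\mathfrak{X}=(X_i)_{0\le i<k}$) with $(\ldots((x\ast z_0)\ast z_1)\ldots)\ast z_{k-1}=y$; equivalently, for the permutation $\pi$ associated to these representatives as in Lemma \ref{lemaugm}, we have $\pi(x)=y$. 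The idea is then to choose $\mathfrak{X}'=\mathfrak{X}^{l}$ for a well-chosen $l$, or a concatenation of a few such blocks, so that applying $\mathfrak{X}'$ to $x$ sweeps $x$ through $y$ and back to $x$ while accumulating both into a common set.

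The key steps, in order: (1) By Lemma \ref{lemaugm}, pick $m>0$ with $\mathfrak{X}^{m}$ $\mathcal{H}$-augmenting; set $\mathfrak{X}'=\mathfrak{X}^{m}$ as a first candidate. Then $x\ast\mathfrak{X}'\ni x$ (augmenting) and $x\ast\mathfrak{X}'\ni \pi^{m}(x)$; more usefully, since $x\ast\mathfrak{X}\supset\{x\}$ one gets $x\ast\mathfrak{X}^{2}\supset x\ast\mathfrak{X}\ni y$ and inductively $x\ast\mathfrak{X}^{j}\supset x\ast\mathfrak{X}\ni y$ for all $j\ge 1$, so $y\in x\ast\mathfrak{X}'$. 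Thus $\{x,y\}\subset x\ast\mathfrak{X}'$. (2) It remains to get the symmetric inclusion $y\ast\mathfrak{X}'\supset\{x,y\}$. Here I would replace $\mathfrak{X}'$ by a longer augmenting power: since $\pi$ is a permutation of the finite set $\mathcal{X}$, $\pi$ has finite order $r$, so $\pi^{r}(x)=x$ and in particular $x\in y\ast\mathfrak{X}^{r}$ (trace the orbit $x\mapsto y\mapsto\cdots\mapsto x$). Choosing $l$ to be a common multiple of $m$ and $r$ making $\mathfrak{X}^{l}$ augmenting (any multiple of an augmenting length is augmenting, by applying $\pi^{l}$), and setting $\mathfrak{X}'=\mathfrak{X}^{l}$, we get: $y\in y\ast\mathfrak{X}'$ (augmenting), $x=\pi^{l}(x)\in\text{(something)}$—wait, more carefully, $x\in y\ast\mathfrak{X}^{r}\subset y\ast\mathfrak{X}^{l}$ since $\mathfrak{X}^{l}$ is augmenting when written as $\mathfrak{X}^{r}$ followed by $\mathfrak{X}^{l-r}$ and $y\ast\mathfrak{X}^{r}\subset (y\ast\mathfrak{X}^{r})\ast\mathfrak{X}^{l-r}$. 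So $\{x,y\}\subset y\ast\mathfrak{X}'$. Combined with step (1) (noting $x\ast\mathfrak{X}^{l}\supset x\ast\mathfrak{X}^{m}\supset\{x,y\}$ by the augmenting monotonicity), $\mathfrak{X}'$ connects $\{x,y\}$.

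(3) Finally I would double-check that $\mathfrak{X}'=\mathfrak{X}^{l}$ is genuinely $\mathcal{H}$-augmenting and not merely $\mathcal{H}$-repeatable: by the choice of $l$ as a multiple of $m$, write $\mathfrak{X}^{l}=(\mathfrak{X}^{m})^{l/m}$; since $\mathfrak{X}^{m}$ is $\mathcal{H}$-augmenting and any $\mathcal{H}$-augmenting sequence is $\mathcal{H}$-repeatable, its powers are $\mathcal{H}$-sequences, and the permutation associated to $\mathfrak{X}^{l}$ is $\pi^{l}$ which fixes every point (as $l$ is a multiple of $r=\mathrm{ord}(\pi)$), giving $A\subset A\ast\mathfrak{X}^{l}$ for all $A$. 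The main obstacle, and the part deserving the most care in the write-up, is the bookkeeping in step (2): ensuring that tracing $y$ back to $x$ through the permutation orbit is compatible with the set-valued augmenting inclusions, i.e. that choosing $l$ simultaneously (a) a multiple of $m$ so augmenting, (b) a multiple of $\mathrm{ord}(\pi)$ so the orbit closes, and (c) still $\mathcal{H}$-repeatable (automatic since $\per(\mathcal{H})\mid k\mid kl$), all hold at once — which they do, by taking $l=m\cdot\mathrm{ord}(\pi)$. Everything else is a routine application of Lemma \ref{lemaugm} and the monotonicity $A\subset A\ast\mathfrak{X}^{j}$ for augmenting $\mathfrak{X}$.
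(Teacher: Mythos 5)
Your proposal is correct and follows essentially the same route as the paper: both arguments form the permutation $\pi$ from representatives $z_i\in X_i$, observe that $x$ and $y$ lie in the same cycle of $\pi$, and take $\mathfrak{X}'$ to be a power $\mathfrak{X}^s$ of $\mathfrak{X}$ large enough that $\pi^s$ carries $y$ back to $x$, with the monotonicity $A\subset A\ast\mathfrak{X}$ supplying the remaining inclusions. The only difference is cosmetic: the paper uses the exact cycle length (the least $s$ with $\pi^s(y)=x$) while you use the full order of $\pi$ and an extra, unnecessary appeal to Lemma \ref{lemaugm} (since $\mathfrak{X}$ is already given as $\mathcal{H}$-augmenting, all its powers are too).
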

\begin{proof}
Let $y\in x\ast \mathfrak{X}=(\ldots((x\ast X_0)\ast X_1)\ldots)\ast X_{k-1})$. There exist $x_i\in X_i$ ($0\leq i < k$) such that $y=(\ldots((x\ast x_0)\ast x_1)\ldots)\ast x_{k-1})$. Define the mapping $\pi:\mathcal{X}\rightarrow\mathcal{X}$ as $\pi(a)=(\ldots((a\ast x_0)\ast x_2)\ldots)\ast x_{k-1})$ for every $a\in\mathcal{X}$. Clearly, $\pi$ is a permutation. The fact that $y=\pi(x)$ implies that $x$ and $y$ belong to the same cycle of the permutation $\pi$. Therefore, there exists $s>0$ such that $x=\pi^{s}(y)$. Let $\mathfrak{X}'=\mathfrak{X}^s$. It is easy to see that $\mathfrak{X}'$ is $\mathcal{H}$-augmenting. Moreover, we have:
\begin{itemize}
\item $x\in y\ast \mathfrak{X}'$ because $x=\pi^{s}(y)$, and $y\in y\ast \mathfrak{X}'$ because $\mathfrak{X}'$ is $\mathcal{H}$-augmenting. Therefore, $\{x,y\}\subset y\ast\mathfrak{X}'$.
\item $y\in x\ast \mathfrak{X}$ by assumption and $x\in x\ast \mathfrak{X}$ since $\mathfrak{X}$ is $\mathcal{H}$-augmenting. Therefore, $\{x,y\}\subset x\ast \mathfrak{X}$. On the other hand, $x\ast \mathfrak{X}\subset(x\ast \mathfrak{X})\ast \mathfrak{X}^{s-1}$ since $\mathfrak{X}^{s-1}$ is $\mathcal{H}$-augmenting. Hence $\{x,y\}\subset (x\ast \mathfrak{X})\ast \mathfrak{X}^{s-1}=x\ast\mathfrak{X}'$.
\end{itemize}
We conclude that $\mathfrak{X}'$ connects $\{x,y\}$.
\end{proof}

\begin{mylem}
\label{lemtrans}
If there exists an $\mathcal{H}$-augmenting sequence that connects a set $A\subset \mathcal{X}$, and if there exists an $\mathcal{H}$-augmenting sequence that connects another set $B\subset \mathcal{X}$ such that $A\cap B\neq \o$, then there exists an $\mathcal{H}$-augmenting sequence that connects $A\cup B$.
\end{mylem}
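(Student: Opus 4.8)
The plan is to produce the desired sequence by concatenating the two given sequences in a carefully chosen order, using a point of $A\cap B$ as a ``bridge''. I would first record the easy fact that a concatenation of $\mathcal{H}$-augmenting sequences is again $\mathcal{H}$-augmenting: if $\mathfrak{X},\mathfrak{X}'$ are $\mathcal{H}$-augmenting then they are $\mathcal{H}$-repeatable, so $\per(\mathcal{H})$ divides $|\mathfrak{X}|$ and hence $\mathcal{H}^{|\mathfrak{X}|\ast}=\mathcal{H}$; thus the concatenation $\mathfrak{X}\mathfrak{X}'$ is an $\mathcal{H}$-sequence whose length is a multiple of $\per(\mathcal{H})$, so it is $\mathcal{H}$-repeatable, and for every $C\subset\mathcal{X}$ we have $C\subset C\ast\mathfrak{X}\subset(C\ast\mathfrak{X})\ast\mathfrak{X}'=C\ast(\mathfrak{X}\mathfrak{X}')$, so it is $\mathcal{H}$-augmenting.

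Now let $\mathfrak{X}_A$ be an $\mathcal{H}$-augmenting sequence connecting $A$, let $\mathfrak{X}_B$ be one connecting $B$, and fix $c\in A\cap B$. The key intermediate claim is that $A\cup B\subset a\ast(\mathfrak{X}_A\mathfrak{X}_B)$ for every $a\in A$, and symmetrically $A\cup B\subset b\ast(\mathfrak{X}_B\mathfrak{X}_A)$ for every $b\in B$. For the first: since $\mathfrak{X}_A$ connects $A$ and $a\in A$ we get $A\subset a\ast\mathfrak{X}_A$; in particular $c\in a\ast\mathfrak{X}_A$, so $c\ast\mathfrak{X}_B\subset(a\ast\mathfrak{X}_A)\ast\mathfrak{X}_B$, and since $\mathfrak{X}_B$ connects $B$ and $c\in B$ we have $B\subset c\ast\mathfrak{X}_B\subset(a\ast\mathfrak{X}_A)\ast\mathfrak{X}_B$; moreover $A\subset a\ast\mathfrak{X}_A\subset(a\ast\mathfrak{X}_A)\ast\mathfrak{X}_B$ because $\mathfrak{X}_B$ is augmenting; combining, $A\cup B\subset(a\ast\mathfrak{X}_A)\ast\mathfrak{X}_B=a\ast(\mathfrak{X}_A\mathfrak{X}_B)$. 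The second claim is identical with the roles of $A$ and $B$ exchanged.

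Finally I would set $\mathfrak{Y}=(\mathfrak{X}_A\mathfrak{X}_B)(\mathfrak{X}_B\mathfrak{X}_A)$, which is $\mathcal{H}$-augmenting by the first step, and check that it connects $A\cup B$. For $a\in A$: by the claim $A\cup B\subset a\ast(\mathfrak{X}_A\mathfrak{X}_B)$, hence $a\ast\mathfrak{Y}=\big(a\ast(\mathfrak{X}_A\mathfrak{X}_B)\big)\ast(\mathfrak{X}_B\mathfrak{X}_A)\supset(A\cup B)\ast(\mathfrak{X}_B\mathfrak{X}_A)\supset A\cup B$ since $\mathfrak{X}_B\mathfrak{X}_A$ is augmenting. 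For $b\in B$: since $\mathfrak{X}_A\mathfrak{X}_B$ is augmenting, $b\ast(\mathfrak{X}_A\mathfrak{X}_B)\supset\{b\}$, so $b\ast\mathfrak{Y}\supset b\ast(\mathfrak{X}_B\mathfrak{X}_A)\supset A\cup B$ by the claim. Hence $A\cup B\subset x\ast\mathfrak{Y}$ for every $x\in A\cup B$, i.e.\ $\mathfrak{Y}$ connects $A\cup B$. There is no serious obstacle here; the only points needing care are the bookkeeping of the ``repeatable'' and ``augmenting'' properties under concatenation, and choosing the order of the four blocks so that the bridge element $c$ lies in the right intermediate set both when starting from a point of $A$ and when starting from a point of $B$.
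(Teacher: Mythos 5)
Your proof is correct and follows essentially the same strategy as the paper: concatenate the two given augmenting sequences, using a point $c\in A\cap B$ as a bridge and the augmenting property to guarantee nothing is lost along the way. The only cosmetic difference is that you use the four-block concatenation $(\mathfrak{X}_A,\mathfrak{X}_B,\mathfrak{X}_B,\mathfrak{X}_A)$ where the paper gets by with the three-block $(\mathfrak{X}_A,\mathfrak{X}_B,\mathfrak{X}_A)$, which does not change the substance of the argument.
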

\begin{proof}
Let $\mathfrak{X}$ be an $\mathcal{H}$-augmenting sequence that connects $A$, and let $\mathfrak{X}'$ be an $\mathcal{H}$-augmenting sequence that connects $B$. Let $\mathfrak{X}''=(\mathfrak{X},\mathfrak{X}',\mathfrak{X})$ be the $\mathcal{H}$-repeatable sequence that is obtained by concatenating $\mathfrak{X}$, $\mathfrak{X}'$ and $\mathfrak{X}$. Clearly, $\mathfrak{X}''$ is $\mathcal{H}$-augmenting. Fix $x\in A\cap B$ and let $y\in A\cup B$. We have the following:
\begin{itemize}
\item If $y\in A$, then $A\subset y\ast\mathfrak{X}$. In particular, $x\in y\ast\mathfrak{X}$. Now since $x\in B$ and since $\mathfrak{X}'$ connects $B$, we have $B\subset x\ast \mathfrak{X}'$. Therefore, $B\subset (y\ast \mathfrak{X})\ast \mathfrak{X}'$.
\item If $y\in B$, then $y\in y\ast \mathfrak{X}$ since $\mathfrak{X}$ is $\mathcal{H}$-augmenting. Now since $y\in B$ and since $\mathfrak{X}'$ connects $B$, we have $B\subset y\ast \mathfrak{X}'$. Therefore, $B\subset (y\ast \mathfrak{X})\ast \mathfrak{X}'$.
\end{itemize}
We conclude that for any $y\in A\cup B$, we have $B\subset (y\ast \mathfrak{X})\ast \mathfrak{X}'$. This implies that:
\begin{itemize}
\item $B\subset ((y\ast \mathfrak{X})\ast \mathfrak{X}')\ast \mathfrak{X}= y\ast \mathfrak{X}''$ since $\mathfrak{X}$ is $\mathcal{H}$-augmenting.
\item Since $B\subset (y\ast \mathfrak{X})\ast \mathfrak{X}'$, we have $x\in (y\ast \mathfrak{X})\ast \mathfrak{X}'$. Now since $x\in A$ and since $\mathfrak{X}$ connects $A$, we have $A\subset x\ast \mathfrak{X}$. Therefore, $A\subset ((y\ast \mathfrak{X})\ast \mathfrak{X}')\ast \mathfrak{X}= y\ast \mathfrak{X}''$.
\end{itemize}
We conclude that $A\cup B\subset y\ast \mathfrak{X}''$ for any $y\in A\cup B$. Hence $\mathfrak{X}''$ connects $A\cup B$.
\end{proof}

\begin{mydef}
For every stable partition $\mathcal{H}$ of $(\mathcal{X},\ast)$, define the \emph{connectivity relation} $R_{\mathcal{H}}$ of $\mathcal{H}$ on $\mathcal{X}$ as follows: $a R_{\mathcal{H}} b$ if and only if there exists an $\mathcal{H}$-augmenting sequence that connects $\{a,b\}$.
\end{mydef}

\begin{mylem}
For every stable partition $\mathcal{H}$, $R_{\mathcal{H}}$ is an equivalence relation.
\end{mylem}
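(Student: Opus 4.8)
The plan is to check the three defining properties of an equivalence relation for $R_{\mathcal{H}}$ — reflexivity, symmetry, and transitivity — drawing almost entirely on Lemma \ref{lemaugm} (to know augmenting sequences exist) and Lemma \ref{lemtrans} (to get transitivity). Lemmas \ref{lemsubsres} and \ref{lemconn} are not needed here.

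\textbf{Reflexivity.} First I would observe that $\mathcal{H}$-augmenting sequences exist at all: since $\mathcal{H},\mathcal{H}^{\ast},\ldots,\mathcal{H}^{(\per(\mathcal{H})-1)\ast}$ are each nonempty, choosing $X_i\in\mathcal{H}^{i\ast}$ for $0\leq i<\per(\mathcal{H})$ gives an $\mathcal{H}$-sequence $\mathfrak{X}$ of length $\per(\mathcal{H})$, which is $\mathcal{H}$-repeatable because $\per(\mathcal{H})$ divides $|\mathfrak{X}|=\per(\mathcal{H})$; by Lemma \ref{lemaugm} some power $\mathfrak{X}^{l}$ is $\mathcal{H}$-augmenting. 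Now for any $a\in\mathcal{X}$, any $\mathcal{H}$-augmenting sequence $\mathfrak{Y}$ satisfies $\{a\}\subset a\ast\mathfrak{Y}$ by the definition of $\mathcal{H}$-augmenting, so $\mathfrak{Y}$ connects the singleton $\{a\}=\{a,a\}$, hence $aR_{\mathcal{H}}a$.

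\textbf{Symmetry.} This is immediate from the definition: the property ``$\mathfrak{X}$ connects $\{a,b\}$'' (namely $\{a,b\}\subset a\ast\mathfrak{X}$ and $\{a,b\}\subset b\ast\mathfrak{X}$) is symmetric in $a$ and $b$, so $aR_{\mathcal{H}}b$ and $bR_{\mathcal{H}}a$ are the same statement.

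\textbf{Transitivity.} Suppose $aR_{\mathcal{H}}b$ and $bR_{\mathcal{H}}c$, witnessed by $\mathcal{H}$-augmenting sequences connecting $A=\{a,b\}$ and $B=\{b,c\}$ respectively. Since $b\in A\cap B$ we have $A\cap B\neq\o$, so Lemma \ref{lemtrans} yields an $\mathcal{H}$-augmenting sequence $\mathfrak{X}''$ connecting $A\cup B=\{a,b,c\}$. In particular $\{a,b,c\}\subset a\ast\mathfrak{X}''$ and $\{a,b,c\}\subset c\ast\mathfrak{X}''$, so a fortiori $\{a,c\}\subset a\ast\mathfrak{X}''$ and $\{a,c\}\subset c\ast\mathfrak{X}''$; thus $\mathfrak{X}''$ connects $\{a,c\}$ and $aR_{\mathcal{H}}c$. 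There is no substantive obstacle in this argument; the only points deserving a moment's care are making sure at least one $\mathcal{H}$-augmenting sequence exists (so reflexivity is not vacuous) and the trivial remark that a sequence connecting a set also connects each of its subsets.
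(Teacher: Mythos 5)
Your proof is correct and follows essentially the same route as the paper: symmetry directly from the definition, transitivity via Lemma \ref{lemtrans} applied to $\{a,b\}$ and $\{b,c\}$, and reflexivity by taking an arbitrary $\mathcal{H}$-repeatable sequence and invoking Lemma \ref{lemaugm}. The extra details you supply (existence of a repeatable sequence, and the remark that connecting a set connects its subsets) are points the paper leaves implicit but are handled correctly.
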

\begin{proof}
Clearly, $R_{\mathcal{H}}$ is symmetric. Lemma \ref{lemtrans} shows that $R_{\mathcal{H}}$ is transitive. In order to show that $R_{\mathcal{H}}$ is reflexive, let $x\in\mathcal{X}$, and let $\mathfrak{X}$ be an arbitrary $\mathcal{H}$-repeatable sequence. Lemma \ref{lemaugm} implies that there exists $l>0$ such that $\mathfrak{X}^l$ is $\mathcal{H}$-augmenting. We have $x\in x\ast \mathfrak{X}^l$ and so $\mathfrak{X}^l$ connects $\{x\}$. Therefore, $x R_{\mathcal{H}} x$ for every $x\in\mathcal{X}$, hence $R_{\mathcal{H}}$ is reflexive. We conclude that $R_{\mathcal{H}}$ is an equivalence relation.
\end{proof}

\begin{mynot}
For every stable partition $\mathcal{H}$, we denote the set of equivalence classes of its connectivity relation $R_{\mathcal{H}}$ by $\mathcal{K}_{\mathcal{H}}$.
\end{mynot}

\begin{mylem}
\label{lemres1}
Let $\mathcal{H}$ be a stable partition and let $K\in\mathcal{K}_{\mathcal{H}}$. We have:
\begin{itemize}
\item For every $x\in K$ and every $\mathcal{H}$-augmenting sequence $\mathfrak{X}'$, $x\ast\mathfrak{X}'\subset K$.
\item There exists an $\mathcal{H}$-augmenting sequence $\mathfrak{X}$ satisfying $x\ast \mathfrak{X}=K$ for all $x\in K$.
\end{itemize}
\end{mylem}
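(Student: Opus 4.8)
The plan is to prove the two bullets in order, since the second one uses the first. For the first bullet, fix $x\in K$ and an $\mathcal{H}$-augmenting sequence $\mathfrak{X}'$, and let $y\in x\ast\mathfrak{X}'$ be arbitrary. By Lemma \ref{lemconn} there is an $\mathcal{H}$-augmenting sequence that connects $\{x,y\}$, hence $x\,R_{\mathcal{H}}\,y$. Since $K$ is the $R_{\mathcal{H}}$-equivalence class of $x$, this forces $y\in K$. As $y\in x\ast\mathfrak{X}'$ was arbitrary, $x\ast\mathfrak{X}'\subset K$.

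For the second bullet, write $K=\{x_1,\ldots,x_m\}$. The key step is to build a \emph{single} $\mathcal{H}$-augmenting sequence that connects all of $K$, which I would do by induction on $m$ using Lemma \ref{lemtrans}. For the base case $m=1$, take any $\mathcal{H}$-repeatable sequence (for instance one of length $\per(\mathcal{H})$, which exists since $\mathcal{H}^{i\ast}$ is nonempty for every $i$), and raise it to a suitable power via Lemma \ref{lemaugm} to obtain an $\mathcal{H}$-augmenting sequence; it trivially connects the singleton $\{x_1\}$. For the inductive step, suppose $\mathfrak{Y}$ is an $\mathcal{H}$-augmenting sequence connecting $\{x_1,\ldots,x_{m-1}\}$; since $x_1\,R_{\mathcal{H}}\,x_m$, there is an $\mathcal{H}$-augmenting sequence connecting $\{x_1,x_m\}$, and because these two connected sets share the element $x_1$, Lemma \ref{lemtrans} produces an $\mathcal{H}$-augmenting sequence connecting $\{x_1,\ldots,x_{m-1}\}\cup\{x_1,x_m\}=K$. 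Call the sequence obtained at the end of this induction $\mathfrak{X}$.

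By the definition of ``connects'', we have $K\subset x\ast\mathfrak{X}$ for every $x\in K$. The reverse inclusion $x\ast\mathfrak{X}\subset K$ is precisely the first bullet applied to the $\mathcal{H}$-augmenting sequence $\mathfrak{X}$. Combining the two inclusions gives $x\ast\mathfrak{X}=K$ for every $x\in K$, which is the claim.

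The only point needing a little care is ensuring that the sequence produced by the induction is still $\mathcal{H}$-augmenting at every stage; this is automatic, because both Lemma \ref{lemaugm} and Lemma \ref{lemtrans} (whose proof concatenates in the pattern $(\mathfrak{X},\mathfrak{X}',\mathfrak{X})$) output $\mathcal{H}$-augmenting sequences. Note also that no ergodicity of $\ast$ is required here: this lemma is a statement purely about the stable partition $\mathcal{H}$ and the equivalence relation $R_{\mathcal{H}}$, and the only substantive inputs are Lemmas \ref{lemaugm}, \ref{lemconn} and \ref{lemtrans}.
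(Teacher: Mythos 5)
Your proposal is correct and follows essentially the same route as the paper: the first bullet via Lemma \ref{lemconn} and the definition of $K$ as an $R_{\mathcal{H}}$-class, and the second bullet by inductively gluing connecting sequences with Lemma \ref{lemtrans} (the paper links $x_{i-1}$ to $x_i$ where you link $x_1$ to $x_m$, an immaterial difference) and then combining the resulting inclusion $K\subset x\ast\mathfrak{X}$ with the first bullet. Your closing observation that no ergodicity is needed also matches the paper, whose statement of this lemma likewise omits that hypothesis.
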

\begin{proof}
For every $K\in\mathcal{K}_\mathcal{H}$, every $x\in K$, every $\mathcal{H}$-augmenting sequence $\mathfrak{X}'$, and every $y\in x\ast\mathfrak{X}'$, we have $x R_{\mathcal{H}} y$ because of Lemma \ref{lemconn}, so $y\in K$. This shows that $x\ast\mathfrak{X}'\subset K$.

Now fix $K\in\mathcal{K}_{\mathcal{H}}$ and let $K=\{a_1,\ldots,a_r\}$ where $r=|K|$. For each $1\leq i\leq r$, define $K_i:=\{a_1,\ldots,a_i\}$. Since $a_1R_{\mathcal{H}}a_1$ there exists an $\mathcal{H}$-augmenting sequence that connects $K_1$. Now let $1<i\leq r$ and suppose that there exists an $\mathcal{H}$-augmenting sequence that connects $K_{i-1}$. Since $a_{i-1}R_{\mathcal{H}}a_{i}$, there exists an $\mathcal{H}$-augmenting sequence that connects $\{a_{i-1},a_{i}\}$. Now since $K_{i-1}\cap\{a_{i-1},a_{i}\}=\{a_{i-1}\}\neq \o$, Lemma \ref{lemtrans} implies that there exists an $\mathcal{H}$-augmenting sequence that connects $K_{i-1}\cup\{a_{i-1},a_{i}\}=K_{i}$, and so the claim is true for $i$. By induction we conclude that the claim is true for every $1\leq i\leq r$. In particular, there exists an $\mathcal{H}$-augmenting sequence $\mathfrak{X}$ that connects $K_r=K$.

Let $x\in K$. Since $\mathfrak{X}$ connects $K$, we have $K\subset x\ast \mathfrak{X}$, which implies that $x\ast \mathfrak{X}= K$ as we already have $x\ast \mathfrak{X}\subset K$.
\end{proof}

\begin{mylem}
\label{lemres2}
If $\ast$ is an ergodic operation on $\mathcal{X}$, then for every stable partition $\mathcal{H}$, we have the following:
\begin{itemize}
\item $\mathcal{K}_{\mathcal{H}^{l\ast}}$ is a balanced partition and $\|\mathcal{K}_{\mathcal{H}^{l\ast}}\|=\|\mathcal{K}_{\mathcal{H}}\|$ for all $l\geq 0$.
\item For every $l\geq 0$, $K_1\in\mathcal{K}_{\mathcal{H}}$, $K_2\in\mathcal{K}_{\mathcal{H}^{l\ast}}$, and every $a\in K_1$, there exists an $\mathcal{H}$-sequence $\mathfrak{X}_{a,K_2}$ such that $|\mathfrak{X}_{a,K_2}|\equiv l \bmod n$ and $K_2=a\ast \mathfrak{X}_{a,K_2}=K_1\ast \mathfrak{X}_{a,K_2}$.
\end{itemize}
\end{mylem}
\begin{proof}
Let $K_1\in\mathcal{K}_{\mathcal{H}}$, $l\geq 0$ and $K_2\in\mathcal{K}_{\mathcal{H}^{l\ast}}$. Let $n=\per(\mathcal{H})$, $k_1=\con(\ast)n+l$ and $k_2=\con(\ast)n+(-l\bmod n)$. Choose $a\in K_1$ and $b\in K_2$. Since $\ast$ is ergodic and since $k_1\geq \con(\ast)$ and $k_2\geq \con(\ast)$, it follows from Proposition \ref{lemerg} that there exist $x_0,\ldots,x_{k_1-1}\in\mathcal{X}$ such that $b=(\ldots((a\ast x_0)\ast x_1)\ldots)\ast x_{k_1-1})$ and there exist $y_0,\ldots,y_{k_2-1}\in\mathcal{X}$ such that $a=(\ldots((b\ast y_0)\ast y_1)\ldots)\ast y_{k_2-1})$. Let $\mathfrak{X}_1=(X_i)_{0\leq i<k_1}$ and $\mathfrak{X}_2=(Y_i)_{0\leq i<k_2}$ be such that $x_i\in X_i\in \mathcal{H}^{i\ast}$ for $0\leq i<k_1$ and $y_i\in Y_i\in \mathcal{H}^{(l+i)\ast}$ for $0\leq i<k_2$. Clearly, $b\in a\ast\mathfrak{X}_1$ and $a\in b\ast\mathfrak{X}_2$. The concatenation $\mathfrak{X}=(\mathfrak{X}_1,\mathfrak{X}_2)$ is an $\mathcal{H}$-repeatable sequence since $n$ divides $k_1+k_2$. Lemma \ref{lemaugm} implies that there exists an integer $s>0$ such that $\mathfrak{X}^s$ is $\mathcal{H}$-augmenting. Lemma \ref{lemres1}, applied to $\mathcal{K}_{\mathcal{H}^{l\ast}}$, implies the existence of an $\mathcal{H}^{l\ast}$-augmenting sequence $\mathfrak{X}'$ such that $b\ast\mathfrak{X}'=K_2$.

Consider the sequence $\mathfrak{X}''=(\mathfrak{X}_1,\mathfrak{X}',\mathfrak{X}_2,\mathfrak{X}^{s-1})$. It is easy to see that $\mathfrak{X}''$ is $\mathcal{H}$-augmenting and so $K_1\subset K_1\ast\mathfrak{X}''$. On the other hand, since $\mathfrak{X}''$ is $\mathcal{H}$-augmenting, Lemma \ref{lemres1} shows that for every $x\in K_1$ we have $x\ast\mathfrak{X}''\subset K_1$, which means that $K_1\ast\mathfrak{X}''\subset K_1$. Therefore, $K_1= K_1\ast\mathfrak{X}''$. Moreover, since $b\in a\ast\mathfrak{X}_1$ and $b\ast\mathfrak{X}'=K_2$, we have 
\begin{equation}
\label{eqTTT328fhdew3A}
K_2\subset (a\ast\mathfrak{X}_1)\ast \mathfrak{X}'\subset (K_1\ast\mathfrak{X}_1)\ast \mathfrak{X}'
\end{equation}
which implies that $|K_2|\leq |(K_1\ast\mathfrak{X}_1)\ast \mathfrak{X}'| \leq |(((K_1\ast\mathfrak{X}_1)\ast \mathfrak{X}')\ast\mathfrak{X}_2)\ast \mathfrak{X}^{s-1}|= |K_1\ast\mathfrak{X}''|=|K_1|$. By exchanging the roles of $K_1$ and $K_2$, we get $|K_1|\leq |K_2|$. Therefore, $|K_2|=|K_1|$ for every $K_1\in\mathcal{K}_{\mathcal{H}}$ and every $K_2\in\mathcal{K}_{\mathcal{H}^{l\ast}}$. We conclude that both $\mathcal{K}_{\mathcal{H}}$ and $\mathcal{K}_{\mathcal{H}^{l\ast}}$ are balanced partitions and $\|\mathcal{K}_{\mathcal{H}}\|=\|\mathcal{K}_{\mathcal{H}^{l\ast}}\|$.

Now define $\mathfrak{X}_{a,K_2}=(\mathfrak{X}_1,\mathfrak{X}')$. Since $\mathfrak{X}_{a,K_2}$ is an initial segment of $\mathfrak{X}''$, we have $|K_1\ast\mathfrak{X}_{a,K_2}|\leq |K_1\ast\mathfrak{X}''|$. But we have shown that $K_1\ast\mathfrak{X}''=K_1$ and $|K_1|=|K_2|$, so we must have $|K_1\ast\mathfrak{X}_{a,K_2}|\leq |K_2|$. Moreover, we have $K_2\subset a\ast\mathfrak{X}_{a,K_2} \subset K_1\ast\mathfrak{X}_{a,K_2}$ from \eqref{eqTTT328fhdew3A}. We conclude that $K_2=a\ast \mathfrak{X}_{a,K_2}=K_1\ast \mathfrak{X}_{a,K_2}$.
\end{proof}

\begin{mylem}
\label{lemres3prime}
Let $\mathcal{H}$ be a stable partition of $(\mathcal{X},\ast)$ where $\ast$ is ergodic. For every $K\in\mathcal{K}_{\mathcal{H}}$ and every $\mathcal{H}$-sequence $\mathfrak{X}$, we have $|K\ast\mathfrak{X}|=|K|=\|\mathcal{K}_{\mathcal{H}}\|$.
\end{mylem}
\begin{proof}
Let $K'=K\ast \mathfrak{X}$ and $l=|\mathfrak{X}|$, and let $\mathfrak{X}'=(X_i')_{0\leq i< (-l\bmod n)}$ be an arbitrary $\mathcal{H}^{l\ast}$-sequence of length $(-l\bmod n)$. Clearly, $(\mathfrak{X},\mathfrak{X}')$ is $\mathcal{H}$-repeatable. Lemma \ref{lemaugm} implies that there exists an integer $s>0$ such that $(\mathfrak{X},\mathfrak{X}')^{s}$ is $\mathcal{H}$-augmenting. We have $K\subset K\ast (\mathfrak{X},\mathfrak{X}')^s$. On the other hand, Lemma \ref{lemres1} implies that $K\ast (\mathfrak{X},\mathfrak{X}')^s\subset K$. Therefore, $K=K\ast (\mathfrak{X},\mathfrak{X}')^{s}=K'\ast(\mathfrak{X}',(\mathfrak{X},\mathfrak{X}')^{s-1})$ which implies that $|K'|\leq |K|$. We also have $|K|\leq |K'|$ since $K'=K\ast\mathfrak{X}$. Thus, $|K'|=|K|=\|\mathcal{K}_{\mathcal{H}}\|$.
\end{proof}

\begin{mylem}
\label{lemres3}
Let $\mathcal{H}$ be a stable partition of $(\mathcal{X},\ast)$ where $\ast$ is ergodic. Let $K\in\mathcal{K}_{\mathcal{H}}$ and $l>0$. If $\mathfrak{X}=(X_i)_{0\leq i<l}$ is an $\mathcal{H}$-sequence, then $K\ast\mathfrak{X}\in \mathcal{K}_{\mathcal{H}^{l\ast}}$.
\end{mylem}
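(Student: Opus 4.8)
The plan is to identify $K\ast\mathfrak{X}$ as an equivalence class of the connectivity relation $R_{\mathcal{H}^{l\ast}}$. Write $n=\per(\mathcal{H})$ and $K'=K\ast\mathfrak{X}$, and recall that $\mathcal{H}^{l\ast}$ is again a stable partition (of period $n$), so that $\mathcal{K}_{\mathcal{H}^{l\ast}}$ is defined. By Lemma \ref{lemres3prime} we have $|K'|=|K|=||\mathcal{K}_{\mathcal{H}}||$, and by Lemma \ref{lemres2} the partition $\mathcal{K}_{\mathcal{H}^{l\ast}}$ is balanced with $||\mathcal{K}_{\mathcal{H}^{l\ast}}||=||\mathcal{K}_{\mathcal{H}}||$. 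Hence $|K'|$ already equals the common cardinality of the members of $\mathcal{K}_{\mathcal{H}^{l\ast}}$, so it suffices to show that $K'$ is contained in a single member of $\mathcal{K}_{\mathcal{H}^{l\ast}}$: picking any $y\in K'$ (which is non-empty) and letting $K_y\in\mathcal{K}_{\mathcal{H}^{l\ast}}$ be its equivalence class, the inclusion $K_y\subset K'$ together with $|K_y|=|K'|$ forces $K'=K_y\in\mathcal{K}_{\mathcal{H}^{l\ast}}$.

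So the core step is to prove $K_y\subset K'$ for an arbitrary $y\in K'$. Apply the second assertion of Lemma \ref{lemres1} to the stable partition $\mathcal{H}^{l\ast}$ to obtain an $\mathcal{H}^{l\ast}$-augmenting sequence $\mathfrak{Z}$ with $z\ast\mathfrak{Z}=K_y$ for all $z\in K_y$; in particular $K_y=y\ast\mathfrak{Z}$. Since $y\in K'=K\ast\mathfrak{X}$, there is $x\in K$ with $y\in x\ast\mathfrak{X}$, so by monotonicity $K_y=y\ast\mathfrak{Z}\subset(x\ast\mathfrak{X})\ast\mathfrak{Z}=x\ast(\mathfrak{X},\mathfrak{Z})$. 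The concatenation $(\mathfrak{X},\mathfrak{Z})$ is an $\mathcal{H}$-sequence: its first $l$ entries form the $\mathcal{H}$-sequence $\mathfrak{X}$, and the $j$-th entry of $\mathfrak{Z}$ lies in $(\mathcal{H}^{l\ast})^{j\ast}=\mathcal{H}^{(l+j)\ast}$, which is exactly the level demanded at position $l+j$. Consequently $x\ast(\mathfrak{X},\mathfrak{Z})\subset K\ast(\mathfrak{X},\mathfrak{Z})=(K\ast\mathfrak{X})\ast\mathfrak{Z}=K'\ast\mathfrak{Z}$. Finally, $\mathfrak{Z}$ being $\mathcal{H}^{l\ast}$-augmenting gives $K'\subset K'\ast\mathfrak{Z}$, while Lemma \ref{lemres3prime} applied to the $\mathcal{H}$-sequence $(\mathfrak{X},\mathfrak{Z})$ gives $|K'\ast\mathfrak{Z}|=|K\ast(\mathfrak{X},\mathfrak{Z})|=|K|=|K'|$, whence $K'\ast\mathfrak{Z}=K'$. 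Chaining the inclusions, $K_y\subset x\ast(\mathfrak{X},\mathfrak{Z})\subset K'\ast\mathfrak{Z}=K'$, which is what was needed.

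The only point requiring real care is the level bookkeeping that certifies $(\mathfrak{X},\mathfrak{Z})$ as a genuine $\mathcal{H}$-sequence; once that is in place, the two size statements (Lemmas \ref{lemres3prime} and \ref{lemres2}) pin down all the relevant cardinalities and each inclusion above collapses to an equality. Note that only the (ergodicity-free) assertions of Lemma \ref{lemres1} are invoked for $\mathcal{H}^{l\ast}$, and ergodicity of $\ast$ enters solely through Lemmas \ref{lemres3prime} and \ref{lemres2}.
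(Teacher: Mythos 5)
Your proof is correct and follows essentially the same route as the paper's: both apply Lemma \ref{lemres1} to $\mathcal{H}^{l\ast}$ to get an augmenting sequence realizing the equivalence class of a point of $K\ast\mathfrak{X}$, then use Lemma \ref{lemres3prime} on the concatenated $\mathcal{H}$-sequence and Lemma \ref{lemres2} to pin down the cardinalities and collapse the inclusions. The only difference is cosmetic bookkeeping — you first establish $K'\ast\mathfrak{Z}=K'$ and then deduce $K_y\subset K'$, whereas the paper bounds $K'\cup K''\subset K'\ast\mathfrak{X}''$ and compares three equal cardinalities at once.
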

\begin{proof}
Let $K'=K\ast \mathfrak{X}$. Fix $x\in K'$ and let $K''\in\mathcal{K}_{\mathcal{H}^{l\ast}}$ be chosen so that $x\in K''$. Lemma \ref{lemres1} implies the existence of an $\mathcal{H}^{l\ast}$-augmenting sequence $\mathfrak{X}''$ such that $x\ast \mathfrak{X}''= K''$. We have $K''\subset K'\ast\mathfrak{X}''$ since $x\in K'$, and $K'\subset K'\ast\mathfrak{X}''$ since $\mathfrak{X}''$ is $\mathcal{H}^{l\ast}$-augmenting. Therefore, $K'\cup K''\subset K'\ast \mathfrak{X}''$. On the other hand, we have the following:
\begin{itemize}
\item $|K'|=|K\ast\mathfrak{X}|=|K|=\|\mathcal{K}_{\mathcal{H}}\|$ from Lemma \ref{lemres3prime}.
\item $(\mathfrak{X},\mathfrak{X}'')$ is an $\mathcal{H}$-sequence, so Lemma \ref{lemres3prime} implies that $|K\ast (\mathfrak{X},\mathfrak{X}'')|=|K|=\|\mathcal{K}_{\mathcal{H}}\|$. Now since $K'\ast \mathfrak{X}'' = K\ast (\mathfrak{X},\mathfrak{X}'')$, we deduce that $|K'\ast \mathfrak{X}''|=\|\mathcal{K}_{\mathcal{H}}\|$.
\item Lemma \ref{lemres2} implies that $\|\mathcal{K}_{\mathcal{H}}\|=\|\mathcal{K}_{\mathcal{H}^{l\ast}}\|$, so $|K''|=\|\mathcal{K}_{\mathcal{H}^{l\ast}}\|=\|\mathcal{K}_{\mathcal{H}}\|$.
\end{itemize}
Therefore, $|K''|=|K'|=|K'\ast \mathfrak{X}''|=\|\mathcal{K}_{\mathcal{H}}\|$ and $K'\cup K''\subset K'\ast \mathfrak{X}''$, hence $K'=K''$ and $K'\in \mathcal{K}_{\mathcal{H}^{l\ast}}$.
\end{proof}

\begin{mylem}
\label{propres}
Let $\mathcal{H}$ be a stable partition of $(\mathcal{X},\ast)$ where $\ast$ is ergodic. $\mathcal{K}_{\mathcal{H}}$ is a sub-stable partition of $\mathcal{H}$ and $\mathcal{K}_{\mathcal{H}^{l\ast}}={\mathcal{K}_{\mathcal{H}}}^{l\ast}$ for all $l\geq 0$.
\end{mylem}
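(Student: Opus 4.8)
The plan is to establish three ingredients separately: the refinement $\mathcal{K}_{\mathcal{H}}\preceq\mathcal{H}$, balancedness of $\mathcal{K}_{\mathcal{H}}$, and periodicity of $\mathcal{K}_{\mathcal{H}}$; the last two make $\mathcal{K}_{\mathcal{H}}$ stable, and together with the refinement this makes it a sub-stable partition of $\mathcal{H}$. The refinement is immediate from the lemmas already proved: for $K\in\mathcal{K}_{\mathcal{H}}$, Lemma \ref{lemres1} produces an $\mathcal{H}$-augmenting sequence $\mathfrak{X}$ with $x\ast\mathfrak{X}=K$ for all $x\in K$, so $\mathfrak{X}$ connects $K$ and Lemma \ref{lemsubsres} yields some $H\in\mathcal{H}$ with $K\subset H$. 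Balancedness of $\mathcal{K}_{\mathcal{H}}$ — and, more generally, of every $\mathcal{K}_{\mathcal{H}^{l\ast}}$ with a size independent of $l$ — is exactly the content of Lemma \ref{lemres2}.

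The substantive part is the identity ${\mathcal{K}_{\mathcal{H}}}^{l\ast}=\mathcal{K}_{\mathcal{H}^{l\ast}}$, and the key move is to reduce it to the case $l=1$ and then induct. For $l=1$, I would first show ${\mathcal{K}_{\mathcal{H}}}^{\ast}\subset\mathcal{K}_{\mathcal{H}^{\ast}}$: given $K_1,K_2\in\mathcal{K}_{\mathcal{H}}$, choose $H\in\mathcal{H}$ with $K_2\subset H$ (the refinement above); applying Lemma \ref{lemres3} to the length-one $\mathcal{H}$-sequence $(H)$ gives $K_1\ast H\in\mathcal{K}_{\mathcal{H}^{\ast}}$, whence $K_1\ast K_2\subset K_1\ast H$; since $\ast$ is uniformity preserving and $K_2$ is non-empty we have $|K_1\ast K_2|\geq|K_1|=||\mathcal{K}_{\mathcal{H}}||$, while $|K_1\ast H|=||\mathcal{K}_{\mathcal{H}^{\ast}}||=||\mathcal{K}_{\mathcal{H}}||$ by Lemma \ref{lemres2}, so the inclusion forces $K_1\ast K_2=K_1\ast H\in\mathcal{K}_{\mathcal{H}^{\ast}}$. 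The reverse inclusion is a covering argument: ${\mathcal{K}_{\mathcal{H}}}^{\ast}$ covers $\mathcal{X}$ (its union equals $\mathcal{X}\ast\mathcal{X}=\mathcal{X}$) and is contained in the partition $\mathcal{K}_{\mathcal{H}^{\ast}}$, hence equals it.

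I would then bootstrap by induction on $l$: the case $l=0$ is trivial, and ${\mathcal{K}_{\mathcal{H}}}^{(l+1)\ast}=({\mathcal{K}_{\mathcal{H}}}^{l\ast})^{\ast}=(\mathcal{K}_{\mathcal{H}^{l\ast}})^{\ast}=\mathcal{K}_{(\mathcal{H}^{l\ast})^{\ast}}=\mathcal{K}_{\mathcal{H}^{(l+1)\ast}}$, where the second equality is the inductive hypothesis, the third is the $l=1$ case applied to the stable partition $\mathcal{H}^{l\ast}$ (stable by the earlier proposition on powers of stable partitions, $\ast$ being still ergodic), and the last is the recursive definition $(\mathcal{H}^{l\ast})^{\ast}=\mathcal{H}^{(l+1)\ast}$. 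Periodicity of $\mathcal{K}_{\mathcal{H}}$ then drops out: with $n=\per(\mathcal{H})$ we have $\mathcal{H}^{n\ast}=\mathcal{H}$, hence ${\mathcal{K}_{\mathcal{H}}}^{n\ast}=\mathcal{K}_{\mathcal{H}^{n\ast}}=\mathcal{K}_{\mathcal{H}}$, so $\mathcal{K}_{\mathcal{H}}$ is periodic; combined with balancedness it is stable, and being finer than $\mathcal{H}$ it is sub-stable in $\mathcal{H}$.

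The main obstacle is the apparent circularity in the identity: the natural way to analyse ${\mathcal{K}_{\mathcal{H}}}^{l\ast}$ through "unbalanced" right-multiplications (as is done for $\mathcal{H}^{l\ast}$ via Lemma \ref{lemarb}) presupposes that $\mathcal{K}_{\mathcal{H}}$ is already periodic, yet periodicity is precisely what the $l=n$ instance of the identity is meant to deliver. The resolution is to prove the $l=1$ instance head-on from Lemmas \ref{lemres2} and \ref{lemres3} plus the one-line size count above, using no prior structure on $\mathcal{K}_{\mathcal{H}}$, and only afterwards iterate. The remaining steps — the covering arguments, the principle that a covering subset of a partition is the whole partition, and the size inequalities coming from uniformity preservation — are routine.
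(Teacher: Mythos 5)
Your proposal is correct and follows essentially the same route as the paper: the same induction on $l$, the same size-comparison via Lemmas \ref{lemres2} and \ref{lemres3} (enlarging $K_2$ to $H\in\mathcal{H}$ with $K_2\subset H$ and squeezing $|K_1|\leq|K_1\ast K_2|\leq|K_1\ast H|=|K_1|$), the same covering argument for the reverse inclusion, and the same derivation of periodicity from the $l=\per(\mathcal{H})$ instance. The only difference is presentational — you isolate the $l=1$ case and apply it to $\mathcal{H}^{l\ast}$ in the inductive step, whereas the paper inlines that identical computation at each level of the induction.
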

\begin{proof}
We will prove that $\mathcal{K}_{\mathcal{H}^{l\ast}}={\mathcal{K}_{\mathcal{H}}}^{l\ast}$ by induction on $l\geq 0$. The statement is trivial for $l=0$. Now let $l>0$ and suppose that $\mathcal{K}_{\mathcal{H}^{(l-1)\ast}}={\mathcal{K}_{\mathcal{H}}}^{(l-1)\ast}$. Let $K\in{\mathcal{K}_{\mathcal{H}}}^{l\ast}=({\mathcal{K}_{\mathcal{H}} }^{(l-1)\ast})^{\ast}= (\mathcal{K}_{\mathcal{H}^{(l-1)\ast}})^{\ast}$. There exist $K_1,K_2\in \mathcal{K}_{\mathcal{H}^{(l-1)\ast}}={\mathcal{K}_{\mathcal{H}}}^{(l-1)\ast}$ such that $K=K_1\ast K_2$. Let $H_2\in\mathcal{H}^{(l-1)\ast}$ be chosen such that $K_2\subset H_2$ (Lemma \ref{lemsubsres} guarantees the existence of $H_2$). From Lemma \ref{lemres3}, we have $K_1\ast H_2\in \mathcal{K}_{\mathcal{H}^{l\ast}}$ and so $|K_1\ast H_2|=\|\mathcal{K}_{\mathcal{H}^{l\ast}}\|\stackrel{(a)}{=}\|\mathcal{K}_{\mathcal{H}^{(l-1)\ast}}\|=|K_1|$, where (a) follows from Lemma \ref{lemres2}. We have $K_1\ast K_2\subset K_1\ast H_2$ and $|K_1|\leq |K_1\ast K_2|\leq |K_1\ast H_2|=|K_1|$. Therefore, $K=K_1\ast K_2=K_1\ast H_2$ which implies that $K\in \mathcal{K}_{\mathcal{H}^{l\ast}}$. This shows that ${\mathcal{K}_{\mathcal{H}}}^{l\ast}\subset \mathcal{K}_{\mathcal{H}^{l\ast}}$, which implies that ${\mathcal{K}_{\mathcal{H}}}^{l\ast}= \mathcal{K}_{\mathcal{H}^{l\ast}}$ since ${\mathcal{K}_{\mathcal{H}}}^{l\ast}$ covers $\mathcal{X}$ and $\mathcal{K}_{\mathcal{H}^{l\ast}}$ is a partition of $\mathcal{X}$.

We conclude that ${\mathcal{K}_{\mathcal{H}}}^{l\ast}= \mathcal{K}_{\mathcal{H}^{l\ast}}$ for all $l\geq 0$. In particular, ${\mathcal{K}_{\mathcal{H}}}^{n\ast}=\mathcal{K}_{\mathcal{H}^{n\ast}}= \mathcal{K}_{\mathcal{H}}$, where $n=\per(\mathcal{H})$ so $\mathcal{K}_{\mathcal{H}}$ is periodic. Moreover, Lemma \ref{lemres2} shows that $\mathcal{K}_{\mathcal{H}}$ is balanced. Therefore, $\mathcal{K}_{\mathcal{H}}$ is a stable partition. Lemma \ref{lemsubsres} now implies that $\mathcal{K}_{\mathcal{H}}$ is a sub-stable partition of $\mathcal{H}$.
\end{proof}

\begin{myprop}
\label{lemres4}
Let $\mathcal{H}$ be a stable partition of $(\mathcal{X},\ast)$ where $\ast$ is ergodic, and let $\mathcal{K}$ be a partition of $\mathcal{X}$ which satisfies the following two conditions:
\begin{itemize}
\item For every $K\in\mathcal{K}$ and every $x\in K$, there exists an $\mathcal{H}$-augmenting sequence $\mathfrak{X}$ such that $x\ast \mathfrak{X}=K$.
\item For every $K\in\mathcal{K}$, every $x\in K$, and every $\mathcal{H}$-augmenting sequence $\mathfrak{X}'$, $x\ast \mathfrak{X}'\subset K$.
\end{itemize}
Then $\mathcal{K}=\mathcal{K}_{\mathcal{H}}$.
\end{myprop}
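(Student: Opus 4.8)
The plan is to show that the two conditions imposed on $\mathcal{K}$ pin it down to be exactly the set of equivalence classes of the connectivity relation $R_{\mathcal{H}}$, which is $\mathcal{K}_{\mathcal{H}}$ by definition. Concretely, I will prove the two mutual refinement statements ``every block of $\mathcal{K}$ is contained in a block of $\mathcal{K}_{\mathcal{H}}$'' and ``every block of $\mathcal{K}_{\mathcal{H}}$ is contained in a block of $\mathcal{K}$'', and then conclude by a routine partition argument.

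First I would show $\mathcal{K} \preceq \mathcal{K}_{\mathcal{H}}$. Fix $K \in \mathcal{K}$ and let $x, y \in K$. By the first hypothesis there is an $\mathcal{H}$-augmenting sequence $\mathfrak{X}$ with $x \ast \mathfrak{X} = K$; in particular $y \in x \ast \mathfrak{X}$. Lemma \ref{lemconn} then produces an $\mathcal{H}$-augmenting sequence that connects $\{x,y\}$, so $x R_{\mathcal{H}} y$. Hence all elements of $K$ lie in one equivalence class of $R_{\mathcal{H}}$, i.e.\ $K$ is contained in some block of $\mathcal{K}_{\mathcal{H}}$.

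Next I would show $\mathcal{K}_{\mathcal{H}} \preceq \mathcal{K}$. Fix $K' \in \mathcal{K}_{\mathcal{H}}$ and $x, y \in K'$, so $x R_{\mathcal{H}} y$ and there is an $\mathcal{H}$-augmenting sequence $\mathfrak{X}'$ connecting $\{x,y\}$; thus $y \in x \ast \mathfrak{X}'$. Let $K \in \mathcal{K}$ be the block containing $x$. The second hypothesis gives $x \ast \mathfrak{X}' \subset K$, hence $y \in K$. So all of $K'$ lies in a single block of $\mathcal{K}$.

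Finally, combining the two refinements: given $K \in \mathcal{K}$, pick $K' \in \mathcal{K}_{\mathcal{H}}$ with $K \subset K'$, then pick $K'' \in \mathcal{K}$ with $K' \subset K''$; since $K \subset K' \subset K''$ with $K, K'' \in \mathcal{K}$ and $\mathcal{K}$ a partition, we get $K = K'' $, whence $K = K'$. Therefore $\mathcal{K} \subset \mathcal{K}_{\mathcal{H}}$, and since both are partitions of $\mathcal{X}$ this forces $\mathcal{K} = \mathcal{K}_{\mathcal{H}}$. I do not expect a genuine obstacle here; the only care needed is to invoke Lemma \ref{lemconn} in the right direction (from ``$y$ reachable from $x$ via an augmenting sequence'' to ``$x R_{\mathcal{H}} y$'') and to use that $\mathcal{K}$, $\mathcal{K}_{\mathcal{H}}$ are genuine partitions so that mutual refinement yields equality.
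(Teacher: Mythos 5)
Your proof is correct and follows essentially the same strategy as the paper's: show that the block of $\mathcal{K}$ containing a point and the block of $\mathcal{K}_{\mathcal{H}}$ containing it each contain the other, using the first hypothesis against the containment property of $\mathcal{K}_{\mathcal{H}}$ and the second hypothesis against the reachability property of $\mathcal{K}_{\mathcal{H}}$. The only cosmetic difference is that for the inclusion $\mathcal{K}\preceq\mathcal{K}_{\mathcal{H}}$ you go back to Lemma \ref{lemconn} and the relation $R_{\mathcal{H}}$, whereas the paper cites Lemma \ref{lemres1} (which packages exactly those facts) for both directions.
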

\begin{proof}
Fix $x\in\mathcal{X}$ and let $K_{1,x}\in\mathcal{K}_{\mathcal{H}}$ and $K_{2,x}\in\mathcal{K}$ be chosen such that $x\in K_{1,x}$ and $x\in K_{2,x}$. Lemma \ref{lemres1} implies the existence of an $\mathcal{H}$-augmenting sequence $\mathfrak{X}_1$ such that $x\ast \mathfrak{X}_1=K_{1,x}$, and the first condition of the proposition implies the existence of an $\mathcal{H}$-augmenting sequence $\mathfrak{X}_2$ such that $x\ast \mathfrak{X}_2=K_{2,x}$. The second condition of the proposition implies that $x\ast \mathfrak{X}_1\subset K_{2,x}$, and Lemma \ref{lemres1} implies that $x\ast \mathfrak{X}_2 \subset K_{1,x}$. Therefore, $K_{1,x}\subset K_{2,x}$ and $K_{2,x}\subset K_{1,x}$ which implies that $K_{1,x}=K_{2,x}$. Since this is true for all $x\in\mathcal{X}$, we conclude that $\mathcal{K}=\mathcal{K}_{\mathcal{H}}$.
\end{proof}
\vspace*{3mm}
Now we are ready to prove Theorem \ref{theres}:
\begin{proof}[Proof of Theorem \ref{theres}]
Lemma \ref{propres} shows that $\mathcal{K}_{\mathcal{H}}$ is a sub-stable partition of $\mathcal{H}$ satisfying ${\mathcal{K}_{\mathcal{H}}}^{l\ast}=\mathcal{K}_{\mathcal{H}^{l\ast}}$ for all $l\geq 0$. Moreover, we have:
\begin{itemize}
\item For every $K\in\mathcal{K}_{\mathcal{H}}$ and every $\mathcal{H}$-sequence $\mathfrak{X}$, we have $K\ast\mathfrak{X} \in \mathcal{K}_{\mathcal{H}^{|\mathfrak{X}|\ast}}={\mathcal{K}_{\mathcal{H}}}^{|\mathfrak{X}|\ast}$ by Lemma \ref{lemres3}.
\item For every $K\in\mathcal{K}_{\mathcal{H}}$ and every $x\in K$, Lemma \ref{lemres1} shows that there exists an $\mathcal{H}$-augmenting sequence $\mathfrak{X}$ such that $x\ast \mathfrak{X}=K$.
\item For every $K\in\mathcal{K}_{\mathcal{H}}$, every $x\in K$, and every $\mathcal{H}$-augmenting sequence $\mathfrak{X}'$, we have $x\ast \mathfrak{X}'\subset K$ by Lemma \ref{lemres1}.
\end{itemize}
This shows the existence part of Theorem \ref{theres}. The uniqueness follows from Proposition \ref{lemres4}.
\end{proof}

\section{Proof of Proposition \ref{stastaerg}}

\label{appD}

\begin{mydef}
Let $\mathcal{A}$ be an $\mathcal{X}$-cover. Define the relation $P_{\mathcal{A}}$ on $\mathcal{X}$ as follows: $x P_{\mathcal{A}} y$ if and only if there exists a finite sequence $(A_i)_{1\leq i\leq n}$ such that $x\in A_1$, $y\in A_n$, $A_i\in\mathcal{A}$ for all $1\leq i\leq n$, and $A_i\cap A_{i+1}\neq \o$ for all $1\leq i<n$. Clearly, $P_{\mathcal{A}}$ is an equivalence relation on $\mathcal{X}$. The set of equivalence classes of $P_{\mathcal{A}}$ (denoted by $\mathcal{P}(\mathcal{A})$) is called the partition of $\mathcal{X}$ \emph{generated by} $\mathcal{A}$.
\end{mydef}

\begin{mylem}
\label{lemlalababa}
Let $\mathcal{A}$ be an $\mathcal{X}$-cover. For every $B\in\mathcal{P}(\mathcal{A})$, there exists a finite sequence $(A_i)_{1\leq i\leq n}$ such that $\displaystyle B=\bigcup_{i=1}^n A_i$, $A_i\in\mathcal{A}$ for all $1\leq i\leq n$, and $A_i\cap A_{i+1}\neq \o$ for all $1\leq i<n$.
\end{mylem}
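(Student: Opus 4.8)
The plan is to reduce the statement to a connectivity fact about a finite graph built from the members of $\mathcal{A}$ that are contained in $B$. First I would record a basic dichotomy: since $B$ is an equivalence class of $P_{\mathcal{A}}$, for every $A\in\mathcal{A}$ we have either $A\subset B$ or $A\cap B=\o$. Indeed, if $A$ contains a point $z\in B$, then every $w\in A$ satisfies $w P_{\mathcal{A}} z$ (the one-term sequence $(A)$ witnesses this), so $w\in B$; hence $A\subset B$. Since $\mathcal{A}$ covers $\mathcal{X}\supset B$ and $\mathcal{X}$ --- and therefore $\mathcal{A}$ --- is finite, this dichotomy yields $B=\bigcup_{A\in\mathcal{A}_B}A$, where $\mathcal{A}_B:=\{A\in\mathcal{A}:\;A\subset B\}$ is a finite collection which is non-empty because $B\neq\o$ (any $x\in B$ lies in some $A\in\mathcal{A}$, and that $A$ then lies in $\mathcal{A}_B$).

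Next I would form the finite graph $G_B$ whose vertex set is $\mathcal{A}_B$, with $A,A'$ adjacent exactly when $A\cap A'\neq\o$, and show that $G_B$ is connected. Given $A,A'\in\mathcal{A}_B$, pick $x\in A$ and $x'\in A'$; then $x,x'\in B$, so $x P_{\mathcal{A}} x'$ and there is a sequence $A_1,\ldots,A_n\in\mathcal{A}$ with $x\in A_1$, $x'\in A_n$ and $A_i\cap A_{i+1}\neq\o$ for all $1\leq i<n$. A short induction using the dichotomy shows that every $A_i$ lies in $\mathcal{A}_B$: $A_1$ contains $x\in B$, hence $A_1\subset B$; and if $A_i\subset B$ then $A_{i+1}$ meets $B$, hence $A_{i+1}\subset B$. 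Since $A$ and $A_1$ share $x$, and $A'$ and $A_n$ share $x'$, the list $A,A_1,\ldots,A_n,A'$ is a walk in $G_B$ from $A$ to $A'$. Thus $G_B$ is connected.

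Finally, a finite connected graph admits a walk visiting every vertex: enumerate the vertices as $V_1,\ldots,V_m$, take a walk from $V_j$ to $V_{j+1}$ for each $j$, and concatenate. Applying this to $G_B$ produces a finite sequence $(A_i)_{1\leq i\leq n}$ of members of $\mathcal{A}_B$ in which every element of $\mathcal{A}_B$ occurs and $A_i\cap A_{i+1}\neq\o$ for all $1\leq i<n$; then $\bigcup_{i=1}^n A_i=\bigcup_{A\in\mathcal{A}_B}A=B$, which is exactly the claimed conclusion (the case $\mathcal{A}_B=\{B\}$ gives the trivial one-term sequence). I do not anticipate a genuine obstacle: the only points requiring care are keeping the distinction between ``$A$ meets $B$'' and ``$A\subset B$'' straight in the induction establishing connectivity, and observing that $\mathcal{A}$ is automatically finite because $\mathcal{X}$ is.
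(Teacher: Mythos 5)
Your proof is correct. It takes a somewhat different route from the paper's: the paper fixes a point $x\in B$, considers $(x,\mathcal{A})$-connected sequences (chains of members of $\mathcal{A}$ starting at a set containing $x$ with consecutive nonempty intersections), observes that the union of any such sequence lies in $B$, and then runs an extremal argument --- a sequence with maximal union must have union equal to $B$, else it could be extended via a chain witnessing $x' P_{\mathcal{A}} y$ for some uncovered $y$. You instead first establish the structural dichotomy that every $A\in\mathcal{A}$ is either contained in $B$ or disjoint from it, so that $B$ is exactly the union of the subfamily $\mathcal{A}_B$, then reformulate the problem as connectivity of the intersection graph on $\mathcal{A}_B$ and invoke the fact that a finite connected graph admits a spanning walk. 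The underlying combinatorial content is the same, but your version isolates two reusable facts (the dichotomy for equivalence classes of $P_{\mathcal{A}}$, and the spanning-walk lemma) where the paper folds everything into one maximality argument; the paper's version is marginally shorter since it never needs to name the subfamily $\mathcal{A}_B$ or verify that the connecting chains stay inside it. Both are complete; the only points in your write-up that deserve the care you already flagged are the induction showing each $A_i$ of a connecting chain lies in $\mathcal{A}_B$, and the harmless possibility of repeated entries at the junctions of the concatenated walks (which still satisfy $A_i\cap A_{i+1}\neq\o$).
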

\begin{proof}
Let $B\in\mathcal{P}(\mathcal{A})$ and let $x\in B$. We say that a sequence $(A_i)_{1\leq i\leq n}$ is $(x,\mathcal{A})$-connected if $x\in A_1$, $A_i\in\mathcal{A}$ for all $1\leq i\leq n$, and $A_i\cap A_{i+1}\neq \o$ for all $1\leq i<n$. If $(A_i)_{1\leq i\leq n}$ is such a sequence, we clearly have $x P_{\mathcal{A}} y$ for every $y\in \displaystyle \bigcup_{i=1}^n A_i$. Therefore, $\displaystyle \bigcup_{i=1}^n A_i\subset B$. 

Let $A_1\in\mathcal{A}$ be such that $x\in A_1$. The sequence $(A_1)$ of length 1 is $(x,\mathcal{A})$-connected. Therefore, there exists at least one $(x,\mathcal{A})$-connected sequence. Now consider an $(x,\mathcal{A})$-connected sequence $(A_i)_{1\leq i\leq n}$ such that $\displaystyle \bigcup_{i=1}^n A_i$ is maximal. If $\displaystyle \bigcup_{i=1}^n A_i\neq B$, there exists $y\in B$ such that $\displaystyle y\notin \bigcup_{i=1}^n A_i$. Let $x'\in A_n$. Since $x',y\in B$, $x' P_{\mathcal{A}} y$ and so there exists a sequence $(A_i')_{1\leq i\leq m}$ such that $x'\in A_1'$, $y\in A_m'$, $A_i'\in\mathcal{A}$ for all $1\leq i\leq m$, and $A_i'\cap A_{i+1}'\neq \o$ for all $1\leq i<m$. Consider the sequence $(A_i'')_{1\leq i\leq n+m}$ defined by $A_i''=A_i$ for $1\leq i\leq n$ and $A_i''=A_{i-n}'$ for $n+1\leq i\leq n+m$. Since $x'\in A_n\cap A_1'= A_n''\cap A_{n+1}''$, $(A_i'')_{1\leq i\leq n+m}$ is $(x,\mathcal{A})$-connected. We have $\displaystyle \bigcup_{i=1}^n A_i\subsetneq \bigcup_{i=1}^{n+m} A_i''$ since $\displaystyle y\in \bigcup_{i=1}^{n+m} A_i''$ and $\displaystyle y\notin \bigcup_{i=1}^n A_i$. This contradicts the maximality of $\displaystyle \bigcup_{i=1}^n A_i$. Therefore, we must have $\displaystyle \bigcup_{i=1}^n A_i=B$.
\end{proof}

\begin{mylem}
\label{lemsublem}
Let $\ast$ be a uniformity preserving operation on a set $\mathcal{X}$, and let $\mathcal{A}$ be an $\mathcal{X}$-cover. For every $n>0$ and every $A\in \mathcal{A}^{n\ast}$, there exists $B\in\mathcal{P}(\mathcal{A})^{n\ast}$ such that $A\subset B$.
\end{mylem}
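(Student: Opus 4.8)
The plan is a straightforward induction on $n$, whose only non-bookkeeping ingredient is the monotonicity of the set product: if $A\subset A'$ and $B\subset B'$ then $A\ast B\subset A'\ast B'$, which is immediate from the definition $A\ast B=\{a\ast b:\,a\in A,\,b\in B\}$. In fact I would prove the statement for all $n\geq 0$ (the case $n>0$ asked for in the lemma being a special case), and I would note in passing that the argument uses neither the uniformity-preserving hypothesis on $\ast$ nor anything about $\mathcal{P}(\mathcal{A})$ beyond its defining property; Lemma \ref{lemlalababa} is not needed here.

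For the base case $n=0$ we have $\mathcal{A}^{0\ast}=\mathcal{A}$ and $\mathcal{P}(\mathcal{A})^{0\ast}=\mathcal{P}(\mathcal{A})$. Given $A\in\mathcal{A}$, the hypothesis that $\mathcal{A}$ is an $\mathcal{X}$-cover gives $A\neq\o$, so we may fix $x\in A$. For every $y\in A$ the one-term sequence $(A)$ satisfies the conditions in the definition of $P_{\mathcal{A}}$, hence $x\,P_{\mathcal{A}}\,y$; therefore all of $A$ lies in the single $P_{\mathcal{A}}$-equivalence class $B$ containing $x$, and $B\in\mathcal{P}(\mathcal{A})=\mathcal{P}(\mathcal{A})^{0\ast}$.

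For the inductive step, assume the claim holds for some $n\geq 0$ and let $A\in\mathcal{A}^{(n+1)\ast}=\mathcal{A}^{n\ast}\ast\mathcal{A}^{n\ast}$. Then $A=A_1\ast A_2$ for some $A_1,A_2\in\mathcal{A}^{n\ast}$, and by the induction hypothesis there are $B_1,B_2\in\mathcal{P}(\mathcal{A})^{n\ast}$ with $A_1\subset B_1$ and $A_2\subset B_2$. By monotonicity of the set product, $A=A_1\ast A_2\subset B_1\ast B_2$, and $B_1\ast B_2\in\mathcal{P}(\mathcal{A})^{n\ast}\ast\mathcal{P}(\mathcal{A})^{n\ast}=\mathcal{P}(\mathcal{A})^{(n+1)\ast}$ directly from the recursive definition of $\mathcal{P}(\mathcal{A})^{(n+1)\ast}$; taking $B=B_1\ast B_2$ finishes the induction. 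I do not anticipate any real obstacle: the only points needing care are using $\o\notin\mathcal{A}$ to pick the base-case element $x$, and invoking the definition $\mathcal{P}(\mathcal{A})^{(n+1)\ast}=\mathcal{P}(\mathcal{A})^{n\ast}\ast\mathcal{P}(\mathcal{A})^{n\ast}$ verbatim so that $B_1\ast B_2$ is genuinely a \emph{member} of $\mathcal{P}(\mathcal{A})^{(n+1)\ast}$, not merely a subset of one.
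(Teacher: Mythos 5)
Your proof is correct and follows essentially the same route as the paper's: induction on $n$ with the trivial base case $\mathcal{A}\preceq\mathcal{P}(\mathcal{A})$, and the inductive step decomposing $A=A_1\ast A_2$ with $A_1,A_2\in\mathcal{A}^{(n-1)\ast}$ and using monotonicity of the set product. The extra observations (that uniformity preservation is not needed, and that the base case uses $\o\notin\mathcal{A}$) are accurate but do not change the argument.
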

\begin{proof}
We will show the lemma by induction on $n$. The lemma is trivial for $n=0$.

Now let $n>0$ and suppose that the lemma is true for $n-1$. Let $A\in \mathcal{A}^{n\ast}$, there exists $A_1,A_2\in\mathcal{A}^{(n-1)\ast}$ such that $A=A_1\ast A_2$. The induction hypothesis implies the existence of two sets $B_1,B_2\in\mathcal{P}(\mathcal{A})^{(n-1)\ast}$ such that $A_1\subset B_1$ and $A_2\subset B_2$. We have $A=A_1\ast A_2\subset B_1\ast B_2$ and $B_1\ast B_2\in\mathcal{P}(\mathcal{A})^{n\ast}$.
\end{proof}

\begin{mylem}
\label{lemindkif}
Let $\ast$ be a uniformity preserving operation on a set $\mathcal{X}$, and let $\mathcal{A}$ be an $\mathcal{X}$-cover. For every $n\geq 0$, we have $\mathcal{P}\big(\mathcal{P}(\mathcal{A})^{n\ast}\big)=\mathcal{P}(\mathcal{A}^{n\ast})$.
\end{mylem}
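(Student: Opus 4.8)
The plan is to prove the two relations $\mathcal{P}(\mathcal{A}^{n\ast})\preceq \mathcal{P}\big(\mathcal{P}(\mathcal{A})^{n\ast}\big)$ and $\mathcal{P}\big(\mathcal{P}(\mathcal{A})^{n\ast}\big)\preceq \mathcal{P}(\mathcal{A}^{n\ast})$ separately; since both sides are partitions of $\mathcal{X}$ and $\preceq$ is antisymmetric on partitions, this gives the desired equality. As a preliminary, I would observe that $\mathcal{A}^{n\ast}$ and $\mathcal{P}(\mathcal{A})^{n\ast}$ are themselves $\mathcal{X}$-covers (the $\ast$-product of two non-empty sets is non-empty, and $\mathcal{X}\ast\mathcal{X}=\mathcal{X}$ because $\ast$ is uniformity preserving), so that $\mathcal{P}(\cdot)$ as well as Lemmas \ref{lemsublem} and \ref{lemlalababa} apply to them. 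I would also record the elementary \emph{monotonicity} of $\mathcal{P}$: if $\mathcal{C}\preceq\mathcal{D}$ are $\mathcal{X}$-covers then $\mathcal{P}(\mathcal{C})\preceq\mathcal{P}(\mathcal{D})$ --- given a $P_{\mathcal{C}}$-chain $C_1,\ldots,C_m$, pick $D_i\in\mathcal{D}$ with $C_i\subset D_i$; then $D_1,\ldots,D_m$ is a $P_{\mathcal{D}}$-chain, so each $P_{\mathcal{C}}$-class is contained in a $P_{\mathcal{D}}$-class --- together with the fact that $\mathcal{P}(\mathcal{Q})=\mathcal{Q}$ whenever $\mathcal{Q}$ is already a partition (in a partition, intersecting blocks coincide, so $P_{\mathcal{Q}}$-chains stay inside one block).

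The first relation is immediate: Lemma \ref{lemsublem} states that every element of $\mathcal{A}^{n\ast}$ is contained in an element of $\mathcal{P}(\mathcal{A})^{n\ast}$, i.e.\ $\mathcal{A}^{n\ast}\preceq\mathcal{P}(\mathcal{A})^{n\ast}$, and monotonicity of $\mathcal{P}$ then yields $\mathcal{P}(\mathcal{A}^{n\ast})\preceq\mathcal{P}\big(\mathcal{P}(\mathcal{A})^{n\ast}\big)$.

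For the second relation it suffices, by monotonicity of $\mathcal{P}$ and $\mathcal{P}\big(\mathcal{P}(\mathcal{A}^{n\ast})\big)=\mathcal{P}(\mathcal{A}^{n\ast})$, to prove the key claim $\mathcal{P}(\mathcal{A})^{n\ast}\preceq\mathcal{P}(\mathcal{A}^{n\ast})$ for all $n\geq 0$, which I would establish by induction on $n$. The base case $n=0$ is trivial. For the inductive step, take $B\in\mathcal{P}(\mathcal{A})^{n\ast}$ and write $B=B_1\ast B_2$ with $B_1,B_2\in\mathcal{P}(\mathcal{A})^{(n-1)\ast}$; by the induction hypothesis there are blocks $D_1,D_2$ of $\mathcal{P}(\mathcal{A}^{(n-1)\ast})$ with $B_1\subset D_1$ and $B_2\subset D_2$, so $B\subset D_1\ast D_2$. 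Apply Lemma \ref{lemlalababa} to the $\mathcal{X}$-cover $\mathcal{A}^{(n-1)\ast}$: write $D_1=\bigcup_{i=1}^{p}A_i$ and $D_2=\bigcup_{j=1}^{q}A_j'$ with $A_i,A_j'\in\mathcal{A}^{(n-1)\ast}$, $A_i\cap A_{i+1}\neq\o$, $A_j'\cap A_{j+1}'\neq\o$. Then $D_1\ast D_2=\bigcup_{i,j}(A_i\ast A_j')$, every $A_i\ast A_j'$ lies in $\mathcal{A}^{(n-1)\ast}\ast\mathcal{A}^{(n-1)\ast}=\mathcal{A}^{n\ast}$, and the doubly-indexed family $\{A_i\ast A_j'\}_{i,j}$ is connected with respect to $P_{\mathcal{A}^{n\ast}}$: for $x\in A_i\cap A_{i+1}$ and any $y\in A_j'$ one has $x\ast y\in (A_i\ast A_j')\cap(A_{i+1}\ast A_j')$, and symmetrically $x'\ast y'$ connects $A_i\ast A_j'$ to $A_i\ast A_{j+1}'$ when $y'\in A_j'\cap A_{j+1}'$ and $x'\in A_i$; so one can move along the $i$- and $j$-directions to see that all the $A_i\ast A_j'$ lie in a single $P_{\mathcal{A}^{n\ast}}$-class. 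Hence $B\subset D_1\ast D_2$ is contained in a single block of $\mathcal{P}(\mathcal{A}^{n\ast})$, completing the induction.

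The only non-routine ingredient is the inductive step, and within it the ``grid connectivity'' observation that $\{A_i\ast A_j'\}_{i,j}$ stays connected under pairwise intersection; everything else is bookkeeping with $\preceq$ and with the fact that $\mathcal{P}$ is monotone and idempotent on partitions. One should also verify the harmless point that every cover appearing in the argument ($\mathcal{A}^{k\ast}$ and $\mathcal{P}(\mathcal{A})^{k\ast}$) is a genuine $\mathcal{X}$-cover, so that $\mathcal{P}(\cdot)$, Lemma \ref{lemsublem} and Lemma \ref{lemlalababa} are applicable at each step.
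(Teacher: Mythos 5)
Your proof is correct and takes essentially the same route as the paper's: induction on $n$, Lemma \ref{lemsublem} for the inclusion $\mathcal{P}(\mathcal{A}^{n\ast})\preceq\mathcal{P}\big(\mathcal{P}(\mathcal{A})^{n\ast}\big)$, and for the reverse direction the observation that multiplying two $P_{\mathcal{A}^{(n-1)\ast}}$-connected chains produces a $P_{\mathcal{A}^{n\ast}}$-connected grid of sets in $\mathcal{A}^{n\ast}$ --- which is exactly the paper's concatenated chain $(A_i'\ast A_1'')_i$ followed by $(A_{m'}'\ast A_{i}'')_i$, merely phrased via Lemma \ref{lemlalababa} and the refinement order $\preceq$ instead of explicit point-chains. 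The additional bookkeeping through monotonicity and idempotence of $\mathcal{P}$ on partitions is sound.
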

\begin{proof}
We will show the lemma by induction on $n$. The lemma is trivial for $n=0$.

Now let $n>0$ and suppose that $\mathcal{P}\big(\mathcal{P}(\mathcal{A})^{(n-1)\ast}\big)=\mathcal{P}(\mathcal{A}^{(n-1)\ast})$, which means that for every $x,y\in\mathcal{X}$, we have $x P_{\mathcal{A}^{(n-1)\ast}}y$ if and only if $x P_{\mathcal{P}(\mathcal{A})^{(n-1)\ast}}y$.

Let $x,y\in\mathcal{X}$ be such that $x P_{\mathcal{P}(\mathcal{A})^{n\ast}} y$. There exists a sequence $(D_j)_{1\leq j\leq m}$ such that: $x\in D_1$, $y\in D_m$, $D_j\in\mathcal{P}(\mathcal{A})^{n\ast}$ for $1\leq j\leq m$, and $D_j\cap D_{j+1}\neq \o$ for $1\leq j< m$. Define $x_1=x$ and $x_{m+1}=y$, and for each $2\leq j\leq m$, choose $x_j\in D_{j-1}\cap D_j$. For every $1\leq j\leq m$, we have $x_j,x_{j+1}\in D_j$ and $D_j\in\mathcal{P}(\mathcal{A})^{n\ast}$. We are going to show that $x_j P_{\mathcal{A}^{n\ast}} x_{j+1}$ for every $1\leq j\leq m$ which will imply that $x P_{\mathcal{A}^{n\ast}} y$.

Fix $j\in\{1,\ldots,m\}$. Since $D_j\in\mathcal{P}(\mathcal{A})^{n\ast}$, there exist $D_j',D_j''\in\mathcal{P}(\mathcal{A})^{(n-1)\ast}$ such that $D_j=D_j'\ast D_j''$. Moreover, since $x_j,x_{j+1}\in D_j$ there exist $a_j',b_{j+1}'\in D_j'$ and $a_j'',b_{j+1}''\in D_j''$ such that $x_j=a_j'\ast a_j''$ and $x_{j+1}=b_{j+1}'\ast b_{j+1}''$. We have $a_j' P_{\mathcal{P}(\mathcal{A})^{(n-1)\ast}} b_{j+1}'$ and $a_j'' P_{\mathcal{P}(\mathcal{A})^{(n-1)\ast}} b_{j+1}''$. Therefore, from the induction hypothesis we have $a_j' P_{\mathcal{A}^{(n-1)\ast}} b_{j+1}'$ and $a_j'' P_{\mathcal{A}^{(n-1)\ast}} b_{j+1}''$. There exist two sequences $(A_i')_{1\leq i\leq m_j'}$ and $(A_i'')_{1\leq i\leq m_j''}$ such that:
\begin{itemize}
\item $a_j'\in A_1'$, $b_{j+1}'\in A_{m_j'}'$, $A_i'\in\mathcal{A}^{(n-1)\ast}$ for $1\leq i\leq m_j'$, and $A_i'\cap A_{i+1}'\neq \o$ for $1\leq i< m_j'$.
\item $a_j''\in A_1''$, $b_{j+1}''\in A_{m_j''}''$, $A_i''\in\mathcal{A}^{(n-1)\ast}$ for $1\leq i\leq m_j''$, and $A_i''\cap A_{i+1}''\neq \o$ for $1\leq i< m_j''$.
\end{itemize}
Now consider the sequence $(A_i)_{1\leq i\leq m_j'+m_j''}$ defined as $A_i=A_i'\ast A_1''$ for $1\leq i\leq m_j'$, and $A_i=A_{m_j'}'\ast A_{i-m_j'}''$ for $m_j'+1\leq i\leq m_j'+m_j''$. The sequence $(A_i)_{1\leq i\leq m_j'+m_j''}$ satisfies the following: $x_j=a_j'\ast a_j''\in A_1$, $x_{j+1}=b_{j+1}'\ast b_{j+1}''\in A_{m_j'+m_j''}$ and $A_i\in\mathcal{A}^{n\ast}$ for $1\leq i\leq m_j'+m_j''$. Moreover, it is easy to see that $A_i\cap A_{i+1}\neq \o$ for $1\leq i< m_j'+m_j''$. Therefore, $x_j P_{\mathcal{A}^{n\ast}} x_{j+1}$. Now since this is true for all $1\leq j\leq m$, we have $x_1 P_{\mathcal{A}^{n\ast}} x_{m+1}$ and so $x P_{\mathcal{A}^{n\ast}} y$. We conclude that for every $x,y\in\mathcal{X}$, $x P_{\mathcal{P}(\mathcal{A})^{n\ast}} y$ implies $x P_{\mathcal{A}^{n\ast}} y$.

Now let $x,y\in\mathcal{X}$ be such that $x P_{\mathcal{A}^{n\ast}} y$. There exists a sequence $(E_i)_{1\leq i\leq k}$ such that: $x\in E_1$, $y\in E_k$, $E_i\in \mathcal{A}^{n\ast}$ for $1\leq i\leq k$, and $E_i\cap E_{i+1}\neq \o$ for $1\leq i<k$. Now for every $1\leq i\leq k$, we can apply Lemma \ref{lemsublem} to get a set $F_i\in \mathcal{P}(\mathcal{A})^{n\ast}$ such that $E_i\subset F_i$. Clearly, we have $x\in F_1$, $y\in F_k$, $F_i\in \mathcal{P}(\mathcal{A})^{n\ast}$ for $1\leq i\leq k$, and $F_i\cap F_{i+1}\neq \o$ for $1\leq i<k$. Thus, $x P_{\mathcal{P}(\mathcal{A})^{n\ast}} y$.

We conclude that for every $x,y\in\mathcal{X}$, $x P_{\mathcal{P}(\mathcal{A})^{n\ast}} y$ if and only if $x P_{\mathcal{A}^{n\ast}} y$. Therefore, $\mathcal{P}\big(\mathcal{P}(\mathcal{A})^{n\ast}\big)=\mathcal{P}(\mathcal{A}^{n\ast})$.
\end{proof}

\begin{mylem}
\label{stasta1}
Let $\ast$ be an ergodic operation on a set $\mathcal{X}$. If $\mathcal{A}$ is a periodic $\mathcal{X}$-cover, then $\mathcal{P}(\mathcal{A})$ is a stable partition.
\end{mylem}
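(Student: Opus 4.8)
The plan is to establish, in this order, that $\mathcal{P}(\mathcal{A})$ is a partition (which is immediate, since it is the set of equivalence classes of $P_{\mathcal{A}}$), that it is \emph{balanced}, and finally that it is \emph{periodic}; the last two properties together say exactly that it is a stable partition. Write $\mathcal{B}=\mathcal{P}(\mathcal{A})$ and $n=\per(\mathcal{A})$. Two preliminary facts will be used repeatedly. First, since $\mathcal{X}\ast\mathcal{X}=\mathcal{X}$ and $|U\ast V|\geq|U|\geq 1$ for non-empty $U,V$ (both because $\ast$ is uniformity preserving), each $\mathcal{B}^{i\ast}$ is again an $\mathcal{X}$-cover. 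Second, whenever $n\mid m$ we have $\mathcal{A}^{m\ast}=\mathcal{A}$, so Lemma~\ref{lemindkif} gives $\mathcal{P}(\mathcal{B}^{m\ast})=\mathcal{P}\big(\mathcal{P}(\mathcal{A})^{m\ast}\big)=\mathcal{P}(\mathcal{A}^{m\ast})=\mathcal{B}$; since any $\mathcal{X}$-cover is finer than the partition it generates, this yields $\mathcal{B}^{m\ast}\preceq\mathcal{B}$. This last relation is the substitute, in our situation, for the identity $\mathcal{H}^{m\ast}=\mathcal{H}$ that drives the proof of Lemma~\ref{stasta}.

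To prove $\mathcal{B}$ balanced I would mimic the proof of Lemma~\ref{stasta}. Fix a block $B\in\mathcal{B}$ of maximal cardinality and an arbitrary block $B'\in\mathcal{B}$, and pick $b\in B$, $b'\in B'$. Choose $m$ a multiple of $n$ with $m\geq\con(\ast)$. By Proposition~\ref{lemerg}(8) there are $x_0,\dots,x_{m-1}\in\mathcal{X}$ with $(\ldots((b\ast x_0)\ast x_1)\ldots\ast x_{m-1})=b'$; for each $i$ choose $X_i\in\mathcal{B}^{i\ast}$ with $x_i\in X_i$ (possible since $\mathcal{B}^{i\ast}$ covers $\mathcal{X}$) and set $B''=(\ldots((B\ast X_0)\ast X_1)\ldots\ast X_{m-1})$, which lies in $\mathcal{B}^{m\ast}$. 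The map $\beta\mapsto(\ldots((\beta\ast x_0)\ast x_1)\ldots\ast x_{m-1})$ is a permutation of $\mathcal{X}$ carrying $B$ into $B''$ and $b$ to $b'$, so $|B''|\geq|B|$ and $b'\in B''$. Since $\mathcal{B}^{m\ast}\preceq\mathcal{B}$ and $b'\in B''\cap B'$, the block of $\mathcal{B}$ containing $B''$ must be $B'$, hence $B''\subseteq B'$ and $|B'|\geq|B''|\geq|B|$. By maximality $|B'|=|B|$, so all blocks of $\mathcal{B}$ have a common size, say $\ell=||\mathcal{B}||$.

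Periodicity is then cheap. Let $D\in\mathcal{B}^{n\ast}$. By $\mathcal{B}^{n\ast}\preceq\mathcal{B}$, $D$ is contained in a block of $\mathcal{B}$, so $|D|\leq\ell$; applying Lemma~\ref{lemsize} $n$ times gives $||\mathcal{B}^{n\ast}||_{\wedge}\geq||\mathcal{B}||_{\wedge}=\ell$, so $|D|\geq\ell$. Hence $|D|=\ell$ and $D$ equals the block of $\mathcal{B}$ that contains it; thus $\mathcal{B}^{n\ast}\subseteq\mathcal{B}$, and since $\mathcal{B}^{n\ast}$ covers $\mathcal{X}$ while $\mathcal{B}$ is a partition we conclude $\mathcal{B}^{n\ast}=\mathcal{B}$. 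Therefore $\mathcal{B}=\mathcal{P}(\mathcal{A})$ is periodic, and being also balanced it is a stable partition.

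The step I expect to be the main obstacle is the balancedness argument. The point is that $\mathcal{P}(\mathcal{A})$ is not known in advance to be periodic, so Lemma~\ref{stasta} cannot be quoted directly; one has to notice that Lemma~\ref{lemindkif} delivers only the one-sided relation $\mathcal{B}^{m\ast}\preceq\mathcal{B}$ (for $n\mid m$) and that this already suffices to run the representative-lifting argument of Lemma~\ref{stasta}. Once balancedness is in hand, periodicity follows from the elementary monotonicity of $||\cdot||_{\wedge}$ under $\ast$ (Lemma~\ref{lemsize}). The only other thing to verify, a routine matter, is that each $\mathcal{B}^{i\ast}$ remains an $\mathcal{X}$-cover, which is precisely what lets us lift the chain $b,\,b\ast x_0,\dots,b'$ to a sequence of sets $X_i\in\mathcal{B}^{i\ast}$.
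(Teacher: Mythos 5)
Your proof is correct and follows essentially the same route as the paper's: Lemma~\ref{lemindkif} yields $\mathcal{P}\big(\mathcal{P}(\mathcal{A})^{m\ast}\big)=\mathcal{P}(\mathcal{A})$, hence $\mathcal{P}(\mathcal{A})^{m\ast}\preceq\mathcal{P}(\mathcal{A})$, for multiples $m$ of $\per(\mathcal{A})$; the representative-lifting argument of Lemma~\ref{stasta} then gives balancedness, and a cardinality squeeze gives $\mathcal{P}(\mathcal{A})^{m\ast}=\mathcal{P}(\mathcal{A})$. The only (harmless) deviations are that you obtain the inclusion $\mathcal{P}(\mathcal{A})^{n\ast}\subseteq\mathcal{P}(\mathcal{A})$ from the monotonicity of $||\cdot||_{\wedge}$ (Lemma~\ref{lemsize}) where the paper uses a surjection argument together with Lemma~\ref{lemsublem}, and that you run the periodicity step with $m=\per(\mathcal{A})$ instead of $\per(\mathcal{A})\cdot\con(\ast)$.
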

\begin{proof}
Let $n=\per(\mathcal{A})\cdot\con(\ast)$. Since $\per(\mathcal{A})$ divides $n$, we have $\mathcal{A}^{n\ast}=\mathcal{A}$. Let $A\in\mathcal{P}(\mathcal{A})$ be chosen so that $|A|$ is maximal, and let $B\in\mathcal{P}(\mathcal{A})$. We clearly have $|B|\leq |A|$. We also have $B\in \mathcal{P}(\mathcal{A}^{n\ast})$ since $\mathcal{A}^{n\ast}=\mathcal{A}$. From Lemma \ref{lemindkif} we have $\mathcal{P}\big(\mathcal{P}(\mathcal{A})^{n\ast}\big)=\mathcal{P}(\mathcal{A}^{n\ast})$, and so $B\in \mathcal{P}\big(\mathcal{P}(\mathcal{A})^{n\ast}\big)=\mathcal{P}(\mathcal{A}^{n\ast})=\mathcal{P}(\mathcal{A})$.

Fix $x\in A$ and $y\in B$. Since $n\geq \con(\ast)$, there exists a sequence $x_0,\ldots,x_{n-1}\in\mathcal{X}$ such that $y=(\ldots((x\ast x_0)\ast x_1)\ldots\ast x_{n-1})$. Now choose $X_0,\ldots,X_{n-1}$ such that $x_i\in X_i\in \mathcal{P}(\mathcal{A})^{i\ast}$ for $0\leq i<n$. Define $C:=(\ldots((A\ast X_0)\ast X_1)\ldots\ast X_{n-1})$. Clearly, $y\in C\in \mathcal{P}(\mathcal{A})^{n\ast}$. Now since $y\in B\in\mathcal{P}\big(\mathcal{P}(\mathcal{A})^{n\ast}\big)$ and $y\in C\in \mathcal{P}(\mathcal{A})^{n\ast}$, we must have $C=(\ldots((A\ast X_0)\ast X_1)\ldots\ast X_{n-1})\subset B$ and so $|A|\leq|C|\leq |B|$, which implies that $|A|=|B|=|C|$ since we already have $|B|\leq |A|$. Therefore, $C=B$ and so $B\in\mathcal{P}(\mathcal{A})^{n\ast}$ for every $B\in\mathcal{P}(\mathcal{A})$, from which we conclude that $\mathcal{P}(\mathcal{A})\subset \mathcal{P}(\mathcal{A})^{n\ast}$. On the other hand, since $|A|=|B|$ for every $B\in\mathcal{P}(\mathcal{A})$, $\mathcal{P}(\mathcal{A})$ is a balanced partition.

Now for every $C\in\mathcal{P}(\mathcal{A})^{n\ast}$, there exists a set $D\in\mathcal{P}(\mathcal{A})$ and a sequence $X_0,\ldots,X_{n-1}$ such that $X_i\in \mathcal{P}(\mathcal{A})^{i\ast}$ and $C=(\ldots((D\ast X_0)\ast X_1)\ldots\ast X_{n-1})$. We have $|D|\leq |C|$. On the other hand, Lemma \ref{lemsublem} (applied to the $\mathcal{X}$-cover $\mathcal{P}(\mathcal{A})^{n\ast}$) implies the existence of a set $B\in\mathcal{P}\big(\mathcal{P}(\mathcal{A})^{n\ast}\big)$ such that $C\subset B$. Therefore, $|D|\leq |C|\leq |B|$. Now since $\mathcal{P}\big(\mathcal{P}(\mathcal{A})^{n\ast}\big)=\mathcal{P}(\mathcal{A}^{n\ast})$ (by Lemma \ref{lemindkif}) and $\mathcal{A}^{n\ast}=\mathcal{A}$, we have $B\in \mathcal{P}\big(\mathcal{P}(\mathcal{A})^{n\ast}\big)=\mathcal{P}(\mathcal{A}^{n\ast})=\mathcal{P}(\mathcal{A})$. Therefore, $|D|=|B|$ since $D,B\in\mathcal{P}(\mathcal{A})$ and since  $\mathcal{P}(\mathcal{A})$ was shown to be a balanced partition. Thus, $|B|=|C|=|D|$ which implies that $C=B\in\mathcal{P}(A)$ since $C\subset B$. We conclude that $C\in \mathcal{P}(A)$ for every $C\in \mathcal{P}(\mathcal{A})^{n\ast}$. Therefore, $\mathcal{P}(\mathcal{A})^{n\ast}\subset\mathcal{P}(\mathcal{A})$. This means that $\mathcal{P}(\mathcal{A})^{n\ast}=\mathcal{P}(\mathcal{A})$ since we already have $\mathcal{P}(\mathcal{A})\subset \mathcal{P}(\mathcal{A})^{n\ast}$. We conclude that $\mathcal{P}(\mathcal{A})$ is a stable partition.
\end{proof}

\begin{mylem}
\label{stastaerg1}
Let $\ast$ be an ergodic operation on a set $\mathcal{X}$. If $\mathcal{A}$ is a periodic $\mathcal{X}$-cover, then $\mathcal{A}$ is stable. Moreover, for every $i\geq 0$, every $A\in\mathcal{A}$ and every $B\in\mathcal{A}^{i\ast}$, we have $|A|=|B|$.
\end{mylem}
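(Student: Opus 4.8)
The plan is to reduce the statement to showing that $\mathcal{A}$ is balanced, and then to prove balancedness using ergodic connectability together with the stable partition $\mathcal{P}(\mathcal{A})$ produced by Lemma~\ref{stasta1}.

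\emph{Reduction.} Let $n=\per(\mathcal{A})$ and let $p$ be any positive multiple of $n$, so $\mathcal{A}^{p\ast}=\mathcal{A}$. Iterating Lemma~\ref{lemsize} along $\mathcal{A},\mathcal{A}^{\ast},\ldots,\mathcal{A}^{p\ast}$ gives $||\mathcal{A}||_{\wedge}\leq||\mathcal{A}^{\ast}||_{\wedge}\leq\cdots\leq||\mathcal{A}^{p\ast}||_{\wedge}=||\mathcal{A}||_{\wedge}$ and likewise for $||\cdot||_{\vee}$; hence $||\mathcal{A}^{i\ast}||_{\wedge}=||\mathcal{A}||_{\wedge}$ and $||\mathcal{A}^{i\ast}||_{\vee}=||\mathcal{A}||_{\vee}$ for every $i\geq 0$ (periodicity of the sequence $(\mathcal{A}^{i\ast})_i$ brings arbitrary $i$ into range). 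Thus, once we know $\mathcal{A}$ is balanced with common size $v$, every $\mathcal{A}^{i\ast}$ is balanced with the same size $v$, which is precisely the ``moreover'' assertion. So it suffices to prove $||\mathcal{A}||_{\wedge}=||\mathcal{A}||_{\vee}=:v$.

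\emph{The connectability engine.} Fix a multiple $m$ of $n$ with $m\geq\con(\ast)$, so $\mathcal{A}^{m\ast}=\mathcal{A}$, and note each $\mathcal{A}^{i\ast}$ still covers $\mathcal{X}$ (since $\mathcal{X}\ast\mathcal{X}=\mathcal{X}$). Mimicking the proof of Lemma~\ref{stasta1}: for every $A_{1}\in\mathcal{A}$ and every $y\in\mathcal{X}$ there is $C\in\mathcal{A}$ with $y\in C$ and $|C|\geq|A_{1}|$. Indeed, pick $a_{1}\in A_{1}$; the eighth point of Proposition~\ref{lemerg} gives $x_{0},\ldots,x_{m-1}$ with $(\ldots((a_{1}\ast x_{0})\ast x_{1})\ldots\ast x_{m-1})=y$; choosing $X_{i}\in\mathcal{A}^{i\ast}$ with $x_{i}\in X_{i}$ and letting $C=(\ldots((A_{1}\ast X_{0})\ast X_{1})\ldots\ast X_{m-1})\in\mathcal{A}^{m\ast}=\mathcal{A}$, we get $y\in C$ and $|C|\geq|A_{1}|$ because $\ast$ is uniformity preserving. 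Applying this with $A_{1}$ of maximal size $v$ shows that the subfamily $\mathcal{A}_{\max}:=\{A\in\mathcal{A}:|A|=v\}$ already covers $\mathcal{X}$. One also gets a useful symmetry: the permutations $\pi$ of $\mathcal{X}$ of the form $x\mapsto(\ldots(x\ast x_{0})\ldots\ast x_{km-1})$ with $x_{i}\in X_{i}$ a point-selection from a length-$km$ $\mathcal{A}$-sequence ($k\geq 1$) form a group $\Pi$ (closure under concatenation keeps the length a multiple of $n$; Lemma~\ref{lemaugm} supplies the identity) which is transitive on $\mathcal{X}$ and leaves $\mathcal{A}_{\max}$ invariant --- for such $\pi$ and $A\in\mathcal{A}_{\max}$, the set $A\ast(X_{0},\ldots,X_{km-1})$ lies in $\mathcal{A}$, has size $\geq|A|=v$ hence exactly $v$, and contains $\pi(A)$ which also has size $v$, so $\pi(A)=A\ast(X_{0},\ldots,X_{km-1})\in\mathcal{A}_{\max}$.

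\emph{The obstacle: excluding smaller members.} It remains to show $\mathcal{A}=\mathcal{A}_{\max}$, equivalently $||\mathcal{A}||_{\wedge}=v$, and this is the hard part. The strategy is to run periodicity backwards: since $\mathcal{A}=\mathcal{A}^{m\ast}$, a member $A'$ of minimal size $w$ has the form $D\ast(X_{0},\ldots,X_{m-1})$ with $D\in\mathcal{A}$ and $X_{i}\in\mathcal{A}^{i\ast}$; minimality forces $|D|=w$, and then $|D\ast(X_{0},\ldots,X_{m-1})|=|D|$ forces all the $w$-element sets $D\ast x_{0}\ast\cdots\ast x_{m-1}$ ($x_{i}\in X_{i}$) to coincide, so $A'=\pi(D)$ for some $\pi\in\Pi$. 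Tracing this around the period, one wants to conclude --- as was done above for $\mathcal{A}_{\max}$ --- that $\mathcal{A}_{\min}:=\{A\in\mathcal{A}:|A|=w\}$ is $\Pi$-invariant; transitivity of $\Pi$ would then make $\mathcal{A}_{\min}$ an $\mathcal{X}$-cover, and a counting argument inside the equal-sized blocks of the stable partition $\mathcal{P}(\mathcal{A})$ of Lemma~\ref{stasta1} (each block being covered both by $w$-sets from $\mathcal{A}_{\min}$ and by $v$-sets from $\mathcal{A}_{\max}$) would force $w=v$. I expect the genuine difficulty to be exactly the $\Pi$-invariance of $\mathcal{A}_{\min}$ --- that is, showing a minimal-size member stays minimal after multiplication by an arbitrary $\mathcal{A}$-sequence (unlike maximal-size members, this does not follow merely from $||\mathcal{A}||_{\vee}$ being constant) --- and then making the $\mathcal{P}(\mathcal{A})$-block count precise; Lemma~\ref{stasta1}, the covering property of $\mathcal{A}_{\max}$, and the transitive group $\Pi$ are the tools I would combine to settle it.
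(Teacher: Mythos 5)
Your proposal does not close, and you say so yourself: the decisive step --- that $\mathcal{A}$ has no member of size smaller than $v=||\mathcal{A}||_{\vee}$ --- is only a plan, not a proof. What you do complete (the reduction via Lemma~\ref{lemsize} and periodicity, the fact that $\mathcal{A}_{\max}$ covers $\mathcal{X}$, and its invariance under the permutation group $\Pi$) is correct, and it is in fact exactly the portion of the paper's intended argument that survives for covers: the paper's entire proof of Lemma~\ref{stastaerg1} is the sentence ``the exact same proof of Lemma~\ref{stasta} can be applied here,'' and the proof of Lemma~\ref{stasta} runs precisely your connectability engine and then concludes $H'=H''$ from $h'\in H'\cap H''$ \emph{because $\mathcal{H}$ is a partition}. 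That identification is the one thing a cover does not give you, and it is the one step you could not reproduce. So you have correctly located the difficulty, but neither you nor the cited proof resolves it.

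The obstacle is moreover not a matter of missing cleverness. Take $\mathcal{X}=\{0,1\}$ with $x\ast y=x\oplus y$ (a group operation, hence ergodic) and $\mathcal{A}=\big\{\{0\},\{1\},\{0,1\}\big\}$. This is an $\mathcal{X}$-cover in the sense of the paper's definition, and a direct computation gives $\mathcal{A}^{\ast}=\mathcal{A}$, so $\mathcal{A}$ is periodic of period $1$; yet it is not balanced. Hence no argument from the stated hypotheses --- neither your $\Pi$-invariance/$\mathcal{P}(\mathcal{A})$-counting plan nor a verbatim transcription of the proof of Lemma~\ref{stasta} --- can establish the lemma as written; an additional hypothesis on $\mathcal{A}$ is needed, and the paper's one-line proof glosses over exactly this point. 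Your own machinery already shows why your plan must fail on this example: $\mathcal{A}_{\min}=\big\{\{0\},\{1\}\big\}$ \emph{is} $\Pi$-invariant and \emph{does} cover $\mathcal{X}$, and the single block of $\mathcal{P}(\mathcal{A})=\{\mathcal{X}\}$ is simultaneously covered by the $1$-element sets and by the $2$-element set, so the block count yields no contradiction between $w=1$ and $v=2$.
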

\begin{proof}
The exact same proof of Lemma \ref{stasta} can be applied here to show the lemma.
\end{proof}

\begin{mylem}
\label{stastaerg2}
Let $\ast$ be an ergodic operation on a set $\mathcal{X}$, and let $\mathcal{A}$ be a periodic $\mathcal{X}$-cover. For every $A,B,C\in\mathcal{A}$, if $B\cap C\neq \o$ then $A\ast B=A\ast C$.
\end{mylem}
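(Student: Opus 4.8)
The plan is to reduce everything to a cardinality argument, using Lemma~\ref{stastaerg1}. Since $\mathcal{A}$ is a periodic $\mathcal{X}$-cover and $\ast$ is ergodic, Lemma~\ref{stastaerg1} tells us that $\mathcal{A}$ is stable and, more importantly, that every element of $\mathcal{A}^{\ast}=\mathcal{A}^{1\ast}$ has the same cardinality as every element of $\mathcal{A}$, namely $||\mathcal{A}||$. This uniform size control on $\mathcal{A}^{\ast}$ is the only nontrivial ingredient.

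First I would fix an element $x\in B\cap C$, which exists because $B\cap C\neq\o$. Since $\ast$ is uniformity preserving, the map $a\mapsto a\ast x$ is a bijection of $\mathcal{X}$, so $|A\ast x|=|A|=||\mathcal{A}||$. On the other hand, $A\ast B=\bigcup_{b\in B}(A\ast b)$ and $x\in B$, so $A\ast x\subset A\ast B$; likewise, $x\in C$ gives $A\ast x\subset A\ast C$. Both $A\ast B$ and $A\ast C$ belong to $\mathcal{A}^{\ast}$, hence each has cardinality $||\mathcal{A}||$ by Lemma~\ref{stastaerg1}.

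Now $A\ast x$ is a subset of each of $A\ast B$ and $A\ast C$ with the same cardinality $||\mathcal{A}||$, so $A\ast x=A\ast B$ and $A\ast x=A\ast C$, and therefore $A\ast B=A\ast C$. The whole argument is short; there is no real obstacle beyond recognizing that Lemma~\ref{stastaerg1} pins down the size of the members of $\mathcal{A}^{\ast}$, which is exactly what makes the ``partial product'' $A\ast x$ coming from a single point of the overlap already of full size, forcing the two full products to coincide.
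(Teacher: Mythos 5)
Your proof is correct and follows essentially the same route as the paper: both use Lemma \ref{stastaerg1} to pin $|A\ast B|$ and $|A\ast C|$ at $|A|$, and the bijectivity of right multiplication to see that the single slice $A\ast x$ for $x\in B\cap C$ already has that full size, forcing $A\ast B=A\ast x=A\ast C$.
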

\begin{proof}
We have $A\ast B\in\mathcal{A}^\ast$, and from Lemma \ref{stastaerg1} we get $|A\ast B|=|A|$. On the other hand, since $\ast$ is uniformity preserving, we have $|A\ast x|=|A|$ for every $x\in \mathcal{X}$. Now since $\displaystyle A\ast B=\bigcup_{b\in B} A\ast b$, and since $|A\ast b|=|A|=|A\ast B|$ for every $b\in B$, we must have $A\ast B=A\ast b$ for every $b\in B$. Similarly, $A\ast C= A\ast c$ for every $c\in C$. We conclude that $A\ast B=A\ast C$ since $B\cap C\neq \o$ (take any $x\in B\cap C$, we have $A\ast B=A\ast x=A\ast C$).
\end{proof}

\begin{mylem}
\label{stastaerg3}
Let $\ast$ be an ergodic operation on a set $\mathcal{X}$, and let $\mathcal{A}$ be a periodic $\mathcal{X}$-cover. For every $A\in\mathcal{A}$ and every $B\in\mathcal{P}(\mathcal{A})$, we have $A\ast B\in\mathcal{A}^{\ast}$.
\end{mylem}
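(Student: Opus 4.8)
The plan is to express $B$ as a finite union of members of $\mathcal{A}$ forming a ``connected chain'', and then to collapse the corresponding union of left-translates using Lemma \ref{stastaerg2}. First I would apply Lemma \ref{lemlalababa} to the $\mathcal{X}$-cover $\mathcal{A}$: since $B\in\mathcal{P}(\mathcal{A})$, there is a finite sequence $A_1,\ldots,A_n\in\mathcal{A}$ with $B=\bigcup_{i=1}^{n}A_i$ and $A_i\cap A_{i+1}\neq\o$ for every $1\leq i<n$.

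Next, from the definition of the set product $A\ast B$, the operation $\ast$ distributes over unions in its left argument, so $A\ast B=A\ast\bigl(\bigcup_{i=1}^{n}A_i\bigr)=\bigcup_{i=1}^{n}(A\ast A_i)$. Now I would invoke Lemma \ref{stastaerg2}: for each $1\leq i<n$ the sets $A,A_i,A_{i+1}$ all lie in the periodic $\mathcal{X}$-cover $\mathcal{A}$ and $A_i\cap A_{i+1}\neq\o$, hence $A\ast A_i=A\ast A_{i+1}$. Chaining these equalities yields $A\ast A_1=A\ast A_2=\cdots=A\ast A_n$, so the union above collapses to $A\ast B=A\ast A_1$. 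Since $A,A_1\in\mathcal{A}$, by definition $A\ast A_1\in\mathcal{A}^{\ast}$, which is the desired conclusion.

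I do not expect any real obstacle here, as the statement follows almost immediately from Lemmas \ref{lemlalababa} and \ref{stastaerg2}; ergodicity of $\ast$ enters only through Lemma \ref{stastaerg2} (which in turn relies on Lemma \ref{stastaerg1}). The only point deserving a word is the degenerate case $n=1$: then $B=A_1\in\mathcal{A}$, the chaining step is vacuous, and $A\ast B=A\ast A_1\in\mathcal{A}^{\ast}$ still holds directly from the definition.
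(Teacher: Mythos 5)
Your proposal is correct and follows essentially the same route as the paper's own proof: decompose $B$ via Lemma \ref{lemlalababa} into a chain $A_1,\ldots,A_n$ with consecutive non-empty intersections, apply Lemma \ref{stastaerg2} to get $A\ast A_1=\cdots=A\ast A_n$, and conclude $A\ast B=A\ast A_1\in\mathcal{A}^{\ast}$. The only difference is that you make explicit the distribution of $\ast$ over the union, which the paper leaves implicit.
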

\begin{proof}
According to Lemma \ref{lemlalababa} there exists a finite sequence $(A_i)_{1\leq i\leq l}$ such that $\displaystyle B=\bigcup_{i=1}^l A_i$, $A_i\in\mathcal{A}$ for all $1\leq i\leq l$, and $A_i\cap A_{i+1}\neq \o$ for all $1\leq i<l$. Lemma \ref{stastaerg2} shows that $A\ast A_1=A\ast A_2=\ldots=A\ast A_l$. Therefore, $A\ast B=A\ast A_1\in\mathcal{A}^{\ast}$.
\end{proof}

\begin{mylem}
\label{stastaerg4}
Let $\ast$ be an ergodic operation on a set $\mathcal{X}$, and let $\mathcal{A}$ be a periodic $\mathcal{X}$-cover. For every $A\in\mathcal{A}$ and every $\mathcal{P}(\mathcal{A})$-sequence $\mathfrak{X}$, we have $A\ast\mathfrak{X}\in\mathcal{A}^{|\mathfrak{X}|\ast}$.
\end{mylem}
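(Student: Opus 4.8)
The plan is to argue by induction on the length $k=|\mathfrak{X}|$ of the $\mathcal{P}(\mathcal{A})$-sequence. The case $k=0$ is trivial, since then $A\ast\mathfrak{X}=A\in\mathcal{A}=\mathcal{A}^{0\ast}$. For the inductive step with $\mathfrak{X}=(X_0,\ldots,X_{k-1})$, I would write $A\ast\mathfrak{X}=\bigl(A\ast\mathfrak{X}'\bigr)\ast X_{k-1}$, where $\mathfrak{X}'=(X_0,\ldots,X_{k-2})$ is a $\mathcal{P}(\mathcal{A})$-sequence of length $k-1$. The induction hypothesis gives $A\ast\mathfrak{X}'\in\mathcal{A}^{(k-1)\ast}$, and it then suffices to show that $\bigl(A\ast\mathfrak{X}'\bigr)\ast X_{k-1}\in\bigl(\mathcal{A}^{(k-1)\ast}\bigr)^{\ast}=\mathcal{A}^{k\ast}$. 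This last step is exactly Lemma~\ref{stastaerg3} applied to the $\mathcal{X}$-cover $\mathcal{B}:=\mathcal{A}^{(k-1)\ast}$, provided I know that $\mathcal{B}$ is a periodic $\mathcal{X}$-cover and that $X_{k-1}\in\mathcal{P}(\mathcal{B})$.

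To run this argument I would first record two auxiliary facts. The first is that if $\mathcal{A}$ is a periodic $\mathcal{X}$-cover then so is $\mathcal{A}^{m\ast}$ for every $m\geq 0$: it still covers $\mathcal{X}$ because $\bigcup\mathcal{A}^{(i+1)\ast}=\bigl(\bigcup\mathcal{A}^{i\ast}\bigr)\ast\bigl(\bigcup\mathcal{A}^{i\ast}\bigr)=\mathcal{X}\ast\mathcal{X}=\mathcal{X}$, it contains no empty set because $|A\ast B|\geq|A|>0$ for nonempty $A,B$, and it is periodic because $(\mathcal{A}^{m\ast})^{n\ast}=\mathcal{A}^{(m+n)\ast}$ (an easy induction from the recursive definition of $\mathcal{A}^{n\ast}$), so taking $n=\per(\mathcal{A})$ yields $(\mathcal{A}^{m\ast})^{n\ast}=\mathcal{A}^{(m+n)\ast}=(\mathcal{A}^{n\ast})^{m\ast}=\mathcal{A}^{m\ast}$. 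The second fact is that $\mathcal{P}(\mathcal{A})^{m\ast}=\mathcal{P}(\mathcal{A}^{m\ast})$ for every $m\geq 0$; this is where I invoke Lemma~\ref{lemindkif}, which gives $\mathcal{P}\bigl(\mathcal{P}(\mathcal{A})^{m\ast}\bigr)=\mathcal{P}(\mathcal{A}^{m\ast})$, combined with the observation that $\mathcal{P}(\mathcal{A})^{m\ast}$ is a partition — it is in fact a stable partition, by Lemma~\ref{stasta1} and the proposition stating that powers of a stable partition are stable — together with the fact that $\mathcal{P}(\mathcal{Q})=\mathcal{Q}$ for any partition $\mathcal{Q}$ (since in a partition $A_i\cap A_{i+1}\neq\o$ forces $A_i=A_{i+1}$, so each $P_{\mathcal{Q}}$-class is a single block). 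Applying the second fact with $m=k-1$ shows that $X_{k-1}\in\mathcal{P}(\mathcal{A})^{(k-1)\ast}=\mathcal{P}(\mathcal{A}^{(k-1)\ast})=\mathcal{P}(\mathcal{B})$, which is precisely what Lemma~\ref{stastaerg3} requires, and the induction closes.

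I expect the only mildly delicate point to be the bookkeeping around the identity $\mathcal{P}(\mathcal{A})^{m\ast}=\mathcal{P}(\mathcal{A}^{m\ast})$ and the verification that $\mathcal{A}^{(k-1)\ast}$ remains a periodic $\mathcal{X}$-cover, so that Lemmas~\ref{lemindkif} and~\ref{stastaerg3} are legitimately applicable to it; once these preliminaries are in place the inductive step is immediate. Everything else is routine, and ergodicity of $\ast$ enters only through its use in Lemmas~\ref{stasta1} and~\ref{stastaerg3}.
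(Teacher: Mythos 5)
Your proposal is correct and follows essentially the same route as the paper's own proof: induction on $|\mathfrak{X}|$, with the inductive step handled by Lemma~\ref{stastaerg3} applied to the periodic $\mathcal{X}$-cover $\mathcal{A}^{(k-1)\ast}$, and the identification $X_{k-1}\in\mathcal{P}(\mathcal{A}^{(k-1)\ast})$ obtained from Lemma~\ref{lemindkif} together with the fact (via Lemma~\ref{stasta1}) that $\mathcal{P}(\mathcal{A})^{(k-1)\ast}$ is a partition. The only cosmetic difference is that you start the induction at $k=0$ rather than at $k=1$.
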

\begin{proof}
We will prove the lemma by induction on $k=|\mathfrak{X}|>0$. Lemma \ref{stastaerg3} implies that the statement is true for $k=1$. Now let $k>1$ and suppose that the lemma is true for $|\mathfrak{X}|=k-1$. Now let $\mathfrak{X}=(X_i)_{0\leq i<k}$ be a $\mathcal{P}(\mathcal{A})$-sequence of length $k$. Define $\mathfrak{X}'=(X_i)_{0\leq i<k-1}$. We have:
\begin{itemize}
\item $A'=A\ast\mathfrak{X}'\in \mathcal{A}^{(k-1)\ast}$ from the induction hypothesis.
\item Lemma \ref{stasta1} shows that $\mathcal{P}(\mathcal{A})$ is a stable partition, and so $\mathcal{P}(\mathcal{A})^{(k-1)\ast}$ is also a stable partition. In particular, $\mathcal{P}(\mathcal{A})^{(k-1)\ast}$ is a partition and so $\mathcal{P}(\mathcal{A})^{(k-1)\ast}=\mathcal{P}\big(\mathcal{P}(\mathcal{A})^{(k-1)\ast}\big)$. On the other hand, Lemma \ref{lemindkif} shows that $\mathcal{P}\big(\mathcal{P}(\mathcal{A})^{(k-1)\ast}\big)=\mathcal{P}(\mathcal{A}^{(k-1)\ast})$. Therefore, $\mathcal{P}(\mathcal{A})^{(k-1)\ast}=\mathcal{P}(\mathcal{A}^{(k-1)\ast})$. We conclude that $X_{k-1}\in \mathcal{P}(\mathcal{A}^{(k-1)\ast})$ since we have $X_{k-1}\in\mathcal{P}(\mathcal{A})^{(k-1)\ast}$.
\item Since $(\mathcal{A}^{(k-1)\ast})^{n\ast}=(\mathcal{A}^{n\ast})^{(k-1)\ast}= \mathcal{A}^{(k-1)\ast}$ (where $n=\per(\mathcal{A})$), $\mathcal{A}^{(k-1)\ast}$ is a periodic $\mathcal{X}$-cover.
\end{itemize}
Now since $A'\in \mathcal{A}^{(k-1)\ast}$ and $X_{k-1}\in \mathcal{P}(\mathcal{A}^{(k-1)\ast})$, and since $\mathcal{A}^{(k-1)\ast}$ is a periodic $\mathcal{X}$-cover, we can apply Lemma \ref{stastaerg3} to obtain $A'\ast X_{k-1}\in(\mathcal{A}^{(k-1)\ast})^\ast=\mathcal{A}^{k\ast}$. We conclude that $A\ast\mathfrak{X}=A'\ast X_{k-1}\in\mathcal{A}^{k\ast}$ which completes the induction argument.
\end{proof}

\vspace*{5mm}

Now we are ready to prove Proposition \ref{stastaerg}:

\begin{proof}[Proof of Proposition \ref{stastaerg}]
Let $\mathcal{A}$ be a periodic $\mathcal{X}$-cover. Lemma \ref{stasta1} shows that $\mathcal{P}(\mathcal{A})$ is a stable partition. Let $n=\per(\mathcal{A}).\scon(\ast)$. We have the following:
\begin{itemize}
\item $\mathcal{P}(\mathcal{A})^{n\ast}=\mathcal{P}\big( \mathcal{P}(\mathcal{A})^{n\ast}\big)$ since $\mathcal{P}(\mathcal{A})$ is a stable partition.
\item $\mathcal{P}\big(\mathcal{P}(\mathcal{A})^{n\ast}\big)=\mathcal{P}(\mathcal{A}^{n\ast})$ by Lemma \ref{lemindkif}.
\item $\mathcal{A}^{n\ast}=\mathcal{A}$ since $\per(\mathcal{A})$ divides $n$.
\end{itemize}
Therefore, $\mathcal{P}(\mathcal{A})^{n\ast}=\mathcal{P}(\mathcal{A}^{n\ast})=\mathcal{P}(\mathcal{A})$.

Fix $A\in \mathcal{A}$. From Lemma \ref{lemsublem} there exists $B\in\mathcal{P}(\mathcal{A})$ such that $A\subset B$. Fix $a\in A$. Since $a\in B\in\mathcal{P}(\mathcal{A})=\mathcal{P}(\mathcal{A})^{n\ast}$ and since $n\geq \scon(\ast)$, we can apply Theorem \ref{thestrong} to get a $\mathcal{P}(\mathcal{A})$-sequence of length $n$ such that $a\ast \mathfrak{X}=B\ast\mathfrak{X}=B$. Since $B=a\ast\mathfrak{X}\subset A\ast\mathfrak{X}\subset B\ast \mathfrak{X}=B$, we have $A\ast\mathfrak{X}=B$. Now from Lemma \ref{stastaerg4}, we have $A\ast\mathfrak{X}\in\mathcal{A}^{n\ast}=\mathcal{A}$, and Lemma \ref{stastaerg1} implies that $|A|=|A\ast\mathfrak{X}|=|B|$. Thus, $A=B$ since we have  $A\subset B$ and $|A|=|B|$.

We conclude that $A\in\mathcal{P}(\mathcal{A})$ for every $A\in\mathcal{A}$. Now since $\mathcal{P}(\mathcal{A})$ is a partition, we have $A\cap B=\o$ for every $A,B\in\mathcal{A}$. On the hand, $\mathcal{A}$ is an $\mathcal{X}$-cover. This shows that $\mathcal{A}$ itself is a partition, hence $\mathcal{A}=\mathcal{P}(\mathcal{A})$. Therefore, $\mathcal{A}$ is a stable partition.
\end{proof}

\section{Proofs for Section \ref{sec8}}

\label{appE}

\subsection{Proof of Theorem \ref{theprod}}

In order to prove Theorem \ref{theprod}, we need a few definitions and lemmas:

\begin{mydef}
\label{defProjGG}
Define the two projection mappings $P_1: \mathcal{X}\rightarrow\mathcal{X}_1$ and $P_2: \mathcal{X}\rightarrow\mathcal{X}_2$ as $P_1(x_1,x_2)=x_1$ and $P_2(x_1,x_2)=x_2$ for all $(x_1,x_2)\in\mathcal{X}$. Define the following:
\begin{itemize}
\item $\mathcal{U}_1(\mathcal{H})=\{P_1(H):\; H\in \mathcal{H}\}$.
\item $\mathcal{U}_2(\mathcal{H})=\{P_2(H):\; H\in \mathcal{H}\}$.
\end{itemize}
\end{mydef}

\begin{mylem}
\label{lemseqoaoa}
For every $x_2,x_2'\in\mathcal{X}_2$, there exists an $\mathcal{H}$-repeatable sequence $\mathfrak{X}$ such that:
\begin{itemize}
\item For every $x_1\in\mathcal{X}_1$, we have $(x_1,x_2')\in (x_1,x_2)\ast\mathfrak{X}$.
\item For every $X\subset \mathcal{X}$, we have $P_1(X)\subset P_1(X\ast\mathfrak{X})$.
\end{itemize}

We say that the sequence $\mathfrak{X}$ can take the second coordinate from $x_2$ to $x_2'$ while keeping the first coordinate unchanged.
\end{mylem}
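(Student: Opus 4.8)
The plan is to realise $\mathfrak{X}$ as the concatenation of two $\mathcal{H}$-repeatable blocks and to keep track of the permutation of $\mathcal{X}$ induced by a fixed choice of representatives along $\mathfrak{X}$. The structural fact I will lean on is that $\ast=\ast_1\otimes\ast_2$ acts coordinatewise, so any such permutation has the product form $\sigma_1\otimes\sigma_2$, where $\sigma_j$ is a composition of right-multiplications of $\ast_j$. In particular $\sigma_1$ is a composition of the bijections $u\mapsto u\ast_1 a$, hence invertible, with $\sigma_1^{-1}=\sigma_1^{\,r-1}$ where $r=\mathrm{ord}(\sigma_1)$ (so $\sigma_1^{-1}$ is again a composition of right-multiplications of $\ast_1$, namely the generating word of $\sigma_1$ repeated $r-1$ times).

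\emph{First block.} Put $n=\per(\mathcal{H})$ and fix a positive multiple $m$ of $n$ with $m\geq\con(\ast_2)$. Since $\ast_2$ is ergodic, the eighth point of Proposition \ref{lemerg} gives $b_0,\ldots,b_{m-1}\in\mathcal{X}_2$ with $(\ldots((x_2\ast_2 b_0)\ast_2 b_1)\ldots)\ast_2 b_{m-1}=x_2'$. Choose $a_i\in\mathcal{X}_1$ arbitrarily and let $A_i$ be the member of $\mathcal{H}^{i\ast}$ containing $(a_i,b_i)$; this is legitimate because $\mathcal{H}^{i\ast}$ is a partition of $\mathcal{X}$. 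Then $\mathfrak{A}=(A_0,\ldots,A_{m-1})$ is $\mathcal{H}$-repeatable, and with representatives $z_i=(a_i,b_i)$ it induces on $\mathcal{X}$ the permutation $\sigma_1\otimes\sigma_2$, where $\sigma_2(x_2)=x_2'$ by construction.

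\emph{Second block.} Let $r=\mathrm{ord}(\sigma_1)$; if $r=1$ take $\mathfrak{B}$ empty, otherwise let $p=m(r-1)\geq m\geq\con(\ast_2)$ and write $\sigma_1^{-1}=\sigma_1^{\,r-1}$ as the composition of the right-multiplications of $\ast_1$ by $c_0,\ldots,c_{p-1}$, where $c_{jm+i}=a_i$. Using ergodicity of $\ast_2$ once more, choose $d_0,\ldots,d_{p-1}\in\mathcal{X}_2$ with $(\ldots((x_2'\ast_2 d_0)\ldots))\ast_2 d_{p-1}=x_2'$, and let $B_j$ be the member of $\mathcal{H}^{(m+j)\ast}$ containing $(c_j,d_j)$. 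Then $\mathfrak{X}=(\mathfrak{A},\mathfrak{B})$ is an $\mathcal{H}$-sequence of length $mr$, which is divisible by $n$, hence $\mathcal{H}$-repeatable.

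\emph{Conclusion and the hard point.} With the representatives above, $\mathfrak{X}$ induces on $\mathcal{X}$ the permutation $\rho=(\sigma_1^{-1}\sigma_1)\otimes(\tau_2\sigma_2)=\mathrm{id}_{\mathcal{X}_1}\otimes(\tau_2\sigma_2)$, where $\tau_2$ is the $\ast_2$-permutation coming from $\mathfrak{B}$ and satisfies $\tau_2(x_2')=x_2'$; hence $\rho(x_1,x_2)=(x_1,x_2')$ for every $x_1\in\mathcal{X}_1$. Since $\rho(w)\in w\ast\mathfrak{X}$ for every $w\in\mathcal{X}$, this yields the first bullet; and since $\rho$ fixes the first coordinate, $P_1(X)=P_1(\rho(X))\subseteq P_1(X\ast\mathfrak{X})$ for every $X\subseteq\mathcal{X}$, which yields the second. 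The one genuinely delicate point is the second block: naively re-iterating $\mathfrak{A}$ to cancel $\sigma_1$ would also move the second coordinate, and the congruences on lengths imposed by $\mathrm{ord}(\sigma_1)$ and by the $\sigma_2$-cycle through $x_2'$ need not be jointly solvable. The resolution is to cancel $\sigma_1$ by its own power $\sigma_1^{\,r-1}$ in an independent block whose length $p$ is a large multiple of $m\geq\con(\ast_2)$, so that ergodicity of $\ast_2$ supplies a closed second-coordinate walk of length exactly $p$ anchored at $x_2'$; everything else is bookkeeping about $\mathcal{H}$-sequences and divisibility by $\per(\mathcal{H})$.
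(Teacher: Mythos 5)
Your proof is correct. Every step checks out: the first block is a legitimate $\mathcal{H}$-repeatable sequence transporting $x_2$ to $x_2'$; the cancellation block $\mathfrak{B}$ is a valid continuation (each $B_j$ lies in $\mathcal{H}^{(m+j)\ast}$, the total length $mr$ is divisible by $\per(\mathcal{H})$, and $p=m(r-1)\geq\con(\ast_2)$ so the closed $\ast_2$-walk at $x_2'$ of length exactly $p$ exists by point 8 of Proposition \ref{lemerg}); and the induced permutation $\rho=\mathrm{id}_{\mathcal{X}_1}\otimes(\tau_2\sigma_2)$ delivers both bullets, since $\rho(w)\in w\ast\mathfrak{X}$ pointwise and $\rho$ preserves the first coordinate.

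That said, your route differs from the paper's in how the choices are scheduled, and the paper's is more economical. The paper uses a single block: it first fixes an arbitrary first-coordinate word of length $k=\per(\mathcal{H})\con(\ast_2)$, repeats it $s$ times until the induced permutation of $\mathcal{X}_1$ is the identity, and only \emph{then} invokes ergodicity of $\ast_2$ to choose a second-coordinate word of the full length $l=ks$ taking $x_2$ to $x_2'$. Because the second-coordinate word is chosen last, after the total length is known, the "joint solvability of congruences" obstruction you flag as the hard point simply never arises --- there is nothing to cancel and no second block is needed. Your two-block construction resolves the same tension by appending $\sigma_1^{-1}$ together with a closed $\ast_2$-loop anchored at $x_2'$, which works but costs you the extra bookkeeping of the order $r$ and the case split at $r=1$. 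Both arguments rest on the same two pillars (finite order of permutations induced by right-multiplication words, and exact-length connectability for $s\geq\con(\ast_2)$); the difference is purely in the order of quantification.
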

\begin{proof}
Let $k=\per(\mathcal{H})\con(\ast_2)\geq\con(\ast_2)$. Choose arbitrarily a sequence of $k$ elements $x_{1,0},\ldots,x_{1,k-1}$ in $\mathcal{X}_1$ and define the mapping $\pi:\mathcal{X}_1\rightarrow\mathcal{X}_1$ as $\pi(x_1)=(\ldots((x_1\ast_1 x_{1,0})\ast_1 x_{1,1})\ldots\ast_1 x_{1,k-1})$. Since $\pi$ is a permutation of $\mathcal{X}_1$, there exists an integer $s>0$ such that $\pi^s(x_1)=x_1$ for all $x_1\in\mathcal{X}_1$. Let $l=ks$ and define the sequence $x_{1,i}$ for $k\leq i<l$ as $x_{1,i}=x_{1,i\bmod k}$. Clearly, 
\begin{equation}
(\ldots((x_1\ast_1 x_{1,0})\ast_1 x_{1,1})\ldots\ast_1 x_{1,l-1})=\pi^{s}(x_1)=x_1\;\text{for\;all}\;x_1\in\mathcal{X}_1.
\label{eq9247297asfnajsb1}
\end{equation}

Now since $l\geq k\geq \con(\ast_2)$ and since $\ast_2$ is ergodic, there exists a sequence $(x_{2,i})_{0\leq i<l}$ in $\mathcal{X}_2$ such that
\begin{equation}
x_2'=(\ldots((x_2\ast_2 x_{2,0})\ast_2 x_{2,1})\ldots\ast_2 x_{2,l-1}).
\label{eq9247297asfnajsb2}
\end{equation}

Define the $\mathcal{H}$-repeatable sequence $\mathfrak{X}=(X_i)_{0\leq i<l}$ such that $(x_{1,i},x_{2,i})\in X_i\in\mathcal{H}^{i\ast}$ for all $0\leq i<l$. For every $x_1\in\mathcal{X}_1$, we have:
$$(x_1,x_2')\stackrel{(a)}{=}(x_1,x_2)\ast\Big((x_{1,i},x_{2,i})_{0\leq i<l}\Big)\stackrel{(b)}{\in} (x_1,x_2)\ast\mathfrak{X},$$ where (a) follows from \eqref{eq9247297asfnajsb1} and \eqref{eq9247297asfnajsb2}, and (b) follows from the fact that $(x_{1,i},x_{2,i})\in X_i$ for all $0\leq i<l$.

Now let $X\subset\mathcal{X}$. We have:
$$P_1(X)\stackrel{(a)}{=}(\ldots((P_1(X)\ast_1 x_{1,0})\ast_1 x_{1,1})\ldots\ast_1 x_{1,l-1})\stackrel{(b)}{=}P_1(X\ast (x_{1,i},x_{2,i})_{0\leq i<l})\stackrel{(c)}{\subset} P_1(X\ast\mathfrak{X}),$$
where (a) follows from \eqref{eq9247297asfnajsb1}, (b) follows from the definition of $\ast$ and $P_1$, and (c) follows from the fact that $(x_{1,i},x_{2,i})\in X_i$ for all $0\leq i<l$.
\end{proof}

\begin{mylem}
Let $\mathfrak{X}$ be an $\mathcal{H}$-repeatable sequence which takes the second coordinate from $x_2$ to $x_2'$ while keeping the first coordinate unchanged as in Lemma \ref{lemseqoaoa}. If there exist $H,H'\in\mathcal{H}$ and $x_1\in\mathcal{X}_1$ such that $(x_1,x_2)\in H$ and $(x_1,x_2')\in H'$, then $H'=H\ast\mathfrak{X}$.
\label{lemseqoaoa1}
\end{mylem}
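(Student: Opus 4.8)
The plan is to combine the two properties produced by Lemma~\ref{lemseqoaoa} with the fact that $\mathcal{H}$ is a partition, so only a one‑point coincidence argument is needed. First I would record that, because $\mathfrak{X}$ is $\mathcal{H}$‑repeatable, $\per(\mathcal{H})$ divides $|\mathfrak{X}|$, hence $\mathcal{H}^{|\mathfrak{X}|\ast}=\mathcal{H}$. Since $H\in\mathcal{H}$ and a concatenation of an $\mathcal{H}$‑sequence sends elements of $\mathcal{H}$ into $\mathcal{H}^{|\mathfrak{X}|\ast}$, we get $H\ast\mathfrak{X}\in\mathcal{H}^{|\mathfrak{X}|\ast}=\mathcal{H}$; so $H\ast\mathfrak{X}$ is genuinely a block of $\mathcal{H}$, not just of $\mathcal{H}^{|\mathfrak{X}|\ast}$.

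Next I would apply the first bullet of Lemma~\ref{lemseqoaoa} to the particular $x_1$ appearing in the hypothesis: it gives $(x_1,x_2')\in(x_1,x_2)\ast\mathfrak{X}$. Using $(x_1,x_2)\in H$ we have $(x_1,x_2)\ast\mathfrak{X}=\{(x_1,x_2)\}\ast\mathfrak{X}\subset H\ast\mathfrak{X}$, and therefore $(x_1,x_2')\in H\ast\mathfrak{X}$. By hypothesis $(x_1,x_2')\in H'$ as well. Thus $H'$ and $H\ast\mathfrak{X}$ are two elements of the partition $\mathcal{H}$ sharing the point $(x_1,x_2')$, which forces $H'=H\ast\mathfrak{X}$.

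There is essentially no obstacle in this argument; the only point requiring a moment's care is the very first step, i.e.\ upgrading $H\ast\mathfrak{X}\in\mathcal{H}^{|\mathfrak{X}|\ast}$ to $H\ast\mathfrak{X}\in\mathcal{H}$, which is immediate from the divisibility $\per(\mathcal{H})\mid|\mathfrak{X}|$ that is built into the definition of an $\mathcal{H}$‑repeatable sequence. Note also that the second bullet of Lemma~\ref{lemseqoaoa} (the inclusion $P_1(X)\subset P_1(X\ast\mathfrak{X})$) is not needed for this particular statement; it will be used elsewhere to control the first coordinate.
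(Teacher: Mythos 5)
Your proof is correct and follows essentially the same route as the paper's: use the first bullet of Lemma \ref{lemseqoaoa} to place $(x_1,x_2')$ in $H\ast\mathfrak{X}$, note that $H\ast\mathfrak{X}\in\mathcal{H}^{|\mathfrak{X}|\ast}=\mathcal{H}$ by repeatability, and conclude by the partition property since $H'$ and $H\ast\mathfrak{X}$ share the point $(x_1,x_2')$. No gaps.
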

\begin{proof}
From Lemma \ref{lemseqoaoa} we have $(x_1,x_2')\in (x_1,x_2)\ast \mathfrak{X}\subset H\ast\mathfrak{X}$. Therefore, $H'\cap(H\ast\mathfrak{X})\neq\o$. On the other hand, we have $H'\in\mathcal{H}$ and $H\ast\mathfrak{X}\in\mathcal{H}^{|\mathfrak{X}|\ast}=\mathcal{H}$. Therefore, $H'=H\ast\mathfrak{X}$ since $\mathcal{H}$ is a partition.
\end{proof}

\begin{mylem}
$\mathcal{U}_1(\mathcal{H})$ (resp. $\mathcal{U}_2(\mathcal{H})$) is a partition of $\mathcal{X}_1$ (resp. $\mathcal{X}_2$).
\label{lemfdjfgh8345sfdds1}
\end{mylem}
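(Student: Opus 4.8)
The plan is to verify separately the two defining properties of a partition for $\mathcal{U}_1(\mathcal{H})$: that it covers $\mathcal{X}_1$, and that any two of its members are either equal or disjoint. The corresponding statement for $\mathcal{U}_2(\mathcal{H})$ then follows by symmetry, since every lemma invoked below has an obvious mirror image obtained by interchanging $\mathcal{X}_1\leftrightarrow\mathcal{X}_2$ and $\ast_1\leftrightarrow\ast_2$. Covering is immediate: given $x_1\in\mathcal{X}_1$, pick any $x_2\in\mathcal{X}_2$ and let $H\in\mathcal{H}$ be the block containing $(x_1,x_2)$; then $x_1\in P_1(H)\in\mathcal{U}_1(\mathcal{H})$.

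The substantive step is disjointness. Suppose $H,H'\in\mathcal{H}$ and $x_1\in P_1(H)\cap P_1(H')$; choose $x_2,x_2'\in\mathcal{X}_2$ with $(x_1,x_2)\in H$ and $(x_1,x_2')\in H'$. I would first apply Lemma \ref{lemseqoaoa} to obtain an $\mathcal{H}$-repeatable sequence $\mathfrak{X}$ that takes the second coordinate from $x_2$ to $x_2'$ while keeping the first coordinate fixed, and then invoke Lemma \ref{lemseqoaoa1} to conclude $H'=H\ast\mathfrak{X}$. The second bullet of Lemma \ref{lemseqoaoa}, applied with $X=H$, then gives $P_1(H)\subset P_1(H\ast\mathfrak{X})=P_1(H')$. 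For the reverse inclusion I would rerun the argument with the two points exchanged: Lemma \ref{lemseqoaoa} also produces an $\mathcal{H}$-repeatable sequence $\mathfrak{X}'$ taking the second coordinate from $x_2'$ back to $x_2$, and Lemma \ref{lemseqoaoa1} (now with $H'$ in the role of $H$) yields $H=H'\ast\mathfrak{X}'$, whence $P_1(H')\subset P_1(H'\ast\mathfrak{X}')=P_1(H)$. Combining the two inclusions gives $P_1(H)=P_1(H')$, so distinct members of $\mathcal{U}_1(\mathcal{H})$ are disjoint, and $\mathcal{U}_1(\mathcal{H})$ is a partition of $\mathcal{X}_1$.

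The only point that requires care is that disjointness is not a one-sided matter: knowing $H'=H\ast\mathfrak{X}$ yields only the inclusion $P_1(H)\subset P_1(H')$, so one must explicitly transport the second coordinate in the opposite direction to obtain the matching inclusion. Apart from this observation, the proof is a direct combination of Lemmas \ref{lemseqoaoa} and \ref{lemseqoaoa1}, with no further calculation needed.
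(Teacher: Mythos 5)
Your proof is correct and follows essentially the same route as the paper: covering is immediate, and disjointness is obtained by combining Lemmas \ref{lemseqoaoa} and \ref{lemseqoaoa1} to get $H'=H\ast\mathfrak{X}$ and hence $P_1(H)\subset P_1(H')$, with the reverse inclusion by exchanging the roles of the two blocks. The paper compresses the reverse direction into the phrase ``by exchanging the roles of $A$ and $B$,'' which is exactly the symmetric rerun you spell out.
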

\begin{proof}
Clearly, $\mathcal{U}_1(\mathcal{H})$ covers $\mathcal{X}_1$. Now suppose that there exist $A,B\in\mathcal{U}_1(\mathcal{H})$ such that $A\cap B\neq \o$ and let $x_1\in A\cap B$. Let $H_A,H_B\in\mathcal{H}$ be such that $P_1(H_A)=A$ and $P_1(H_B)=B$. There exist $x_{2,A}\in\mathcal{X}_2$ and $x_{2,B}\in\mathcal{X}_2$ such that $(x_1,x_{2,A})\in H_A$ and $(x_1,x_{2,B})\in H_B$. Using Lemma \ref{lemseqoaoa}, choose an $\mathcal{H}$-repeatable sequence $\mathfrak{X}$ which can take the second coordinate from $x_{2,A}$ to $x_{2,B}$ while keeping the first coordinate unchanged.

Lemma \ref{lemseqoaoa1} shows that $H_B=H_A\ast\mathfrak{X}$ and Lemma \ref{lemseqoaoa} implies that $P_1(H_A)\subset P_1(H_A\ast\mathfrak{X})$. We conclude that $A=P_1(H_A)\subset P_1(H_A\ast\mathfrak{X}) = P_1(H_B)=B$. By exchanging the roles of $A$ and $B$, we can also get $B\subset A$. Therefore, $A=B$. We conclude that $\mathcal{U}_1(\mathcal{H})$ is a partition of $\mathcal{X}_1$. A similar argument shows that $\mathcal{U}_2(\mathcal{H})$ is a partition of $\mathcal{X}_2$.
\end{proof}

\begin{mylem}
$\mathcal{U}_1(\mathcal{H})$ (resp. $\mathcal{U}_2(\mathcal{H})$) is a stable partition of $\mathcal{X}_1$ (resp. $\mathcal{X}_2$) of period at most $\per(\mathcal{H})$. Moreover, for every $i\geq 0$, we have $\mathcal{U}_1(\mathcal{H})^{i\ast_1}=\mathcal{U}_1(\mathcal{H}^{i\ast})$ and $\mathcal{U}_2(\mathcal{H})^{i\ast_2}=\mathcal{U}_2(\mathcal{H}^{i\ast})$.
\label{lemProdUStable}
\end{mylem}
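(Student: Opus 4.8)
The plan is to exploit the fact that the projection $P_1\colon\mathcal{X}\to\mathcal{X}_1$ is a homomorphism from $(\mathcal{X},\ast)$ to $(\mathcal{X}_1,\ast_1)$: since $\ast=\ast_1\otimes\ast_2$, we have $P_1(x\ast y)=P_1(x)\ast_1 P_1(y)$ for all $x,y\in\mathcal{X}$. First I would lift this to subsets, checking that $P_1(A\ast B)=P_1(A)\ast_1 P_1(B)$ for all $A,B\subset\mathcal{X}$ (the nontrivial inclusion uses that every first coordinate occurring in $P_1(A)$ is paired with at least one second coordinate, so that it actually comes from an element of $A$, and likewise for $B$). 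Reading $\mathcal{U}_1$ as an operator on arbitrary collections of subsets of $\mathcal{X}$, this yields, for any such collection $\mathcal{G}$,
\[
\mathcal{U}_1(\mathcal{G})^{\ast_1}=\{P_1(A)\ast_1 P_1(B):A,B\in\mathcal{G}\}=\{P_1(A\ast B):A,B\in\mathcal{G}\}=\{P_1(C):C\in\mathcal{G}^{\ast}\}=\mathcal{U}_1(\mathcal{G}^{\ast}).
\]
Applying this with $\mathcal{G}=\mathcal{H}^{(i-1)\ast}$ and inducting on $i$ (the case $i=0$ being immediate) gives $\mathcal{U}_1(\mathcal{H})^{i\ast_1}=\mathcal{U}_1(\mathcal{H}^{i\ast})$ for every $i\geq0$; the same computation with $P_2$ and $\ast_2$ gives the analogous identity for $\mathcal{U}_2$.

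Next I would deduce periodicity and stability. Put $n=\per(\mathcal{H})$, so that $\mathcal{H}^{n\ast}=\mathcal{H}$; then $\mathcal{U}_1(\mathcal{H})^{n\ast_1}=\mathcal{U}_1(\mathcal{H}^{n\ast})=\mathcal{U}_1(\mathcal{H})$. By Lemma~\ref{lemfdjfgh8345sfdds1}, $\mathcal{U}_1(\mathcal{H})$ is a partition of $\mathcal{X}_1$, hence it is a periodic partition whose period divides $n$, i.e.\ is at most $\per(\mathcal{H})$. Since $\ast=\ast_1\otimes\ast_2$ is ergodic, Proposition~\ref{trivialprod} shows that $\ast_1$ is ergodic, so Lemma~\ref{stasta} applies and $\mathcal{U}_1(\mathcal{H})$, being a periodic partition of $(\mathcal{X}_1,\ast_1)$, is stable. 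The treatment of $\mathcal{U}_2(\mathcal{H})$ is identical.

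The argument contains no real obstacle; the only place calling for a bit of care is the subset-level homomorphism identity $P_1(A\ast B)=P_1(A)\ast_1 P_1(B)$, together with the bookkeeping in the induction, namely making sure that at each step $\mathcal{U}_1$ is applied to the collection $\mathcal{H}^{(i-1)\ast}$ (a set of subsets of $\mathcal{X}$) rather than to a partition of $\mathcal{X}_1$, so that the displayed identity is legitimately invoked.
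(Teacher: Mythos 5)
Your proposal is correct and follows essentially the same route as the paper: the subset-level identity $P_1(A\ast B)=P_1(A)\ast_1 P_1(B)$ drives an induction giving $\mathcal{U}_1(\mathcal{H})^{i\ast_1}=\mathcal{U}_1(\mathcal{H}^{i\ast})$, after which periodicity follows from $\mathcal{H}^{p\ast}=\mathcal{H}$, the partition property from Lemma~\ref{lemfdjfgh8345sfdds1}, and stability from Lemma~\ref{stasta} via ergodicity of $\ast_1$. No gaps.
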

\begin{proof}
We will only prove the lemma for $\mathcal{U}_1(\mathcal{H})$ since the proof for $\mathcal{U}_2(\mathcal{H})$ is similar. We will start by showing by induction on $i\geq 0$ that $\mathcal{U}_1(\mathcal{H})^{i\ast_1}=\mathcal{U}_1(\mathcal{H}^{i\ast})$. The claim is trivial for $i=0$. Now let $i>0$ and suppose that the claim is true for $i-1$. We have:
\begin{align*}
\mathcal{U}_1(\mathcal{H})^{i\ast_1}&= \big(\mathcal{U}_1(\mathcal{H})^{(i-1)\ast_1} \big)^{\ast_1}\stackrel{(a)}{=}\big(\mathcal{U}_1(\mathcal{H}^{(i-1)\ast}) \big)^{\ast_1}=\{H_1'\ast_1 H_1'':\; H_1',H_1''\in \mathcal{U}_1(\mathcal{H}^{(i-1)\ast})\}\\
&=\{P_1(H')\ast_1 P_1(H''):\; H',H''\in \mathcal{H}^{(i-1)\ast}\}\stackrel{(b)}{=}\{P_1(H'\ast H''):\; H',H''\in \mathcal{H}^{(i-1)\ast}\}\\
&=\{P_1(H):\; H\in \mathcal{H}^{i\ast}\}=\mathcal{U}_1(\mathcal{H}^{i\ast}),
\end{align*}
where (a) follows from the induction hypothesis and (b) follows from the identity $P_1(H')\ast_1 P_1(H'')=P_1(H'\ast H'')$ which is very easy to check. We conclude that we have $\mathcal{U}_1(\mathcal{H})^{i\ast_1}=\mathcal{U}_1(\mathcal{H}^{i\ast})$ for all $i\geq 0$. In particular, for $p=\per(\mathcal{H})$, we have $\mathcal{U}_1(\mathcal{H})^{p\ast_1}=\mathcal{U}_1(\mathcal{H}^{p\ast})= \mathcal{U}_1(\mathcal{H})$.

Lemma \ref{lemfdjfgh8345sfdds1} shows that $\mathcal{U}_1(\mathcal{H})$ is a partition, and we have just shown that $\mathcal{U}_1(\mathcal{H})^{p\ast_1}= \mathcal{U}_1(\mathcal{H})$. Therefore, $\mathcal{U}_1(\mathcal{H})$ is periodic  of period at most $p$. Lemma \ref{stasta} now implies that $\mathcal{U}_1(\mathcal{H})$ is a stable partition of $\mathcal{X}_1$.
\end{proof}

\begin{mydef}
\label{defProjEE}
Let $X\subset\mathcal{X}$, $x_1\in\mathcal{X}_1$ and $x_2\in\mathcal{X}_2$. Define the sets $P_{1|x_2}(X)\subset \mathcal{X}_1$ and $P_{2|x_1}(X)\subset \mathcal{X}_2$ as:
\begin{itemize}
\item $P_{1|x_2}(X)=\{x_1\in\mathcal{X}_1:\;(x_1,x_2)\in X\}=P_1\big(X\cap (\mathcal{X}_1\times\{x_2\})\big)$.
\item $P_{2|x_1}(X)=\{x_2\in\mathcal{X}_2:\;(x_1,x_2)\in X\}=P_2\big(X\cap (\{x_1\}\times \mathcal{X}_2)\big)$.
\end{itemize}
Define the following:
\begin{itemize}
\item $\mathcal{L}_1(\mathcal{H})=\{P_{1|x_2}(H):\; H\in \mathcal{H},\; x_2\in\mathcal{X}_2,\;P_{1|x_2}(H)\neq \o\}$.
\item $\mathcal{L}_2(\mathcal{H})=\{P_{2|x_1}(H):\; H\in \mathcal{H},\; x_1\in\mathcal{X}_1,\;P_{2|x_1}(H)\neq \o\}$.
\end{itemize}
\end{mydef}

\begin{mylem}
$\mathcal{L}_1(\mathcal{H})$ (resp. $\mathcal{L}_2(\mathcal{H})$) is a partition of $\mathcal{X}_1$ (resp. $\mathcal{X}_2$).
\label{lemvdbs7sdf7ds7a1}
\end{mylem}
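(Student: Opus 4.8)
The plan is to verify directly that $\mathcal{L}_1(\mathcal{H})$ satisfies the two defining properties of a partition; $\mathcal{L}_2(\mathcal{H})$ is then handled by the symmetric argument. Covering is immediate: given $x_1\in\mathcal{X}_1$, fix any $x_2\in\mathcal{X}_2$ (possible since $\mathcal{X}_2\neq\o$), so that $(x_1,x_2)$ lies in some $H\in\mathcal{H}$, whence $x_1\in P_{1|x_2}(H)\in\mathcal{L}_1(\mathcal{H})$.

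For the pairwise condition, I would first establish a one-sided statement: if $H,H'\in\mathcal{H}$ and $x_2,x_2'\in\mathcal{X}_2$ are such that $P_{1|x_2}(H)\cap P_{1|x_2'}(H')\neq\o$, then $P_{1|x_2}(H)\subset P_{1|x_2'}(H')$. Indeed, pick $x_1$ in the intersection, so $(x_1,x_2)\in H$ and $(x_1,x_2')\in H'$. By Lemma \ref{lemseqoaoa} there is an $\mathcal{H}$-repeatable sequence $\mathfrak{X}$ taking the second coordinate from $x_2$ to $x_2'$ while keeping the first coordinate unchanged, and Lemma \ref{lemseqoaoa1} then gives $H'=H\ast\mathfrak{X}$. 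Now for any $y_1\in P_{1|x_2}(H)$ we have $(y_1,x_2)\in H$, so by Lemma \ref{lemseqoaoa}, $(y_1,x_2')\in(y_1,x_2)\ast\mathfrak{X}\subset H\ast\mathfrak{X}=H'$, i.e.\ $y_1\in P_{1|x_2'}(H')$, which proves the inclusion.

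The pairwise disjointness property now follows by symmetry: if $A=P_{1|x_2}(H)$ and $B=P_{1|x_2'}(H')$ are members of $\mathcal{L}_1(\mathcal{H})$ with $A\cap B\neq\o$, the one-sided statement gives $A\subset B$, and applying it with the roles of $(H,x_2)$ and $(H',x_2')$ exchanged gives $B\subset A$, so $A=B$. Hence $\mathcal{L}_1(\mathcal{H})$ is a partition of $\mathcal{X}_1$, and the same reasoning with $P_2$-slices (and Lemma \ref{lemseqoaoa} in its form for the first coordinate) in place of $P_1$-slices shows that $\mathcal{L}_2(\mathcal{H})$ is a partition of $\mathcal{X}_2$.

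I do not anticipate a genuine obstacle here. The one point that needs care is the appeal to Lemmas \ref{lemseqoaoa} and \ref{lemseqoaoa1} to transport the fixed first coordinate $x_1$ between the two second-coordinate levels $x_2$ and $x_2'$ — this is precisely what forces the two slices $P_{1|x_2}(H)$ and $P_{1|x_2'}(H')$ to agree — together with the observation that the hypothesis $A\cap B\neq\o$ is symmetric in $(H,x_2)$ and $(H',x_2')$, so only one direction of the inclusion needs to be argued explicitly.
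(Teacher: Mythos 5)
Your proposal is correct and follows essentially the same route as the paper's own proof: covering is immediate, and the disjointness argument uses Lemma \ref{lemseqoaoa} to transport the second coordinate and Lemma \ref{lemseqoaoa1} to identify $H'$ with $H\ast\mathfrak{X}$, yielding one inclusion and then the other by symmetry. No gaps.
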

\begin{proof}
Clearly, $\mathcal{L}_1(\mathcal{H})$ covers $\mathcal{X}_1$. Suppose that there exist $A,B\in\mathcal{L}_1(\mathcal{H})$ such that $A\cap B\neq\o$ and let $x_1\in A\cap B$. Let $H_A,H_B\in\mathcal{H}$ and $x_{2,A},x_{2,B}\in\mathcal{X}_2$ be such that $A=P_{1|x_{2,A}}(H_A)$ and $B=P_{2|x_{2,B}}(H_B)$. Using Lemma \ref{lemseqoaoa}, choose an $\mathcal{H}$-repeatable sequence $\mathfrak{X}$ which can take the second coordinate from $x_{2,A}$ to $x_{2,B}$ while keeping the first coordinate unchanged.

Since $x_1\in A=P_{1|x_{2,A}}(H_A)$ and $x_1\in B=P_{1|x_{2,B}}(H_B)$, we have $(x_1,x_{2,A})\in H_A$ and $(x_1,x_{2,B})\in H_B$. It follows from Lemma \ref{lemseqoaoa1} that $H_B=H_A\ast\mathfrak{X}$.

Now for every $x_1'\in A=P_{1|x_{2,A}}(H_A)$, we have $(x_1',x_{2,A})\in H_A$ and so by Lemma \ref{lemseqoaoa} we have $(x_1',x_{2,B})\in (x_1',x_{2,A})\ast\mathfrak{X}\subset H_A\ast\mathfrak{X}=H_B$. We conclude that $x_1'\in P_{1|x_{2,B}}(H_B)=B$ for every $x_1'\in A$. Therefore, $A\subset B$. By exchanging the roles of $A$ and $B$ we can also get $B\subset A$ which implies that $A=B$. We conclude that $\mathcal{L}_1(\mathcal{H})$ is a partition of $\mathcal{X}_1$. A similar argument shows that $\mathcal{L}_2(\mathcal{H})$ is a partition of $\mathcal{X}_2$.
\end{proof}

\begin{mylem}
$\mathcal{L}_1(\mathcal{H})$ (resp. $\mathcal{L}_2(\mathcal{H})$) is a balanced partition of $\mathcal{X}_1$ (resp. $\mathcal{X}_2$).
\label{lemvdbs7sdf7ds7a2}
\end{mylem}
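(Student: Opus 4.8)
The plan is to use Lemma~\ref{lemvdbs7sdf7ds7a1}, which already gives that $\mathcal{L}_1(\mathcal{H})$ is a partition, so that only balancedness remains; then to reduce the comparison of the sizes of two of its blocks to the comparison of two ``slices'' of elements of $\mathcal{H}$ taken at the \emph{same} height of the second coordinate, and finally to carry out that comparison by translating along the first coordinate via a mirror image of Lemma~\ref{lemseqoaoa}. I will only treat $\mathcal{L}_1(\mathcal{H})$; the statement for $\mathcal{L}_2(\mathcal{H})$ follows by exchanging the roles of $\mathcal{X}_1$ and $\mathcal{X}_2$.

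First I would prove an auxiliary ``height‑shift'' fact: if $A=P_{1|x_2}(H)\in\mathcal{L}_1(\mathcal{H})$ and $x_2'\in\mathcal{X}_2$ is arbitrary, then $A=P_{1|x_2'}(H')$ for some $H'\in\mathcal{H}$. Indeed, pick by Lemma~\ref{lemseqoaoa} an $\mathcal{H}$-repeatable sequence $\mathfrak{X}$ which takes the second coordinate from $x_2$ to $x_2'$ while keeping the first coordinate unchanged, and set $H'=H\ast\mathfrak{X}$; since $\per(\mathcal{H})$ divides $|\mathfrak{X}|$ we have $H'\in\mathcal{H}^{|\mathfrak{X}|\ast}=\mathcal{H}$. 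For every $x_1\in A$ we have $(x_1,x_2)\in H$, hence $(x_1,x_2')\in(x_1,x_2)\ast\mathfrak{X}\subseteq H\ast\mathfrak{X}=H'$, so $A\subseteq P_{1|x_2'}(H')$; as $\mathcal{L}_1(\mathcal{H})$ is a partition and $A\neq\o$, this forces $A=P_{1|x_2'}(H')$. Applying this to two arbitrary blocks $A_1,A_2\in\mathcal{L}_1(\mathcal{H})$, I may henceforth assume $A_1=P_{1|x_2^{\ast}}(H_1)$ and $A_2=P_{1|x_2^{\ast}}(H_2)$ for a common height $x_2^{\ast}\in\mathcal{X}_2$ and some $H_1,H_2\in\mathcal{H}$.

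Next I would invoke the mirror of Lemma~\ref{lemseqoaoa}: running its construction with the two factors interchanged — using the ergodicity of $\ast_1$ (valid by Proposition~\ref{trivialprod}) to steer the first coordinate and the uniformity of $\ast_2$ to return the second coordinate to itself — one obtains, for all $x_1,x_1'\in\mathcal{X}_1$, an $\mathcal{H}$-repeatable sequence $\mathfrak{Y}$ and a permutation $\psi$ of $\mathcal{X}_1$ with $\psi(x_1)=x_1'$ such that $(\psi(y_1),y_2)\in(y_1,y_2)\ast\mathfrak{Y}$ for every $(y_1,y_2)\in\mathcal{X}$. Now fix $x_1\in A_1$ and $x_1'\in A_2$ and take such $\mathfrak{Y},\psi$. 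From $(x_1,x_2^{\ast})\in H_1$ we get $(x_1',x_2^{\ast})=(\psi(x_1),x_2^{\ast})\in H_1\ast\mathfrak{Y}\in\mathcal{H}^{|\mathfrak{Y}|\ast}=\mathcal{H}$, and since $(x_1',x_2^{\ast})\in H_2$, the partition property gives $H_1\ast\mathfrak{Y}=H_2$. Then for every $x_1''\in A_1$ we have $(x_1'',x_2^{\ast})\in H_1$, hence $(\psi(x_1''),x_2^{\ast})\in H_1\ast\mathfrak{Y}=H_2$, i.e. $\psi(x_1'')\in A_2$; thus $\psi(A_1)\subseteq A_2$ and, $\psi$ being injective, $|A_1|\leq|A_2|$. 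Exchanging $A_1$ and $A_2$ yields $|A_2|\leq|A_1|$, so $|A_1|=|A_2|$, and $\mathcal{L}_1(\mathcal{H})$ is balanced.

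The hard part will be the step that compares slices sitting in \emph{different} elements of $\mathcal{H}$: Lemma~\ref{lemseqoaoa} only lets one move the second coordinate, so by itself it relates $P_{1|x_2}(H)$ to $P_{1|x_2'}(H\ast\mathfrak{X})$ — a slice of a possibly different element — which is not enough. The resolution is to notice that the hypotheses of Theorem~\ref{theprod} are symmetric in the two factors, so Lemma~\ref{lemseqoaoa} has a mirror version translating the first coordinate, and that translation is exactly what exhibits the injection $\psi$ between two blocks at a common height. Everything else is routine bookkeeping with the identity $\mathcal{H}^{|\mathfrak{Z}|\ast}=\mathcal{H}$ for $\mathcal{H}$-repeatable $\mathfrak{Z}$ and with Lemma~\ref{lemvdbs7sdf7ds7a1}.
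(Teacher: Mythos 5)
Your proof is correct. It is essentially the paper's argument: both rest on constructing an $\mathcal{H}$-repeatable sequence that carries a chosen point of one slice to a chosen point of the other (which, by the partition property of $\mathcal{H}=\mathcal{H}^{|\mathfrak{X}|\ast}$, forces the two ambient blocks of $\mathcal{H}$ to correspond under $\ast\mathfrak{X}$), and then exploit the fact that the induced first-coordinate map is a permutation of $\mathcal{X}_1$ to inject one slice into the other. The only difference is organizational: the paper moves both coordinates simultaneously with a single sequence of length $\per(\mathcal{H})\cdot\max\{\con(\ast_1),\con(\ast_2)\}$ and so never needs a mirror of Lemma~\ref{lemseqoaoa}, whereas you factor the move into a height-normalization (via Lemma~\ref{lemseqoaoa} and Lemma~\ref{lemvdbs7sdf7ds7a1}) followed by a first-coordinate shift; your mirror lemma is legitimate for exactly the symmetry reason you give, but it is an extra auxiliary statement that the one-shot construction avoids.
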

\begin{proof}
Let $A,B\in\mathcal{L}_1(\mathcal{H})$. There exist $H_A,H_B\in\mathcal{H}$ and $x_{2,A},x_{2,B}\in\mathcal{X}_2$ such that $A=P_{1|x_{2,A}}(H_A)$ and $B=P_{2|x_{2,B}}(H_B)$. Fix $x_{1,A}\in A$ and $x_{1,B}\in B$ and define $k=\per(\mathcal{H})\cdot\max\{\con(\ast_1),\con(\ast_2)\}$. Clearly, $(x_{1,A},x_{2,A})\in H_A$ and $(x_{1,B},x_{2,B})\in H_B$.

Since $k\geq\con(\ast_1)$ and $k\geq\con(\ast_2)$, and since $\ast_1$ and $\ast_2$ are ergodic, there exist a sequence $(x_{1,i})_{0\leq i<k}$ in $\mathcal{X}_1$ and a sequence $(x_{2,i})_{0\leq i<k}$ in $\mathcal{X}_2$ such that:
\begin{equation}
\label{eqleqteqdas4q1}
\begin{aligned}
(\ldots((x_{1,A}\ast_1 x_{1,0})\ast_1 x_{1,1})\ldots\ast_1 x_{1,k-1})&=x_{1,B},\\
(\ldots((x_{2,A}\ast_2 x_{2,0})\ast_2 x_{2,1})\ldots\ast_2 x_{2,k-1})&=x_{2,B}.
\end{aligned}
\end{equation}
Now define the $\mathcal{H}$-repeatable sequence $\mathfrak{X}=(X_i)_{0\leq i<k}$ such that $(x_{1,i},x_{2,i})\in X_i\in\mathcal{H}^{i\ast}$ for all $0\leq i<k$. We have:
$$(x_{1,B},x_{2,B})\stackrel{(a)}{=}(x_{1,A},x_{2,A})\ast \Big((x_{1,i},x_{2,i})_{0\leq i<k}\Big)\stackrel{(b)}{\in} H_A\ast\mathfrak{X},$$
where (a) follows from \eqref{eqleqteqdas4q1} and (b) follows from the fact that $(x_{1,A},x_{2,A})\in H_A$ and $(x_{1,i},x_{2,i})\in X_i$ for every $0\leq i<k$. We conclude that $H_B\cap (H_A\ast\mathfrak{X})\neq \o$. On the other hand, we have $H_B\in\mathcal{H}$ and $H_A\ast\mathfrak{X}\in \mathcal{H}^{k\ast}=\mathcal{H}$. Therefore, $H_B=H_A\ast\mathfrak{X}$ since $\mathcal{H}$ is a partition.

Define the mapping $\pi_1:\mathcal{X}_1\rightarrow\mathcal{X}_1$ as $\pi_1(x_1)=(\ldots((x_1\ast_1 x_{1,0})\ast_1 x_{1,1})\ldots\ast_1 x_{1,k-1})$ for every $x_1\in\mathcal{X}_1$ and the mapping $\pi_2:\mathcal{X}_2\rightarrow\mathcal{X}_2$ as $\pi_2(x_2)=(\ldots((x_2\ast_2 x_{2,0})\ast_2 x_{2,1})\ldots\ast_2 x_{2,k-1})$ for every $x_2\in\mathcal{X}_2$.

Now let $x_1\in A=P_{1|x_{2,A}}(H_A)$, we have:
$$(\pi_1(x_1),x_{2,B})\stackrel{(a)}{=}(\pi_1(x_1),\pi_2(x_{2,A}))\stackrel{(b)}{=}(x_1,x_{2,A})\ast \Big((x_{1,i},x_{2,i})_{0\leq i<k}\Big)\stackrel{(c)}{\in} H_A\ast\mathfrak{X}=H_B,$$ where (a) follows from \eqref{eqleqteqdas4q1}, (b) follows from the definition of $\pi_1$ and $\pi_2$ and (c) follows from the fact that $(x_1,x_{2,A})\in H_A$ and $(x_{1,i},x_{2,i})\in X_i$ for every $0\leq i<k$.

We conclude that $\pi_1(x_1)\in P_{1|x_{2,B}}(H_B)=B$ for every $x_1\in A$. Therefore, $\pi_1(A)\subset B$, which implies that $|A|\stackrel{(a)}{=}|\pi_1(A)|\leq |B|$, where (a) follows from the fact that $\pi_1$ is a permutation. By exchanging the roles of $A$ and $B$ we can also get $|B|\leq |A|$ which implies that $|A|=|B|$. We conclude that $\mathcal{L}_1(\mathcal{H})$ is a balanced partition of $\mathcal{X}_1$ as Lemma \ref{lemvdbs7sdf7ds7a1} already showed that $\mathcal{L}_1(\mathcal{H})$ is a partition. A similar argument shows that $\mathcal{L}_2(\mathcal{H})$ is a balanced partition of $\mathcal{X}_2$.
\end{proof}

\begin{mylem}
For every $i\geq 0$ and every $A\in \mathcal{L}_1(\mathcal{H})^{i\ast_1}$, there exists $B\in \mathcal{L}_1(\mathcal{H}^{i\ast})$ such that $A\subset B$.
\label{lemvdbs7sdf7ds7a3}
\end{mylem}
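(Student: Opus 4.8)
The plan is to prove the statement by induction on $i$. The base case $i=0$ is immediate, since $\mathcal{L}_1(\mathcal{H})^{0\ast_1}=\mathcal{L}_1(\mathcal{H})=\mathcal{L}_1(\mathcal{H}^{0\ast})$, so one may take $B=A$. For the inductive step, assume the claim for $i-1$ and let $A\in\mathcal{L}_1(\mathcal{H})^{i\ast_1}=(\mathcal{L}_1(\mathcal{H})^{(i-1)\ast_1})^{\ast_1}$. Then $A=A_1\ast_1 A_2$ for some $A_1,A_2\in\mathcal{L}_1(\mathcal{H})^{(i-1)\ast_1}$, and the induction hypothesis yields $B_1,B_2\in\mathcal{L}_1(\mathcal{H}^{(i-1)\ast})$ with $A_1\subset B_1$ and $A_2\subset B_2$; hence $A=A_1\ast_1 A_2\subset B_1\ast_1 B_2$, and it suffices to exhibit $B\in\mathcal{L}_1(\mathcal{H}^{i\ast})$ with $B_1\ast_1 B_2\subset B$.

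The core of the argument is an elementary identity linking conditional slices (Definition \ref{defProjEE}) with products. Write $B_1=P_{1|y_1}(H_1)$ and $B_2=P_{1|y_2}(H_2)$ with $H_1,H_2\in\mathcal{H}^{(i-1)\ast}$ and $y_1,y_2\in\mathcal{X}_2$. For any $a_1\in B_1$ and $a_2\in B_2$ we have $(a_1,y_1)\in H_1$ and $(a_2,y_2)\in H_2$, so $(a_1\ast_1 a_2,\, y_1\ast_2 y_2)=(a_1,y_1)\ast(a_2,y_2)\in H_1\ast H_2$; thus $a_1\ast_1 a_2\in P_{1|y_1\ast_2 y_2}(H_1\ast H_2)$, i.e. $B_1\ast_1 B_2\subset P_{1|y_1\ast_2 y_2}(H_1\ast H_2)=:B$. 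Since $H_1\ast H_2\in(\mathcal{H}^{(i-1)\ast})^{\ast}=\mathcal{H}^{i\ast}$, and since $B$ is nonempty — it contains $A$, which is nonempty because $\mathcal{L}_1(\mathcal{H})$ is a partition by Lemma \ref{lemvdbs7sdf7ds7a1} and every $\ast_1$-power of a family of nonempty sets again consists of nonempty sets (as $\ast_1$ is uniformity preserving, whence $|C\ast_1 C'|\ge|C|$) — the set $B$ is, by Definition \ref{defProjEE}, a member of $\mathcal{L}_1(\mathcal{H}^{i\ast})$. Then $A\subset B_1\ast_1 B_2\subset B$ closes the induction.

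I do not expect a genuine obstacle: the whole proof is a matter of unwinding the recursive definition of the $\ast_1$-powers of a set of subsets and of the conditional projections $P_{1|x_2}$, together with the trivial product identity above. The only points that require a little care are confirming that the candidate $B$ is actually an element of $\mathcal{L}_1(\mathcal{H}^{i\ast})$ (this is where the nonemptiness check is needed), and keeping the indices straight so that $H_1,H_2$ lie in $\mathcal{H}^{(i-1)\ast}$ and their product in $\mathcal{H}^{i\ast}$.
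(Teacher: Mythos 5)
Your proof is correct and follows essentially the same route as the paper's: induction on $i$, decomposing $A=A_1\ast_1 A_2$, applying the induction hypothesis, and using the slice identity $P_{1|y_1}(H_1)\ast_1 P_{1|y_2}(H_2)\subset P_{1|y_1\ast_2 y_2}(H_1\ast H_2)$ to produce $B\in\mathcal{L}_1(\mathcal{H}^{i\ast})$. The only difference is your explicit nonemptiness check for $B$, which the paper leaves implicit; it is a harmless (and slightly more careful) addition.
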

\begin{proof}
We will prove the lemma by induction on $i\geq 0$. The lemma is trivial for $i=0$.

Now let $i>0$ and suppose that the lemma is true for $i-1$. Let $A\in \mathcal{L}_1(\mathcal{H})^{i\ast_1}$, there exist $A',A''\in \mathcal{L}_1(\mathcal{H})^{(i-1)\ast_1}$ such that $A=A'\ast_1 A''$. From the induction hypothesis, there exist $B',B''\in \mathcal{L}_1(\mathcal{H}^{(i-1)\ast})$ such that $A'\subset B'$ and $A''\subset B''$. This means that there exist $H',H''\in \mathcal{H}^{(i-1)\ast}$ and $x_2',x_2''\in\mathcal{X}_2$ such that $B'=P_{1|x_2'}(H')$ and $B''=P_{1|x_2''}(H'')$. We have:
$$A=A'\ast_1 A''\subset B'\ast_1 B''=P_{1|x_2'}(H')\ast_1 P_{1|x_2''}(H'') \stackrel{(a)}{\subset} P_{1|x_2'\ast_2 x_2''}(H'\ast H''),$$
where (a) follows from the fact that for every $x_1'\in P_{1|x_2'}(H')$ and $x_1''\in P_{1|x_2''}(H'')$, we have $(x_1',x_2')\in H'$ and $(x_1'',x_2'')\in H''$, and so $(x_1'\ast_1 x_1'',x_2'\ast_2 x_2'')=(x_1',x_2')\ast (x_1'',x_2'')\in H'\ast H''$, which implies that $x_1'\ast_1 x_1''\in P_{1|x_2'\ast_2 x_2''}(H'\ast H'')$.

If we define $B=P_{1|x_2'\ast_2 x_2''}(H'\ast H'')\in\mathcal{L}_1(\mathcal{H}^{i\ast})$, we get $A\subset B$. We conclude that the lemma is true for all $i\geq 0$.
\end{proof}

\begin{mylem}
$\mathcal{L}_1(\mathcal{H})$ (resp. $\mathcal{L}_2(\mathcal{H})$) is a stable partition of $\mathcal{X}_1$ (resp. $\mathcal{X}_2$) of period at most $\per(\mathcal{H})$. Moreover, for every $i\geq 0$, we have $\mathcal{L}_1(\mathcal{H})^{i\ast_1}=\mathcal{L}_1(\mathcal{H}^{i\ast})$ and $\mathcal{L}_2(\mathcal{H})^{i\ast_2}=\mathcal{L}_2(\mathcal{H}^{i\ast})$.
\label{lemProdLStable}
\end{mylem}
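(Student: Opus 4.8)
The plan is to prove the two assertions in order and then deduce the second from the first applied to the shifted partitions $\mathcal{H}^{i\ast}$. I will only treat $\mathcal{L}_1(\mathcal{H})$, since $\mathcal{L}_2(\mathcal{H})$ is handled by the symmetric argument. First I would record what is already in hand: $\mathcal{L}_1(\mathcal{H})$ is a partition of $\mathcal{X}_1$ by Lemma~\ref{lemvdbs7sdf7ds7a1} and it is balanced by Lemma~\ref{lemvdbs7sdf7ds7a2}, so to obtain stability I only need periodicity, and the only remaining claim is the identity $\mathcal{L}_1(\mathcal{H})^{i\ast_1}=\mathcal{L}_1(\mathcal{H}^{i\ast})$.

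For periodicity I would set $p=\per(\mathcal{H})$, so $\mathcal{H}^{p\ast}=\mathcal{H}$ and hence $\mathcal{L}_1(\mathcal{H}^{p\ast})=\mathcal{L}_1(\mathcal{H})$. Lemma~\ref{lemvdbs7sdf7ds7a3} with $i=p$ puts every block of $\mathcal{L}_1(\mathcal{H})^{p\ast_1}$ inside a block of $\mathcal{L}_1(\mathcal{H})$, so by balancedness each such block has size at most $||\mathcal{L}_1(\mathcal{H})||$; meanwhile iterating Lemma~\ref{lemsize} $p$ times gives $||\mathcal{L}_1(\mathcal{H})^{p\ast_1}||_{\wedge}\geq||\mathcal{L}_1(\mathcal{H})||$. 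Hence every block of $\mathcal{L}_1(\mathcal{H})^{p\ast_1}$ has size exactly $||\mathcal{L}_1(\mathcal{H})||$ and therefore coincides with the block of $\mathcal{L}_1(\mathcal{H})$ containing it; since $\mathcal{L}_1(\mathcal{H})^{p\ast_1}$ covers $\mathcal{X}_1$ and $\mathcal{L}_1(\mathcal{H})$ is a partition, this forces $\mathcal{L}_1(\mathcal{H})^{p\ast_1}=\mathcal{L}_1(\mathcal{H})$. So $\mathcal{L}_1(\mathcal{H})$ is periodic of period dividing $\per(\mathcal{H})$, and being balanced it is a stable partition of $(\mathcal{X}_1,\ast_1)$. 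Crucially this argument used only that $\mathcal{H}$ is a stable partition of $(\mathcal{X},\ast)$, so it applies verbatim to each $\mathcal{H}^{i\ast}$, showing $\mathcal{L}_1(\mathcal{H}^{i\ast})$ is stable as well.

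For the identity $\mathcal{L}_1(\mathcal{H})^{i\ast_1}=\mathcal{L}_1(\mathcal{H}^{i\ast})$, Lemma~\ref{lemvdbs7sdf7ds7a3} already gives $\mathcal{L}_1(\mathcal{H})^{i\ast_1}\preceq\mathcal{L}_1(\mathcal{H}^{i\ast})$, so it remains to check that the two partitions have the same block size. I would count blocks from both sides: being finer, $\mathcal{L}_1(\mathcal{H})^{i\ast_1}$ has $|\mathcal{L}_1(\mathcal{H})^{i\ast_1}|\geq|\mathcal{L}_1(\mathcal{H}^{i\ast})|$, and since $\mathcal{L}_1(\mathcal{H})$ is now known to be stable, Proposition~\ref{propPerSizePer} gives $|\mathcal{L}_1(\mathcal{H})^{i\ast_1}|=|\mathcal{L}_1(\mathcal{H})|$; for the reverse, pick $j>0$ with $p\mid i+j$, apply Lemma~\ref{lemvdbs7sdf7ds7a3} to the stable partition $\mathcal{H}^{i\ast}$ to get $\mathcal{L}_1(\mathcal{H}^{i\ast})^{j\ast_1}\preceq\mathcal{L}_1(\mathcal{H}^{(i+j)\ast})=\mathcal{L}_1(\mathcal{H})$, and use that $\mathcal{L}_1(\mathcal{H}^{i\ast})$ is stable together with Proposition~\ref{propPerSizePer} to obtain $|\mathcal{L}_1(\mathcal{H}^{i\ast})|\geq|\mathcal{L}_1(\mathcal{H})|$. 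Thus $|\mathcal{L}_1(\mathcal{H}^{i\ast})|=|\mathcal{L}_1(\mathcal{H})|$, so these two balanced partitions of $\mathcal{X}_1$ have the same common block size; by Proposition~\ref{propPerSizePer} so does $\mathcal{L}_1(\mathcal{H})^{i\ast_1}$. A finer partition contained in a coarser one with the same block size must equal it, which gives the identity. The main obstacle is precisely this last step: there is no a priori control on the block size of $\mathcal{L}_1(\mathcal{H}^{i\ast})$, and the resolution is to bootstrap the first step by applying it to $\mathcal{H}^{i\ast}$ rather than to $\mathcal{H}$, so that a two-sided count of the number of blocks pins the sizes down and turns ``finer than'' into equality.
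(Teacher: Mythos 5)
Your proof is correct and follows essentially the same route as the paper: stability is obtained exactly as in the paper's proof (Lemma \ref{lemvdbs7sdf7ds7a3} at $i=p$ combined with Lemma \ref{lemsize} and the balancedness from Lemma \ref{lemvdbs7sdf7ds7a2}), and the identity $\mathcal{L}_1(\mathcal{H})^{i\ast_1}=\mathcal{L}_1(\mathcal{H}^{i\ast})$ is again reduced to showing that the finer-than relation of Lemma \ref{lemvdbs7sdf7ds7a3} holds between partitions of equal block size. The only difference is one of bookkeeping: the paper pins down $||\mathcal{L}_1(\mathcal{H}^{i\ast})||$ by a telescoping chain of block-size inequalities through all intermediate powers $0,1,\ldots,p$, whereas you count the number of blocks via a two-sided comparison between levels $0$ and $i$ (using a return trip of length $j$ with $p\mid i+j$ and Proposition \ref{propPerSizePer}); both are valid.
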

\begin{proof}
We will only prove the lemma for $\mathcal{L}_1(\mathcal{H})$ since the proof for $\mathcal{L}_2(\mathcal{H})$ is similar.

Let $p=\per(\mathcal{H})$. According to Lemma \ref{lemvdbs7sdf7ds7a3}, for every $A\in \mathcal{L}_1(\mathcal{H})^{p\ast_1}$, there exists $B\in \mathcal{L}_1(\mathcal{H}^{p\ast})=\mathcal{L}_1(\mathcal{H})$ such that $A\subset B$. On the other hand, we have:
$$|A|\geq \|\mathcal{L}_1(\mathcal{H})^{p\ast_1}\|_{\wedge}\stackrel{(a)}{\geq} \|\mathcal{L}_1(\mathcal{H})\|_{\wedge}\stackrel{(b)}{=}\|\mathcal{L}_1(\mathcal{H})\|=|B|,$$
where (a) follows from Lemma \ref{lemsize} and (b) follows from the fact that $\mathcal{L}_1(\mathcal{H})$ is a balanced partition (Lemma \ref{lemvdbs7sdf7ds7a2}). We conclude that $A=B\in \mathcal{L}_1(\mathcal{H})$ since $|A|\geq |B|$ and $A\subset B$. Now since this is true for every $A\in \mathcal{L}_1(\mathcal{H})^{p\ast_1}$, we have $\mathcal{L}_1(\mathcal{H})^{p\ast_1}\subset \mathcal{L}_1(\mathcal{H})$ which implies that $\mathcal{L}_1(\mathcal{H})^{p\ast_1}=\mathcal{L}_1(\mathcal{H})$ since $\mathcal{L}_1(\mathcal{H})$ is a partition of $\mathcal{X}_1$ and $\mathcal{L}_1(\mathcal{H})^{p\ast_1}$ is an $\mathcal{X}_1$-cover. We conclude that $\mathcal{L}_1(\mathcal{H})$ is a stable partition of period at most $p=\per(\mathcal{H})$. Now since this is true for every stable partition and since $\mathcal{H}^{i\ast}$ is a stable partition for every $i\geq 0$, we conclude that $\mathcal{L}_1(\mathcal{H}^{i\ast})$ is a stable partition for every $i\geq 0$. This implies that $\mathcal{L}_1(\mathcal{H}^{i\ast})^{j\ast_1}$ is a stable partition for every $i\geq 0$ and every $j\geq 0$.

For every $i> 0$, Lemma \ref{lemvdbs7sdf7ds7a3} (applied to $\mathcal{H}^{(i-1)\ast}$) implies that $\mathcal{L}_1(\mathcal{H}^{(i-1)\ast})^{\ast_1}$ is a sub-stable partition of $\mathcal{L}_1(\mathcal{H}^{i\ast})$ and so $\|\mathcal{L}_1(\mathcal{H}^{(i-1)\ast})\|=\|\mathcal{L}_1(\mathcal{H}^{(i-1)\ast})^{\ast_1}\|\leq \|\mathcal{L}_1(\mathcal{H}^{i\ast})\|$. Therefore,
$$\|\mathcal{L}_1(\mathcal{H})\|\leq  \|\mathcal{L}_1(\mathcal{H}^\ast)\|\leq\ldots\leq \|\mathcal{L}_1(\mathcal{H}^{p\ast})\|=\|\mathcal{L}_1(\mathcal{H})\|.$$
We conclude that $\|\mathcal{L}_1(\mathcal{H}^{i\ast})\|=\|\mathcal{L}_1(\mathcal{H}^{(i\bmod p)\ast})\|=\|\mathcal{L}_1(\mathcal{H})\|$ for every $i\geq 0$. Moreover, since $\mathcal{L}_1(\mathcal{H})$ is stable, we have $\|\mathcal{L}_1(\mathcal{H})^{i\ast_1}\|=\|\mathcal{L}_1(\mathcal{H})\|$, which implies that $\|\mathcal{L}_1(\mathcal{H})^{i\ast_1}\|=\|\mathcal{L}_1(\mathcal{H}^{i\ast})\|$ for every $i\geq 0$.

Now for every $i\geq 0$, $\mathcal{L}_1(\mathcal{H})^{i\ast_1}$ is a sub-stable partition of $\mathcal{L}_1(\mathcal{H}^{i\ast})$ (by Lemma \ref{lemvdbs7sdf7ds7a3}) and we have just shown that $\|\mathcal{L}_1(\mathcal{H})^{i\ast_1}\|=\|\mathcal{L}_1(\mathcal{H}^{i\ast})\|$.  We conclude that $\mathcal{L}_1(\mathcal{H})^{i\ast_1}=\mathcal{L}_1(\mathcal{H}^{i\ast})$ for every $i\geq 0$.
\end{proof}

\vspace*{3mm}
Now we are ready to prove Theorem \ref{theprod}:

\begin{proof}[Proof of Theorem \ref{theprod}]
Lemma \ref{lemProdLStable} shows that $\mathcal{L}_1(\mathcal{H})$ and $\mathcal{L}_2(\mathcal{H})$ are stable partitions of $\mathcal{X}_1$ and $\mathcal{X}_2$ respectively, and Lemma \ref{lemProdUStable} shows that $\mathcal{U}_1(\mathcal{H})$ and $\mathcal{U}_2(\mathcal{H})$ are stable partitions of $\mathcal{X}_1$ and $\mathcal{X}_2$ respectively. Moreover, Lemma \ref{lemProdLStable} shows that $\mathcal{L}_1(\mathcal{H})^{i\ast_1}=\mathcal{L}_1(\mathcal{H}^{i\ast})$ and $\mathcal{L}_2(\mathcal{H})^{i\ast_2}=\mathcal{L}_2(\mathcal{H}^{i\ast})$ for every $i>0$, and Lemma \ref{lemProdUStable} shows that $\mathcal{U}_1(\mathcal{H})^{i\ast_1}=\mathcal{U}_1(\mathcal{H}^{i\ast})$ and $\mathcal{U}_2(\mathcal{H})^{i\ast_2}=\mathcal{U}_2(\mathcal{H}^{i\ast})$ for every $i>0$.

It is easy to see that $\mathcal{L}_1(\mathcal{H})\preceq\mathcal{U}_1(\mathcal{H})$ and $\mathcal{L}_2(\mathcal{H})\preceq\mathcal{U}_2(\mathcal{H})$. Now we turn to show that $\mathcal{L}_1(\mathcal{H})\otimes \mathcal{L}_2(\mathcal{H})\preceq \mathcal{H}\preceq\mathcal{U}_1(\mathcal{H})\otimes \mathcal{U}_2(\mathcal{H})$. Let $A\times B\in \mathcal{L}_1(\mathcal{H})\otimes \mathcal{L}_2(\mathcal{H})$ (i.e., $A\in\mathcal{L}_1(\mathcal{H})$ and $B\in\mathcal{L}_2(\mathcal{H})$), and fix $x_1\in A$ and $x_2\in B$. Let $H\in\mathcal{H}$ be such that $(x_1,x_2)\in H$. We have $x_1\in P_{1|x_2}(H)$ as $(x_1,x_2)\in H$. Therefore, $P_{1|x_2}(H)\cap A\neq\o$ which implies that $A=P_{1|x_2}(H)$ since both $A$ and $P_{1|x_2}(H)$ are in $\mathcal{L}_1(\mathcal{H})$ which was shown to be a stable partition.

Now fix $(x_A,x_B)\in A\times B$. Since $x_A\in A=P_{1|x_2}(H)$, we have $(x_A,x_2)\in H$ which means that $x_2\in P_{2|x_A}(H)$. Therefore, $B\cap P_{2|x_A}(H)\neq \o$ which implies that $B= P_{2|x_A}(H)$ since both $B$ and $P_{2|x_A}(H)$ are in $\mathcal{L}_2(\mathcal{H})$ which was shown to be a stable partition. Now since $x_B\in B= P_{2|x_A}(H)$, we conclude that $(x_A,x_B)\in H$. But this is true for all $(x_A,x_B)\in A\times B$, hence $A\times B\subset H$. Therefore, $\mathcal{L}_1(\mathcal{H})\otimes \mathcal{L}_2(\mathcal{H})\preceq \mathcal{H}$.

In order to prove that $\mathcal{H}\preceq\mathcal{U}_1(\mathcal{H})\otimes \mathcal{U}_2(\mathcal{H})$, let $H\in\mathcal{H}$, $A'=P_1(H)\in\mathcal{U}_1(\mathcal{H})$ and $B'=P_2(H)\in\mathcal{U}_2(\mathcal{H})$. Clearly, $H\subset A'\times B'$, hence $\mathcal{H}\preceq\mathcal{U}_1(\mathcal{H})\otimes \mathcal{U}_2(\mathcal{H})$.

Now let $H\in\mathcal{H}$. Since $\mathcal{L}_1(\mathcal{H})\otimes \mathcal{L}_2(\mathcal{H})\preceq \mathcal{H}$, there exist an integer $n_H>0$ and $n_H$ sets $H_1,\ldots,H_{n_H}\in \mathcal{L}_1(\mathcal{H})\otimes \mathcal{L}_2(\mathcal{H})$ such that $H_1,\ldots,H_{n_H}$ are disjoint and $H=H_1\cup\ldots\cup H_{n_H}$. Since $H_1,\ldots,H_{n_H}\in \mathcal{L}_1(\mathcal{H})\otimes \mathcal{L}_2(\mathcal{H})$, there exist $n_H$ sets $H_{1,1},\ldots,H_{1,n_H}\in\mathcal{L}_1(\mathcal{H})$ and $n_H$ sets $H_{2,1},\ldots,H_{2,n_H}\in\mathcal{L}_2(\mathcal{H})$ such that $H_1=H_{1,1}\times H_{2,1}$, \ldots, and $H_{n_H}=H_{1,n_H}\times H_{2,n_H}$. Clearly, $H_{1,i}=P_1(H_i)$ and $H_{2,i}=P_2(H_i)$ for every $1\leq i\leq n_H$. We have:
\begin{itemize}
\item $H_{1,1}\cup \ldots \cup H_{1,n_H}= P_1(H_1)\cup\ldots\cup P_1(H_{n_H})= P_1(H_1\cup\ldots\cup H_{n_H})=P_1(H)\in\mathcal{U}_1(\mathcal{H})$.
\item $H_{2,1}\cup \ldots \cup H_{2,n_H}= P_2(H_1)\cup\ldots\cup P_2(H_{n_H})= P_2(H_1\cup\ldots\cup H_{n_H})=P_2(H)\in\mathcal{U}_2(\mathcal{H})$.
\item Suppose that $H_{1,i}=H_{1,j}$ for some $i\neq j$ and let $x_1\in H_{1,i}=H_{1,j}$, then $H_{2,i}\cup H_{2,j}\subset P_{2|x_1}(H)\in\mathcal{L}_2(\mathcal{H})$ which cannot happen unless $H_{2,i}=H_{2,j}=P_{2|x_1}(H)$. This is a contradiction since $(H_{1,i}\times H_{2,i})$ and $(H_{1,j}\times H_{2,j})$ are disjoint. We conclude that $H_{1,1},\ldots,H_{1,n_H}$ are disjoint. Similarly, $H_{2,1},\ldots,H_{2,n_H}$ are also disjoint.
\end{itemize}
Now since $H_{1,1},\ldots,H_{1,n_H}$ are disjoint, we have $\|\mathcal{U}_1(\mathcal{H})\|=|P_1(H)|=|H_{1,1}|+\ldots+|H_{1,n_H}|=n_H\|\mathcal{L}_1(\mathcal{H})\|$. Therefore, $n_H=\frac{\|\mathcal{U}_1(\mathcal{H})\|}{\|\mathcal{L}_1(\mathcal{H})\|}$. Similarly,  $n_H=\frac{\|\mathcal{U}_2(\mathcal{H})\|}{\|\mathcal{L}_2(\mathcal{H})\|}$. We conclude that $n_H$ is the same for all $H\in\mathcal{H}$. Let us denote this common integer as $n$. It is now easy to see that $\|\mathcal{H}\|=n\cdot\|\mathcal{L}_1(\mathcal{H})\|\cdot \|\mathcal{L}_2(\mathcal{H})\|=\|\mathcal{L}_1(\mathcal{H})\|\cdot \|\mathcal{U}_2(\mathcal{H})\|=\|\mathcal{U}_1(\mathcal{H})\|\cdot \|\mathcal{L}_2(\mathcal{H})\|$.

Now in order to prove the uniqueness of $\mathcal{L}_1(\mathcal{H})$, $\mathcal{L}_2(\mathcal{H})$, $\mathcal{U}_1(\mathcal{H})$ and $\mathcal{U}_2(\mathcal{H})$, suppose that $\mathcal{H}_1$, $\mathcal{H}_2$, $\mathcal{H}_1'$, $\mathcal{H}_2'$, and $n'>0$ satisfy the conditions of the theorem (i.e. $\mathcal{H}_1$, $\mathcal{H}_2$, $\mathcal{H}_1'$, $\mathcal{H}_2'$ and $n'$ play the roles of $\mathcal{L}_1(\mathcal{H})$, $\mathcal{L}_2(\mathcal{H})$, $\mathcal{U}_1(\mathcal{H})$, $\mathcal{U}_2(\mathcal{H})$ and $n$ respectively). Let $H\in\mathcal{H}$, then there exist $n'$ disjoint sets $H_{1,1}',\ldots,H_{1,n'}'\in \mathcal{H}_1$ and $n'$ disjoint sets $H_{2,1}',\ldots,H_{2,n'}'\in \mathcal{H}_2$ such that:
\begin{itemize}
\item $H_{1,1}'\cup \ldots \cup H_{1,n'}'\in\mathcal{H}_1'$.
\item $H_{2,1}'\cup \ldots \cup H_{2,n'}'\in\mathcal{H}_2'$.
\item $H=(H_{1,1}'\times H_{2,1}')\cup \ldots \cup (H_{1,n'}'\times H_{2,n'}')$.
\end{itemize}
Since $H=(H_{1,1}'\times H_{2,1}')\cup \ldots \cup (H_{1,n'}'\times H_{2,n'}')$, we have $P_1(H)=H_{1,1}'\cup \ldots \cup H_{1,n'}'\in\mathcal{H}_1'$. But this is true for every $H\in\mathcal{H}$. Therefore, $\mathcal{U}_1(\mathcal{H})\subset \mathcal{H}_1'$ which implies that $\mathcal{H}_1'=\mathcal{U}_1(\mathcal{H})$ since $\mathcal{H}_1'$ and $\mathcal{U}_1(\mathcal{H})$ are partitions. Similarly, $\mathcal{H}_2'= \mathcal{U}_2(\mathcal{H})$.

Now let $x_2\in\mathcal{X}_2$ be such that $P_{1|x_2}(H)\neq \o$. Clearly, $x_2\in H_{2,i}'$ for some $1\leq i\leq n'$ and so $P_{1|x_2}(H)=H_{1,i}'\in\mathcal{H}_1$ since $H=(H_{1,1}'\times H_{2,1}')\cup \ldots \cup (H_{1,n'}'\times H_{2,n'}')$ and since $H_{2,1}',\ldots,H_{2,n'}'$ are disjoint. Therefore, for every $x_2\in\mathcal{X}_2$ satisfying $P_{1|x_2}(H)\neq \o$, we have $P_{1|x_2}(H) \in\mathcal{H}_1$. We conclude that $\mathcal{L}_1(\mathcal{H})\subset \mathcal{H}_1$ which implies that $\mathcal{H}_1=\mathcal{L}_1(\mathcal{H})$ since $\mathcal{H}_1$ and $\mathcal{L}_1(\mathcal{H})$ are partitions. Similarly, $\mathcal{H}_2=\mathcal{L}_2(\mathcal{H})$. Moreover, $n'=\frac{\|\mathcal{H}_1'\|}{\|\mathcal{H}_1\|}=\frac{\|\mathcal{U}_1(\mathcal{H})\|}{\|\mathcal{L}_1(\mathcal{H})\|}=n$.

We conclude that the stable partitions $\mathcal{L}_1(\mathcal{H})$, $\mathcal{L}_2(\mathcal{H})$, $\mathcal{U}_1(\mathcal{H})$, $\mathcal{U}_2(\mathcal{H})$ are unique.
\end{proof}

\subsection{Proof of Theorem \ref{theprodstrong}}

For Theorem \ref{theprodstrong}, we will first prove it for $m=2$ using two lemmas. The general result can then be proven by induction on $m\geq 2$.

\begin{mylem}
\label{lemProdStrongNec}
If $\ast=\ast_1\otimes\ast_2$ is a strongly ergodic operation on $\mathcal{X}=\mathcal{X}_1\times\mathcal{X}_2$, then $\ast_1$ and $\ast_2$ are strongly ergodic.
\end{mylem}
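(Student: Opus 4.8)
The plan is to verify Definition \ref{defdef} directly for $\ast_1$ (and, symmetrically, for $\ast_2$), using the first projection $P_1:\mathcal{X}\to\mathcal{X}_1$ as a bridge. Since $\ast$ is strongly ergodic it is in particular uniformity preserving, so Proposition \ref{trivialprod} gives that $\ast_1$ and $\ast_2$ are uniformity preserving. First I would fix an arbitrary stable partition $\mathcal{H}_1$ of $(\mathcal{X}_1,\ast_1)$ and lift it to $\mathcal{X}$ by taking cylinders: set $\mathcal{H}:=\mathcal{H}_1\otimes\{\mathcal{X}_2\}=\{A\times\mathcal{X}_2:\;A\in\mathcal{H}_1\}$. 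Using that $\mathcal{X}_2\ast_2\mathcal{X}_2=\mathcal{X}_2$ (because $\pi_b(x)=x\ast_2 b$ is a bijection for each $b\in\mathcal{X}_2$, so $|\mathcal{X}_2\ast_2 b|=|\mathcal{X}_2|$), an easy induction on $i$ shows $\mathcal{H}^{i\ast}=\{A\times\mathcal{X}_2:\;A\in\mathcal{H}_1^{i\ast_1}\}$; in particular $\mathcal{H}$ is balanced, periodic with $\per(\mathcal{H})=\per(\mathcal{H}_1)$, hence a stable partition of $(\mathcal{X},\ast)$.

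Next I would invoke the strong ergodicity of $\ast$ for the partition $\mathcal{H}$: fixing $x_1\in\mathcal{X}_1$, choosing any $x_2\in\mathcal{X}_2$ and setting $x=(x_1,x_2)$, Definition \ref{defdef} furnishes an integer $n=n(x,\mathcal{H})$ such that every $H\in\mathcal{H}^{n\ast}$ equals $x\ast\mathfrak{X}_{x,H}$ for some $\mathcal{H}$-sequence $\mathfrak{X}_{x,H}$ of length $n$. Now, given an arbitrary $A\in\mathcal{H}_1^{n\ast_1}$, I would apply this to $H=A\times\mathcal{X}_2\in\mathcal{H}^{n\ast}$, write the resulting $\mathcal{H}$-sequence as $\mathfrak{X}_{x,H}=(B_i\times\mathcal{X}_2)_{0\leq i<n}$ with $B_i\in\mathcal{H}_1^{i\ast_1}$, and form the $\mathcal{H}_1$-sequence $\mathfrak{Y}:=(B_i)_{0\leq i<n}$ of length $n$. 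Applying the homomorphism identity $P_1(S\ast T)=P_1(S)\ast_1 P_1(T)$ (immediate from the definition of the product operation, and already used in the proof of Lemma \ref{lemProdUStable}) repeatedly along the sequence gives $x_1\ast_1\mathfrak{Y}=P_1(x\ast\mathfrak{X}_{x,H})=P_1(H)=A$. Since $A$ ranged over all of $\mathcal{H}_1^{n\ast_1}$ and $n$ depends only on $x$ and $\mathcal{H}$, this is precisely the condition in Definition \ref{defdef} witnessing that $\ast_1$ is strongly ergodic; the identical argument with $P_2$ in place of $P_1$ and the cylinder partition $\{\mathcal{X}_1\}\otimes\mathcal{H}_2$ handles $\ast_2$.

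I do not expect a serious obstacle here. The only step requiring a little care is the cylinder computation $\mathcal{H}^{i\ast}=\{A\times\mathcal{X}_2:\;A\in\mathcal{H}_1^{i\ast_1}\}$: one must check that the product $(A\times\mathcal{X}_2)\ast(A'\times\mathcal{X}_2)$ collapses to $(A\ast_1 A')\times\mathcal{X}_2$ and not to a smaller set, which is exactly where $\mathcal{X}_2\ast_2\mathcal{X}_2=\mathcal{X}_2$ is used (note this needs only uniformity preservation of $\ast_2$, not that it be a quasigroup). Everything else is a routine unwinding of Definition \ref{defdef} through the projection. Alternatively, one could first note via Theorem \ref{thestrong} and Proposition \ref{trivialprod} that $\ast_1,\ast_2$ are ergodic and then conclude through Theorem \ref{thestrong} by showing $\mathcal{K}_{\mathcal{H}_1}=\mathcal{H}_1$; but the direct verification above sidesteps the residue machinery entirely.
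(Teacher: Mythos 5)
Your proposal is correct and follows essentially the same route as the paper's own proof: lift $\mathcal{H}_1$ to the cylinder partition $\mathcal{H}_1\otimes\{\mathcal{X}_2\}$, apply the strong ergodicity of $\ast$ at $(x_1,x_2)$ with $x_2$ fixed, and project the resulting $\mathcal{H}$-sequence back to the first coordinate. The only difference is that you spell out the verification that $\mathcal{H}^{i\ast}=\mathcal{H}_1^{i\ast_1}\otimes\{\mathcal{X}_2\}$ (via $\mathcal{X}_2\ast_2\mathcal{X}_2=\mathcal{X}_2$), which the paper takes for granted.
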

\begin{proof}
Let $\mathcal{H}_1$ be a stable partition of $\mathcal{X}_1$, then $\mathcal{H}=\mathcal{H}_1\otimes \{\mathcal{X}_2\}$ is a stable partition of $\mathcal{X}_1\times \mathcal{X}_2$. Fix $x_2\in\mathcal{X}_2$ and let $x_1\in\mathcal{X}_1$. Since $\ast$ is strongly ergodic, then by Definition \ref{defdef} there exists $n=n(x_1,x_2,\mathcal{H})>0$ such that for any $H\in \mathcal{H}^{n\ast}$, there exists an $\mathcal{H}$-sequence $\mathfrak{X}=(X_i)_{0\leq i<n}$ satisfying $(x_1,x_2)\ast\mathfrak{X}=H$. Let $H_1\in\mathcal{H}_1^{n\ast_1}$. Clearly, $H_1\times\mathcal{X}_2\in\mathcal{H}_1^{n\ast_1}\otimes \{\mathcal{X}_2\}=(\mathcal{H}_1\otimes \{\mathcal{X}_2\})^{n\ast}=\mathcal{H}^{n\ast}$.

Since $H_1\times\mathcal{X}_2\in\mathcal{H}^{n\ast}$, there exists an $\mathcal{H}$-sequence $\mathfrak{X}=(X_i)_{0\leq i<n}$ such that $(x_1,x_2)\ast\mathfrak{X}=H_1\times\mathcal{X}_2$. For every $0\leq i< n$, $X_i\in(\mathcal{H}_1\otimes\{\mathcal{X}_2\})^{i\ast}= \mathcal{H}_1^{i\ast_1}\otimes\{\mathcal{X}_2\}$ and so there exists $X_{1,i}\in\mathcal{H}_1^{i\ast_1}$ such that $X_i=X_{1,i}\times \mathcal{X}_2$. By projecting the equation $(x_1,x_2)\ast\mathfrak{X}=H_1\times\mathcal{X}_2$ on the first coordinate, we get $x_1\ast_1 \mathfrak{X}_1=H_1$, where $\mathfrak{X}_1$ is the $\mathcal{H}_1$-sequence $(X_{1,i})_{0\leq i<n}$. By fixing $x_2\in\mathcal{X}_2$, $n$ will depend only on $x_1$ and $\mathcal{H}_1$ as required in the definition of strong ergodicity. This proves that $\ast_1$ is strongly ergodic. A similar argument shows that $\ast_2$ is also strongly ergodic.
\end{proof}

\begin{mylem}
\label{lemProdStrongSuf}
If $\ast_1$ and $\ast_2$ are two strongly ergodic operations on $\mathcal{X}_1$ and $\mathcal{X}_2$ respectively, then $\ast=\ast_1\otimes\ast_2$ is a strongly ergodic operation on $\mathcal{X}=\mathcal{X}_1\times\mathcal{X}_2$.
\end{mylem}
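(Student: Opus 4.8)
\emph{Overall strategy.} By Proposition~\ref{trivialprod}, $\ast=\ast_1\otimes\ast_2$ is ergodic, so by Theorem~\ref{thestrong}(3) it is enough to prove that $\mathcal{K}_{\mathcal{H}}=\mathcal{H}$ for every stable partition $\mathcal{H}$ of $(\mathcal{X},\ast)$. Fix $\mathcal{H}$ and let $\mathcal{L}_i=\mathcal{L}_i(\mathcal{H})$, $\mathcal{U}_i=\mathcal{U}_i(\mathcal{H})$ and $c=\cor_{\ast_1,\ast_2}(\mathcal{H})$ be as in Theorem~\ref{theprod}; recall $\mathcal{L}_1\otimes\mathcal{L}_2\preceq\mathcal{H}\preceq\mathcal{U}_1\otimes\mathcal{U}_2$, that every $H\in\mathcal{H}$ is a disjoint union $H=\bigcup_{j=1}^{c}(H_{1,j}\times H_{2,j})$ of $c$ ``cells'', that $\mathcal{L}_i(\mathcal{H}^{l\ast})=\mathcal{L}_i^{l\ast_i}$ and $\mathcal{U}_i(\mathcal{H}^{l\ast})=\mathcal{U}_i^{l\ast_i}$, and that $\per(\mathcal{L}_i)$ divides $\per(\mathcal{H})$. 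Since $R_{\mathcal{H}}$ is an equivalence relation, $\mathcal{K}_{\mathcal{H}}=\mathcal{H}$ will follow from: (A) any two points lying in the same cell of some $H\in\mathcal{H}$ are $R_{\mathcal{H}}$-related; and (B) for each $H\in\mathcal{H}$, the $c$ cells of $H$ are mutually $R_{\mathcal{H}}$-connected (this is vacuous when $c=1$).

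\emph{Claim (A).} Since $\ast_1$ is strongly ergodic, Theorem~\ref{thestrong}(3) gives $\mathcal{K}_{\mathcal{L}_1}=\mathcal{L}_1$, so Lemma~\ref{lemres1} yields an $\mathcal{L}_1$-augmenting sequence $\mathfrak{Y}$ with $a_1\ast_1\mathfrak{Y}=H_{1,j}$ for all $a_1\in H_{1,j}$, and likewise an $\mathcal{L}_2$-augmenting $\mathfrak{Z}$ with $a_2\ast_2\mathfrak{Z}=H_{2,j}$ for all $a_2\in H_{2,j}$; in particular $H_{1,j}\ast_1\mathfrak{Y}=H_{1,j}$ and $H_{2,j}\ast_2\mathfrak{Z}=H_{2,j}$. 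Replacing $\mathfrak{Y},\mathfrak{Z}$ by suitable powers (powers of augmenting sequences are again augmenting, and these identities persist) we may assume $|\mathfrak{Y}|=|\mathfrak{Z}|=l$ with $\per(\mathcal{H})\mid l$. For $0\le i<l$ we have $Y_i\times Z_i\in\mathcal{L}_1(\mathcal{H}^{i\ast})\otimes\mathcal{L}_2(\mathcal{H}^{i\ast})\preceq\mathcal{H}^{i\ast}$, so pick $X_i\in\mathcal{H}^{i\ast}$ with $Y_i\times Z_i\subset X_i$, and set $\mathfrak{X}'=(X_i)_{0\le i<l}$, an $\mathcal{H}$-repeatable sequence. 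Writing $\mathfrak{W}=(Y_i\times Z_i)_{0\le i<l}$, for every $t\ge1$ and every $p\in H_{1,j}\times H_{2,j}$ we get $p\ast\mathfrak{W}^{t}=(H_{1,j}\ast_1\mathfrak{Y}^{t})\times(H_{2,j}\ast_2\mathfrak{Z}^{t})=H_{1,j}\times H_{2,j}$, hence by monotonicity of $\ast$ (as $Y_i\times Z_i\subset X_i$) $p\ast(\mathfrak{X}')^{t}\supset H_{1,j}\times H_{2,j}$. By Lemma~\ref{lemaugm} choose $t$ with $(\mathfrak{X}')^{t}$ $\mathcal{H}$-augmenting; then $(\mathfrak{X}')^{t}$ connects the set $H_{1,j}\times H_{2,j}$, so all its points are pairwise $R_{\mathcal{H}}$-related. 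This proves (A). As a byproduct, since $\mathcal{K}_{\mathcal{H}}\preceq\mathcal{H}$ every $R_{\mathcal{H}}$-class is a union of whole cells, and comparing the $P_{1|x_2}$- and $P_{2|x_1}$-projections of the classes with those of the $\mathcal{H}$-blocks gives $\mathcal{L}_i(\mathcal{K}_{\mathcal{H}})=\mathcal{L}_i(\mathcal{H})$; hence by Theorem~\ref{theprod} the equality $\mathcal{K}_{\mathcal{H}}=\mathcal{H}$ is \emph{equivalent} to $\cor(\mathcal{K}_{\mathcal{H}})=c$, i.e.\ to (B).

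\emph{Claim (B), the main obstacle.} The plan is to produce, for any two cells of a given $H\in\mathcal{H}$, an $\mathcal{H}$-augmenting sequence carrying a point of the first cell into the second; concatenating such sequences then connects all $c$ cells. The heart of the matter is a \textbf{Key Lemma}: if $\mathcal{G}$ is a stable partition of $(\mathcal{X},\ast)$, $S=S_1\times S_2\in\mathcal{L}_1(\mathcal{G})\otimes\mathcal{L}_2(\mathcal{G})$ is a single cell, and $X\in\mathcal{G}$, then $S\ast X$ is a full block of $\mathcal{G}^{\ast}$, i.e.\ $|S\ast X|=||\mathcal{G}||$. Indeed, writing $X=\bigcup_{j=1}^{c}(A_j\times B_j)$ in the normal form of Theorem~\ref{theprod}, we have $S\ast X=\bigcup_{j=1}^{c}(S_1\ast_1 A_j)\times(S_2\ast_2 B_j)$, a union of $c$ cells of $\mathcal{G}^{\ast}$ all contained in the single block $G\ast X$ (where $G\supset S$, $G\in\mathcal{G}$); since distinct cells of a $\mathcal{G}^{\ast}$-block have distinct first coordinates, the Key Lemma reduces to the injectivity of $j\mapsto S_1\ast_1 A_j$ on the $c$ pieces $A_1,\dots,A_c$ of the $\mathcal{U}_1(\mathcal{G})$-block $P_1(X)$. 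By Lemma~\ref{lemarb}, for each fixed $j$ the map $H_1\mapsto H_1\ast_1 A_j$ is a bijection $\mathcal{L}_1(\mathcal{G})\to\mathcal{L}_1(\mathcal{G}^{\ast})$ — this is what makes $G\ast X$ genuinely split into $c$ cells — but the required \emph{transverse} injectivity (left factor $S_1$ fixed, $A_j$ varying) is the delicate point; I expect it to be exactly where strong ergodicity of $\ast_1$, in the form $\mathcal{K}_{\mathcal{L}_1(\mathcal{G})}=\mathcal{L}_1(\mathcal{G})$, is indispensable, with the argument paralleling the group identity $gN\cdot hM=ghM$ for $N\trianglelefteq M$. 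That $\mathcal{K}_{\mathcal{H}}=\mathcal{H}$ can fail for ergodic-but-not-strongly-ergodic operations (Remark~\ref{remexamperg} and the remark after Theorem~\ref{theres}) confirms that this step cannot be cheap, and it is the crux of the whole proof. Granting the Key Lemma one obtains (B), hence $\mathcal{K}_{\mathcal{H}}=\mathcal{H}$, which proves the lemma; together with Lemma~\ref{lemProdStrongNec} and an easy induction on $m$ (via $\ast_1\otimes\cdots\otimes\ast_m=\ast_1\otimes(\ast_2\otimes\cdots\otimes\ast_m)$) this also gives Theorem~\ref{theprodstrong}.
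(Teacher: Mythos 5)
Your reduction to showing $\mathcal{K}_{\mathcal{H}}=\mathcal{H}$ for every stable partition $\mathcal{H}$ of $(\mathcal{X},\ast)$ is exactly the paper's starting point (via Theorem~\ref{thestrong}), and your Claim~(A) --- that the points of a single cell $H_{1,j}\times H_{2,j}$ are mutually $R_{\mathcal{H}}$-related, obtained by lifting an $\mathcal{L}_1$-augmenting and an $\mathcal{L}_2$-augmenting sequence into an $\mathcal{H}$-repeatable sequence and invoking Lemma~\ref{lemaugm} --- is sound. But the proof is not complete: Claim~(B) is exactly the content of the lemma, and you do not prove it. Your ``Key Lemma'' (that $S\ast X$ is a full block of $\mathcal{G}^{\ast}$ whenever $S$ is a single cell and $X\in\mathcal{G}$, equivalently the transverse injectivity of $j\mapsto S_1\ast_1 A_j$) is stated, identified as ``the delicate point'' and ``the crux,'' and then assumed (``I expect it to be exactly where strong ergodicity\ldots is indispensable,'' ``Granting the Key Lemma\ldots''). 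An expectation plus an analogy with the group identity $gN\cdot hM=ghM$ is not an argument, and it is far from clear that this single-step statement is even true for arbitrary strongly ergodic (non-quasigroup) operations; nothing in Lemma~\ref{lemarb} or Theorem~\ref{theprod} gives injectivity when the left factor is held fixed and only $A_j$ varies.

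For comparison, the paper circumvents this transverse-injectivity issue entirely by a two-stage construction of a single $\mathcal{H}$-augmenting sequence $\mathfrak{X}=(\mathfrak{X}_U,\mathfrak{X}_L)$ with $H\subset(x_1,x_2)\ast\mathfrak{X}$. First, strong ergodicity of $\ast_1$ applied to $\mathcal{U}_1(\mathcal{H})$ yields $\mathfrak{X}_U$ with $P_1(H)\subset P_1\big((x_1,x_2)\ast\mathfrak{X}_U\big)$, so that above every $a\in P_1(H)$ there is some point $(a,b_a)$ in the image. Second, for each such $a$, strong ergodicity of $\ast_2$ applied to $\mathcal{L}_2(\mathcal{H})$ yields an $\mathcal{L}_2(\mathcal{H})$-augmenting sequence expanding $b_a$ to the fibre $H_{2,i_a}$; the lift of this sequence to an $\mathcal{H}$-sequence is taken to a power under which the induced permutation of the \emph{first} coordinate is the identity, so the fibre $\{a\}\times H_{2,i_a}$ is swept out without disturbing $a$. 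Concatenating over all $a\in P_1(H)$ gives $H=\bigcup_a\{a\}\times H_{2,i_a}\subset(x_1,x_2)\ast\mathfrak{X}$, whence $\lVert\mathcal{K}_{\mathcal{H}}\rVert\geq\lVert\mathcal{H}\rVert$ and $\mathcal{K}_{\mathcal{H}}=\mathcal{H}$. If you want to salvage your outline, you should either prove your Key Lemma or replace Claim~(B) by this fibrewise sweeping argument; as written, the central step is missing.
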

\begin{proof}
Fix a stable partition $\mathcal{H}$ of $\mathcal{X}$. Since $\ast_1$ and $\ast_2$ are strongly ergodic, they are ergodic and so Theorem \ref{theprod} can be applied. Let $\mathcal{L}_1(\mathcal{H})$, $\mathcal{L}_2(\mathcal{H})$, $\mathcal{U}_1(\mathcal{H})$ and $\mathcal{U}_2(\mathcal{H})$ be defined as in Theorem \ref{theprod}, and let $P_1$ and $P_2$ be the projection onto the first and second coordinate respectively as in Definition \ref{defProjGG}.

Let $(x_1,x_2)\in H\in\mathcal{H}$. We will construct an $\mathcal{H}$-augmenting sequence $\mathfrak{X}$ satisfying $H\subset (x_1,x_2)\ast\mathfrak{X}$ in two steps: We first construct an $\mathcal{H}$-augmenting sequence $\mathfrak{X}_U$ such that $P_1(H)\subset P_1\big((x_1,x_2)\ast\mathfrak{X}_U\big)$, i.e., $\mathfrak{X}_U$ stretches $\{(x_1,x_2)\}$ in the direction of the first coordinate to cover $P_1(H)$. In the second step, we construct an $\mathcal{H}$-augmenting sequence $\mathfrak{X}_L$ such that $H\subset \big((x_1,x_2)\ast\mathfrak{X}_U\big)\ast \mathfrak{X}_L$, i.e., $\mathfrak{X}_L$ stretches $(x_1,x_2)\ast\mathfrak{X}_U$ in the direction of the second coordinate to cover $H$.

\vspace*{2mm}

\emph{Step 1:} Let $H_1=P_1(H)\in\mathcal{U}_1(\mathcal{H})$. Since $\ast_1$ is strongly ergodic, there exists a $\mathcal{U}_1(\mathcal{H})$-augmenting sequence $\mathfrak{X}_1$ such that $x_1\ast_1\mathfrak{X}_1=H_1$. Let $\mathfrak{X}_1'=(X_{1,i}')_{0\leq i<k'}=(\mathfrak{X}_1)^{\per(\mathcal{H})}$. For every $0\leq i< k'=|\mathfrak{X}_1'|$, we have $X_{1,i}'\in \mathcal{U}_1(\mathcal{H})^{i\ast_1}= \mathcal{U}_1(\mathcal{H}^{i\ast})$, and so from Definition \ref{defProjGG} there exists $X_i'\in\mathcal{H}^{i\ast}$ such that $P_1(X_i')=X_{1,i}'$. Define the $\mathcal{H}$-sequence $\mathfrak{X}_U'=(X_i')_{0\leq i< k'}$. The sequence $\mathfrak{X}_U'$ is $\mathcal{H}$-repeatable since $\per(\mathcal{H})$ divides $|\mathfrak{X}_U'|=k'=|\mathfrak{X}_1|\cdot\per(\mathcal{H})$. By Lemma \ref{lemaugm}, there exists $l>0$ such that $\mathfrak{X}_U:=(\mathfrak{X}_U')^l$ is $\mathcal{H}$-augmenting. We have:
\begin{equation}
\label{eqsdj863wqd32wed}
\begin{aligned}
H_1 &\stackrel{(a)}{\subset} H_1\ast_1(\mathfrak{X}_1)^{\per(\mathcal{H})l-1} =(x_1\ast_1 \mathfrak{X}_1)\ast_1 (\mathfrak{X}_1)^{\per(\mathcal{H})l-1}=x_1\ast_1 (\mathfrak{X}_1)^{\per(\mathcal{H})l}=x_1\ast_1 (\mathfrak{X}_1')^l\\
&=x_1\ast_1 \big((X_{1,i}')_{0\leq i<k'}\big)^l = P_1\big((x_1,x_2)\big)\ast_1 \Big(\big(P_1(X_i')\big)_{0\leq i<k'}\Big)^l = P_1\Big((x_1,x_2)\ast \big((X_i')_{0\leq i<k'}\big)^l\Big) \\
&= P_1\big((x_1,x_2)\ast (\mathfrak{X}_U')^l\big)=P_1\big((x_1,x_2)\ast \mathfrak{X}_U\big),
\end{aligned}
\end{equation}
where (a) follows from the fact that $\mathfrak{X}_1$ is $\mathcal{U}_1(\mathcal{H})$-augmenting.

\vspace*{2mm}

\emph{Step 2:} Define $X_U= (x_1,x_2)\ast \mathfrak{X}_U$. Since $\mathfrak{X}_U$ is $\mathcal{H}$-augmenting, we must have $X_U\subset K$, where $K\in\mathcal{K}_{\mathcal{H}}$ is such that $(x_1,x_2)\in K$ (see Theorem \ref{theres}). Now since $\mathcal{K}_{\mathcal{H}}$ is a sub-stable partition of $\mathcal{H}$ (by Theorem \ref{theres}) and since $(x_1,x_2)\in K\cap H$, we must have $K\subset H$. Therefore, $X_U\subset H$. On the other hand, from \eqref{eqsdj863wqd32wed} we have $H_1\subset P_1(X_U)$. We conclude that for every $a\in H_1$, we have $a\in P_1(X_U)$ and so there exists $b_a\in\mathcal{X}_2$ such that $(a,b_a)\in X_U\subset H$.

According to Theorem \ref{theprod}, there exist $n$ disjoint sets $H_{1,1},\ldots,H_{1,n}\in\mathcal{L}_1(\mathcal{H})$ and $n$ disjoint sets $H_{2,1},\ldots,H_{2,n}\in\mathcal{L}_2(\mathcal{H})$ such that $H=(H_{1,1}\times H_{2,1})\cup \ldots \cup (H_{1,n}\times H_{2,n})$. For every $a\in H_1=H_{1,1}\cup\ldots\cup H_{1,n}$, there exists a unique $1\leq i_a\leq n$ such that $a\in H_{1,i_a}$. We have:
\begin{equation}
\label{eqsadn24ar3fsH}
H=\bigcup_{1\leq i\leq n} (H_{1,i}\times H_{2,i})=\bigcup_{1\leq i\leq n} \bigcup_{a\in H_{1,i}} (\{a\}\times H_{2,i})= \bigcup_{1\leq i\leq n} \bigcup_{a\in H_{1,i}} (\{a\}\times H_{2,i_a})= \bigcup_{a\in H_1} (\{a\}\times H_{2,i_a}).
\end{equation}

Fix $a\in H_1$. Since $\displaystyle (a,b_a)\in H= \bigcup_{a'\in H_1} (\{a'\}\times H_{2,i_{a'}})$, we must have $b_a\in H_{2,i_a}\in \mathcal{L}_2(\mathcal{H})$. Now since $\ast_2$ is strongly ergodic, there exists an $\mathcal{L}_2(\mathcal{H})$-augmenting sequence $\mathfrak{X}_{2,a}$ such that $b_a \ast_2 \mathfrak{X}_{2,a}= H_{2,i_a}$. Let $\mathfrak{X}_{2,a}'=(X_{2,a,i}')_{0\leq i<k_a'}=(\mathfrak{X}_{2,a})^{\per(\mathcal{H})}$.  For every $0\leq i< k_a'$, we have $X_{2,a,i}'\in \mathcal{L}_2(\mathcal{H})^{i\ast_2}=\mathcal{L}_2(\mathcal{H}^{ i\ast})$, and so from Definition \ref{defProjEE} there exist $x_{1,a,i}'\in\mathcal{X}_1$ and $X_{a,i}'\in\mathcal{H}^{i\ast}$ such that $X_{2,a,i}'=P_{2|x_{1,a,i}'}(X_{a,i}')$. Define the $\mathcal{H}$-sequence $\mathfrak{X}_a'=(X_{a,i}')_{0\leq i<k_a'}$. The sequence $\mathfrak{X}_a'$ is $\mathcal{H}$-repeatable since $\per(\mathcal{H})$ divides $|\mathfrak{X}_a'|=k_a'=|\mathcal{X}_{2,a}|\cdot\per(\mathcal{H})$.

Define the mapping $\pi_a:\mathcal{X}_1\rightarrow\mathcal{X}_1$ as $\pi_a(x)=(((x\ast_1 x_{1,a,0}')\ast_1 x_{1,a,1}')\ldots\ast_1 x_{1,a,k_a'-1}')$ for every $x\in\mathcal{X}_1$. Since $\pi_a$ is a permutation, there exists $p_a>0$ such that $\pi_a^{p_a}(x)=x$ for every $x\in\mathcal{X}_1$. $(\mathfrak{X}_a')^{p_a}$ is $\mathcal{H}$-repeatable since $\mathfrak{X}_a'$ is $\mathcal{H}$-repeatable. Now by Lemma \ref{lemaugm} there exists $l_a>0$ such that $\mathfrak{X}_a:=(\mathfrak{X}_a')^{p_al_a}$ is $\mathcal{H}$-augmenting. We have:
\begin{align*}
\{a\}\times H_{2,i_a}&\stackrel{(a)}{\subset} \{a\}\times \big(H_{2,i_a}\ast_2 (\mathfrak{X}_{2,a})^{\per(\mathcal{H})p_al_a-1}\big)=\{a\}\times \big((b_a\ast_2\mathfrak{X}_{2,a})\ast_2 (\mathfrak{X}_{2,a})^{\per(\mathcal{H})p_al_a-1}\big)\\
&\stackrel{(b)}{=}\{\pi_a^{p_al_a}(a)\}\times\big(b_a\ast_2(\mathfrak{X}_{2,a})^{\per(\mathcal{H})p_al_a}\big)= \{\pi_a^{p_al_a}(a)\}\times\big(b_a\ast_2(\mathfrak{X}_{2,a}')^{p_al_a}\big)\\
&\stackrel{(c)}{=} (a,b_a)\ast \Big(\big(\{x_{1,a,i}'\}\times X_{2,a,i}'\big)_{0\leq i< k_a}\Big)^{p_al_a} \stackrel{(d)}{\subset}
(a,b_a)\ast \big((X_{a,i}')_{0\leq i< k_a}\big)^{p_al_a}\\
&= (a,b_a)\ast (\mathfrak{X}_a')^{p_al_a}= (a,b_a)\ast \mathfrak{X}_a.
\end{align*}
(a) follows from the fact that $\mathfrak{X}_{2,a}$ is $\mathcal{L}_2(\mathcal{H})$-augmenting, hence $(\mathfrak{X}_{2,a})^{\per(\mathcal{H})p_al_a-1}$ is $\mathcal{L}_2(\mathcal{H})$-augmenting (by Remark \ref{remtemdem123}), and so $H_{2,i_a}\subset H_{2,i_a}\ast_2 (\mathfrak{X}_{2,a})^{\per(\mathcal{H})p_al_a-1}$. (b) follows from the fact that $\pi_a^{p_a}(x)=x$ for every $x\in\mathcal{X}_1$, which implies that $\pi_a^{p_al_a}(a)=a$. (c) follows from the definition of $\pi_a$ and from the fact that $\mathfrak{X}_{2,a}'=(X_{2,a,i}')_{0\leq i<k_a}$. (d) follows from the fact that $P_{2|x_{1,a,i}'}(X_{a,i}')=X_{2,a,i}'$, which implies that $\{x_{1,a,i}'\}\times X_{2,a,i}' \subset X_{a,i}'$ for every $0\leq i< k_a$.

Now let $\mathfrak{X}_L=(\mathfrak{X}_a)_{a\in H_1}$ be the $\mathcal{H}$-augmenting sequence obtained by concatenating the $\mathcal{H}$-augmenting sequences $\mathfrak{X}_a$ for all $a\in H_1$ (the order of the concatenation is not important). Since $\{a\}\times H_{2,i_a}\subset (a,b_a)\ast \mathfrak{X}_a$ for every $a\in H_1$, we must have
\begin{equation}
\label{eqgsdjge31urqwhf34}
\{a\}\times H_{2,i_a}\subset (a,b_a)\ast \mathfrak{X}_L\text{\;for\;every\;}a\in H_1.
\end{equation}

Define $\mathfrak{X}=(\mathfrak{X}_U,\mathfrak{X}_L)$. We have
$(x_1,x_2)\ast\mathfrak{X}=\big((x_1,x_2)\ast \mathfrak{X}_U\big)\ast\mathfrak{X}_L=X_U\ast\mathfrak{X}_L$. For every $a\in H_1$, we have already shown that $(a,b_a)\in X_U$ and so it follows from \eqref{eqgsdjge31urqwhf34} that:
$$\{a\}\times H_{2,i_a}\subset (a,b_a)\ast \mathfrak{X}_L\subset X_U\ast\mathfrak{X}_L=(x_1,x_2)\ast\mathfrak{X}.$$
Since this is true for every $a\in H_1$, we have:
$$H\stackrel{(a)}{=}\bigcup_{a\in H_1}\{a\}\times H_{2,i_a}\subset (x_1,x_2)\ast\mathfrak{X},$$
where (a) follows from \eqref{eqsadn24ar3fsH}.

Now since $\mathfrak{X}$ is $\mathcal{H}$-augmenting, Theorem \ref{theres} implies that $(x_1,x_2)\ast\mathfrak{X}\subset K$, where $K\in\mathcal{K}_{\mathcal{H}}$ is such that $(x_1,x_2)\in K$. Therefore, $\|\mathcal{H}\|=|H|\leq | (x_1,x_2)\ast\mathfrak{X}|\leq|K|=\|\mathcal{K}_{\mathcal{H}}\|$. Now since $\mathcal{K}_{\mathcal{H}}$ is a sub-stable partition of $\mathcal{H}$, we conclude that $\mathcal{K}_{\mathcal{H}}=\mathcal{H}$. But this is true for every stable partition $\mathcal{H}$ of $\mathcal{X}$, hence $\ast$ is strongly ergodic.
\end{proof}

\vspace*{3mm}

Now we are ready to prove Theorem \ref{theprodstrong}:
\begin{proof}[Proof of Theorem \ref{theprodstrong}]
Lemmas \ref{lemProdStrongNec} and \ref{lemProdStrongSuf} show that Theorem \ref{theprodstrong} is true for $m=2$. Now let $m>2$ and suppose that the theorem is true for $m-1$.

Let $\ast_1,\ldots,\ast_m$ be $m$ binary operations such that $\ast_1\otimes\ldots\otimes \ast_m$ is strongly ergodic. It is easy to see that $\ast_1\otimes\ldots\otimes \ast_m$ can be identified to $(\ast_1\otimes\ldots\otimes \ast_{m-1})\otimes \ast_m$ (see Notation \ref{notIdent}). Therefore, $(\ast_1\otimes\ldots\otimes \ast_{m-1})\otimes \ast_m$ is strongly ergodic. Lemma \ref{lemProdStrongNec} implies that $\ast_1\otimes\ldots\otimes \ast_{m-1}$ and $\ast_m$ are strongly ergodic. It then follows from the induction hypothesis that $\ast_1,\ldots,\ast_{m-1}$ are strongly ergodic. Therefore, $\ast_1,\ldots,\ast_m$ are strongly ergodic.

Conversely, let $\ast_1,\ldots,\ast_m$ be $m$ strongly ergodic operations. From the induction hypothesis, we get that $\ast_1\otimes\ldots\otimes\ast_{m-1}$ is strongly ergodic. Lemma \ref{lemProdStrongSuf} implies that $(\ast_1\otimes\ldots\otimes\ast_{m-1})\otimes \ast_m$ is strongly ergodic. But since $(\ast_1\otimes\ldots\otimes\ast_{m-1})\otimes \ast_m$ can be identified to $\ast_1\otimes\ldots\otimes \ast_m$, we conclude that $\ast_1\otimes\ldots\otimes \ast_m$ is strongly ergodic.

Therefore, Theorem \ref{theprodstrong} is true for all $m\geq 2$.
\end{proof}

\section*{Acknowledgment}
I would like to thank Emre Telatar for enlightening discussions and for his helpful feedback on the paper. 

\bibliographystyle{IEEEtran}
\bibliography{bibliofile}
\end{document}